\title{Counting mod $n$ in pseudofinite fields}
\author{Will Johnson}
\DeclareMathOperator*{\forkindep}{\raise0.2ex\hbox{\ooalign{\hidewidth$\vert$\hidewidth\cr\raise-0.9ex\hbox{$\smile$}}}}
\newcommand{\Def}{\operatorname{Def}}
\newcommand{\Abs}{\operatorname{Abs}}
\newcommand{\Gal}{\operatorname{Gal}}
\newcommand{\Frac}{\operatorname{Frac}}
\newcommand{\Sym}{\operatorname{Sym}}
\newcommand{\characteristic}{\operatorname{char}}
\newcommand{\Spec}{\operatorname{Spec}}
\newcommand{\Aut}{\operatorname{Aut}}
\newcommand{\acl}{\operatorname{acl}}
\newcommand{\dcl}{\operatorname{dcl}}
\newcommand{\tp}{\operatorname{tp}}
\newcommand{\Fr}{\operatorname{Fr}}
\newcommand{\ACPF}{\mathrm{ACPF}}
\newcommand{\TACPF}{\widetilde{\ACPF}}
\newcommand{\Tr}{\operatorname{Tr}}
\newtheorem{theorem}{Theorem}[section] 
\newtheorem{lemma}[theorem]{Lemma}
\newtheorem{corollary}[theorem]{Corollary}
\newtheorem{fact}[theorem]{Fact}
\newtheorem{conjecture}[theorem]{Conjecture}
\newtheorem{proposition}[theorem]{Proposition}
\newtheorem{proposition-eh}[theorem]{Proposition(?)}
\newtheorem*{theorem-star}{Theorem}
\newtheorem*{conjecture-star}{Conjecture}
\newtheorem*{lemma-star}{Lemma}
\theoremstyle{definition}
\newtheorem{definition}[theorem]{Definition}
\newtheorem{example}[theorem]{Example}
\theoremstyle{remark}
\newtheorem{remark}[theorem]{Remark}
\newtheorem{claim}[theorem]{Claim}
\newcommand{\Aa}{\mathbb{A}}
\newcommand{\Qq}{\mathbb{Q}}
\newcommand{\Rr}{\mathbb{R}}
\newcommand{\Zz}{\mathbb{Z}}
\newcommand{\Nn}{\mathbb{N}}
\newcommand{\Gg}{\mathbb{G}}
\newcommand{\Mm}{\mathbb{M}}
\newcommand{\Ff}{\mathbb{F}}
\newcommand{\Pp}{\mathbb{P}}
\newcommand{\Oo}{\mathcal{O}}
\newenvironment{claimproof}[1][\proofname]
               {
                 \proof[#1]
                 
               }
               {
                 \endproof
               }
\begin{document}
\maketitle\unmarkedfntext{
  \emph{2010 Mathematical Subject Classification}: 03C60, 03C80, 03H05

  \emph{Key words and phrases}: pseudofinite fields, nonstandard
  analysis, Euler characteristics

  The author would like to thank Tianyi Xu, for helpful discussions
  about recursive ind-definability, and Tom Scanlon, who read an
  earlier version of this paper appearing in the author's
  dissertation.

  This material is based upon work supported by the National Science
  Foundation under Grant No. DGE-1106400 and Award No. DMS-1803120.
  Any opinions, findings, and conclusions or recommendations expressed
  in this material are those of the author and do not necessarily
  reflect the views of the National Science Foundation.
}

\begin{abstract}
  We show that in an ultraproduct of finite fields, the mod-$n$
  nonstandard size of definable sets varies definably in families.
  Moreover, if $K$ is any pseudofinite field, then one can assign
  ``nonstandard sizes mod $n$'' to definable sets in $K$.  As $n$ varies, these
  nonstandard sizes assemble into a definable strong Euler
  characteristic on $K$, taking values in the profinite completion
  $\hat{\Zz}$ of the integers.  The strong Euler characteristic is not
  canonical, but depends on the choice of a nonstandard Frobenius.
  When $\Abs(K)$ is finite, the Euler characteristic has some funny
  properties for two choices of the nonstandard Frobenius.

  Additionally, we show that the theory of finite fields remains
  decidable when first-order logic is expanded with parity
  quantifiers.  However, the proof depends on a computational
  algebraic geometry statement whose proof is deferred to a later
  paper.
\end{abstract}

\section{Introduction}
\subsection{Euler characteristics}
Let $M$ be a structure and $R$ be a ring.  Let $\Def(M)$ denote the
collection of (parametrically) definable sets in $M$.  Recall the
following definitions from \cite{krajicek} and \cite{euler-papers}.
An \emph{$R$-valued Euler characteristic} is a function $\chi :
\Def(M) \to R$ such that
\begin{itemize}
\item $\chi(\emptyset) = 0$
\item $\chi(X) = 1$ if $X$ is a singleton
\item $\chi(X) = \chi(Y)$ if $X$ and $Y$ are in definable bijection.
\item $\chi(X \times Y) = \chi(X) \cdot \chi(Y)$
\item $\chi(X \cup Y) = \chi(X) + \chi(Y)$ if $X$ and $Y$ are disjoint.
\end{itemize}
If the following additional property holds, then $\chi$ is called a
\emph{strong} Euler characteristic:
\begin{itemize}
\item
  If $f : X \to Y$ is a definable function and there is an $r \in R$
  such that $\chi(f^{-1}(y)) = r$ for all $y$, then
  \[ \chi(X) = r \cdot \chi(Y).\]
\end{itemize}
An Euler characteristic $\chi$ is \emph{definable} if the set $\{y \in
Y : \chi(f^{-1}(y)) = r\}$ is definable for every definable function
$f : X \to Y$ and every $r \in R$.

\subsection{Examples of Euler characteristics}
The simplest example of an Euler characteristic is the counting
function on a finite structure.  If $M$ is a finite structure, there
is a $\Zz$-valued Euler characteristic given by
\begin{equation*}
  \chi(X) = |X|
\end{equation*}
where $|X|$ denotes the size of $X$.  This $\chi$ is always strong and
0-definable.

Another well-known example is the Euler characteristic on dense
o-minimal structures (\cite{lou-o-minimality}, \S 4.2).  If
$(M,<,\ldots)$ is a dense o-minimal structure, there is a $\Zz$-valued
Euler characteristic on $M$, characterized by the fact that $\chi(C) =
-1^{\dim C}$ for any open cell $C$.  This Euler characteristic is
strong and 0-definable.  By work of Kamenkovich and Peterzil
\cite{kamenkovich}, it can be extended to $M^{eq}$.  On o-minimal
expansions of the reals, $\chi(X)$ agrees with the topological Euler
characteristic for compact definable $X \subseteq \Rr^n$.

Pseudofinite structures have strong Euler characteristics arising from
counting mod $n$.  More precisely, if $M$ is an ultraproduct of finite
structures, there is a canonical strong Euler characteristic $\chi_n :
\Def(M) \to \Zz/n\Zz$ defined in the following way.  Let $M$ be the
ultraproduct $\prod_{i \in I} M_i/\mathcal{U}$, and $X = \phi(M;a)$
be a definable set.  Choose a tuple $\langle a_i \rangle_{i \in I} \in
\prod_{i \in I} M_i$ representing $a$.  Then define $\chi_n(X) \in
\Zz/n\Zz$ to be the ultralimit along $\mathcal{U}$ of the sequence
\begin{equation*}
  \langle |\phi(M_i;a_i)| + n\Zz \rangle_{i \in I}
\end{equation*}
This ultralimit exists because $\Zz/n\Zz$ is finite.

More intuitively, if we take $\Zz^*$ to be the ultrapower
$\Zz^{\mathcal{U}} \succeq \Zz$, then there is a nonstandard counting
function $\chi^*: \Def(M) \to \Zz^*$ assigning to each definable set
$X \subseteq M^n$ its nonstandard ``size'' in $\Zz^*$.  Then $\chi_n$
is the composition
\begin{equation*}
  \Def(M) \stackrel{\chi^*}{\to} \Zz^* \rightarrow \Zz^*/n\Zz^*
  \xrightarrow{\sim} \Zz/n\Zz
\end{equation*}
The map $\chi^*$ happens to be a strong Euler characteristic itself,
but we will not consider it further.

The mod $n$ Euler characteristics on pseudofinite structures need not
be definable.  For example, consider an ultraproduct of the
totally ordered sets $\{0,1,\ldots,m\}$ as $m \to \infty$.  The resulting ultraproduct $(M,<)$ admits no definable
$\Zz/n\Zz$-valued Euler characteristics (for $n > 1$).  Indeed, if
$\chi$ is an Euler characteristic on $M$, consider the function
\begin{equation*}
  f(a) = \chi\left([0,a]\right) \in \Zz/n\Zz
\end{equation*}
Then $f(b) = f(a)+1$ when $b$ is the successor of $a$.  The set
$f^{-1}(0)$ must therefore contain every $n$th element of $M$, and
hence cannot be definable, because $M$ is (non-dense) o-minimal.

We will see below (Theorem~\ref{psy-thm}.\ref{ultra-psy-copy}) that this
does \emph{not} happen with ultraproducts of finite fields: the
$\chi_n$ are always definable on ultraproducts of finite fields.

On an ultraproduct $M$ of finite structures, these $\chi_n$ maps are
compatible in the sense that the following diagram commutes when $n$
divides $m$:
\begin{equation*}
  \xymatrix{\Def(M) \ar[r]^{\chi_m} \ar[rd]^{\chi_n} & \Zz/m\Zz \ar[d]
    \\ & \Zz/n\Zz}
\end{equation*}
Consequently, they assemble into a map
\begin{equation*}
  \hat{\chi} : \Def(M) \to \hat{\Zz}
\end{equation*}
where $\hat{\Zz}$ is the ring $\varprojlim_{n \in \Nn} \Zz/n\Zz$.

More generally, if $M$ is any structure, we will say that a map $\chi
: \Def(M) \to \hat{\Zz}$ is
\begin{enumerate}
\item an \emph{Euler characteristic} if all the compositions $\Def(M)
  \to \hat{\Zz} \to \Zz/n\Zz$ are Euler characteristics
\item\label{sechar} a \emph{strong} Euler characteristic if all the
  compositions $\Def(M) \to \hat{\Zz} \to \Zz/n\Zz$ are strong Euler
  characteristics
\item\label{dechar} a \emph{definable} Euler characteristic if all the
  compositions $\Def(M) \to \hat{\Zz} \to \Zz/n\Zz$ are definable
  Euler characteristics.
\end{enumerate}
For \ref{sechar} and \ref{dechar}, this is an abuse of terminology.

We can repeat the discusison above with the $p$-adics $\Zz_p =
\varprojlim_{k} \Zz/p^k\Zz$ instead of $\hat{\Zz}$.  Recall that
\begin{equation*}
  \hat{\Zz} \cong \prod_p \Zz_p
\end{equation*}
by the Chinese remainder theorem.  Giving an Euler characteristic
$\hat{\chi}:\Def(M) \to \hat{\Zz}$ is therefore equivalent to giving
an Euler characteristic $\chi_p : \Def(M) \to \Zz_p$ for every $p$.
Moreover, $\hat{\chi}$ is strong or definable if and only if every
$\chi_p$ is strong or definable, respectively.  It is sometimes more
convenient to work with $\Zz_p$ because it is an integral domain,
unlike $\hat{\Zz}$.

\subsection{Main results for pseudofinite fields} \label{sec-mrfields}
A structure is \emph{pseudofinite} if it is infinite, yet
elementarily equivalent to an ultraproduct of finite structures.  By a
theorem of Ax \cite{ax-paper}, a field $K$ is pseudofinite if and only
if $K$ satsifies the following three conditions:
\begin{itemize}
\item $K$ is perfect
\item $K$ is pseudo-algebraically closed: every geometrically integral
  variety over $K$ has a $K$-point.
\item $\Gal(K) \cong \hat{\Zz}$, or equivalently, $K$ has a unique
  field extension of degree $n$ for each $n$.
\end{itemize}

Our first main result can be phrased purely in terms of pseudofinite
fields.
\begin{theorem}\label{psy-thm}
  \hfill
  \begin{enumerate}
  \item Let $K = \prod_i K_i/\mathcal{U}$ be an ultraproduct of
    finite fields.  Then the nonstandard counting functions $\chi_n$
    are $\acl^{eq}(\emptyset)$-definable.\label{ultra-psy-intro}
  \item Every pseudofinite field admits an
    $\acl^{eq}(\emptyset)$-definable $\hat{\Zz}$-valued strong Euler
    characteristic.\label{rando-psy-intro}
  \end{enumerate}
\end{theorem}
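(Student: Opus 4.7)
The plan is to prove part (\ref{ultra-psy-intro}) first and then derive part (\ref{rando-psy-intro}) from it by a transfer argument.

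For part (\ref{ultra-psy-intro}), I would unwind the definability condition: for any $\emptyset$-definable family $\{X_a\}_{a \in Y}$ of subsets of $K^m$, the map $a \mapsto \chi_n(X_a)$ must be $\acl^{eq}(\emptyset)$-definable. By \L{}o\'s's theorem, this reduces to showing that in the finite-field factors $K_i = \Ff_{q_i}$, the set $\{a : |X_a(\Ff_{q_i})| \equiv r \pmod n\}$ is cut out by an $\emptyset$-definable formula whose shape does not depend on $i$. To produce such formulas, I would apply the Chatzidakis--van den Dries--Macintyre quantifier elimination for pseudofinite fields to rewrite a general definable set as a finite Boolean combination of Galois-twisted conditions on points of varieties (e.g., conditions of the form ``$f(\vec x)$ is a $k$-th power in $\Ff_q$''). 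The remaining question becomes definability in families of the quantity $|V(\Ff_q)| \bmod n$ as $V$ varies constructibly; the natural tool here is $\ell$-adic cohomology with torsion coefficients, using the Grothendieck--Lefschetz trace formula
\[
|V(\Ff_q)| \equiv \sum_i (-1)^i \Tr\bigl(\Fr_q \mid H^i_c(V_{\bar{\Ff}_q}, \Zz/n)\bigr) \pmod n
\]
to express the mod-$n$ count as a Frobenius trace on the constructible sheaves $R^if_!(\Zz/n)$, which themselves vary constructibly with the base. The strengthening from ``definable'' to ``$\acl^{eq}(\emptyset)$-definable'' should come cheaply, because $\chi_n$ takes values in the finite set $\Zz/n\Zz \subseteq \acl^{eq}(\emptyset)$.

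For part (\ref{rando-psy-intro}), let $K$ be an arbitrary pseudofinite field. Since $\Gal(K) \cong \hat\Zz$, one can pick a topological generator $\sigma$ to serve as a ``nonstandard Frobenius''. A standard compactness argument, using Ax's theorem, shows that $(K,\sigma)$ is elementarily equivalent to some $(L,\tau)$ with $L = \prod_i K_i / \mathcal{U}$ an ultraproduct of finite fields and $\tau$ its genuine nonstandard Frobenius. Part (\ref{ultra-psy-intro}) equips $L$ with the desired $\acl^{eq}(\emptyset)$-definable $\hat\Zz$-valued strong Euler characteristic, and elementary equivalence lifts the defining formulas verbatim to $K$. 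Different choices of $\sigma$ will produce different Euler characteristics on $K$, as the abstract warns.

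I expect the main obstacle to be the definability-in-families statement for mod-$n$ point counts on varieties. The classical Lang--Weil and Chatzidakis--van den Dries--Macintyre analyses control only the leading-order asymptotics, not the exact mod-$n$ behaviour, so some cohomological or arithmetic-geometric input seems essential. The case where $n$ is divisible by $\characteristic(K)$ is especially delicate, since étale cohomology with $\Zz/\ell$ coefficients fails for $\ell = \characteristic(K)$ and one probably needs a separate argument (via $p$-adic or crystalline tools, or a structural feature of $\Fr_q$-fixed points in characteristic $p$).
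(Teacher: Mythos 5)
Your overall architecture (reduce ultraproducts to uniform statements over finite fields, then transfer to arbitrary pseudofinite fields) tracks the paper's, but several steps are either wrong or leave the hardest parts unaddressed.

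The most concrete gap is your justification of the $\acl^{eq}(\emptyset)$ clause. You write that it ``should come cheaply, because $\chi_n$ takes values in the finite set $\Zz/n\Zz \subseteq \acl^{eq}(\emptyset)$.'' That is a non-sequitur: having a finite range says nothing about which parameters are needed to define the fibers. (The paper's own example of an ultraproduct of finite linear orders gives a $\Zz/n\Zz$-valued strong Euler characteristic that is not definable at all.) The correct reason, and what the paper actually proves in Lemma~\ref{psf-case}, is that the Euler characteristic is $0$-definable in the richer difference-field language with the nonstandard Frobenius $\sigma$ named, and there are only boundedly many topological generators of $\Gal(K) \cong \hat\Zz$; so the Euler characteristic has only boundedly many $\Aut(K)$-conjugates, hence is $\acl^{eq}(\emptyset)$-definable. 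Relatedly, your reduction via \L{}o\'s to ``an $\emptyset$-definable formula whose shape does not depend on $i$'' is self-defeating: if such a formula existed uniformly you would get $0$-definability, which Kraj\'i\v{c}ek showed fails. The formula necessarily involves the Frobenius action (cf.\ Example~\ref{the-example}), which is why one must pass to periodic difference fields and then descend.

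The second gap is the cohomological core. You propose the Grothendieck--Lefschetz trace formula with $\Zz/n$-coefficients and constructibility of $R^i f_!$; you then correctly flag that this cannot handle $\ell = \characteristic(K)$, but you offer no fix. That case is not a corner case to be deferred: the theorem is about $\hat\Zz$-valued characteristics, so the $p$-adic part must be handled. The paper deliberately avoids \'etale cohomology for this reason (it is also raised as Conjecture~\ref{weilish} that the \'etale picture should match, but that is a consequence, not a tool). Instead the paper reduces everything to dimension~$1$ (Proposition~\ref{all-in-zero} plus a fiber-power / Vandermonde trick in Lemma~\ref{almost-unique-lemma-1}), where the count is controlled by the Jacobian via Weil's original machinery (Fact~\ref{curve-fact}). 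The bad-characteristic case is then handled by an explicit study of the $p$-torsion group scheme of the Jacobian (Lemma~\ref{local-artin}, Proposition~\ref{prop:h1}, Corollary~\ref{trace-count-2}), which gives a bound ensuring the ``extra'' $p$-adic eigenvalues of Frobenius are $p$-adically small and hence negligible mod $p^s$ once $q$ is large. Without an analogue of this, your proof does not close.

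Finally, the uniqueness/definability bookkeeping is not automatic. To get a single uniformly definable $\hat\Zz$-valued $\chi$ across all pseudofinite fields, the paper axiomatizes an expansion $T^+$ of the theory of (Frobenius) periodic fields that pins down $\chi$ on curves, shows every model has a unique expansion, and then invokes Beth implicit definability (Fact~\ref{beth-version}, Corollary~\ref{beth-2}). Your ``elementary equivalence lifts the defining formulas verbatim'' gestures at the same idea, but the delicate point is uniqueness: one must show the proposed axioms determine $\chi$ on all definable sets, not just curves, and that this holds across the entire elementary class. That is exactly what the fiber-power/Vandermonde argument and the $S_n \Rightarrow T_n \Rightarrow S_{n+1}$ induction accomplish, and it is absent from your sketch.
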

We make several remarks:
\begin{enumerate}
\item In Part~\ref{ultra-psy-intro}, the $\acl^{eq}(\emptyset)$ is
  necessary: the nonstandard counting function is known to not be
  0-definable, by Theorem~7.3 in \cite{krajicek}.
\item In Part~\ref{rando-psy-intro}, the Euler characteristic is not
  canonical, but depends on a choice of a topological generator
  $\sigma \in \Gal(K)$.
\end{enumerate}
One approach to proving Theorem~\ref{psy-thm} would be to use etale
cohomology.  In fact, there should be a close connection between
$\ell$-adic cohomology and the $\ell$-adic part of the Euler
characteristic $\chi$---see Conjecture~\ref{weilish}.  This approach
was originally suggested by Hrushovski, according to Kraj\'i\v{c}ek's
comments at the end of \cite{krajicek}.

We avoid this line of proof, because it is less elementary, and
doesn't handle the case where $\ell = \characteristic(K)$.  Rather
than using etale cohomology, we will use the more elementary theory of
abelian varieties and jacobians, essentially falling back to Weil's
original proof of the Riemann hypothesis for curves.

Aside from Theorem~\ref{psy-thm}, there is also a decidability theorem
in terms of generalized parity quantifiers.  For any $n \in \Nn$ and
$k \in \Zz/n\Zz$, let $\mu^n_k x$ be a new quantifier.  Interpret
$\mu^n_k x : \phi(x)$ in finite structures as
\begin{quote}
  The number of $x$ such that $\phi(x)$ holds is congruent to $k$ mod $n$.
\end{quote}
In other words,
\begin{equation*}
  \left(M \models \mu^n_k \vec{x} : \phi(\vec{x},\vec{b})\right) \iff \left(|\{\vec{a} :
  M \models \phi(\vec{a},\vec{b})\}| \equiv k \pmod{n}\right).
\end{equation*}
For example,
\begin{itemize}
\item $\mu^2_0 x$ means ``there are an even number of $x$ such that\ldots''
\item $\mu^2_1 x$ means ``there are an odd number of $x$ such that\ldots''
\end{itemize}
We call $\mu^n_k$ a \emph{generalized parity quantifier}.

Let $\mathcal{L}^{\mu}_{rings}$ be the language of rings expanded with
generalized parity quantifiers.
\begin{theorem}\label{intro-deci}
  Assuming Conjecture~\ref{horror}, the
  $\mathcal{L}^\mu_{rings}$-theory of finite fields is decidable.
\end{theorem}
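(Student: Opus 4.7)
The plan is to translate every $\mathcal{L}^\mu_{rings}$-sentence $\phi$ into an equivalent ordinary $\mathcal{L}_{rings}$-sentence $\phi^*$ and then invoke Ax's classical decidability theorem for the first-order theory of finite fields. The engine that makes such a translation possible is Theorem~\ref{psy-thm}.\ref{ultra-psy-intro}: in an ultraproduct of finite fields, the nonstandard size mod $n$ of a definable family $\{\psi(M;\vec{y})\}_{\vec{y}}$ is itself $\acl^{eq}(\emptyset)$-definable in $\vec{y}$. By {\L}o\'s's theorem, what holds in all ultraproducts of finite fields is exactly what holds in all sufficiently large finite fields, so such a translation yields an equivalence valid on a cofinite set of finite fields; together with direct inspection of the finitely many small $K$, this reduces the problem to deciding the ring-language theory.

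I would proceed by induction on the structure of $\phi$. Atoms, Booleans and ordinary quantifiers commute with the translation in the obvious way. For a generalized parity quantifier, $\mu^n_k y : \psi(\vec{x}, y)$ is rewritten as the assertion that $\chi_n$ applied to the set defined by the already-translated $\psi^*$ equals $k$; by Theorem~\ref{psy-thm}.\ref{ultra-psy-intro} this is an $\acl^{eq}(\emptyset)$-definable condition on $\vec{x}$, hence can be rephrased as $\exists \vec{z}\,(\theta(\vec{z}) \wedge \rho(\vec{x}, \vec{z}))$ for some $\mathcal{L}_{rings}$-formulas $\theta$ and $\rho$, where $\theta$ pins $\vec{z}$ down to finitely many choices and $\rho$ encodes ``$\chi_n = k$'' in terms of $\vec{z}$. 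Iterating produces the translation $\phi \mapsto \phi^*$.

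The main obstacle is effectivity. Decidability demands an algorithm that, given $\phi$, actually produces $\phi^*$; this in turn requires an effective version of Theorem~\ref{psy-thm}.\ref{ultra-psy-intro}, so that the translating formulas $\theta$ and $\rho$ can be computed from $\psi$. The proof of Theorem~\ref{psy-thm} rests on the geometry of curves, jacobians, and point counts for varieties over $K$, and distilling from that geometric input a uniform, computable family of $\mathcal{L}_{rings}$-formulas witnessing $\acl^{eq}(\emptyset)$-definability of $\chi_n$ is precisely the role to be played by Conjecture~\ref{horror}. Once effectivity is secured, the final step --- deciding the resulting sentence in the theory of finite fields --- is handled by Ax's classical decision procedure.
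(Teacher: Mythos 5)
Your plan hinges on translating an arbitrary $\mathcal{L}^\mu_{rings}$-sentence into an equivalent $\mathcal{L}_{rings}$-sentence valid across (all sufficiently large) finite fields, but the paper explicitly shows this is impossible. Lemma~\ref{lame} proves that the sentence $\mu^5_2 x : x = x$ has \emph{no} $\mathcal{L}_{rings}$-equivalent uniform over finite fields, and the failure is not confined to small fields. So a translation $\phi \mapsto \phi^*$ of the kind you describe cannot exist even non-effectively; Conjecture~\ref{horror} addresses effectivity, not this obstruction.

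The specific step that breaks is the passage from $\acl^{eq}(\emptyset)$-definability to a formula $\exists \vec{z}\,(\theta(\vec{z}) \wedge \rho(\vec{x},\vec{z}))$ in $\mathcal{L}_{rings}$. An $\acl^{eq}(\emptyset)$-definable set has only finitely many conjugates under $\Aut(M)$, but those conjugates can be genuinely distinct, and the set itself is therefore not $\Aut(M)$-invariant. Your proposed rewrite, by existentially quantifying over the finitely many possible witnesses $\vec{z}$, produces the \emph{union} of the conjugates --- an $\Aut(M)$-invariant set --- which in general over-counts. In the concrete case of $\mu^5_2$, the relevant parameter is a choice of Frobenius action on the fifth roots of unity; existentially quantifying over it would yield roughly ``$q \not\equiv 0,1 \pmod 5$'' instead of ``$q \equiv 2 \pmod 5$.'' The paper's escape is to pass to the language of periodic fields, where the automorphism $\sigma$ is named and the Euler characteristic becomes genuinely $0$-definable (Theorem~\ref{ecpdf-thm}); one then eliminates parity quantifiers over $\mathcal{L}_{pf}$ (Proposition~\ref{parelim}, made effective via Conjecture~\ref{horror} in Corollary~\ref{conversion}), decides the $\mathcal{L}_{pf}$-theory of Frobenius periodic fields directly, and finally reduces $\mathcal{L}^\mu_{rings}$-sentences to $\mathcal{L}^\mu_{pf}$-sentences by interpreting $K_1$ inside the periodic field. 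Any correct proof must involve a change of language (or naming the nonstandard Frobenius) at the point where you tried to stay in $\mathcal{L}_{rings}$.
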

Unfortunately, this result is conditional on Conjecture~\ref{horror},
a technical statement about definability in algebraic geometry.  While
the conjecture is certainly true, it is hard to give a sane proof, for
reasons discussed in \S\ref{sec:interlude}.  A complete proof will
(hopefully) be given in future work \cite{complementary-paper}.

\subsection{Main results for periodic difference fields}
The results of \S\ref{sec-mrfields} can be stated more precisely in
terms of difference fields.  Recall that a \emph{difference field} is
a pair $(K,\sigma)$ where $K$ is a field and $\sigma$ is an
automorphism of $K$.
\begin{definition}
  A \emph{periodic} difference field is a difference field
  $(K,\sigma)$ such that every element of $K$ has finite orbit under
  $\sigma$.
\end{definition}
Periodic difference fields are not an elementary class in the language
of difference fields.  However, they constitute an elementary class
when regarded as multi-sorted structures $(K_1,K_2,\ldots)$ where
$K_i$ is the fixed field of $\sigma^i$, with the following structure:
\begin{itemize}
\item The difference-field structure on each $K_i$
\item The inclusion map $K_n \to K_m$ for each pair $n, m$ with $n$
  dividing $m$
\end{itemize}
These multi-sorted structures were considered by Hrushovski in
\cite{the-lost-paper}, and we will give an overview of their basic
properties in \S\ref{pf-review} below.

To highlight the fact that we are no longer working in the language of
difference fields, we will call these structures \emph{periodic
  fields}.  If $(K_1,K_2,\ldots)$ is a periodic field, we let
$K_\infty$ denote the associated periodic difference field
\[ K_\infty = \varinjlim_n K_n.\]
We will abuse notation and write $(K_\infty,\sigma)$ when we really
mean the associated periodic field $(K_1,K_2,\ldots)$.

For any $q$, let $\Fr^q_\infty$ denote $(\Ff_q^{alg},\phi_q)$, where $\phi_q$
is the $q$th power Frobenius.  Thus $\Fr^q_n = (\Ff_{q^n},\phi_q)$.  We will call the $\Fr^q_\infty$'s \emph{Frobenius
  periodic fields}.  Frobenius periodic fields are essentially finite,
in the sense that every definable set is finite.  Consequently,
ultraproducts of Frobenius periodic fields admit $\Zz/n\Zz$-valued
strong Euler characteristics $\chi_n$.

There is a theory ACPF whose class of models can be described in
several ways:
\begin{enumerate}
\item The existentially closed periodic fields.
\item The non-Frobenius periodic fields satisfying the theory of
  Frobenius periodic fields.
\item The periodic fields of the form $(K^{alg},\sigma)$, where $K$ is
  pseudofinite and $\sigma$ is a topological generator of $\Gal(K)$.
\end{enumerate}
(See Propositions~\ref{acpf-char}, \ref{acpf-is-ultraprod}, and
\ref{acpf-is-pseuf}, respectively.)  In particular, ACPF is the model
companion of periodic fields, and non-principal ultraproducts of
Frobenius periodic fields are models of ACPF.  The situation is
analogous to, but much simpler than, the situation with ACFA
\cite{frobenius}.

Theorem~\ref{psy-thm} has the following analogue for periodic fields:
\begin{theorem}\label{ecpdf-thm}
  Let $\mathcal{C}$ be the class of Frobenius periodic fields and
  existentially closed periodic fields.  There is a $\hat{\Zz}$-valued
  strong Euler characteristic $\chi$ on $(K,\sigma)$ in $\mathcal{C}$ with the
  following properties:
  \begin{itemize}
  \item $\chi$ is uniformly 0-definable across $\mathcal{C}$.
  \item If $(K,\sigma)$ is a Frobenius periodic field, then $\chi$ is
    the counting Euler characteristic:
    \[ \chi(X) = |X|.\]
  \item If $(K,\sigma)$ is an ultraproduct of Frobenius periodic
    fields, then $\chi$ is the nonstandard counting Euler
    characteristic.
  \end{itemize}
\end{theorem}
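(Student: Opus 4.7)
The plan is to start from the observation that on each Frobenius periodic field $\Fr^q_\infty = (\Ff_q^{alg},\phi_q)$ every definable set is finite, so the counting function $X \mapsto |X| \in \Zz$ is already a strong $\Zz$-valued Euler characteristic; composing with the diagonal embedding $\Zz \hookrightarrow \hat{\Zz}$ yields a strong $\hat{\Zz}$-valued Euler characteristic $\chi^{(q)}$ on each $\Fr^q_\infty$. The whole content of the theorem is then that the $\chi^{(q)}$ are \emph{uniformly} $0$-definable: for each formula $\phi(\vec x,\vec y)$ in the periodic-field language, each $n$, and each $k \in \Zz/n\Zz$, there should exist a single formula $\psi_{\phi,n,k}(\vec y)$ such that $\Fr^q_\infty \models \psi_{\phi,n,k}(\vec b) \iff |\phi(\Fr^q_\infty;\vec b)| \equiv k \pmod n$ for \emph{every} $q$.

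Given such uniform definability, the extension to existentially closed periodic fields is by transfer. By Proposition~\ref{acpf-is-ultraprod}, every model of ACPF is elementarily equivalent to a non-principal ultraproduct $K = \prod_i \Fr^{q_i}_\infty/\mathcal{U}$ of Frobenius periodic fields, and for such an ultraproduct \L o\'s's theorem applied to the $\psi_{\phi,n,k}$ shows that this scheme of formulas defines exactly the ultralimit of $|\phi(\Fr^{q_i}_\infty;\vec a_i)| \bmod n$, which is the nonstandard counting Euler characteristic --- matching the third bullet of the theorem. For a general model $M$ of ACPF I would simply \emph{define} $\chi_n(\phi(M;\vec b)) = k$ to mean $M \models \psi_{\phi,n,k}(\vec b)$. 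The axioms for a strong definable Euler characteristic are all expressible as schemes of first-order sentences in the $\psi_{\phi,n,k}$; they hold trivially on every $\Fr^q_\infty$ because counting on finite structures is strong, hence hold on every ultraproduct by \L o\'s, hence hold on every model of ACPF by completeness of ACPF.

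Thus the theorem reduces to producing the formulas $\psi_{\phi,n,k}$, and this is where the real work lies. The strategy is to use quantifier reduction for periodic fields, developed in \S\ref{pf-review}, to reduce from an arbitrary periodic-field formula to data about Zariski-constructible families of subvarieties of products of the fixed-field sorts $K_{n_1} \times \cdots \times K_{n_r}$, and then to give counting formulas for such families. For curves this is Weil's theorem, which gives point counts of the form $q + 1 - \sum_i \alpha_i$ with $|\alpha_i| = \sqrt{q}$ varying in a controlled algebraic way; higher-dimensional and relative cases follow by fibration and induction on dimension. The main obstacle I anticipate is handling the case $p \mid n$ for $p = \characteristic(K)$: with $\ell$-adic cohomology one would immediately extract $|X| \bmod \ell^k$ for $\ell \neq p$, but the $p$-part must be recovered by more elementary means --- through Jacobians, their $p$-torsion, and the Hasse--Witt invariant --- which is precisely the reason the paper avoids \'etale cohomology and falls back to Weil's original approach.
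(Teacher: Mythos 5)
Your high-level reduction is right: everything comes down to showing that the counting Euler characteristic on the Frobenius periodic fields $\Fr^q_\infty$ is \emph{uniformly} $0$-definable across $q$, i.e.\ producing formulas $\psi_{\phi,n,k}$ with $\Fr^q_\infty \models \psi_{\phi,n,k}(\vec b) \iff |\phi(\Fr^q_\infty;\vec b)| \equiv k \pmod n$ for every $q$, and then transferring to ACPF via \L o\'s and the fact that $\TACPF$ is the theory of the Frobenius periodic fields. That much matches the paper. Where you diverge is precisely at the point you yourself flag as ``the real work,'' and that is where the proposal has a gap.

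You propose to \emph{explicitly construct} the $\psi_{\phi,n,k}$ by quantifier-reducing to varieties and then writing down counting formulas via Weil's theorem, ``higher-dimensional and relative cases follow by fibration and induction on dimension.'' This runs into a chicken-and-egg problem that the paper is explicitly engineered to avoid. To do the fibration/induction step for a set $X \subseteq K_1^{n+1}$, you need to partition the base $K_1$ according to the mod-$m$ count of the fibers $X_t \subseteq K_1^n$; but whether that partition is \emph{definable}, uniformly across all $\Fr^q_\infty$, is exactly the statement you are trying to prove one dimension down. The paper circumvents this by a Beth implicit-definability scheme (\S\ref{sec-beth}): it defines an auxiliary theory $T^+$ whose axioms say that $\chi$ is a strong Euler characteristic and pin down $\chi_{p^k}(C(K_1))$ for curves via the Jacobian torsion formula $\chi_{p^k}(C(K_1)) = 1 - \Tr(\sigma\mid J[p^k]) + \Tr(\sigma\mid \Gg_m[p^k])$ (Axiom~\ref{t-plus-5}, justified by Corollaries~\ref{trace-count} and~\ref{trace-count-2} including the bad-characteristic bound $h(p,s,g)$), then proves \emph{existence} (Proposition~\ref{finites-do-work}: counting works on every $\Fr^q$) and \emph{uniqueness} (Proposition~\ref{there-can-only-be-one}), and concludes definability from Beth's theorem (Corollary~\ref{beth-2}) without ever writing down the $\psi_{\phi,n,k}$. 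The uniqueness argument is not a straightforward fibration: Lemma~\ref{almost-unique-lemma-1} uses Proposition~\ref{all-in-zero} to realize a unary definable set $X$ as the \emph{image} of a quasi-finite morphism $V \to \Aa^1$ (not as a constructible subvariety), takes $n$-fold fiber powers $V_n$, and inverts a Vandermonde matrix to recover $\chi(X)$ from the $\chi(V_n(K_1))$, which in turn reduce to curves via Lemma~\ref{qfree-to-curves-v2}; Lemma~\ref{almost-unique-lemma-n} then does the induction on the number of variables, interleaving uniqueness ($S_n$) with definability-in-families ($T_n$), with Beth applied again inside the induction.

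So the gap is twofold: (1) your ``fibration and induction'' step presupposes the very definability-in-families it is meant to establish, and (2) passing from definable sets to varieties through Proposition~\ref{all-in-zero} gives an image of a quasi-finite morphism rather than a Zariski-constructible family, so one needs the Vandermonde trick to undo the fibers of that morphism. You correctly identify the bad-characteristic case as the technical obstacle, but the resolution is not Hasse--Witt invariants alone: it is the quantitative bound $h(p,s,g)$ of Corollary~\ref{trace-count-2}, needed so that the defining axiom is actually \emph{true} on all sufficiently large $\Fr^q$ and hence survives the transfer. A direct explicit construction of $\psi_{\phi,n,k}$ might be possible (it would be in the spirit of Dwork--Kiefe), but it is a substantially harder route and is not carried out in your sketch; the paper's Beth argument is the clean way around it.
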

If $F$ is an abstract pseudofinite field, each topological generator
$\sigma \in \Gal(F)$ turns $(F^{alg},\sigma)$ into a periodic
difference field satisfying ACPF.  There is no canonical choice of
$\sigma$, which is the reason for the non-canonicalness in
Theorem~\ref{psy-thm}.\ref{rando-psy-copy}.

There are also statements in terms of parity quantifiers.  Let
$\mathcal{L}_{pf}$ be the first-order language of periodic fields, and
let $\mathcal{L}^{\mu}_{pf}$ be its expansion by generalized parity
quantifiers.
\begin{theorem}\label{intro-deci-2}
  \hfill
  \begin{enumerate}
  \item \label{id2a} Generalized parity quantifiers are uniformly eliminated on the
    class of Frobenius periodic fields.
  \item \label{id2b} Assuming Conjecture~\ref{horror}, the
    $\mathcal{L}^{\mu}_{pf}$-theory of Frobenius periodic fields is
    decidable.
  \end{enumerate}
\end{theorem}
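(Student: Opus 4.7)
The plan is to leverage Theorem \ref{ecpdf-thm}: on any Frobenius periodic field $(K,\sigma)$ the supplied strong Euler characteristic satisfies $\chi(X) = |X|$, so $\chi_n(X)$ is exactly the residue modulo $n$ computed by the generalized parity quantifier $\mu^n_k$. Uniform $0$-definability of $\chi_n$ across $\mathcal{C}$ then translates parity quantifiers into ordinary $\mathcal{L}_{pf}$-formulas.

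For \ref{id2a}, I induct on the complexity of an $\mathcal{L}^{\mu}_{pf}$-formula. Boolean connectives and ordinary first-order quantifiers pass through in the usual way, so the only nontrivial case is a formula of the shape $\mu^n_k \vec{x}\, \phi(\vec{x},\vec{y})$ where $\phi$ is already $\mathcal{L}_{pf}$ by induction. Theorem \ref{ecpdf-thm} then supplies an $\mathcal{L}_{pf}$-formula $\psi_{n,k,\phi}(\vec{y})$ defining
\[ \{\vec{b} : \chi_n(\phi(K,\vec{b})) = k\} \]
uniformly in every $(K,\sigma) \in \mathcal{C}$. In a Frobenius periodic field this set is exactly $\{\vec{b} : |\phi(K,\vec{b})| \equiv k \pmod{n}\}$, so $\mu^n_k \vec{x}\, \phi \leftrightarrow \psi_{n,k,\phi}$ holds uniformly on the class of Frobenius periodic fields, completing the induction.

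For \ref{id2b}, the task is to make this elimination effective. Given an $\mathcal{L}^{\mu}_{pf}$-sentence $\Phi$, one first computes an equivalent $\mathcal{L}_{pf}$-sentence $\Psi$ via the induction of part \ref{id2a}, and then decides $\Psi$ in the $\mathcal{L}_{pf}$-theory of Frobenius periodic fields. The latter is decidable by standard arguments extending Ax's decidability theorem for finite fields, using the characterizations of ACPF and its relation to ultraproducts of Frobenius periodic fields (Propositions \ref{acpf-char}, \ref{acpf-is-ultraprod}). The production of $\Psi$, by contrast, is precisely where Conjecture \ref{horror} enters: the proof of Theorem \ref{ecpdf-thm} constructs the formulas $\psi_{n,k,\phi}$ by an ind-definable procedure, and Conjecture \ref{horror} supplies the computational algebraic geometry bounds that make this procedure algorithmic in the input data $(\phi, n, k)$.

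The main obstacle is entirely in \ref{id2b} and consists in the effectivity of the elimination: part \ref{id2a} is a fairly routine consequence of Theorem \ref{ecpdf-thm} once available, whereas \ref{id2b} demands an effective version of the same theorem, which is exactly the purpose of Conjecture \ref{horror}.
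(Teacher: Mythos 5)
Your proof takes essentially the same route as the paper: part~\ref{id2a} is exactly the paper's Proposition~\ref{parelim}, reducing by induction to the case $\mu^n_k\vec{y}\,\psi$ and invoking the uniform $0$-definability of $\chi_n$ from Theorem~\ref{ecpdf-thm} (stated there as Corollary~\ref{uniform-elim}); and part~\ref{id2b} is the paper's decidability argument, which combines the effective version of the elimination (Corollary~\ref{conversion}) with the decidability of the $\mathcal{L}_{pf}$-theory of Frobenius periodic fields (computably enumerable because it is axiomatized by the recursively axiomatized $\TACPF$, co-computably enumerable because each $\Fr^q$ is essentially finite). One small clarification about the role of Conjecture~\ref{horror}: you describe it as supplying ``computational algebraic geometry bounds'' that make an ind-definable procedure algorithmic, whereas the paper uses it slightly more bluntly---it ensures the theory $T^+$ is recursively axiomatized (Lemma~\ref{yanked}), after which the conversion $\phi\mapsto\phi'$ is a brute-force search through the consequences of $T^+$ for an equivalence with an $\mathcal{L}_{pf}$-formula (Lemma~\ref{comply-convert}); the Beth-definability argument guarantees this search terminates. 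The difference is one of emphasis rather than substance.
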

This statement is stronger than what we can say about finite and
pseudofinite fields.  In fact, generalized parity quantifiers are
\emph{not} uniformly eliminated on finite fields (Lemma~\ref{lame}).

\subsection{A special case}\label{pre-mock}
If $p$ is a prime, let $\Zz_{\neg p}$ be the prime-to-$p$ completion
of $\Zz$:
\begin{equation*}
  \Zz_{\neg p} = \varprojlim_{(n,p) = 1} \Zz/n\Zz = \prod_{\ell \ne p}
  \Zz_\ell.
\end{equation*}
If $K$ is a field, let $\Abs(K)$ denote the subfield of \emph{absolute
  numbers}, i.e., the relative algebraic closure of the prime field.
Say that a field $K$ is \emph{mock-finite} if $K$ is pseudofinite and
$\Abs(K)$ is finite.  Say that $K$ is a \emph{mock-$\Ff_q$} if moreover
$\Abs(K) \cong \Ff_q$.  For each prime power $q$, there is a unique
mock-$\Ff_q$ up to elementary equivalence
(Proposition~\ref{prop:frob}.\ref{z3}).

The nonstandard Euler characteristics behave in a funny way on
mock-finite fields:
\begin{theorem}\label{mocktastic}
  Let $K$ be a mock-$\Ff_q$, for some prime power $q = p^k$.  There
  are two $\Zz_{\neg p}$-valued 0-definable strong Euler
  characteristics $\chi$ and $\chi^\dag$ on $K$, such that
  \begin{enumerate}
  \item If $V$ is a smooth projective variety over $\Ff_q$, then
    \begin{align*}
      \chi(V(K)) &= |V(\Ff_q)| \\
      \chi^\dag(V(K)) &= |V(\Ff_q)|/q^{\dim V}.
    \end{align*}
  \item If $X$ is any $\Ff_q$-definable set, then
    \[ \chi(X) = |X \cap \dcl(\Ff_q)|.\]
    In particular, $\chi(X) \in \Zz$.
  \item If $X$ is any $\Ff_q$-definable set, then $\chi^\dag(X) \in \Qq$.
  \end{enumerate}
\end{theorem}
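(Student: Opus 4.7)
The plan is to invoke Theorem~\ref{psy-thm}.\ref{rando-psy-intro} at two carefully chosen topological generators of $\Gal(K)$, and then verify the listed properties by unwinding the construction. Since $K$ is mock-$\Ff_q$, the subfield $\Ff_q = \Abs(K)$ is $0$-definable as the set of roots of $x^q - x$, and the restriction map $\pi \colon \Gal(K) \twoheadrightarrow \Gal(\Ff_p^{\mathrm{alg}}/\Ff_q) \cong \hat{\Zz}$ is a continuous surjection of $\hat{\Zz}$'s. I would pick topological generators $\sigma, \sigma^\dagger \in \Gal(K)$ with $\pi(\sigma) = \phi_q$ and $\pi(\sigma^\dagger) = \phi_q^{-1}$; both restrictions are topological generators of $\Gal(\Ff_p^{\mathrm{alg}}/\Ff_q)$ because $\pm 1 \in \hat{\Zz}^\times$. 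Theorem~\ref{psy-thm}.\ref{rando-psy-intro} then yields $\hat{\Zz}$-valued strong Euler characteristics $\chi, \chi^\dagger$, which I project into $\Zz_{\neg p}$.

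For property~(1), I would realize $(K,\sigma)$ as an ultraproduct $\prod_i (\Ff_{q^{n_i}}, \phi_q)/\mathcal{U}$ of Frobenius periodic fields, using the equivalence of ACPF models with non-principal ultraproducts of this form. For a smooth projective $V/\Ff_q$, Weil's zeta-function machinery (via Jacobians and abelian varieties, as the paper advertises) writes $|V(\Ff_{q^m})| = \sum_j (-1)^j \sum_s \alpha_{j,s}^m$ with each Weil number $\alpha_{j,s}$ an algebraic integer of complex absolute value $q^{j/2}$, hence an $\ell$-adic unit for every $\ell \ne p$. I expect the construction of Theorem~\ref{psy-thm} to output $\chi(V(K)) = \sum_j (-1)^j \sum_s \alpha_{j,s}^u$ in $\Zz_{\neg p}$ when $\pi(\sigma) = \phi_q^u$; specializing $u = 1$ gives $|V(\Ff_q)|$, and specializing $u = -1$ gives $\sum_j (-1)^j \sum_s \alpha_{j,s}^{-1}$, which Weil's functional equation $\alpha \leftrightarrow q^{\dim V}/\alpha$ identifies with $|V(\Ff_q)|/q^{\dim V}$ (meaningful in $\Zz_{\neg p}$ since $q$ is invertible there).

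For properties~(2) and (3), I would stratify an arbitrary $\Ff_q$-definable set $X$ into quasi-projective pieces and reduce by inclusion--exclusion and fibration arguments (using strongness of $\chi$) to the smooth projective case above. For $\chi$ the strata contribute their trace-with-$\phi_q$ counts, which sum to $|X(\Ff_q)| = |X \cap \dcl(\Ff_q)|$, giving (2); for $\chi^\dagger$ each stratum of dimension $d$ contributes (integer)$/q^d$, so $\chi^\dagger(X) \in \Qq$, giving (3).

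Finally, upgrading the $\acl^{\mathrm{eq}}(\emptyset)$-definability from Theorem~\ref{psy-thm} to $0$-definability is the main obstacle. Since $\Ff_q \subseteq \dcl(\emptyset)$, every $\alpha \in \Aut(K)$ fixes $\Ff_q$ pointwise, and so sends $\sigma$ to another topological generator of $\Gal(K)$ with the same restriction $\phi_q$; if I can show that $\chi|_{\Def(K)}$ depends on $\sigma$ only through $\pi(\sigma)$, then all $\Aut(K)$-conjugates of $\sigma$ yield the same $\chi$, forcing $0$-definability. Proving this dependence-only-on-$\pi(\sigma)$ claim is the delicate step: it seems to require inspecting the Weil/Jacobian proof of Theorem~\ref{psy-thm} carefully enough to verify that for every definable $X \subseteq K^n$ the value $\chi_n(X)$ is already determined by the absolutely constructible data over $\Ff_q$, with no further dependence on $\sigma$ beyond its restriction to $\Ff_p^{\mathrm{alg}}$.
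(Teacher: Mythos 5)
Your overall plan---take the two $\hat{\Zz}$-valued Euler characteristics coming from a nonstandard Frobenius extending $\phi_q$ and one extending $\phi_q^{-1}$, project to $\Zz_{\neg p}$, and verify the properties via Dwork rationality and the functional equation---is essentially the paper's. But there are two gaps, one of which makes you work much harder than necessary, and one of which is genuinely unresolved as written.

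First, you treat the restriction $\pi \colon \Gal(K) \to \Gal(\Ff_q)$ as merely a surjection and then flag ``show $\chi$ depends on $\sigma$ only through $\pi(\sigma)$'' as the delicate step. You are missing the observation that $\pi$ is an \emph{isomorphism}: both groups are $\hat{\Zz}$, and a continuous surjection $\hat{\Zz}\to\hat{\Zz}$ is automatically injective. Hence there is a \emph{unique} $\sigma\in\Gal(K)$ with $\sigma|_{\Ff_q^{alg}}=\phi_q$ (the mock Frobenius), and your $\sigma^\dagger$ is forced to be $\sigma^{-1}$. Your worry then evaporates: any $\alpha\in\Aut(K)$, extended to $\tilde\alpha\in\Aut(K^{alg})$, conjugates $\sigma$ to another element of $\Gal(K)$ restricting to $\phi_q$ (using that $\Gal(\Ff_q)$ is abelian), and by uniqueness this conjugate is $\sigma$ itself. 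So $\Aut(K)$-invariance is automatic, and combined with the $\acl^{eq}(\emptyset)$-definability from Lemma~\ref{psf-case} this gives $0$-definability directly, with no inspection of the Jacobian proof needed. The ``delicate step'' you were worried about does not exist once the isomorphism is noted.

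Second, for properties (2) and (3) you propose to ``stratify an arbitrary $\Ff_q$-definable set $X$ into quasi-projective pieces and reduce by inclusion--exclusion and fibration arguments.'' This does not work as stated: $\Ff_q$-definable subsets of $K^n$ in a pseudofinite field are \emph{not} in general constructible (e.g.\ the set of squares), so there is no stratification into quasi-projective pieces to appeal to. The paper's route is to apply Proposition~\ref{all-in-zero} to write $X$ as the image of $V(K)\to \Aa^n(K)$ for a quasi-finite $\Ff_q$-morphism $V\to\Aa^n$, form the $m$-fold fiber powers $V_m$, compare $\chi(V_m(K))$ with $|V_m(\Ff_q)|$ (the geometric case you did settle), and then extract $\chi(X_k)=|X_k\cap\Ff_q^n|$ from the system of equations $\sum_k k^m\chi(X_k)=\sum_k k^m|X_k\cap\Ff_q^n|$ using invertibility of the Vandermonde matrix; an outer induction on $n$ then handles higher ambient dimension. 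You would need to supply this (or an equivalent) argument; inclusion--exclusion over strata will not reach general definable sets. On the geometric side your use of Dwork plus the Poincar\'e-duality functional equation is fine, though note that for general (non-smooth, non-projective) $V$ the paper establishes the $\ell$-adic-unit property of Dwork's $\alpha_i,\beta_j$ (Corollary~\ref{basically-units}) as a \emph{consequence} of this Euler-characteristic theory, rather than quoting it from cohomology.
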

Using this, we construct a strange $\Qq$-valued weak Euler
characteristic on pseudofinite fields in \S\ref{sec-neutral}.

\subsection{Related work}
Many people have considered non-standard sizes of definable sets in
pseudofinite fields \cite{alshanqiti, CDM, elwes, krajicek,
  euler-papers}.  Non-standard sizes modulo $p$ were considered by
Kraj\'i\v{c}ek, who used them to prove the existence of non-trivial
strong Euler characteristics on pseudofinite fields \cite{krajicek}.
However, most research has focused on \emph{ordered} Euler
characteristics (\cite{alshanqiti, euler-papers}) and the \emph{real}
standard part of non-standard sizes (\cite{CDM, elwes}).  These topics
can be seen as ``non-standard sizes modulo the infinite prime.''

Dwork~\cite{dwork} and Kiefe~\cite{catarina} consider the behavior of
$|\phi(\Ff_q)|$ as $q$ varies.  Their work can be used to calculate
the non-standard mod-$n$ sizes of 0-definable sets in pseudofinite
fields of positive characteristic.

Almost everything in \S\ref{pf-review} is well-known to experts.  The
results specific to periodic fields probably appear in Hrushovski's
paper \cite{the-lost-paper}, which I have had trouble finding.

\subsection{Notation}
If $K$ is a field, then $K^{alg}$ (resp. $K^{sep}$) denotes the
algebraic (resp. separable) closure, and $\Gal(K)$ denotes the
absolute Galois group $\Gal(K^{sep}/K) = \Aut(K^{sep}/K)$.  We let
\[ \hat{\Zz} = \varprojlim_n \Zz/n\Zz\]
denote the profinite completion of $\Zz$.  The finite field with $q$
elements is denoted $\Ff_q$.

A \emph{variety} over $K$ is a finite-type separated reduced scheme
over $K$, not necessarily irreducible or quasi-projective.  If $V$ is
a variety, then $V(K)$ denotes the set of $K$-points of $V$.  A scheme
$X$ over $K$ is \emph{geometrically integral} or \emph{geometrically
  irreducible} if $X \times_K K^{alg}$ is integral or irreducible.  A
\emph{curve} over $K$ is a geometrically integral 1-dimensional smooth
projective variety over $K$.
\begin{remark}
  If $K$ is a perfect field and $V$ is a variety, then geometrically
  irreducible is equivalent to geometrically integral.
\end{remark}

\section{Review of abelian varieties}
Let $A$ be an abelian variety over some field $K$.  For any $n \in
\Nn$, let $A[n]$ denote the group of $n$-torsion in $A(K^{alg})$,
viewed as an abelian group with $\Gal(K)$-action.  The $\ell$th
\emph{Tate module} is defined as an inverse limit
\[ T_\ell A = \varprojlim_n A[\ell^n].\]
See \S 18 of \cite{mumford} for a precise definition.  If $g = \dim
A$, then there are non-canonical isomorphisms
\[ T_\ell A \approx \Zz_\ell^{2g}\]
for all $\ell \ne \characteristic(K)$.  In particular, $T_\ell A$ is a
free $\Zz_\ell$-module of rank $2g$.  If $p = \characteristic(K)$,
then
\[ T_p A \approx \Zz_p^r\]
for some $r$ known as the \emph{$p$-rank} of $A$.  The $p$-rank is at
most $g$.  Similar statements hold for the torsion subgroups:
\begin{align*}
  A[\ell^k] &\approx (\Zz/\ell^k)^{2g} \qquad \ell \ne \characteristic(K) \\
  A[p^k] &\approx (\Zz/p^k)^r \qquad p = \characteristic(K).
\end{align*}
An \emph{isogeny} on $A$ is a surjective endomorphism $f : A \to A$.
An isogeny $f$ is finite and flat (\cite{milneAV}, Proposition~I.7.1),
hence has a well-defined degree $\deg(f)$.  Degree of finite flat maps
is preserved in pullbacks, so $\deg(f)$ can be described alternately
as
\begin{itemize}
\item The length of the scheme-theoretic kernel of $f$ (a finite group
  scheme over $K$).
\item The degree of the fraction field extension.
\end{itemize}
If $f : A \to A$ is a non-surjective endomorphism, then $\deg(f)$ is
defined to be 0.

Any endomorphism $f : A \to A$ induces an endomorphism $T_\ell(f)$ on
the Tate modules.  We can talk about the determinant and trace of this
endomorphism.
\begin{fact}[cf. Theorem~19.4 in \cite{mumford}, or Proposition~I.10.20 in \cite{milneAV}]
  \label{t-ell-magic-2}
  If $f : A \to A$ is any endomorphism, and $\ell \ne
  \characteristic(K)$, then
  \begin{equation*}
    \deg(f) = \det T_\ell(f).
  \end{equation*}
\end{fact}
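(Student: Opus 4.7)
The plan is to reduce to the case where $f$ is an isogeny, match $\ell$-adic valuations of both sides using the structure of $T_\ell A$ as a free $\Zz_\ell$-module, and then upgrade the pointwise valuation identity to an actual equality by varying an auxiliary parameter $n \in \Zz$. If $f$ is not surjective, then $B := f(A)$ is a proper abelian subvariety of $A$, and $T_\ell f$ factors through $T_\ell B \hookrightarrow T_\ell A$, a $\Zz_\ell$-submodule of rank $2\dim B < 2g$. Hence $T_\ell f$ has rank strictly less than $2g$, giving $\det T_\ell f = 0 = \deg f$. Now assume $f$ is an isogeny. The snake lemma applied to $f$ acting on the short exact sequences $0 \to A[\ell^n] \to A \xrightarrow{[\ell^n]} A \to 0$, combined with passage to the inverse limit, identifies $T_\ell A / T_\ell f(T_\ell A)$ with the $\ell$-primary component $(\ker f)_\ell$ of $\ker f(K^{alg})$. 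The structure theorem for $\Zz_\ell$-modules then gives $|T_\ell A / T_\ell f(T_\ell A)| = \ell^{v_\ell(\det T_\ell f)}$, and since the inseparable degree of $f$ is a power of $\characteristic(K) \ne \ell$, we obtain $v_\ell(\deg f) = v_\ell(|\ker f|) = v_\ell(\det T_\ell f)$.

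To promote this valuation identity to actual equality, I would apply the previous step uniformly across the family $f - [n]$ for $n \in \Zz$. By Mumford's theorem the degree function is a homogeneous polynomial function of degree $2g$ on $\End(A) \otimes \Qq$, so $P(n) := \deg(f - [n])$ is a polynomial in $n$ of degree $\le 2g$ whose leading coefficient equals $\deg([-1]) = 1$. Likewise $Q(n) := \det(nI - T_\ell f)$ is the characteristic polynomial of $T_\ell f$, monic of degree $2g$ in $n$, and since $2g$ is even it coincides with $\det(T_\ell f - nI) = \det T_\ell(f - [n])$. Applying the isogeny step to $f - [n]$ yields $v_\ell(P(n)) = v_\ell(Q(n))$ at every integer $n$ with $f - [n]$ an isogeny. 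Combined with both polynomials being monic of equal degree $2g$, this should force $P = Q$, and then evaluating at $n = 0$ gives $\deg f = \det T_\ell f$.

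The main obstacle is precisely that last polynomial-identity step: in principle two monic polynomials in $\Zz_\ell[n]$ of the same degree can have matching $\ell$-adic valuations at every integer without being equal, so one needs additional rigidity. The rigidity available here is that $P(n)$ takes non-negative integer values and that the characteristic polynomial $Q$ can be shown (by running the same computation at every prime $\ell \ne \characteristic(K)$) to have integer coefficients independent of $\ell$. This is the part I would handle by following Mumford's treatment in \S 19 of \cite{mumford} rather than reproducing the argument from scratch.
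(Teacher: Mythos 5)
The paper does not prove this statement at all---it is labeled a \emph{Fact} and simply cited to Mumford's Theorem~19.4 and Milne's Proposition~I.10.20. So there is no ``paper proof'' to compare against; the useful question is whether your sketch faithfully reproduces the argument in those sources. It essentially does. The treatment of the non-surjective case (both sides vanish because $T_\ell f$ factors through a proper submodule $T_\ell B$ of rank $2\dim B < 2g$), the identification of $\coker(T_\ell f)$ with the $\ell$-primary part of $\ker f$ for $f$ an isogeny, the reduction $v_\ell(\deg f) = v_\ell(\deg_s f) = v_\ell(\det T_\ell f)$, and the passage to the one-parameter family $f - [n]$ to exploit the polynomiality of $\deg$ are all exactly the standard steps. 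You also correctly observe that $P(n) := \deg(f-[n])$ and $Q(n) := \det T_\ell(f - [n])$ are monic of degree $2g$, using $\deg([-1]) = 1$ and the evenness of $2g$ to fix signs.

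Two small points worth tightening. First, your isogeny-case count $|T_\ell A / T_\ell f(T_\ell A)| = \ell^{v_\ell(\det T_\ell f)}$ silently requires $T_\ell f$ to be injective (otherwise the Smith normal form computation doesn't apply); this does hold for an isogeny, but say so. Second, and more substantively, the ``rigidity'' you propose for the polynomial-identity step is somewhat circular: the fact that the characteristic polynomial of $T_\ell f$ has integer coefficients independent of $\ell$ is usually deduced \emph{from} the identity $\deg(f - [n]) = \det(nI - T_\ell f)$, not used to prove it. The genuine content of Mumford's argument is a separate lemma that upgrades the pointwise $\ell$-adic valuation agreement of two homogeneous polynomial functions on $\End^0(A)$ to an actual proportionality, which is not just a formal statement about two monic $\Zz_\ell[n]$-polynomials. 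You flag this step honestly and defer to Mumford, which is the right call, but the specific rigidity mechanism you gesture at is not the one that makes the argument close.
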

\begin{corollary} \label{poly-roots}
  If $\alpha_1, \ldots, \alpha_{2g}$ denote the eigenvalues of
  $T_\ell(f)$, then for any polynomial $P(X) \in \Zz[X]$,
  \begin{equation*}
    \deg(P(f)) = \prod_{i = 1}^{2g} P(\alpha_i).
  \end{equation*}
  Because the left hand side is an integer independent of $\ell$, it
  follows that the $\alpha_i$ are algebraic numbers which do not
  depend on $\ell$.
\end{corollary}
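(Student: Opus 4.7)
The plan is to derive the product formula $\deg(P(f)) = \prod_{i=1}^{2g} P(\alpha_i)$ by applying Fact~\ref{t-ell-magic-2} directly to the endomorphism $P(f)$, and then to use the formula for a well-chosen family of $P$'s to pin the $\alpha_i$ down as algebraic numbers independent of $\ell$.

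First I would observe that the assignment $f \mapsto T_\ell(f)$ is a ring homomorphism $\End(A) \to \End(T_\ell A)$. Functoriality gives that it preserves composition and the identity, and the fact that $A \mapsto A[\ell^n]$ is a functor of abelian groups (with its natural $\Zz$-linear structure on endomorphisms coming from the group law on $A$) shows that it sends sums of endomorphisms to sums and $n \cdot \id_A$ to $n \cdot \id_{T_\ell A}$. Consequently $T_\ell(P(f)) = P(T_\ell(f))$ for every $P(X) \in \Zz[X]$. Applying Fact~\ref{t-ell-magic-2} to the endomorphism $P(f)$ then gives
\[ \deg(P(f)) = \det T_\ell(P(f)) = \det P(T_\ell(f)). \]
Standard linear algebra over $\overline{\Qq_\ell}$ says that if $T_\ell(f)$ has eigenvalues $\alpha_1, \ldots, \alpha_{2g}$ (with multiplicity), then $P(T_\ell(f))$ has eigenvalues $P(\alpha_1), \ldots, P(\alpha_{2g})$, whose product is the determinant. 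This establishes the identity.

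For the independence claim I would apply the formula to the family $P_n(X) = n - X$ for $n \in \Zz$. Writing $\chi_\ell(X) := \det(X \cdot I - T_\ell(f)) = \prod_i(X - \alpha_i) \in \Zz_\ell[X]$ for the characteristic polynomial of $T_\ell(f)$, we obtain
\[ \chi_\ell(n) = \prod_{i=1}^{2g}(n - \alpha_i) = \deg(n \cdot \id - f) \in \Zz, \]
an integer depending only on $f$ and $n$, not on $\ell$. A monic polynomial of degree $2g$ is recovered from its values at $0, 1, \ldots, 2g$ by a rational Vandermonde formula, so $\chi_\ell \in \Qq[X]$ is independent of $\ell$; since the coefficients also lie in $\Zz_\ell$ for every $\ell \ne \characteristic(K)$, they in fact lie in $\Zz$ (or at worst $\Zz[1/\characteristic(K)]$ in positive characteristic). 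Hence the multiset of roots $\{\alpha_i\}$ is independent of $\ell$ and consists of algebraic numbers.

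There is no real obstacle: the corollary is essentially just an unpacking of Fact~\ref{t-ell-magic-2} via the functoriality of $T_\ell$ and polynomial interpolation. The only subtlety worth spelling out is the ring-homomorphism property of $T_\ell$, which ensures that polynomial identities in $f$ transport to polynomial identities in $T_\ell(f)$; everything else is routine.
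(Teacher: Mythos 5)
The paper states this corollary without proof, and your argument supplies exactly the standard details one would expect: $T_\ell$ is a ring homomorphism $\End(A) \to \End(T_\ell A)$, so $T_\ell(P(f)) = P(T_\ell(f))$, and Fact~\ref{t-ell-magic-2} applied to $P(f)$ gives the product formula, after which evaluating at the integer polynomials $n - X$ pins down the characteristic polynomial of $T_\ell(f)$ over $\Qq$ independently of $\ell$. Your proof is correct and is essentially the only natural route; the only minor nit is that the coefficients are in fact in $\Zz$ (this is also part of Mumford's Theorem~19.4), but your weaker observation that they lie in $\Qq$ already suffices for the conclusion stated.
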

The numbers $\alpha_1, \ldots, \alpha_{2g}$ are called the
\emph{characteristic roots} of the endomorphism $f$.  The
characteristic roots govern the counting of points on curves over
finite fields:
\begin{fact}[= Theorem~III.11.1 in \cite{milneAV}]\label{curve-fact}
  Let $C$ be a curve over a finite field $\Ff_q$, and let $J$ be its
  Jacobian.  Then
  \begin{equation*}
    |C(\Ff_q)| = 1 - \left(\sum_{i = 1}^{2g} \alpha_i\right) + q
  \end{equation*}
  where the $\alpha_i$ are the characteristic roots of the $q$th power
  Frobenius endomorphism $\phi_q : J \to J$.
\end{fact}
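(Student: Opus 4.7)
The plan is to reduce the counting problem to an intersection-theoretic computation on $C \times C$, and then transport it to the Jacobian $J$. First, I observe that the $\Ff_q$-points of $C$ are precisely the fixed points of the $q$-power Frobenius morphism $\phi_q : C \to C$ acting on $C(\Ff_q^{alg})$. Since $\phi_q$ is purely inseparable, its differential vanishes identically, so the graph $\Gamma_{\phi_q} \subseteq C \times C$ meets the diagonal $\Delta$ transversely at every fixed point. Hence $|C(\Ff_q)|$ equals the intersection number $(\Gamma_{\phi_q} \cdot \Delta)$ on the surface $C \times C$.

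Next, I invoke the classical dictionary between divisor classes on $C \times C$ and endomorphisms of $J$: modulo the ``trivial'' summand $\pi_1^* \Pic(C) + \pi_2^* \Pic(C)$, the Picard group of $C \times C$ is naturally isomorphic to $\End(J)$, and under this identification the graph of $\phi_q : C \to C$ corresponds to the Frobenius endomorphism $\phi_q : J \to J$. The bidegree of $\Gamma_{\phi_q}$---the degrees of the two projections $\Gamma_{\phi_q} \to C$---is $(1,q)$, since the first projection is an isomorphism while the second is the degree-$q$ Frobenius on $C$.

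The crux is the trace formula for correspondences: for any divisor $D$ on $C \times C$ of bidegree $(a,b)$, and its associated endomorphism $\psi_D \in \End(J)$,
\[ (D \cdot \Delta) = a + b - \Tr(T_\ell \psi_D), \]
for any prime $\ell \ne \characteristic(\Ff_q)$. Applied to $D = \Gamma_{\phi_q}$, and using $\Tr(T_\ell \phi_q) = \sum_{i=1}^{2g} \alpha_i$ by the definition of the characteristic roots in Corollary~\ref{poly-roots}, this yields the desired identity $|C(\Ff_q)| = 1 + q - \sum_{i=1}^{2g} \alpha_i$.

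The main obstacle is proving the trace formula above, which is the real content beyond the formalism already developed in Section~2. One elementary route, fitting the paper's stated preference for abelian-variety methods, is to use the Rosati involution on $\End(J) \otimes \Qq$ together with the positivity of its associated trace form in order to express $\Tr(T_\ell \psi_D)$ directly in terms of intersection numbers on $C \times C$, drawing on the self-intersection identity $(\Delta \cdot \Delta) = 2-2g$ and the Riemann--Roch theorem on the surface $C \times C$. This is essentially how Milne handles the argument in the cited Theorem~III.11.1.
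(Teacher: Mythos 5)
The paper does not prove this statement; it is recorded as a Fact and attributed directly to Milne's Theorem~III.11.1, so there is no in-paper argument to compare against. Your sketch is a correct outline of the classical Weil-style proof via correspondences on $C \times C$, and it does match the strategy of the cited reference: identify $|C(\Ff_q)|$ with the intersection number $(\Gamma_{\phi_q} \cdot \Delta)$, using the vanishing differential of Frobenius for transversality; pass through the isomorphism between numerical divisor classes on $C \times C$ modulo the trivial part and $\End(J)$; and invoke the trace identity $d_1(D) + d_2(D) - (D \cdot \Delta) = \Tr(T_\ell \psi_D)$, whose proof ultimately rests on the Castelnuovo--Severi inequality (equivalently, positivity of the Rosati trace form) together with $(\Delta \cdot \Delta) = 2 - 2g$. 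The one detail you gloss over---whether $\Gamma_{\phi_q}$ corresponds under the dictionary to $\phi_q$ itself or to its Rosati transpose (the Verschiebung)---is harmless, since the trace is invariant under the Rosati involution, so the final count is unaffected.
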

\begin{corollary}\label{trace-count}
  In the setting of Theorem~\ref{curve-fact}, if $\ell$ is prime to $q$, then
  \begin{equation*}
    |C(\Ff_q)| \equiv 1 - \Tr(\phi_q | J[\ell^k]) + \Tr(\phi_q | \Gg_m[\ell^k]) \pmod{\ell^k}
  \end{equation*}
  where $\Gg_m$ denotes the multiplicative group, $\Gg_m[\ell^k]$
  denotes the group of $\ell^k$th roots of unity (in $\Ff_q^{alg}$),
  and $\Tr(\sigma | M)$ denotes the trace of an endomorphism $\sigma$
  of some free $\Zz/\ell^k$-module $M$.
\end{corollary}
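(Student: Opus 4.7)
The plan is to reduce the identity in Fact~\ref{curve-fact} modulo $\ell^k$, replacing each of the two ``nonstandard'' terms (the sum of characteristic roots, and the $q$) by a trace on an $\ell^k$-torsion group.

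First I would handle the $\sum \alpha_i$ term. By Corollary~\ref{poly-roots}, the $\alpha_i$ are the eigenvalues of $T_\ell(\phi_q)$, so $\sum_{i=1}^{2g} \alpha_i = \Tr(T_\ell(\phi_q))$. Since $\ell$ is prime to $q$ (and hence to $\characteristic(\Ff_q)$), $T_\ell J$ is a free $\Zz_\ell$-module of rank $2g$, and from the definition $T_\ell J = \varprojlim_n J[\ell^n]$ one gets a canonical $\Gal(\Ff_q)$-equivariant isomorphism
\[
T_\ell J / \ell^k T_\ell J \;\cong\; J[\ell^k]
\]
of free $\Zz/\ell^k$-modules of rank $2g$. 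Traces are preserved under this reduction, so
\[
\sum_{i=1}^{2g}\alpha_i \;\equiv\; \Tr(\phi_q \mid J[\ell^k]) \pmod{\ell^k}.
\]

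Next I would handle the $q$ term. The $\ell^k$th roots of unity $\Gg_m[\ell^k]\subseteq \Ff_q^{alg}$ form a free $\Zz/\ell^k$-module of rank $1$ (again using $\ell\ne\characteristic(\Ff_q)$), and Frobenius $\phi_q$ acts on a root of unity $\zeta$ by $\zeta\mapsto \zeta^q$, i.e.\ as multiplication by $q$. Its trace on this rank-$1$ module is therefore $q \bmod \ell^k$, so
\[
q \;\equiv\; \Tr(\phi_q \mid \Gg_m[\ell^k]) \pmod{\ell^k}.
\]

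Substituting both congruences into the formula of Fact~\ref{curve-fact} yields the statement. There is no real obstacle here: the only thing one has to be careful about is ensuring $\ell \ne \characteristic(\Ff_q)$ at both places where we invoke freeness of the Tate module and of $\varprojlim_n \Gg_m[\ell^n]$, which is exactly the hypothesis $\gcd(\ell,q)=1$.
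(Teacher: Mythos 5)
Your proof is correct and follows essentially the same approach as the paper: identify the sum of characteristic roots with $\Tr(\phi_q \mid T_\ell J)$ via Corollary~\ref{poly-roots}, reduce modulo $\ell^k$ using $T_\ell J / \ell^k T_\ell J \cong J[\ell^k]$, and observe that $\phi_q$ acts on $\Gg_m[\ell^k]$ as multiplication by $q$. The paper's version is slightly terser about the reduction step but is the same argument.
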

\begin{proof}
  First of all note that there are non-canonical isomorphisms
  \begin{align*}
    J[\ell^k] &\approx (\Zz/\ell^k)^{2g} \\
    \Gg_m[\ell^k] &\approx \Zz/\ell^k
  \end{align*}
  and so the modules are indeed free $\Zz/\ell^k$-modules, and the
  traces are meaningful.  The trace $\Tr(\phi_q | J[\ell^k])$ is
  simply the $\ell^k$-residue class of $\Tr(\phi_q | T_\ell J)$.  The
  action of $\phi_q$ on $\Gg_m$ is multiplication by $q$, so
  $\Tr(\phi_q | \Gg_m[\ell^k])$ is exactly $q$ (mod $\ell^k$).
\end{proof}

\subsection{Bad characteristic}
We would like an analogue of Corollary~\ref{trace-count} in the case
of bad characteristic $\ell = p$.

\begin{lemma}\label{submultiset}
  Let $Q(x)$ and $R(x)$ be two monic polynomials in $\Qq_p[x]$.  Let
  $\beta_1, \ldots, \beta_m \in \Qq_p^{alg}$ be the roots of $Q(x)$,
  and $\alpha_1, \ldots, \alpha_n \in \Qq_p^{alg}$ be the roots of
  $R(x)$.  Suppose that
  \begin{equation}
    v_p\left(\prod_{i = 1}^m P(\beta_i)\right) \le v_p \left( \prod_{i
      = 1}^n P(\alpha_i) \right) \label{mess-e}
  \end{equation}
  holds for every $P(x) \in \Zz[x]$.  Then
  $\{\beta_1,\ldots,\beta_m\}$ is a submultiset of
  $\{\alpha_1,\ldots,\alpha_m\}$, i.e., $Q(x)$ divides $R(x)$.
\end{lemma}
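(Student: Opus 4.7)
I plan to prove the contrapositive: assuming $Q(x) \nmid R(x)$ in $\Qq_p[x]$, I will construct a $P(x) \in \Zz[x]$ that witnesses the failure of~\eqref{mess-e}. Since $Q \nmid R$, some monic irreducible factor $\mu(x) \in \Qq_p[x]$ of $Q$ appears with multiplicity $a$ in $Q$ and $b$ in $R$ with $a > b \ge 0$; let $\gamma^{(1)},\ldots,\gamma^{(r)} \in \Qq_p^{alg}$ denote the distinct roots of $\mu$. The idea is to produce $P$ whose roots in $\Qq_p^{alg}$ cluster $p$-adically near the $\gamma^{(l)}$, and then use an ultrametric estimate to show that this $P$ forces the strict reverse of~\eqref{mess-e}.

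To build $P$, I would approximate $\mu$ by a polynomial $\tilde\mu(x) \in \Qq[x]$ of the same degree $r$, with coefficients $p$-adically close to those of $\mu$. By continuous dependence of roots on coefficients (Krasner's lemma), once the approximation is sharp enough the roots $\tilde\gamma^{(l)} \in \Qq_p^{alg}$ of $\tilde\mu$ can be labeled so that $v_p(\tilde\gamma^{(l)} - \gamma^{(l)})$ is as large as desired for each $l$. Clearing denominators then gives $P(x) := c\,\tilde\mu(x) \in \Zz[x]$, where $c$ is the product of a prime-to-$p$ common denominator and, if needed, a fixed power $p^{e_0}$ absorbing coefficients of $\mu$ of negative $p$-adic valuation. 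In particular, $v_p(c)$ equals a fixed integer $e \ge 0$ depending only on $\mu$ (not on the quality of the approximation), and the roots of $P$ in $\Qq_p^{alg}$ are exactly the $\tilde\gamma^{(l)}$.

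The main computation uses the identity $v_p(P(\beta)) = e + \sum_{l=1}^r v_p(\beta - \tilde\gamma^{(l)})$ together with the ultrametric inequality. For $\beta_i$ equal to some conjugate $\gamma^{(l)}$, the inner sum picks up a large contribution $v_p(\tilde\gamma^{(l)} - \gamma^{(l)})$ plus bounded terms $v_p(\gamma^{(l)} - \gamma^{(l')})$ for $l' \ne l$; for $\beta_i$ not a conjugate of any $\gamma^{(l)}$, the inner sum stabilizes at the bounded value $v_p(\mu(\beta_i))$. Since each $\gamma^{(l)}$ occurs with multiplicity $a$ among the $\beta_i$ and $b$ among the $\alpha_j$, and the analysis is otherwise symmetric between $Q$ and $R$, collecting terms yields
\begin{equation*}
v_p\!\left(\prod_{i=1}^m P(\beta_i)\right) - v_p\!\left(\prod_{j=1}^n P(\alpha_j)\right) = (a - b)\sum_{l=1}^{r} v_p\!\left(\tilde\gamma^{(l)} - \gamma^{(l)}\right) + B,
\end{equation*}
where $B$ is a constant depending only on $Q$, $R$, $\mu$, and $e$, independent of the approximation. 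Since $a - b > 0$, a sufficiently close approximation makes the right-hand side strictly positive, contradicting~\eqref{mess-e}. The main obstacle is the fiddly ultrametric bookkeeping: one must check that the "nearest approximating root" genuinely dominates the others for every relevant $\beta_i$ and $\alpha_j$, and arrange the scaling factor $c$ so that the constant $e$ stays fixed as the approximation sharpens.
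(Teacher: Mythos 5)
Your proof is correct and follows essentially the same route as the paper's: both arguments build a test polynomial whose roots cluster $p$-adically around a single $\Qq_p$-conjugacy class of roots, and then observe that $v_p$ of the relevant products grows linearly in the quality of the clustering with slope equal to the multiplicity of that class, which forces the multiplicity inequality. The only difference is packaging: you argue by contrapositive and replace the irreducible factor $\mu$ by a nearby rational polynomial $\tilde\mu$ (perturbing the roots of the test polynomial), while the paper extends (\ref{mess-e}) by continuity to $P\in\Zz_p[x]$, takes $P$ to be the scaled minimal polynomial of an arbitrary $\gamma_1$, and applies the hypothesis to $P(x+\epsilon)$ with $v_p(\epsilon)\gg 0$ (perturbing the argument instead)---a minor repackaging rather than a genuinely different proof.
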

\begin{proof}
  Let $\gamma_1$ be any element of $\Qq_p^{alg}$.  Let $q_1$ and $r_1$
  be the multiplicities of $\gamma_1$ as a root of $Q(x)$ and $R(x)$,
  respectively.  (Either can be zero.)  We will show that $q_1 \le
  r_1$.
  
  The identity (\ref{mess-e}) extends by continuity to any $P(x) \in
  \Zz_p[x]$.  Let $\{\gamma_1, \ldots, \gamma_\ell\} \subseteq
  \Qq_p^{alg}$ be the set of conjugates of $\gamma_1$ over $\Qq_p$.
  For some non-zero $a \in \Zz_p$, the polynomial
  \begin{equation*}
    P(x) = a(x - \gamma_1) \cdots (x - \gamma_\ell)
  \end{equation*}
  lies in $\Zz_p[x]$.  For any $\epsilon \in \Zz_p$, we can apply
  (\ref{mess-e}) to $P(x + \epsilon)$, yielding
  \begin{equation*}
    v_p\left(a^m \prod_{i = 1}^m \prod_{j = 1}^\ell (\beta_i + \epsilon - \gamma_j)\right)
    \le
    v_p\left(a^n \prod_{i = 1}^n \prod_{j = 1}^\ell (\alpha_i + \epsilon - \gamma_j)\right),
  \end{equation*}
  or equivalently,
  \begin{equation}
    v_p\left(a^m \prod_{j = 1}^\ell Q(\gamma_j - \epsilon)\right) \le
    v_p\left(a^n \prod_{j = 1}^\ell R(\gamma_j - \epsilon)\right). \label{mess-d}
  \end{equation}
  Let $q_j$ and $r_j$ be the multiplicity of $\gamma_j$ as a root of
  $Q(x)$ and $R(x)$, respectively.  If $v_p(\epsilon) \gg 0$, then
  (\ref{mess-d}) yields
  \begin{equation*}
    O(1) + v_p(\epsilon) \cdot \sum_{j = 1}^\ell q_j \le O(1) + v_p(\epsilon)
    \cdot \sum_{j = 1}^\ell r_j.
  \end{equation*}
  Thus $\sum_{j = 1}^\ell q_j \le \sum_{j = 1}^\ell r_j$.  But in fact
  $q_j = q_1$, because $Q(x)$ is over $\Qq_p$.  Similarly, $r_j = r_1$
  independent of $j$.  Thus $\ell \cdot q_1 \le \ell \cdot r_1$.
\end{proof}
Recall that the degree of an isogeny $f : A \to A$ is equal to the
degree of the fraction field extension, and therefore factors into separable and inseparable parts:
\begin{equation*}
  \deg(f) = \deg_s(f) \cdot \deg_i(f).
\end{equation*}
Moreover, $\deg_s(f)$ is the size of the set-theoretic kernel of $f$
(\cite{mumford}, \S6, Application 3).
\begin{fact}\label{ell-can-be-p}
  For any $\ell$ (possibly $\ell = p$),
  \begin{equation*}
    v_\ell(\det T_\ell(\phi)) = v_\ell(|\ker \phi|) = v_\ell(\deg_s(\phi))
  \end{equation*}
\end{fact}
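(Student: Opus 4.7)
\medskip
\noindent\textbf{Proof proposal.} The plan is to split into two cases according to whether $\ell = \characteristic(K)$ and reduce the whole statement to the computation of the $\ell$-primary part of $|\ker\phi|$ via the Tate module. Note first that the rightmost equality $v_\ell(|\ker\phi|) = v_\ell(\deg_s(\phi))$ is just the already-recalled fact that $\deg_s(\phi) = |\ker\phi|$, so the content is $v_\ell(\det T_\ell(\phi)) = v_\ell(|\ker\phi|)$.

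For $\ell \ne p := \characteristic(K)$ there is nothing new to do: Fact~\ref{t-ell-magic-2} gives $\det T_\ell(\phi) = \deg(\phi) = \deg_s(\phi) \cdot \deg_i(\phi)$, and $\deg_i(\phi)$ is a power of $p$, hence coprime to $\ell$, so $v_\ell$ kills it and leaves $v_\ell(\deg_s(\phi))$.

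The real content is the case $\ell = p$. I would use the standard identification
\[ A(K^{alg})[p^\infty] \cong T_pA \otimes_{\Zz_p} (\Qq_p/\Zz_p), \]
under which $\phi$ corresponds to $T_p\phi \otimes \id$. Since $\phi$ is an isogeny, $\ker\phi$ is finite; this forces $T_p\phi : T_pA \to T_pA$ to be injective (a nontrivial compatible sequence in its kernel would, after stabilizing, produce infinitely many elements of $\ker\phi$), so $\det T_p\phi \ne 0$ in $\Zz_p$ and the cokernel of $T_p\phi$ on the free $\Zz_p$-module $T_pA$ is finite of size $p^{v_p(\det T_p\phi)}$. Now apply the snake lemma to the action of $T_p\phi$ on the short exact sequence
\[ 0 \to T_pA \to T_pA \otimes \Qq_p \to T_pA \otimes \Qq_p/\Zz_p \to 0. \]
Because $T_p\phi$ acts invertibly on the middle term (its determinant is nonzero in $\Qq_p$), the connecting map is an isomorphism from $\ker(T_p\phi \mid T_pA \otimes \Qq_p/\Zz_p)$ onto $\coker(T_p\phi \mid T_pA)$. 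The left side is exactly the $p$-primary part of $\ker\phi(K^{alg})$, so taking sizes yields $v_p(|\ker\phi|) = v_p(\det T_p\phi)$, as required.

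The step I expect to be most delicate is the identification $A(K^{alg})[p^\infty] \cong T_pA \otimes \Qq_p/\Zz_p$, since in positive characteristic the Tate module $T_pA$ (of rank equal to the $p$-rank $r \le g$) sees only the \'etale/separable part of the full $p$-divisible group, and one must check that the action of $\phi$ on $(\Qq_p/\Zz_p)^r$ coming from $T_p\phi$ really does reproduce the restriction of $\phi$ to the set-theoretic $p$-power torsion; once this is granted, the rest is standard structure theory of finitely generated modules over the DVR $\Zz_p$.
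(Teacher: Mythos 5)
Your argument is correct, and it is worth noting that the paper itself gives no proof of this fact: it says only that it is ``implicit in the proof of Theorem~19.4 in \cite{mumford} or Theorem~I.10.20 in \cite{milneAV},'' both of which actually state $\deg(f) = \det T_\ell(f)$ for $\ell \neq \characteristic(K)$, so your self-contained derivation fills a real gap. The easy reductions are handled properly: $v_\ell(|\ker\phi|) = v_\ell(\deg_s\phi)$ is exactly the already-recalled identity $|\ker\phi| = \deg_s\phi$, and for $\ell \neq p$ one simply discards the prime-to-$\ell$ factor $\deg_i(\phi)$ after invoking Fact~\ref{t-ell-magic-2}. The snake-lemma computation for $\ell = p$ is the standard device for equating $v_p(\det T_p\phi)$ with the order of the $p$-primary part of $\ker\phi$, and it works exactly as you describe. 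Two small remarks on the points you flagged. The identification $A[p^\infty] \cong T_pA \otimes_{\Zz_p} \Qq_p/\Zz_p$ is less delicate than you fear: the canonical map sending $(a_n)_n \otimes (k/p^m)$ to $k a_m$ is functorial in $A$, hence automatically intertwines $T_p\phi \otimes \id$ with $\phi$, and it is an isomorphism because $A[p^\infty]$ is a divisible $p$-group of finite corank $r$, with $T_pA = \Hom(\Qq_p/\Zz_p, A[p^\infty]) \cong \Zz_p^r$ and the evaluation map bijective. And the injectivity of $T_p\phi$ has a one-line argument that avoids the ``after stabilizing'' hand-waving: $\ker(T_p\phi) \subseteq T_p(\ker\phi)$, and the $p$-adic Tate module of any finite group is zero, since once $n$ exceeds the $p$-exponent of the group, the composite transition map (multiplication by $p^N$) annihilates everything.
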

Fact~\ref{ell-can-be-p} is implicit in the proof of Theorem~19.4 in
\cite{mumford} or Theorem~I.10.20 in \cite{milneAV}.
\begin{lemma}\label{the-other-r}
  Let $A$ be an abelian variety over $\Ff_q$ for $q = p^k$.  Let
  $\beta_1, \ldots, \beta_r$ be the eigenvalues of $T_p(\phi_q)$, for
  $\phi_q$ the $q$th power Frobenius on $A$.
  \begin{enumerate}
  \item \label{p1} $\{\beta_1, \ldots, \beta_r\}$ is a submultiset of
    the characteristic roots $\{\alpha_1,\ldots,\alpha_r\}$ of
    $\phi_q$.
  \item \label{p2} Each $\beta_i$ has valuation zero in $\Qq_p^{alg}$.
  \end{enumerate}
\end{lemma}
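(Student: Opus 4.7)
The plan is to deduce part~(1) from Lemma~\ref{submultiset}, applied to $Q(x) = \prod_{i=1}^{r}(x - \beta_i) \in \Zz_p[x]$, the characteristic polynomial of $T_p(\phi_q)$, and to $R(x) = \prod_{i=1}^{2g}(x - \alpha_i) \in \Zz[x]$. The hypothesis to verify is that for every $P(x) \in \Zz[x]$,
\[ v_p\left(\prod_{i=1}^{r} P(\beta_i)\right) \le v_p\left(\prod_{i=1}^{2g} P(\alpha_i)\right), \]
and then Lemma~\ref{submultiset} yields $Q \mid R$, which is exactly the submultiset assertion.

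To check this inequality, set $\psi = P(\phi_q) \in \End(A)$. By Corollary~\ref{poly-roots}, $\prod_i P(\alpha_i) = \deg(\psi)$. By functoriality of the Tate module, $T_p(\psi) = P(T_p \phi_q)$, and over $\Qq_p^{alg}$ its eigenvalues are the $P(\beta_i)$, so $\prod_i P(\beta_i) = \det T_p(\psi)$. If $\psi$ fails to be surjective, then $\deg(\psi) = 0$ by convention and the right-hand side is $+\infty$, making the inequality vacuous. Otherwise $\psi$ is an isogeny, and Fact~\ref{ell-can-be-p} gives $v_p(\det T_p\psi) = v_p(\deg_s \psi)$; factoring $\deg(\psi) = \deg_s(\psi) \cdot \deg_i(\psi)$, we obtain $v_p(\det T_p\psi) = v_p(\deg_s \psi) \le v_p(\deg \psi)$, as required.

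For part~(2), each $\beta_i$ is a root of a monic polynomial over $\Zz_p$, so it is an algebraic integer in $\Qq_p^{alg}$ with $v_p(\beta_i) \ge 0$. The Frobenius $\phi_q : A \to A$ is purely inseparable of degree $q^g$ (where $g = \dim A$), hence $\deg_s(\phi_q) = 1$, and Fact~\ref{ell-can-be-p} forces $v_p(\det T_p \phi_q) = 0$. Since $\det T_p \phi_q = \prod_{i=1}^{r} \beta_i$ and each factor has non-negative valuation, all $v_p(\beta_i)$ must vanish.

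I do not anticipate a serious obstacle: the proof reduces to translating both sides of the submultiset inequality into quantities controlled by Fact~\ref{ell-can-be-p} and Corollary~\ref{poly-roots}. The only point requiring care is the non-isogeny case when verifying the inequality, but this trivializes because the convention $\deg(\psi) = 0$ for non-surjective $\psi$ makes the right-hand side vacuously large.
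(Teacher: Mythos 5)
Your proof of part~(\ref{p1}) matches the paper's essentially line for line: translate the two products into $\det T_p(P(\phi_q))$ and $\deg(P(\phi_q))$ via Corollary~\ref{poly-roots} and Fact~\ref{ell-can-be-p}, use $\deg_s \le \deg$, then invoke Lemma~\ref{submultiset}. Your explicit handling of the case where $P(\phi_q)$ is not an isogeny is a useful clarification that the paper leaves implicit.

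For part~(\ref{p2}), the paper argues a little differently: it notes that $\phi_q$ is a bijection on $\Ff_q^{alg}$-points, so $T_p(\phi_q)$ is an \emph{automorphism} of the $\Zz_p$-module $T_p A$; hence the $\beta_i$ and the $\beta_i^{-1}$ are both integral, forcing $v_p(\beta_i) = 0$. You instead observe that $\phi_q$ is purely inseparable, so $\deg_s(\phi_q) = 1$, and Fact~\ref{ell-can-be-p} gives $v_p\bigl(\prod_i \beta_i\bigr) = 0$; since each term already has non-negative valuation, each must vanish. These are really two phrasings of the same fact (trivial set-theoretic kernel), and your version is a sound, slightly more arithmetic alternative. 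No gaps.
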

\begin{proof}
  By Corollary~\ref{poly-roots} and Fact~\ref{ell-can-be-p}, the
  following holds for any polynomial $P(x) \in \Zz[x]$:
  \begin{align*}
    v_p\left(\prod_{i = 1}^r P(\beta_i)\right) &= v_p(\det
    T_p(P(\phi_q))) = v_p(\deg_s(P(\phi_q))) \\ & \le v_p(\deg(P(\phi_q)))
    = v_p \left(\prod_{i = 1}^{2g} P(\alpha_i)\right).
  \end{align*}
  Then (\ref{p1}) follows by Lemma~\ref{submultiset}.  For (\ref{p2}),
  note that the $\beta_i$ are integral over $\Zz_p$ because they are
  the eigenvalues of a linear map $\Zz_p^r \to \Zz_p^r$.  Integrality
  implies that $v_p(\beta_i) \ge 0$.  Moreover, the map $\Zz_p^r \to
  \Zz_p^r$ is invertible, because the $q$th power Frobenius is a
  bijection on points.  Therefore, the $\beta_i^{-1}$ are also
  integral, of nonnegative valuation.
\end{proof}

\begin{lemma}\label{sparta-1}
  There is a computable function $h_1(d,d',p,s)$ with the following
  property.  Let $(K,v)$ be an algebraically closed valued field of
  mixed characteristic $(0,p)$.  Let $Q(x)$ be a monic polynomial of
  degree $d$, with roots $\alpha_1, \ldots, \alpha_d$.  Suppose $d'
  \le d$ and suppose that $v(Q(p^i)) \ge v(p^{id'})$ for $1 \le i \le
  h_1(d,d',p,s)$.  Then at least $d'$ of the $\alpha_i$ satisfy
  $v(\alpha_i) \ge v(p^s)$.
\end{lemma}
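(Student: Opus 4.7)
The plan is to argue by contrapositive, taking $h_1 := d(s+1)$ (a bound that will turn out to be independent of $p$). Write $\pi := v(p)$ and $c_j := v(\alpha_j)$ in the value group of $K$, and suppose for contradiction that fewer than $d'$ of the roots satisfy $c_j \ge s\pi$. Partition the indices as $R := \{j : c_j \ge s\pi\}$, so $|R| \le d'-1$, and $T := \{1,\ldots,d\}\setminus R$, so that $c_j < s\pi$ strictly for every $j \in T$. The goal is to exhibit an integer $i_0 \le h_1$ at which $v(Q(p^{i_0})) < i_0 d' \pi$, contradicting the hypothesis.

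The central tool is the non-archimedean identity $v(p^i - \alpha_j) = \min(i\pi, c_j)$, which holds unless $i\pi = c_j$ (the ``tie'' case), in which one only has $v(p^i - \alpha_j) \ge i\pi$ and the valuation can be arbitrarily large. For any $j \in T$ and integer $i \ge s$, we have $i\pi \ge s\pi > c_j$ strictly, so no tie occurs and $v(p^i - \alpha_j) = c_j$. For each $j \in R$, a tie with that fixed $j$ can occur at most at the single integer $i = c_j/\pi$, so the set of ``bad'' integers (those producing a tie with some $j \in R$) has size at most $|R| \le d'-1 \le d-1$. Combining these, for any non-bad integer $i \ge s$ we obtain the strict inequality
\[
  v(Q(p^i)) \;\le\; |R|\cdot i\pi \;+\; \sum_{j \in T} c_j \;<\; (d'-1)\,i\pi \;+\; ds\,\pi.
\]

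Comparing with the hypothesis $v(Q(p^i)) \ge i d'\pi$ would force $i d'\pi < (d'-1)i\pi + ds\pi$, i.e., $i < ds$. So the hypothesis must fail at every non-bad integer $i \ge \max(ds, 1)$. The window of $d$ consecutive positive integers beginning at $\max(ds, 1)$ contains at most $d-1$ bad values, so a non-bad witness $i_0$ exists with $i_0 \le d(s+1) = h_1$, contradicting the hypothesis. The only subtlety, and the reason one searches over a window of size $d$ rather than picking a single $i$, is the tie case, where the ultrametric equality breaks and $v(p^i - \alpha_j)$ can spike arbitrarily high; but ties with distinct roots occur at distinct integers, so a window of $d$ integers is guaranteed to contain a non-bad one.
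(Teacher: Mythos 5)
Your proof is correct and takes a genuinely different, more constructive route than the paper's. The paper argues by compactness: if no bound worked for fixed $(d,d',p,s)$, a model of $\mathrm{ACVF}_{0,p}$ would contain a monic $Q$ with $v(Q(p^i)) \ge v(p^{id'})$ for \emph{all} $i \in \Nn$ yet fewer than $d'$ roots of infinitesimal valuation; sorting roots by valuation, one chooses a single $i$ with $v(p^i)$ lying strictly between the infinitesimal and non-infinitesimal blocks (so ties never arise) and derives a contradiction, after which computability of $h_1$ follows indirectly from the decidability of ACVF. Your argument avoids compactness and produces the explicit value $h_1 = d(s+1)$, which is moreover independent of $p$---something not visible from the paper's proof, since its compactness is carried out over $\mathrm{ACVF}_{0,p}$ for each fixed $p$ separately. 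The price is that you must handle ties by hand, which you do correctly: each $j\in R$ produces a tie $i\pi = v(\alpha_j)$ at at most one integer $i$, so a window of $d$ consecutive integers starting at $\max(ds,1)$ contains a tie-free $i_0 \le d(s+1)$, and there the hypothesis would force $i_0 < ds$, a contradiction. One small point worth making explicit in a final write-up: the strict inequality $\sum_{j\in T} c_j < ds\pi$ relies on $T$ being nonempty, i.e.\ $|T| = d - |R| \ge d - d' + 1 \ge 1$, which holds since $d' \le d$. Your version could replace the paper's whenever an explicit or $p$-uniform bound is desired.
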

\begin{proof}
  We first claim that $h_1(d,d',p,s)$ exists for fixed $d, d', p, s$.
  Otherwise, by compactness there is $(K,v) \models
  \mathrm{ACVF}_{0,p}$ and a monic polynomial $Q(x)$ of degree $d$
  such that
  \[ \forall i \in \Nn : v(Q(p^i)) \ge v(p^{id'}),\]
  but fewer than $d'$ of the roots of $Q(x)$ have valuation greater
  than $v(p^s)$.  Let $\alpha_1, \ldots, \alpha_d$ be the roots of
  $Q(x)$, sorted so that
  \[ v(\alpha_1) \ge v(\alpha_2) \ge \cdots \ge v(\alpha_d).\]
  Say that $\alpha_j$ is ``infinitesimal'' if $v(\alpha_j) \ge v(p^n)$
  for every $n \in \Nn$.  Then $\alpha_1, \ldots, \alpha_k$ are
  infinitesimal, and $\alpha_{k+1}, \ldots, \alpha_d$ are not, for
  some $k \le d$.  We claim $k \ge d'$.  Otherwise, take $i \in \Nn$ so large that
  \begin{align*}
    v(p^i) & > v(\alpha_{k+1}) \\
    (d' - k) \cdot v(p^i) & > \sum_{j = k+1}^d v(\alpha_j).
  \end{align*}
  Note
  \[ v(\alpha_k) > v(p^i) > v(\alpha_{k+1}).\]
  Then
  \begin{align*}
    v(Q(p^i)) &= \sum_{j = 1}^k v(p^i - \alpha_j) + \sum_{j = k+1}^d v(p^i - \alpha_j) \\
    &= \sum_{j = 1}^k v(p^i) + \sum_{j = k+1}^d v(\alpha_j) \\
    &= k \cdot v(p^i) + \sum_{j = k+1}^d v(\alpha_j).
  \end{align*}
  By assumption, $v(Q(p^i)) \ge v(p^{id'}) = d' \cdot v(p^i)$, and so
  \[ k \cdot v(p^i) + \sum_{j = k+1}^d v(\alpha_j) \ge d' \cdot v(p^i),\]
  contradicting the choice of $i$.

  Therefore $k \ge d'$.  So at least $d'$ of the roots of $Q(x)$ are
  infinitesimal, hence have magnitude greater than or equal to $p^s$,
  a contradiction.  This shows that $h_1(d,d',p,s)$ exits for each $d,
  d', p, s$.  Now if $\tau_{d,d',p,s,h}$ is the first-order sentence
  expressing that $h$ has the desired property with respect to $d, d',
  p, s$, then
  \begin{equation*}
    \forall d, d', p, s ~ \exists h : \mathrm{ACVF} \vdash \tau_{d,d',p,s,h}.
  \end{equation*}
  Because $\tau_{d,d',p,s,h}$ depends computably on $d, d', p, s, h$,
  and the set of theorems in ACVF is computably enumerable, one can
  choose $h$ to depend computably on $d, d', p, s$.
\end{proof}

\begin{lemma}\label{local-artin}
  Let $G$ be a finite connected commutative group scheme of length $n$
  over $\Ff_q$.  If $n < q$ then the $q$th-power Frobenius morphism $G
  \to G$ is the zero endomorphism.
\end{lemma}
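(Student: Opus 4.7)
The plan is to translate to commutative algebra. Since $G$ is finite and connected over $\Ff_q$, we have $G = \Spec A$ for a local Artinian $\Ff_q$-algebra $(A,\mathfrak{m})$. Because $G$ is a group scheme over $\Ff_q$, the identity section gives a surjective $\Ff_q$-algebra map $\epsilon : A \to \Ff_q$ (the counit of the Hopf algebra structure), and since $G$ has a single closed point (connectedness), that point is the image of the identity section. Hence the residue field $A/\mathfrak{m}$ is $\Ff_q$ and $\mathfrak{m} = \ker \epsilon$. The hypothesis that $G$ has length $n$ means $\dim_{\Ff_q} A = n$.

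Under this dictionary, the $q$th-power Frobenius $G \to G$ corresponds to the ring endomorphism $\Fr_q : A \to A$, $a \mapsto a^q$, while the zero endomorphism $G \to \Spec \Ff_q \hookrightarrow G$ corresponds to $a \mapsto \epsilon(a) \cdot 1_A$. So I need to prove
\[ a^q = \epsilon(a) \cdot 1_A \qquad \text{for every } a \in A. \]
Writing $a = \epsilon(a) \cdot 1_A + m$ with $m \in \mathfrak{m}$, and using that $x \mapsto x^q$ is a ring homomorphism in characteristic $p$ (as $q$ is a power of $p$), I obtain $a^q = \epsilon(a)^q \cdot 1_A + m^q = \epsilon(a) \cdot 1_A + m^q$, where $\epsilon(a)^q = \epsilon(a)$ because $\epsilon(a) \in \Ff_q$.

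It therefore remains to show $m^q = 0$ for every $m \in \mathfrak{m}$. The strictly descending chain $A = \mathfrak{m}^0 \supsetneq \mathfrak{m} \supsetneq \mathfrak{m}^2 \supsetneq \cdots \supsetneq \mathfrak{m}^t = 0$ has length $t$ at most $\dim_{\Ff_q} A = n$, so $\mathfrak{m}^n = 0$. Since $q > n$ by hypothesis, $m^q \in \mathfrak{m}^q \subseteq \mathfrak{m}^n = 0$, as required.

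The only subtlety, and the place where one must be careful, is keeping the conventions aligned: "length $n$" on the geometric side means $\dim_{\Ff_q} A = n$ on the algebraic side, "connected" means $A$ is local with residue field $\Ff_q$ (using the identity section), and the "zero endomorphism" of a group scheme is the one factoring through the identity section, not the zero map of rings. Once these are unpacked, the argument is simply the freshman's dream plus the elementary bound $\mathfrak{m}^{\dim_{\Ff_q} A} = 0$ for local Artinian algebras.
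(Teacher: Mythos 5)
Your proof is correct and follows essentially the same route as the paper's: reduce to the local Artinian Hopf algebra $A$, identify the zero endomorphism with $a \mapsto \epsilon(a)\cdot 1_A$, and show $x^q = 0$ for $x \in \mathfrak{m}$ using the bound $n < q$. The only difference is cosmetic: you derive $\mathfrak{m}^q = 0$ from the standard fact $\mathfrak{m}^n = 0$ (via the chain of powers of $\mathfrak{m}$), while the paper instead argues that if $x^q \ne 0$ then the chain of principal ideals $A \supsetneq (x) \supsetneq \cdots \supsetneq (x^q) \supsetneq 0$ would exceed length $n$.
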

\begin{proof}
  We can write $G$ as $\Spec A$ for some local Artinian
  $n$-dimensional $\Ff_q$-algebra $A$.  Let $\mathfrak{m}$ be the
  maximal ideal of $A$; by properties of local Artinian rings this is
  the sole prime ideal.  We claim that the $\Ff_q$-algebra
  $A/\mathfrak{m}$ is exactly $\Ff_q$ (rather than a finite field
  extension), and that the quotient map
  \[ A \twoheadrightarrow A/\mathfrak{m} \stackrel{\sim}{\rightarrow} \Ff_q\]
  is dual to the inclusion of the identity element $\Spec \Ff_q
  \hookrightarrow G$.  Indeed, the inclusion of the identity must
  correspond to \emph{some} homomorphism $f : A \to \Ff_q$.  Since $f$
  is a homomorphism of $\Ff_q$-algebras, $f$ is a left inverse to the
  structure map $\Ff_q \to A$, and so $f$ is surjective.  The kernel
  is a prime ideal, necessarily $\mathfrak{m}$.

  Now by properties of Artinian local rings, the maximal ideal
  $\mathfrak{m}$ is also the nilradical, so every $x \in \mathfrak{m}$
  is nilpotent.  In fact, $x^q = 0$ for all $x \in \mathfrak{m}$.
  Otherwise, the descending chain of ideals
  \begin{equation*}
    A \supsetneq (x) \supsetneq (x^2) \supsetneq \cdots \supsetneq
    (x^q) \supsetneq (0)
  \end{equation*}
  would contradict length $\le q$.

  So the $q$th power homomorphism on $A$ annihilates $\mathfrak{m}$,
  and must therefore be
  \begin{equation*}
    A \twoheadrightarrow A/\mathfrak{m} \stackrel{\sim}{\rightarrow}
    \Ff_q \hookrightarrow A
  \end{equation*}
  Thus the $q$th power Frobenius on $G$ must be $G \to \Spec \Ff_q \to
  G$, which is the zero endomorphism.
\end{proof}

\begin{fact}\label{divisibility-fact}
  Let $G$ be a commutative finite group scheme over a field $K$.
  \begin{itemize}
  \item Let $G'$ be a finite subgroup scheme.  Then the length of $G'$
    divides the length of $G$.
  \item Let $G^0$ denote the connected component of $G$.  Then
    $\ell(G^0) = \ell(G) / |G(K^{alg})|$.
  \end{itemize}
\end{fact}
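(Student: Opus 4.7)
The plan is to apply the standard theory of finite commutative group schemes over a field (Waterhouse, or SGA~3). For the first bullet, since $G$ is commutative, the quotient $G/G'$ exists in the category of finite commutative group schemes over $K$, and the quotient morphism $\pi : G \to G/G'$ is faithfully flat with every scheme-theoretic fiber isomorphic to $G'$. Because length is multiplicative in finite flat morphisms, $\ell(G) = \ell(G') \cdot \ell(G/G')$, which gives the desired divisibility.

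For the second bullet, I would invoke the canonical connected-\'etale short exact sequence
\begin{equation*}
1 \to G^0 \to G \to G^{\mathrm{et}} \to 1,
\end{equation*}
where $G^{\mathrm{et}} := G/G^0$ is \'etale over $K$. By the first bullet, $\ell(G) = \ell(G^0) \cdot \ell(G^{\mathrm{et}})$, so it remains to identify $|G(K^{alg})|$ with $\ell(G^{\mathrm{et}})$. Two classical observations suffice. First, $G^0$ is geometrically connected because it contains the $K$-rational identity section, so $G^0 \times_K K^{alg}$ is the spectrum of a local Artinian $K^{alg}$-algebra and contributes only the identity point to $G(K^{alg})$. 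Second, an \'etale finite group scheme $H$ over $K$ becomes a disjoint union of $\ell(H)$ copies of $\Spec K^{alg}$ after base change, so $|H(K^{alg})| = \ell(H)$.

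To assemble these, I would argue that the induced map $G(K^{alg}) \to G^{\mathrm{et}}(K^{alg})$ is a bijection. Injectivity comes from the first observation together with the fact that $G \to G^{\mathrm{et}}$ is a $G^0$-torsor. Surjectivity follows because the fiber of this torsor over any $K^{alg}$-point is a $G^0$-torsor over $\Spec K^{alg}$, which is necessarily trivial on a geometric point, hence has a $K^{alg}$-point. Combining, $|G(K^{alg})| = \ell(G^{\mathrm{et}}) = \ell(G)/\ell(G^0)$, which rearranges to the claimed formula.

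The main difficulty is essentially bookkeeping: all the ingredients are standard. The only subtle point worth double-checking is that $G^0$ remains connected after base change to $K^{alg}$ even when $K$ is imperfect, but this is guaranteed by the existence of the $K$-rational identity section in $G^0$, and the whole argument is insensitive to whether $G^0$ is reduced.
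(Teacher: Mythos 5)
Your argument is correct and is the standard one. The paper itself gives no proof here, only citing Theorems~10.5--10.7 and Proposition~15.3 of Pink's lecture notes on finite group schemes, and what you have written is essentially the proof behind those references: quotient by a finite subgroup scheme is faithfully flat with fibers isomorphic to the subgroup, so length is multiplicative; then the connected--\'etale sequence, triviality of $G^0(K^{alg})$, and the identification $|G^{\mathrm{et}}(K^{alg})| = \ell(G^{\mathrm{et}})$. One small simplification worth noting: the torsor machinery is not needed for surjectivity of $G(K^{alg}) \to G^{\mathrm{et}}(K^{alg})$ --- the map $G \to G^{\mathrm{et}}$ is a finite surjection, so the fiber over any $K^{alg}$-point is a nonempty finite $K^{alg}$-scheme, which automatically has a $K^{alg}$-point because $K^{alg}$ is algebraically closed.
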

The first point follows from Theorems~10.5-10.7 in \cite{pink}.  The
second point follows by the proof of Proposition~15.3 in \cite{pink}.

\begin{lemma}\label{loial}
  Suppose $A$ is a $g$-dimensional abelian variety over $\Ff_q$.
  Suppose $q > p^{2gi}$.  Let $r$ be the $p$-rank of $A$.  Let
  $\phi_q$ denote the $q$th power Frobenius endomorphism of $A$.  Then
  $\deg(\phi_q - p^i)$ is divisible by $p^{i(2g-r)}$.
\end{lemma}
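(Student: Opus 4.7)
The plan is to analyze the scheme-theoretic kernel $G := \ker(\phi_q - p^i)$ directly---which makes sense because the hypothesis $q > p^{2gi}$ forces $\phi_q - p^i$ to be an isogeny, each characteristic root satisfying $|\alpha_j| = q^{1/2} > p^i$---and to exhibit inside $G$ a distinguished connected subgroup scheme of length exactly $p^{i(2g-r)}$, namely the identity component $A[p^i]^0$ of the $p^i$-torsion. Since $\deg(\phi_q - p^i) = \ell(G)$, the divisibility will then follow from the length-divisibility statement in Fact~\ref{divisibility-fact}.

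First I would compute $\ell(A[p^i]^0)$: the multiplication map $p^i: A \to A$ has degree $p^{2gi}$, so $\ell(A[p^i]) = p^{2gi}$, while $|A[p^i](\Ff_q^{alg})| = p^{ir}$ by the structure of $A[p^k]$ recalled at the start of \S 2. The second point of Fact~\ref{divisibility-fact} then gives $\ell(A[p^i]^0) = p^{2gi}/p^{ir} = p^{i(2g-r)}$. The hypothesis $q > p^{2gi}$ ensures $\ell(A[p^i]^0) = p^{i(2g-r)} \leq p^{2gi} < q$.

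Next, to see $A[p^i]^0 \subseteq G^0$: because $\phi_q$ commutes with $p^i$ and respects identity components, it restricts to a group scheme endomorphism of $A[p^i]^0$, which has length less than $q$, so by Lemma~\ref{local-artin} this restriction vanishes. Since $p^i$ also vanishes on $A[p^i]^0$, we get $(\phi_q - p^i)|_{A[p^i]^0} = 0$, placing the connected $A[p^i]^0$ inside $G^0$. Two applications of the first point of Fact~\ref{divisibility-fact} then give $p^{i(2g-r)} = \ell(A[p^i]^0) \mid \ell(G^0) \mid \ell(G) = \deg(\phi_q - p^i)$.

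The one step carrying real content is the vanishing of $\phi_q$ on $A[p^i]^0$, and Lemma~\ref{local-artin} supplies this exactly when $\ell(A[p^i]^0) < q$---which is precisely what the bound $q > p^{2gi}$ purchases. A direct attack via the factorization $\deg(\phi_q - p^i) = \prod_j(\alpha_j - p^i)$ in characteristic roots looks considerably more delicate: generic Newton-polygon bounds only guarantee $v_p(\alpha_j) \geq 1/(2g)$ for non-unit roots, far short of the $v_p(\alpha_j) \geq i$ one would need to make each non-unit factor contribute at least $i$ to the $p$-adic valuation of the product.
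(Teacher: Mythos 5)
Your proof is correct and takes essentially the same approach as the paper: both identify the connected component $A[p^i]^0$ (what the paper calls $G^0$), compute its length as $p^{i(2g-r)}$ via Fact~\ref{divisibility-fact}, use Lemma~\ref{local-artin} and the bound $q > p^{2gi}$ to show $\phi_q$ kills it, and then observe it sits inside $\ker(\phi_q - p^i)$ so Fact~\ref{divisibility-fact} gives the divisibility. Your aside justifying that $\phi_q - p^i$ is an isogeny (so that the kernel is a finite group scheme of length equal to the degree) is a detail the paper leaves implicit, but it does not change the structure of the argument.
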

\begin{proof}
  Take $\ell \ne p$.  By Fact~\ref{t-ell-magic-2}, $\deg(p^i) =
  p^{2gi}$ because $T_\ell A$ is a free $\Zz_\ell$-module of rank
  $2g$.  Let $G$ denote the scheme-theoretic kernel of the the
  multiplication-by-$p^i$ endomorphism of $A$.  Then $G$ is a finite
  group scheme of length $\deg(p^i) = p^{2gi}$.  By definition of
  $p$-rank, $G(\Ff_q^{alg}) \approx (\Zz/p^i)^r$, so $G(\Ff_q^{alg})$
  has size $p^{ir}$.  Therefore, the connected component $G^0$ of $G$
  has length $p^{2gi}/p^{ir} = p^{i(2g-r)}$, by
  Fact~\ref{divisibility-fact}.

  The endomorphism $\phi_q : A \to A$ restricts to the $q$th-power
  Frobenius endomorphism on $G$ and $G^0$.  By assumption, $q >
  p^{2ig} \ge p^{i(2g-r)}$, and so $\phi_q$ annihilates $G^0$ by
  Lemma~\ref{local-artin}.

  Let $G'$ denote the kernel of $\phi_q - p^i$.  Then $G^0$ is a
  closed subgroup scheme of $G'$.  By Fact~\ref{divisibility-fact},
  \begin{equation*}
    \ell(G^0) = p^{i(2g-r)} \text{ divides } \ell(G) = \deg(\phi_q - p^i). \qedhere
  \end{equation*}
\end{proof}

\begin{proposition} \label{prop:h1}
  There is a computable function $h_2(p,s,g)$ with the following
  property.  Let $A$ be a $g$-dimensional abelian variety over
  $\Ff_q$, with $q = p^k > h_2(p,s,g)$.  Let $\phi_q$ denote the $q$th
  power Frobenius on $A$.  Let $r$ be the $p$-rank of $A$.  Then we
  can write the characteristic roots of $\phi_q$ as $\alpha_1, \ldots,
  \alpha_{2g}$, where
  \begin{itemize}
  \item $\alpha_1, \ldots, \alpha_r$ are the eigenvalues of
    $T_p(\phi_q) : T_p A \to T_p A$.
  \item $v_p(\alpha_i) > v_p(p^s)$ for $i \in \{r+1, r+2, \ldots, 2g\}$.
  \end{itemize}
\end{proposition}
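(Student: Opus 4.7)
The plan is to combine the three preceding lemmas: Lemma \ref{the-other-r} identifies the first $r$ characteristic roots, Lemma \ref{loial} provides a divisibility estimate on $\deg(\phi_q - p^i)$, and Lemma \ref{sparta-1} converts such estimates into lower bounds on the valuations of the remaining roots.

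First, I would apply Lemma \ref{the-other-r} to write the characteristic roots of $\phi_q$ as $\alpha_1,\ldots,\alpha_{2g}$, where $\alpha_1,\ldots,\alpha_r$ are the eigenvalues of $T_p(\phi_q)$, and in particular $v_p(\alpha_j) = 0$ for $1 \le j \le r$. The task then reduces to showing $v_p(\alpha_j) > s$ for $j > r$ once $q$ is sufficiently large. By Corollary \ref{poly-roots},
\[
\deg(\phi_q - p^i) = \prod_{j=1}^{2g}(p^i - \alpha_j),
\]
and for $1 \le j \le r$ the ultrametric equality gives $v_p(p^i - \alpha_j) = \min(i, 0) = 0$ since $v_p(p^i) = i \ge 1 > 0 = v_p(\alpha_j)$. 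Hence the contribution to the $p$-adic valuation is concentrated in the last $2g - r$ factors:
\[
v_p\!\left(\prod_{j=r+1}^{2g}(p^i - \alpha_j)\right) = v_p(\deg(\phi_q - p^i)).
\]

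Next, provided $q > p^{2gi}$, Lemma \ref{loial} tells us $v_p(\deg(\phi_q - p^i)) \ge i(2g - r)$. Setting $Q(x) = \prod_{j=r+1}^{2g}(x - \alpha_j)$, a monic polynomial of degree $d = 2g - r$ in $\Qq_p^{alg}[x]$ (actually with coefficients in $\Qq_p$, since this is the quotient of two monic polynomials over $\Qq_p$), we obtain $v_p(Q(p^i)) \ge i(2g-r) = id'$ for $d' = d = 2g - r$. Now I would apply Lemma \ref{sparta-1} with $s$ replaced by $s+1$: whenever the above estimate holds for $1 \le i \le h_1(2g-r, 2g-r, p, s+1)$, we conclude that all $2g-r$ roots of $Q$ satisfy $v_p(\alpha_j) \ge s+1 > s$, as required.

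Finally, to set the bound, I need $q > p^{2gi}$ to hold for every $i$ in $\{1,\ldots,h_1(2g-r, 2g-r, p, s+1)\}$, which amounts to $q > p^{2g \cdot h_1(2g-r, 2g-r, p, s+1)}$. Since $r$ depends on $A$ but lies in $\{0, 1, \ldots, g\}$, I would simply define
\[
h_2(p, s, g) = p^{\,2g \cdot \max_{0 \le r \le g} h_1(2g - r, 2g - r, p, s+1)},
\]
which is computable because $h_1$ is. The only mild subtlety — and the closest thing to an obstacle — is the bookkeeping ensuring that $Q(x)$ really has coefficients in $\Qq_p$ so that Lemma \ref{sparta-1} applies, and that the unknown value of $r$ is absorbed into the $\max$ in the definition of $h_2$; the rest is a direct chaining of the prior lemmas.
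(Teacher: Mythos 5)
Your proposal is correct and rests on the same three ingredients (Lemma~\ref{the-other-r}, Lemma~\ref{loial}, Lemma~\ref{sparta-1}), but packages them slightly differently. The paper applies Lemma~\ref{sparta-1} directly to the full degree-$2g$ characteristic polynomial with $d = 2g$ and $d' = 2g-r$, concluding that at least $2g-r$ roots have high valuation, and then appeals to Lemma~\ref{the-other-r} to argue that these must be disjoint from the $r$ zero-valuation eigenvalues, so by counting the partition is forced. You instead factor out the $r$ eigenvalue roots first, check via the ultrametric inequality that they contribute nothing to $v_p(\deg(\phi_q-p^i))$, and apply Lemma~\ref{sparta-1} to the degree-$(2g-r)$ cofactor with $d = d' = 2g-r$, so the conclusion applies to \emph{all} of its roots with no disjointness bookkeeping. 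The extra cost of your route is the observation that the cofactor really lies in $\Qq_p[x]$ (needed for Lemma~\ref{sparta-1}), which you correctly justify via divisibility of monic polynomials over $\Qq_p$; the gain is a cleaner counting argument. One genuine improvement: Lemma~\ref{sparta-1} only delivers the weak inequality $v(\alpha_i) \ge v(p^s)$, while the Proposition asserts the strict $v_p(\alpha_i) > v_p(p^s)$. The paper's proof quietly ends with ``at least $v_p(p^s)$'' and never closes that gap (harmless, since Corollary~\ref{trace-count-2} only needs $\ge$), whereas your substitution $s \mapsto s+1$ before invoking $h_1$ honestly produces the claimed strict inequality.
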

\begin{proof}
  Define
  \begin{equation*}
    h_2(p,s,g) = \max\{ p^{2g \cdot h_1(2g,d',p,s)} : 0 \le d' \le 2g\},
  \end{equation*}
  where $h_1$ is as in Lemma~\ref{sparta-1}.
  Suppose the assumptions hold.  Then for any $1 \le i \le h_1(2g, 2g -
  r, p, s)$, we have
  \begin{equation*}
    q = p^k > h_2(p,s,g) \ge p^{2g \cdot h_1(2g,2g-r,p,s)} \ge
    p^{2gi}.
  \end{equation*}
  By Lemma~\ref{loial},
  \begin{equation*}
    v_p(\deg(\phi_q - p^i)) \ge v_p(p^{i(2g-r)}) \qquad \text{ for } i
    \le h_1(2g,2g-r,p,s).
  \end{equation*}
  Let $Q(x)$ be the rational polynomial whose roots
  are the $\alpha_i$.  By Corollary~\ref{poly-roots},
  \begin{equation*}
    \deg(\phi_q - p^i) = \prod_{i = 1}^{2g} (\alpha_i - p^i) = Q(p^i).
  \end{equation*}
  Thus
  \begin{equation*}
    v_p(Q(p^i)) \ge v_p(p^{i(2g-r)}) \qquad \text{ for } i \le
    h_1(2g,2g-r,p,s).
  \end{equation*}
  By definition of $h_1$ (Lemma~\ref{sparta-1}), it follows that at
  least $2g-r$ of the roots of $Q(x)$ have $p$-adic valuation at least
  $v_p(p^s)$.  Meanwhile, Lemma~\ref{the-other-r} gives $r$ roots
  $\beta_1, \ldots, \beta_r$, coming from the eigenvalues of
  $T_p(\phi_q)$.  Each of thse roots has valuation zero.  There can be
  no overlap between the $2g - r$ roots of valuation at least
  $v_p(p^s)$, and the $r$ roots coming from $T_p(\phi_q)$, so these
  together account for all $2g$ roots of $Q(x)$.
\end{proof}

\begin{corollary}\label{trace-count-2}
  There is a computable function $h(p,s,g)$ with the following
  property.  Let $C$ be a curve of genus $g$ over a finite field
  $\Ff_q$, and let $J$ be its Jacobian.  Suppose $q$ is a power of
  $p$, and $q > h(p,s,g)$.  Then
  \begin{equation*}
    |C(\Ff_q)| \equiv 1 - \Tr(\phi_q | J[p^s]) + \Tr(\phi_q |
    \Gg_m[p^s]) \pmod{p^s},
  \end{equation*}
  where the notation is as in Corollary~\ref{trace-count}.
\end{corollary}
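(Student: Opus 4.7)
The plan is to combine Fact~\ref{curve-fact} with Proposition~\ref{prop:h1} directly; almost all the real work is already done. By Fact~\ref{curve-fact},
\[
|C(\Ff_q)| = 1 - \sum_{i=1}^{2g}\alpha_i + q,
\]
where the $\alpha_i$ are the characteristic roots of $\phi_q$ acting on $J$. I would choose $h(p,s,g) = \max\!\bigl(h_2(p,s,g),\, p^s\bigr)$, with $h_2$ as in Proposition~\ref{prop:h1}. For $q = p^k > h(p,s,g)$, Proposition~\ref{prop:h1} produces an ordering $\alpha_1,\ldots,\alpha_{2g}$ in which $\alpha_1,\ldots,\alpha_r$ are the eigenvalues of $T_p(\phi_q)$ and $v_p(\alpha_i) > v_p(p^s)$ for $i > r$.

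Because the last $2g-r$ eigenvalues have $p$-adic valuation strictly exceeding $v_p(p^s)$, they vanish modulo $p^s$, and so
\[
\sum_{i=1}^{2g}\alpha_i \;\equiv\; \sum_{i=1}^{r}\alpha_i \;=\; \Tr\!\bigl(\phi_q \mid T_p J\bigr) \pmod{p^s}.
\]
Since $T_p J$ is a free $\Zz_p$-module of rank $r$ with $J[p^s] \cong T_p J / p^s T_p J$ as $\phi_q$-modules, reduction modulo $p^s$ turns the right-hand side into $\Tr(\phi_q \mid J[p^s])$, exactly as in the argument at the end of Corollary~\ref{trace-count}.

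It remains to account for the additive term $q$ and to match it with the $\Gg_m$ correction. Since $q = p^k \geq p^s$ by our choice of $h$, we have $q \equiv 0 \pmod{p^s}$. On the other hand, in characteristic $p$ the polynomial $x^{p^s}-1 = (x-1)^{p^s}$ has only the root $1$, so the group of $p^s$-th roots of unity $\Gg_m[p^s] \subseteq \Ff_q^{\mathrm{alg}}$ is trivial and $\Tr(\phi_q \mid \Gg_m[p^s]) = 0$. Hence $q \equiv \Tr(\phi_q \mid \Gg_m[p^s]) \pmod{p^s}$, and plugging back into Fact~\ref{curve-fact} yields the desired congruence. There is no serious obstacle at this stage: the only nontrivial input is the $p$-adic control of the characteristic roots granted by Proposition~\ref{prop:h1}, which is where all the difficulty was absorbed; the corollary itself is essentially a bookkeeping argument together with the trivial observation that the $\Gg_m$ term degenerates in bad characteristic.
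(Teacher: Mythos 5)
Your proof is correct and follows essentially the same route as the paper's: choose $h=\max(h_2,p^s)$, invoke Fact~\ref{curve-fact} together with Proposition~\ref{prop:h1} to discard the $\alpha_{r+1},\ldots,\alpha_{2g}$ modulo $p^s$, identify the surviving sum with $\Tr(\phi_q\mid J[p^s])$, and observe that both $q$ and $\Tr(\phi_q\mid\Gg_m[p^s])$ vanish. The only cosmetic difference is how you justify the vanishing of the $\Gg_m$ term (explicit factorization of $x^{p^s}-1$ versus the paper's remark that $T_p\Gg_m$ has rank $0$), which is the same fact.
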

\begin{proof}
  Take $h(p,s,g)$ to be the maximum of $h_2(p,s,g)$ and $p^s$.
  Suppose $q > h(p,s,g)$.  By Fact~\ref{curve-fact},
  \begin{equation*}
    |C(\Ff_q)| = 1 + q - \sum_{i = 1}^{2g} \alpha_i.
  \end{equation*}
  Working modulo $p^s$, the term $q$ vanishes, because $q > h(p,s,g)
  \ge p^s$.  Also, $q > h_2(p,s,g)$, so by Proposition~\ref{prop:h1},
  we may assume that
  \begin{itemize}
  \item $\alpha_1, \ldots, \alpha_r$ are the eigenvalues of $T_p(\phi_q)$
  \item $\alpha_{r+1},\ldots,\alpha_{2g}$ have valuation at least
    $v_p(p^s)$.
  \end{itemize}
  Working modulo $p^s$, we can therefore ignore
  $\alpha_{r+1},\ldots,\alpha_{2g}$.  Thus
  \begin{equation*}
    |C(\Ff_q)| \equiv 1 - \sum_{i = 1}^r \alpha_i \pmod{p^s}.
  \end{equation*}
  The right hand side is $1 - \Tr(\phi_q | J[p^s])$.  Finally, observe
  that $\Tr(\phi_q | \Gg_m[p^s])$ vanishes, because $T_p \Gg_m$ is
  free of rank 0.  (There is no $p$-torsion in the multiplicative
  group.)
\end{proof}

\section{Review of periodic difference fields}\label{pf-review}
In this section, we review the basic facts about periodic fields.  The
original source for these results is apparently Hrushovski's
hard-to-find \cite{the-lost-paper}.  We will follow an approach that
mimics the closely related case of ACFA (\cite{zoe-udi},
\cite{frobenius}).

Recall that a periodic field $(K_\infty,\sigma)$ is secretly a
multi-sorted structure $(K_1,K_2,\ldots)$ where $K_n$ is the fixed
field of $\sigma^n$ on $K_\infty$.  The multi-sorted structure has the
following functions and relations:
\begin{itemize}
\item The inclusion maps $K_n \hookrightarrow K_m$ when $n$ divides
  $m$
\item The difference field structure on each $K_n$
\end{itemize}

\subsection{Existentially closed periodic fields}

If $(K_\infty,\sigma)$ is a periodic field, then $K_n/K_1$ is a cyclic
Galois extension of degree at most $n$.  Say that $(K_\infty,\sigma)$
is \emph{non-degenerate} if $\Gal(K_n/K_1) \cong \Zz/n\Zz$ for each
$n$.  Equivalently, $K_n \not\subseteq K_m$ for any $m < n$.
\begin{lemma}\label{non-deg}
  If $(K_\infty,\sigma)$ is a non-degenerate periodic field and
  $(L_\infty,\sigma)$ extends $(K_\infty,\sigma)$, then the natural
  map
  \begin{equation*}
    \psi_n : L_1 \otimes_{K_1} K_n \to L_n
  \end{equation*}
  is an isomorphism of difference rings for all $n \in \Nn \cup
  \{\infty\}$.
\end{lemma}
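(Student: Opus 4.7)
The plan is to handle the finite case $n \in \Nn$ first by showing $\psi_n$ is a surjection between $L_1$-algebras of the same dimension $n$, hence an isomorphism; the case $n = \infty$ then follows by passing to the filtered colimit along the divisibility order.

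I would first verify that $\psi_n$ is a homomorphism of difference rings. It is manifestly a ring map (multiplication in $L_n$ restricted to the subring generated by $L_1$ and $K_n$). Since $L_1$ is the fixed field of $\sigma$ on $L_\infty$, the difference-ring structure on $L_1 \otimes_{K_1} K_n$ is $\id_{L_1} \otimes \sigma|_{K_n}$, and $\sigma(\psi_n(a \otimes b)) = \sigma(a)\sigma(b) = a\sigma(b) = \psi_n(a \otimes \sigma(b))$ for $a \in L_1$, $b \in K_n$, so $\sigma$-equivariance is immediate.

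Next comes the dimension count, where non-degeneracy gets used. The source $L_1 \otimes_{K_1} K_n$ has $L_1$-dimension exactly $[K_n : K_1] = n$ by non-degeneracy of $(K_\infty,\sigma)$. The image of $\psi_n$ is the compositum $L_1 \cdot K_n \subseteq L_n$. To pin down $[L_1 \cdot K_n : L_1]$, I would compute the order of $\sigma$ restricted to $L_1 \cdot K_n$. An exponent $\sigma^i$ fixes $L_1 \cdot K_n$ pointwise iff it fixes both $L_1$ (which is automatic) and $K_n$, and by non-degeneracy the latter happens iff $n \mid i$. Hence $\sigma|_{L_1 \cdot K_n}$ has order exactly $n$, so $[L_1 \cdot K_n : L_1] \ge n$. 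On the other hand $L_1 \cdot K_n \subseteq L_n$ and $L_n/L_1$ is cyclic of degree at most $n$ (being a subextension of the extension generated by the order-$n$ element $\sigma$), so all these inequalities collapse: $L_1 \cdot K_n = L_n$ and $[L_n : L_1] = n$. Thus $\psi_n$ is a surjective map between $L_1$-algebras of the same finite dimension $n$, and is therefore an isomorphism.

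For $n = \infty$, observe that $K_\infty = \varinjlim_{n} K_n$ and $L_\infty = \varinjlim_{n} L_n$ along the inclusions indexed by divisibility, as diagrams of difference rings. Tensor products commute with filtered colimits, so
\[ L_1 \otimes_{K_1} K_\infty \;=\; \varinjlim_n \bigl(L_1 \otimes_{K_1} K_n\bigr) \;\xrightarrow{\varinjlim \psi_n}\; \varinjlim_n L_n \;=\; L_\infty \]
is an isomorphism because each $\psi_n$ is.

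The only nontrivial point is the order-of-$\sigma$ calculation on $L_1 \cdot K_n$; everything else is formal. I expect that to be the \emph{whole} difficulty of the argument, since it is precisely there that non-degeneracy of $(K_\infty,\sigma)$ is indispensable.
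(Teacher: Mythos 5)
Your proof is correct and takes essentially the same approach as the paper's. The only difference is in how the surjectivity step is packaged: the paper observes that the compositum $K_n L_1$ is an intermediate field $L_m$ (with $m \mid n$) of the cyclic Galois extension $L_n/L_1$, then uses non-degeneracy of $K_\infty$ to rule out $m < n$; you instead compute the order of $\sigma$ restricted to the compositum. Both hinge on exactly the same fact, namely that non-degeneracy forces $\sigma|_{K_n}$ to have order $n$. A small bonus of your version is that by establishing $[L_n : L_1] = n$ directly, you get non-degeneracy of $L_\infty$ as a byproduct, and so need not invoke it as a separate step for the injectivity/dimension count, as the paper does.
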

\begin{proof}
  The $n = \infty$ case follows by taking the limit, so we may assume
  $n < \infty$.  The image of $\psi_n$ is the compositum $K_nL_1$.
  This is an intermediate field in the Galois extension $L_n/L_1$, so
  it must be $L_m$ for some $m$ dividing $n$.  By non-degeneracy, $K_n
  \not \subseteq L_m$ for any $m < n$.  Thus $K_nL_1 = L_n$ and the map is
  surjective.  Non-degeneracy of $K_\infty$ implies non-degeneracy of
  $L_\infty$, and so
  \begin{equation*}
    [K_n : K_1] = n = [L_n : L_1].
  \end{equation*}
  Counting dimensions, $\psi_n$ must be injective.
\end{proof}
Recall that a field extension $L/K$ is \emph{regular} if $L \otimes_K
K^{alg}$ is a domain, or equivalently, a field.  A field $K$ is
\emph{pseudo algebraically closed} (PAC) if $K$ is relatively
existentially closed in every regular extension.  An equivalent
condition is that $V(K) \ne \emptyset$ for every geometrically
integral variety $V$ over $K$.  This property is first-order
(\cite{field-arithmetic}, Proposition~10.9).
\begin{proposition}\label{acpf-char}
  A periodic field $(K_\infty,\sigma)$ is existentially closed if and
  only if
  \begin{enumerate}
  \item \label{ec1} $K_\infty \models \mathrm{ACF}$,
  \item \label{ec2} $(K_\infty,\sigma)$ is non-degenerate, and
  \item \label{ec3} $K_1$ is PAC.
  \end{enumerate}
\end{proposition}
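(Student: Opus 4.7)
The plan is to prove each implication separately.

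\emph{Direction $(\Leftarrow)$.} Assume (1)--(3) hold, and let $(L_\infty, \tau) \supseteq (K_\infty, \sigma)$ be any periodic field extension. By Lemma~\ref{non-deg}, each higher sort satisfies $L_n \cong L_1 \otimes_{K_1} K_n$, and passing to the direct limit gives $L_\infty \cong L_1 \otimes_{K_1} K_1^{alg}$ by (1). Since this tensor product is a field, the base extension $L_1/K_1$ is regular. Using (2), fix a $K_1$-basis of each $K_n$, which is simultaneously an $L_1$-basis of $L_n$; then an existential $\mathcal{L}_{pf}$-formula over $K$ translates into a single existential formula in the language of rings, with parameters in $K_1$ and variables ranging in $L_1$, because the field operations, inclusions $K_n \hookrightarrow K_m$, and restrictions $\sigma_n$ all become polynomial in coordinates. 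Since $K_1$ is PAC and $L_1/K_1$ is regular, $K_1$ is existentially closed in $L_1$ as a field, so any witness in $(L_\infty, \tau)$ descends to a witness in $(K_\infty, \sigma)$.

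\emph{Direction $(\Rightarrow)$.} Assume $(K_\infty, \sigma)$ is existentially closed. Each of (1), (3), and (2) is proved by exhibiting a periodic field extension in which an appropriate existential formula holds, and then invoking existential closure. For (1), extend $\sigma$ to $\tilde\sigma \in \Aut(K_1^{alg})$; every element of $K_1^{alg}$ has finite Galois orbit over $K_1$ and hence finite $\tilde\sigma$-orbit, so $(K_1^{alg}, \tilde\sigma)$ is a periodic field extension, and applying existential closure to $\exists \beta \in K_N : f(\beta) = 0$ for $f$ the minimal polynomial of a root $\alpha$ and $N$ its $\tilde\sigma$-period yields roots in $K_\infty$. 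For (3), given a geometrically integral variety $V$ over $K_1$, set $L_1 := K_1(V)$ and $L_\infty := K_\infty(V)$, equipped with $\tau := \sigma \otimes \id$; then $L_n = K_n(V)$, and $(L_\infty, \tau)$ is a periodic field extension of $(K_\infty, \sigma)$ in which $V$ has an $L_1$-point (the generic point), so by existential closure $V$ has a $K_1$-point. For (2), suppose $[K_n:K_1] = m < n$; embed $K_1$ into a pseudofinite field $L_1$ in such a way that a topological generator $\tau$ of $\Gal(L_1) \cong \hat\Zz$ restricts to $\sigma$ on $K_\infty = K_1^{alg}$, yielding a non-degenerate periodic field extension $(L_1^{alg}, \tau)$ in which the existential formula $\exists \alpha \in K_n : \sigma_n^m(\alpha) \ne \alpha$ holds---contradicting existential closure.

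The main technical hurdle is step $(\Rightarrow)$(2): embedding $K_1$ into a pseudofinite $L_1$ with a prescribed restriction of the Frobenius-like generator to $K_1^{alg}$. This is an embedding problem in the Galois theory of PAC fields, solvable by choosing the ultrafilter defining $L_1$ carefully, using that surjections $\hat\Zz \twoheadrightarrow \hat\Zz/N$ send topological generators to topological generators.
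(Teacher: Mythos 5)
Your proof is largely correct in outline, and the $(\Leftarrow)$ direction together with items (1) and (3) of the $(\Rightarrow)$ direction take essentially the same route as the paper: use Lemma~\ref{non-deg} to reduce the periodic-field situation to the $K_1$/$L_1$ situation, apply PAC-ness, and for (1) and (3) witness the failure of existential closedness via the obvious extensions ($K_1^{alg}$ with an extended automorphism, and a function field of a variety, which is just a special case of the paper's regular extension $L\otimes_{K_1} K_\infty$). One small stylistic difference: for $(\Leftarrow)$ the paper embeds $L_\infty$ into a large ultrapower $K_\infty^*$ of $K_\infty$ rather than tracking the translation of existential formulas into the ring language by hand; the ultrapower argument avoids the bookkeeping of coordinates and basis choices and is easier to check.

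Where you diverge substantially, and where you have (correctly) flagged the ``main technical hurdle,'' is $(\Rightarrow)$(2). The paper's construction here is much more elementary than what you propose: if $K_n = K_m$ with $m < n$, one simply adjoins $n$ new transcendentals $x_1,\dots,x_n$ to $K_\infty$ and extends $\sigma$ to permute them cyclically. This gives a periodic field extension in which the formula $\sigma^n(x) = x \wedge \sigma^m(x) \neq x$ has a solution (namely $x_1 \in K'_n$) even though it has none in $K_\infty$, because $K_n = K_m$ forces $\sigma^m(x) = x$ there. Nothing about PAC fields, pseudofinite fields, or Galois embedding problems is needed; the transcendental extension works for any periodic field. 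Your plan---embed $K_1$ into a pseudofinite $L_1$ with a topological generator $\tau \in \Gal(L_1) \cong \hat{\Zz}$ restricting to $\sigma$---is not wrong in spirit, but you rely on an unproved solvability of a Galois-theoretic embedding problem, you implicitly make (2) dependent on (1) and (3) being established first (so $K_1$ is PAC, hence $\Gal(K_1)$ is projective), and you bring in substantial machinery to produce an extension that the paper manufactures in one line. You should replace this step with the transcendental-extension trick, which also decouples (2) from the other two conditions.
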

\begin{proof}
  Suppose (\ref{ec1}) fails.  Extend $\sigma$ to an automorphism
  $\sigma'$ of $K_\infty^{alg}$.  Then $(K_\infty,\sigma)$ fails to be
  existentially closed in $(K_\infty^{alg},\sigma')$.

  Suppose (\ref{ec2}) fails, so that $K_n = K_m$ for some $m < n$.
  Let $\sigma'$ be the automorphism of $K'_\infty :=
  K_\infty(x_1,\ldots,x_n)$ extending $\sigma$ and mapping \[x_1
  \mapsto x_2 \mapsto \cdots \mapsto x_n \mapsto x_1.\] Then
  $(K_\infty,\sigma)$ is not existentially closed in
  $(K'_\infty,\sigma')$.  Indeed, the equation $\sigma^n(x) = x \ne
  \sigma^m(x)$ has a solution in $K'_n$ but not $K_n$.

  Suppose (\ref{ec3}) fails, so $K_1$ is not existentially closed in
  some regular extension $L/K_1$.  The difference ring $L_\infty := L
  \otimes_{K_1} K_\infty$ is a field by regularity of $L/K_1$.  Then
  $L_\infty$ is a periodic field extending $K_\infty$, and $K_\infty$
  is not existentially closed in $L_\infty$ because $K_1$ is not
  existentially closed in $L_1$.

  Finally, suppose (\ref{ec1}-\ref{ec3}) all hold.  Let $L_\infty$ be
  a periodic field extending $K_\infty$.  Let $K^*_\infty$ be a big
  ultrapower of $K_\infty$ (in the language of periodic fields, not difference fields).  It suffices to embed $L_\infty$ into
  $K^*_\infty$ over $K_\infty$.  Note that
  \begin{equation*}
    K^*_\infty = K^*_1 \otimes_{K_1} K_\infty = K^*_1 \otimes_{K_1} K_1^{alg}.
  \end{equation*}
  The first equality holds by Lemma~\ref{non-deg} and (\ref{ec2}); the
  second equality holds by (\ref{ec1}) and the general fact that
  $K_\infty/K_1$ is algebraic.  Similarly
  \begin{equation*}
    L_\infty = L_1 \otimes_{K_1} K_\infty = L_1 \otimes_{K_1} K_1^{alg}.
  \end{equation*}  
  Then $L_1/K_1$ is regular, so $K_1$ is existentially closed in $L_1$
  by (\ref{ec3}).  It follows that $L_1$ embeds into $K^*_1$ over
  $K_1$.  Tensoring with $K_\infty$, this gives the desired embedding of
  periodic fields:
  \begin{equation*}
    L_\infty = L_1 \otimes_{K_1} K_\infty \hookrightarrow K^*_1
    \otimes_{K_1} K_\infty = K^*_\infty. \qedhere
  \end{equation*}
\end{proof}
The conditions of Proposition~\ref{acpf-char} are first order, in
spite of appearances to the contrary.
\begin{definition}
  \emph{ACPF} is the theory of existentially closed periodic fields.
  In other words, ACPF is the model companion of periodic fields.
\end{definition}
The name ``ACPF'' is not standard, and is chosen by analogy with ACFA.

If $(K,\sigma)$ is a periodic field, let $\Abs(K)$ denote the
``absolute numbers,'' the relative algebraic closure of the prime
field in $K$.  We can regard $\Abs(K)$ as a substructure of $K$.  The
field $\Abs(K)$ is algebraically closed whenever $K$ is.
\begin{lemma}\label{ee-criterion}
  Two models $K_1, K_2 \models \ACPF$ are elementarily
  equivalent if and only if $\Abs(K_1) \cong \Abs(K_2)$.  More
  generally, if $F$ is a substructure of $K_1$ and $F = F^{alg}$, then
  any embedding of $F$ into $K_2$ is a partial elementary map from
  $K_1$ to $K_2$.
\end{lemma}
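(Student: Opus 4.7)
The forward direction of the first assertion is immediate: the isomorphism type of $\Abs(K)$ is determined by $\mathrm{Th}(K)$, since for each polynomial $p$ over the prime field and each $n$, ``$p$ has a root in $K^{(n)}$'' is a first-order sentence, and these sentences recover both the field structure of $\Abs(K)_\infty$ and the action of $\sigma$ on it. I will prove the converse and the ``more generally'' clause simultaneously: taking $F := \Abs(K_1)$, which is algebraically closed as a substructure (because $\Abs(K_1)_\infty$ is the algebraic closure of the prime field in $K_1^{(\infty)}$, and $K_1^{(\infty)}$ is algebraically closed by Proposition~\ref{acpf-char}.(\ref{ec1})), any isomorphism $\Abs(K_1) \cong \Abs(K_2)$ serves as an embedding $F \hookrightarrow K_2$, and its partial elementarity yields $K_1 \equiv K_2$.

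For the ``more generally'' clause, I will use model-completeness of $\ACPF$: as the model companion of the theory of periodic fields, any periodic-field embedding between its models is elementary. It therefore suffices to construct a periodic-field embedding $j : K_1 \to K_2'$ for some $|K_1|^+$-saturated $K_2' \succeq K_2$ with $j|_F = \iota$; then $j$ is elementary, and $\iota = j|_F$ is partial elementary as a map $K_1 \to K_2 \preceq K_2'$.

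The construction of $j$ proceeds by Zorn's lemma on the poset of partial periodic-field embeddings $j_0 : M \to K_2'$ with $F \subseteq M \subseteq K_1$ and $j_0|_F = \iota$. Maximality forces $M = K_1$ by the following extension step: given such a $j_0$ and $a \in K_1 \setminus M$, first enlarge $M$ to its algebraic closure inside $K_1$, using that $K_2'^{(\infty)}$ is algebraically closed and that saturation can arrange the $\sigma$-compatibility; so assume $M = M^{alg}$. The argument that $F = F^{alg}$ implies $F_n$ is algebraically closed in $K_1^{(n)}$---any $x \in K_1^{(n)}$ algebraic over $F_n$ lies in $F_1^{alg} = F_\infty$ and is fixed by $\sigma^n$, hence lies in $F_\infty^{\sigma^n} = F_n$---applies verbatim to yield that $M_n$ is algebraically closed in $K_1^{(n)}$. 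Thus $M\langle a\rangle_n / M_n$ is regular for each $n$ (perfectness of $M_n$ follows similarly: $p$-th roots in $K_1^{(n)}$ of elements of $M_n$ are unique and hence lie in $M_n$). By Lemma~\ref{non-deg} applied to the non-degenerate $M$, $M\langle a\rangle_n \cong M_n \otimes_{M_1} M\langle a\rangle_1$, so the periodic structure of $M\langle a\rangle$ over $M$ is controlled by the first-sort extension $M\langle a\rangle_1 / M_1$. Using the PAC property of $K_2'^{(1)}$ (Proposition~\ref{acpf-char}.(\ref{ec3})), regularity of $M\langle a\rangle_1 / M_1$, and saturation of $K_2'$, I embed $M\langle a\rangle_1$ into $K_2'^{(1)}$ over $j_0|_{M_1}$; tensoring sort by sort via Lemma~\ref{non-deg} applied on both source and target lifts this to a periodic-field embedding $M\langle a\rangle \to K_2'$ extending $j_0$.

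The main technical obstacle is verifying the non-degeneracy hypothesis of Lemma~\ref{non-deg} at the extension step: for $M = M^{alg}$ to be non-degenerate, $\Gal(M_\infty/M_1)$ must be all of $\hat{\Zz}$, which can fail in positive characteristic when $M_1 = M_\infty$ is already algebraically closed as a field (a scenario compatible with $K_1$ pseudofinite containing a large subfield of absolute numbers). This degenerate subcase needs separate treatment: there $\sigma$ acts trivially on $M$, and $M\langle a\rangle_n$ is the unique cyclic degree-dividing-$n$ Galois extension of $M\langle a\rangle_1$; the extension to $K_2'$ is then built by embedding $M\langle a\rangle_1$ into $K_2'^{(1)}$ via PAC and matching it with the corresponding cyclic extension inside $K_2'$, using the non-degeneracy of $K_2'$ itself in place of that of $M$.
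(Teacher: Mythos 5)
Your proof takes a genuinely different route from the paper's. The paper does a one-shot amalgamation: since $F_\infty$ is algebraically closed, $K_1^{(\infty)} \otimes_{F_\infty} K_2^{(\infty)}$ is a domain, so $L := \Frac(K_1 \otimes_F K_2)$ is a periodic difference field into which $K_1$ and $K_2$ both embed over $F$; embedding $L$ into a model $\hat L$ of $\ACPF$ and invoking model completeness makes both embeddings elementary, and the conclusion follows immediately. You instead try to build an embedding $j : K_1 \to K_2'$ extending $\iota$ by Zorn's lemma, reducing each extension step to a first-sort embedding via Lemma~\ref{non-deg} plus PAC and saturation. That outline is sound in principle but, as written, it has a real gap in exactly the place where the amalgamation is doing the work.

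The problem is the step ``first enlarge $M$ to its algebraic closure inside $K_1$, using that $K_2'^{(\infty)}$ is algebraically closed and that saturation can arrange the $\sigma$-compatibility.'' This is not a consequence of saturation alone. After you embed $M\langle a\rangle_1$ into $K_2'^{(1)}$ via PAC, you have made a \emph{choice} of the image $b$ of $a$, and that choice is only controlled by the quantifier-free type of $a$ over $M$, which does not record information such as whether $a$ is a square in $K_1^{(1)}$. Concretely: take $M = F$, $a \in K_1^{(1)}$ transcendental over $F_1$ with $\sqrt a \in K_1^{(1)}$, so that $\gamma := \sqrt a$ lies in $\widetilde{M\langle a\rangle}_\infty$ with $\sigma(\gamma) = \gamma$. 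If the PAC-chosen $b \in K_2'^{(1)}$ happens not to be a square in $K_2'^{(1)}$, then $\sqrt b \in K_2'^{(2)}\setminus K_2'^{(1)}$ satisfies $\sigma(\sqrt b) = -\sqrt b$, and $j_0$ cannot be extended $\sigma$-equivariantly to $\gamma$. So a ``maximal'' element of your Zorn poset can be stuck with $M \subsetneq K_1$, and your argument does not rule this out. What you actually need is that the qf-type of $\widetilde{M\langle a\rangle}$ over $M$ (including the Galois action) is realizable over $j_0(M)$ in $K_2'$---and establishing that consistency is essentially the amalgamation argument, so as stated the step is circular. A secondary gap is the degenerate case: you treat only the sub-case where $\sigma$ acts trivially on $M_\infty$, but $\overline{\langle \sigma|_{M_\infty}\rangle}$ can be a nontrivial proper procyclic quotient of $\hat{\Zz}$ (e.g.\ $\Zz/2$ or $\Zz_\ell$), which your sketch does not address. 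Both issues disappear if you instead form $\Frac(K_1 \otimes_F K_2)$ directly and appeal to model-completeness, which is what the paper does.
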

\begin{proof}
  The proof is the same as for ACFA (\cite{zoe-udi}, Theorem 1.3).
  Let $L = \Frac(K_1 \otimes_F K_2)$.  Then $L$ is a periodic
  field amalgamating $K_1$ and $K_2$ over $F$.  By
  companionability, $K_1$ and $K_2$ have the same type over $F$.
\end{proof}
Recall that a field is \emph{pseudofinite} if it perfect, PAC, and has
absolute Galois group $\hat{\Zz}$.  Models of ACPF are essentially
pseudofinite fields with a choice of a generator of the Galois group:
\begin{proposition}\label{acpf-is-pseuf}
  If $K$ is pseudofinite and $\sigma$ is a topological generator of
  $\Gal(K)$, then $(K^{alg},\sigma) \models \ACPF$.  The
  periodic field $(K^{alg},\sigma)$ and the pseudofinite field $K$ are
  bi-interpretable after naming parameters.  All models of ACPF arise
  in this way from pseudofinite fields.
\end{proposition}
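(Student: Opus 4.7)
The plan is to verify the three conditions of Proposition~\ref{acpf-char} for $(K^{alg},\sigma)$, and then in the converse direction extract a pseudofinite field from any model of ACPF. For the first claim, condition (\ref{ec1}) is tautological. For (\ref{ec3}), the fixed field of $\sigma$ in $K^{alg}$ is exactly $K$: since $K$ is pseudofinite hence perfect, $K^{alg}=K^{sep}$, so $\sigma$ is determined by its action on $K^{sep}$ and has fixed field $K$ by definition. Hence $K_1=K$ is PAC. For (\ref{ec2}), since $\sigma$ topologically generates $\Gal(K)\cong\hat{\Zz}$, the closed subgroup $\overline{\langle\sigma^n\rangle}$ corresponds to $n\hat{\Zz}$, which has index $n$, so by the infinite Galois correspondence $[K_n:K]=n$.

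Conversely, given $(L_\infty,\sigma)\models\ACPF$, I would show $L_1$ is pseudofinite and $\sigma$ is a topological generator of $\Gal(L_1)$. By (\ref{ec3}), $L_1$ is PAC. It is perfect: if $\characteristic(L_1)=p$ and $b\in L_\infty$ satisfies $b^p=a\in L_1$, then $\sigma(b)^p=a=b^p$ forces $\sigma(b)=b$ since $p$th roots are unique in characteristic $p$, so $b\in L_1$. Since $L_\infty=\bigcup_n L_n$ is algebraic over $L_1$ and algebraically closed, we have $L_\infty=L_1^{alg}=L_1^{sep}$. Thus $\sigma\in\Gal(L_1)$, and non-degeneracy tells us $\overline{\langle\sigma^n\rangle}=\Gal(L_\infty/L_n)$ has index $n$ in $\Gal(L_1)$. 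Hence $\sigma$ topologically generates $\Gal(L_1)$, and $\Gal(L_1)\cong\hat{\Zz}$ (as it has a topological generator and a unique open subgroup of each finite index). Setting $K:=L_1$, we realise $(L_\infty,\sigma)$ as $(K^{alg},\sigma)$ from a pseudofinite field as claimed.

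For bi-interpretability after naming parameters, one direction is immediate: $K$ is literally the sort $(K^{alg})_1$ of the periodic field structure. For the reverse, I would, for each $n$, name a primitive element $\alpha_n\in K_n$ together with its minimal polynomial $p_n(x)\in K[x]$, and interpret the sort $K_n$ as $K[x]/(p_n(x))\cong K^n$. The restriction $\sigma|_{K_n}$ is then encoded by a single tuple in $K^n$ recording the image of $\alpha_n$, and for each $n\mid m$ the inclusion $K_n\hookrightarrow K_m$ is encoded by a tuple in $K^m$ recording $\alpha_n$ as a polynomial in $\alpha_m$. All operations on and between the sorts become first-order definable from this data, and composing with the trivial interpretation of $K$ in $(K^{alg},\sigma)$ yields inverse interpretations up to definable isomorphism.

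The main obstacle is the bi-interpretability clause, since the periodic field has countably many sorts, so one is nominally naming infinitely many parameters. I would argue this is acceptable for the multi-sorted notion of interpretation, since the interpretation scheme itself is of finite complexity per sort. Alternatively, one can tidy this by choosing the $\alpha_n$ coherently: pick a compatible system (for instance requiring $\alpha_n=N_{K_m/K_n}(\alpha_m)$ whenever $n\mid m$, or using the fact that $K_{n!}$ contains $K_n$ and working through the tower) so that the whole interpretation is uniformly controlled by the chosen generating sequence. The underlying algebra — primitive element theorem for cyclic extensions, and $K_\infty=\varinjlim K_n$ — makes this purely routine; the only genuine subtlety is formulating ``bi-interpretability'' for multi-sorted structures with infinitely many sorts, which is why the statement is hedged with ``after naming parameters.''
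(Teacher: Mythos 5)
Your proof is correct and takes essentially the same approach as the paper's, which is a brief sketch: deduce the forward and converse directions from Proposition~\ref{acpf-char}, and interpret each sort $K_n$ in $K$ as $K^n$ by choosing a basis (your primitive element $\alpha_n$ gives exactly such a basis). You fill in details the paper leaves implicit — including the legitimate observation that ``after naming parameters'' must be read as allowing countably many parameters across the infinitely many sorts.
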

\begin{proof}
  Except for bi-interpretability, this follows from
  Proposition~\ref{acpf-char}.  Note that ``$(K^{alg},\sigma)$'' is
  really the multisorted structure $(K_1,K_2,\ldots)$ where $K_n$ is
  the degree $n$ extension of $K$.  This can be interpreted in $K$ by
  choosing a basis for each $K_n$ and interpreting $K_n$ as $K^n$.
  Conversely, $K$ is $K_1$.
\end{proof}

\subsection{Definable sets}
The following standard fact is an easy application of compactness:
\begin{fact}\label{kpct-fact}
  Let $\Mm$ be a monster model.  Let $A \subseteq \Mm$ be small.  Let
  $\mathcal{P}$ be a collection of $A$-definable subsets of $\Mm^n$
  closed under positive boolean combinations.  Suppose the following
  holds:
  \begin{quote}
    For every $a, b \in \Mm^n$, if
    \[ \forall X \in \mathcal{P} : a \in X \implies b \in X,\]
    then $\tp(a/A) = \tp(b/A)$.
  \end{quote}
  Then every $A$-definable subset of $\Mm^n$ is in $\mathcal{P}$.
\end{fact}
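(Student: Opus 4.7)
The plan is the standard compactness argument for showing that a family of definable sets is exhaustive, adapted to the ``positive only'' closure assumption. Fix an arbitrary $A$-definable $Y \subseteq \Mm^n$; I need to exhibit $Y$ as a positive boolean combination of members of $\mathcal{P}$.

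First I would encode the statement ``$\mathcal{P}$ fails to separate a point of $Y$ from a point of its complement'' as a partial type in two variables over $A$. Explicitly, consider
\[ \Sigma(x,y) \;:=\; \{y \in Y,\ x \notin Y\} \;\cup\; \{y \in X \to x \in X \,:\, X \in \mathcal{P}\}. \]
Any realization $(b,a)$ of $\Sigma$ would have every $\mathcal{P}$-set through $b$ also containing $a$, so the hypothesis gives $\tp(a/A) = \tp(b/A)$, contradicting $b \in Y$, $a \notin Y$ for the $A$-definable $Y$. Thus $\Sigma$ is inconsistent.

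By compactness, there exist finitely many $X_1, \ldots, X_k \in \mathcal{P}$ such that no $(a,b)$ witnesses $b \in Y$, $a \notin Y$, and $b \in X_i \to a \in X_i$ for every $i$. Equivalently, for all $b \in Y$ and all $a \notin Y$ some index $i$ satisfies $b \in X_i$ and $a \notin X_i$. Now I extract the presentation. For each $b \in Y$ set $S_b := \{i : b \in X_i\} \subseteq \{1, \ldots, k\}$ and $U_b := \bigcap_{i \in S_b} X_i$. Then $b \in U_b$, while the previous sentence forces $U_b \subseteq Y$. Hence $Y = \bigcup_{b \in Y} U_b$; since $S_b$ ranges over a set of size at most $2^k$, this union is finite, so $Y$ is a finite union of finite intersections of members of $\mathcal{P}$, hence itself in $\mathcal{P}$.

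I do not expect any real obstacle. The only point needing care is that $\mathcal{P}$ is closed only under \emph{positive} boolean operations, so one cannot take complements and must write $Y$ as a union of intersections rather than via a set-difference trick; that is why the sets $U_b$ are built as intersections indexed by the ``positive'' side $S_b$ only. One also needs the degenerate cases $Y = \emptyset$ and $Y = \Mm^n$ (corresponding to empty unions and empty intersections) to lie in $\mathcal{P}$, which is the conventional reading of ``closed under positive boolean combinations.''
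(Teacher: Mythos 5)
Your argument is correct and is precisely the standard compactness argument that the paper alludes to when it calls this ``an easy application of compactness'' and states it without proof. The extraction of $Y$ as $\bigcup_{b \in Y} \bigcap_{i \in S_b} X_i$ using only the ``positive side'' $S_b$, and the remark that the degenerate cases $Y=\emptyset$, $Y=\Mm^n$ require $\emptyset$ and $\Mm^n$ to be counted as positive boolean combinations, are exactly the right points of care.
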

We shall need the following geometric form of almost quantifier
elimination.  Recall that a morphism $f : V_1 \to V_2$ of
$K$-varieties is \emph{quasi-finite} if the fibers of the map
$V_1(K^{alg}) \to V_2(K^{alg})$ are finite.
\begin{proposition}\label{all-in-zero}
  Let $(\Mm,\sigma)$ be a model of ACPF.  Let $(K_\infty,\sigma)$ be a
  non-degenerate substructure, with $K_1$ perfect.  Let $X$ be a
  $K_\infty$-definable subset of $\Mm_1^n$.  Then $X$ is the image of
  $V(\Mm_1) \to \Aa^n(\Mm_1)$ for some quasi-finite morphism $V \to
  \Aa^n$ of $K_1$-varieties.
\end{proposition}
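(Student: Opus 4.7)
The plan is to apply Fact~\ref{kpct-fact} to the collection $\mathcal{P}$ of subsets of $\Mm_1^n$ of the form $f(V(\Mm_1))$, where $V$ is a $K_1$-variety and $f : V \to \Aa^n$ is a quasi-finite $K_1$-morphism. Closure under positive boolean combinations is immediate: finite unions are handled by disjoint unions of $K_1$-varieties, and finite intersections by the reduced fibre products $(V_1 \times_{\Aa^n} V_2)_{\mathrm{red}}$, which remain $K_1$-varieties quasi-finite over $\Aa^n$ because quasi-finiteness is preserved by base change and by passing to a closed subscheme.

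The heart of the argument is the separation of types: if $a, b \in \Mm_1^n$ lie in exactly the same sets of $\mathcal{P}$, then $\tp(a/K_\infty) = \tp(b/K_\infty)$. By Lemma~\ref{ee-criterion}, it suffices to construct an isomorphism between two algebraically closed sub-periodic-fields of $\Mm$, each containing $K_\infty$, fixing $K_\infty$ pointwise and sending $a \mapsto b$. Taking $V$ to be the Zariski closure of $\{a\}$ over $K_1$ with its inclusion into $\Aa^n$ (a closed immersion, hence quasi-finite) shows that $a$ and $b$ share the same $K_1$-locus, giving an isomorphism $K_1(a) \cong K_1(b)$ via $a \mapsto b$. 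Refining this, for each finite field extension $L/K_1(a)$ one can build a quasi-finite $K_1$-morphism $V \to \Aa^n$ whose image contains $a$ precisely when $L$ admits a $K_1(a)$-algebra embedding into $\Mm_1$, so the relative algebraic closures $\acl_{\Mm_1}(K_1(a))$ and $\acl_{\Mm_1}(K_1(b))$ are isomorphic by an extension of $a \mapsto b$. One then bootstraps to an isomorphism of absolute algebraic closures inside $\Mm_\infty = \Mm_1^{\mathrm{alg}}$ using PAC-ness of $\Mm_1$ to realize the intermediate witnesses.

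The main obstacle I anticipate is making this isomorphism $\sigma$-compatible and absorbing $K_\infty$, despite $\mathcal{P}$ using only $K_1$-varieties. The key leverage comes from non-degeneracy (Lemma~\ref{non-deg}), which forces $K_\infty/K_1$ to be the canonical $\hat{\Zz}$-extension: the $\sigma$-action on $K_\infty(a)^{\mathrm{alg}}$ is then entirely determined by how $\sigma$ permutes Galois conjugates of elements algebraic over $K_1$. This permutation data is itself detectable by $\mathcal{P}$-sets built from $K_1$-varieties whose $\Mm_1$-points parameterize ordered root tuples of $K_1$-polynomials, so the field isomorphism can be promoted to one of periodic-field substructures. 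Once $\sigma$-compatibility is secured, Fact~\ref{kpct-fact} delivers that every $K_\infty$-definable subset of $\Mm_1^n$ lies in $\mathcal{P}$, which is the proposition.
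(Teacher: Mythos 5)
Your skeleton matches the paper's: pass to $\mathcal{P}$, verify closure under positive boolean combinations (the paper uses coproducts $V_1 \sqcup V_2$ and fibre products over $\Aa^n$, as you do), then separate types via Lemma~\ref{ee-criterion} by building an embedding of an algebraically closed periodic subfield fixing $K_\infty$ and carrying $a$ to $b$. But there are two substantive problems.

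First, Fact~\ref{kpct-fact} only hands you the one-directional implication $a \in X \implies b \in X$ for $X \in \mathcal{P}$; you start from the bidirectional ``exactly the same $\mathcal{P}$-sets.'' In particular, the Zariski-closure argument as you state it only yields $V_b \subseteq V_a$, so you do not immediately get $K_1(a) \cong K_1(b)$. The paper avoids this by phrasing its key Claim existentially: if a quantifier-free $\phi(x;y)$ over $K_1$ holds of $(a,c)$ with $c$ drawn from $(K_1(a)^{alg})_1$, then some $d$ in $\Mm_1$ satisfies $\phi(b;d)$ --- using that the set of locally-closed witnesses lands in $\mathcal{P}$. After saturating (the paper explicitly replaces $\Mm$ by a $|K_\infty|^+$-saturated elementary extension), a single infinite tuple $d$ realizes the whole quantifier-free type, and then the map $(a,c) \mapsto (b,d)$ is automatically a field \emph{isomorphism} onto its image: negations of quantifier-free formulas are again quantifier-free, so once $d$ is fixed, one-directional preservation forces equality of quantifier-free types. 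Saturation is doing the work here, not PAC-ness of $\Mm_1$ --- the latter is a red herring and is not used in the paper's proof.

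Second, and more seriously, your proposed handling of $\sigma$-compatibility is both unnecessary and would not work as sketched. You want to encode how $\sigma$ permutes Galois conjugates via $\mathcal{P}$-sets parameterizing ordered root tuples, but $\mathcal{P}$-sets are subsets of $\Mm_1^n$, which is the fixed field of $\sigma$; any root tuple they see is already $\sigma$-fixed, so they cannot register the nontrivial $\sigma$-action living in $\Mm_n$ for $n>1$. You cite Lemma~\ref{non-deg}, but not for what it is actually worth: it says $L_\infty \cong L_1 \otimes_{K_1} K_\infty$ as \emph{difference rings}, with $\sigma$ acting as $\id \otimes \sigma$. Consequently, any $K_1$-algebra embedding of the fixed field $(K_1(a)^{alg})_1$ into $\Mm_1$ tensored with $\id_{K_\infty}$ is automatically a $\sigma$-equivariant embedding of periodic fields $K_1(a)^{alg} \hookrightarrow \Mm_\infty$ fixing $K_\infty$ pointwise. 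There is no extra Galois bookkeeping to do; the compatibility is structural. This is precisely the step the paper performs, after which Lemma~\ref{ee-criterion} closes the argument.
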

\begin{proof}
  Replacing $\Mm$ with an elementary extension, we may assume $\Mm$ is
  $|K_\infty|^+$-saturated.  Let $\mathcal{P}$ be the class of
  definable subsets of $\Mm_1^n$ of the specified form.  We need to
  show that $\mathcal{P}$ contains every $K_\infty$-definable subset
  of $\Mm_1^n$.

  Note that $\mathcal{P}$ is closed under finite unions, because we
  can form coproducts $V_1 \sqcup V_2$ in the category of
  $K_1$-varieties.  Similarly, $\mathcal{P}$ is closed under finite
  intersections, because of fiber products $V_1 \times_{\Aa^n} V_2$.
  Therefore, we can use Fact~\ref{kpct-fact}.  Let $a, b$ be two
  points in $\Mm_1^n$.  Suppose that for every $X \in \mathcal{P}$,
  \[ a \in X \implies b \in X.\]
  We must show $\tp(a/K_\infty) = \tp(b/K_\infty)$.  Let
  $(K_1(a)^{alg})_1$ denote the fixed field of the periodic difference
  field $K_1(a)^{alg} \subseteq \Mm_\infty$.
  \begin{claim}
    Let $c$ be an $m$-tuple from $(K_1(a)^{alg})_1$ and $\phi(x;y)$ be
    a quantifier-free $\mathcal{L}_{rings}(K_1)$-formula such that
    $\phi(a;c)$ holds.  Then there is an $m$-tuple $d$ from $\Mm_1$
    such that $\phi(b;d)$ holds.
  \end{claim}
  \begin{claimproof}
    Strengthening $\phi(x;y)$, we may assume that
    \begin{itemize}
    \item $\phi(x;y)$ witnesses that $y \in K_1(x)^{alg}$.
    \item $\phi(\Mm_\infty)$ defines a locally closed subvariety $W$ of
      $\Aa^{n+m}$.
    \end{itemize}
    Then the projection $W \to \Aa^n$ is a quasi-finite morphism of
    varieties over $K_1$.  Let $X \in \mathcal{P}$ be the image of
    $W(\Mm_1) \to \Aa^n(\Mm_1)$.  Then
    \begin{equation*}
      (a;c) \in W(\Mm_1) \implies a \in X \implies b \in X \implies
      (b;d) \in W(\Mm_1)
    \end{equation*}
    for some $m$-tuple $d \in \Mm_1$.
  \end{claimproof}
  By saturation, the Claim holds even when $c$ is an infinite tuple and $\phi(x;y)$ is a type.  Letting $c$ enumerate $(K_1(a)^{alg})_1$ and $\phi(x;y)$ be the complete type of $(a,c)$ over $K_1$, we obtain an embedding of fields
  \[ (K_1(a)^{alg})_1 \hookrightarrow \Mm_1\]
  mapping $a$ to $b$ and $K_1$ to $K_1$ pointwise.  By
  Lemma~\ref{non-deg}, we can apply the functor $- \otimes_{K_1}
  K_\infty$ and obtain an embedding of periodic fields
  \[ K_1(a)^{alg} \hookrightarrow \Mm_\infty\]
  sending $a$ to $b$, and $K_\infty$ to $K_\infty$ pointwise.  By
  Lemma~\ref{ee-criterion}, this is a partial elementary map, so
  $\tp(a/K_\infty) = \tp(b/K_\infty)$.
\end{proof}
In Proposition~\ref{all-in-zero}, note that $\dim(V) \le n$, because
the geometric fibers of $V \to \Aa^n$ are finite.
\begin{lemma}\label{qfree-to-curves-v2}
  Let $K$ be a pseudofinite field, and $V$ be a 1-dimensional variety
  over $K$.  In other words, $V(K^{alg})$ is 1-dimensional as a
  definable set in $K^{alg}$.  Then there exist
  curves\footnote{Geometrically irreducible, smooth, and projective as
    always.} $C_1, C_2, \ldots, C_n$ and a definable bijection between
  a cofinite subset of $V(K)$ and a cofinite subset of $\coprod_{i =
    1}^n C_i(K)$.
\end{lemma}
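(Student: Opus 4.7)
The plan is to reduce $V$ to a disjoint union of smooth projective geometrically integral curves by decomposing into irreducible components and, for each geometrically integral component, passing to its unique smooth projective birational model. The argument is essentially a structure theorem for $1$-dimensional varieties over a perfect field, combined with the observation that $K$-points on components that fail to be geometrically irreducible are finite in number.

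First I would decompose $V$ into irreducible components $W_1, \ldots, W_m$. Any $W_i$ that is $0$-dimensional contributes finitely many $K$-points and can be discarded for the purposes of producing a cofinite bijection; likewise the pairwise intersections $W_i \cap W_j$ are $0$-dimensional and can be removed. So it suffices to treat a single $1$-dimensional irreducible component $W$. I split into cases based on geometric irreducibility. If $W$ is not geometrically irreducible, then its geometric components $\widetilde{W}_1, \ldots, \widetilde{W}_d$ over $K^{alg}$ are permuted transitively by $\Gal(K)$, and any $K$-point of $W$ is Galois-fixed, hence lies in $\bigcap_i \widetilde{W}_i$, which is a proper closed subscheme of each $1$-dimensional $\widetilde{W}_i$ and therefore finite. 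So such $W$ contribute finitely many $K$-points and can again be discarded.

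It remains to treat a geometrically integral $1$-dimensional $W$ over the perfect field $K$. Here I take the normalization $\widetilde{W} \to W$, which is birational and an isomorphism over the smooth locus $W^{sm} \subseteq W$; since $K$ is perfect and $\widetilde{W}$ is normal of dimension one, $\widetilde{W}$ is smooth. I then embed $\widetilde{W}$ as a dense open subscheme of its unique smooth projective completion $C$, which is a curve in the paper's sense. Removing the finite sets $W \setminus W^{sm}$ and $C \setminus \widetilde{W}$, the birational map $W \dashrightarrow C$ becomes an everywhere-defined isomorphism of quasi-projective $K$-varieties, hence a $K$-definable bijection on $K$-points. Assembling one such $C_i$ for each geometrically integral $1$-dimensional component of $V$ gives a definable bijection between a cofinite subset of $V(K)$ and a cofinite subset of $\coprod_i C_i(K)$, as required.

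The only non-routine step is the claim that non-geometrically-irreducible components have finite $K$-point sets, and this is really just an application of the Galois-transitivity of the geometric component decomposition together with the fact that distinct $1$-dimensional irreducible subvarieties of a variety intersect in a $0$-dimensional scheme. Everything else is the standard correspondence between smooth projective curves and function fields in one variable over a perfect field, with the pseudofinite hypothesis on $K$ used only through the perfectness needed to identify geometric irreducibility with geometric integrality and to guarantee the normalization of a reduced $1$-dimensional variety is smooth.
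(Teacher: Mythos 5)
Your proof is correct and takes essentially the same approach as the paper: decompose into irreducible components, reduce to the geometrically integral case, and pass to the unique smooth projective model. The only cosmetic difference is that the paper reduces to the geometrically irreducible case in one step by replacing $V$ with the Zariski closure of $V(K)$ (which automatically kills the geometrically reducible and $0$-dimensional components), whereas you discard those components by the direct Galois-orbit argument; both are valid and land in the same place.
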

\begin{proof}
  Replacing $V$ with a closed subvariety, we may assume $V(K)$ is
  Zariski dense in $V(K^{alg})$.  This ensures that the irreducible
  components of $V$ are geometrically irreducible.  Let
  \[ V = D_1 \cup \cdots \cup D_n\]
  be the decomposition of $V$ into irreducible components.  Each $D_i$
  is a 1-dimensional irreducible variety over $K$, hence has a unique
  smooth projective model $C_i$.  Outside of finitely many exceptional
  points, there is a canonical bijection
  \[ D_1(K) \cup \cdots \cup D_n(K) \cong C_1(K) \sqcup \cdots \sqcup C_n(K). \qedhere\]
\end{proof}

\subsection{The theory of Frobenius periodic fields}\label{sec:tacpf}
Recall the Frobenius periodic fields $\Fr^q_\infty = (\Ff_q^{alg},\phi_q)$,
where $\phi_q$ is the $q$th power Frobenius.  There is an analogy
\begin{quote}
  finite fields : pseudofinite fields :: Frobenius periodic fields : e.c.\ periodic fields.
\end{quote}
Ax showed that a field $K$ is pseudofinite if and only if it is
elementarily equivalent to a non-principal ultraproduct of finite
fields.  The analogous thing happens here.
\begin{definition}
  $\TACPF$ is the theory of periodic fields $K_\infty$ such that
  \begin{enumerate}
  \item \label{tec1} $K_\infty \models \mathrm{ACF}$
  \item \label{tec2} $K_\infty$ is non-degenerate
  \item \label{tec3} $K_1$ is a model of the theory $T_{fin}$ of
    finite fields.
  \item \label{tec4} If $K_1$ has size $q < \infty$, then $\sigma$ acts as the $q$th power
    Frobenius on $K_\infty$.
  \end{enumerate}
\end{definition}
Ax showed that the models of $T_{fin}$ are exactly the finite and
pseudofinite fields.
\begin{lemma}\label{axish-0}
  The models of $\TACPF$ are exactly
  \begin{itemize}
  \item Models of $\ACPF$
  \item Frobenius periodic fields
  \end{itemize}
\end{lemma}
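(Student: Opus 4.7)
The plan is to prove both inclusions directly. Both directions turn on the fact that once $K_\infty$ is algebraically closed and $(K_\infty,\sigma)$ is non-degenerate, the fixed field $K_1$ is automatically perfect and has absolute Galois group $\hat{\Zz}$, so Ax's characterization of $T_{fin}$ reduces matters to the PAC condition.

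First I would check that every model of $\ACPF$ satisfies $\TACPF$. Conditions~(\ref{tec1}) and~(\ref{tec2}) are just (ec1) and (ec2) of Proposition~\ref{acpf-char}. For~(\ref{tec3}), I argue that $K_1$ is perfect: since $K_\infty$ is algebraically closed and equals $\bigcup_n K_n$, each element of $K_1^{alg}$ lies in some $K_n$, which is a cyclic (hence separable) extension of $K_1$ by non-degeneracy; a non-separable element of $K_1^{alg}$ would contradict this. Non-degeneracy also yields $\Gal(K_1) \cong \varprojlim_n \Zz/n\Zz \cong \hat{\Zz}$. Combined with (ec3) and the fact that a PAC field is infinite, $K_1$ is pseudofinite, so $K_1 \models T_{fin}$ by Ax's theorem. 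Condition~(\ref{tec4}) is vacuous because $K_1$ is infinite.

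Next I would check that every Frobenius periodic field $\Fr^q_\infty = (\Ff_q^{alg}, \phi_q)$ satisfies $\TACPF$: $\Ff_q^{alg} \models \mathrm{ACF}$ and is non-degenerate because $\Gal(\Ff_{q^n}/\Ff_q) \cong \Zz/n\Zz$ is generated by $\phi_q$; the fixed field $\Ff_q$ is finite hence a model of $T_{fin}$; and $\sigma = \phi_q$ by definition.

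For the converse, let $(K_\infty,\sigma) \models \TACPF$. By Ax, condition~(\ref{tec3}) says $K_1$ is either pseudofinite or finite. If $K_1$ is pseudofinite it is PAC, so conditions~(\ref{tec1})--(\ref{tec3}) give exactly (ec1)--(ec3), hence $(K_\infty,\sigma) \models \ACPF$ by Proposition~\ref{acpf-char}. If instead $K_1$ is finite, say $K_1 = \Ff_q$, then~(\ref{tec4}) forces $\sigma$ to act on $K_\infty$ as $x \mapsto x^q$. Every element of $K_\infty$ has finite orbit under $\sigma$, so satisfies $x^{q^n} = x$ for some $n$, placing it in $\Ff_{q^n} \subseteq \Ff_q^{alg}$; thus $K_\infty \subseteq \Ff_q^{alg}$. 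Conversely, $K_\infty \models \mathrm{ACF}$ and contains $\Ff_q$, so $K_\infty \supseteq \Ff_q^{alg}$. Therefore $(K_\infty,\sigma) = \Fr^q_\infty$.

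The main step requiring care is the perfectness argument in the $\ACPF$ direction (equivalently, verifying $K_1 \models T_{fin}$ from the abstract axioms), since the definition of $\TACPF$ explicitly mentions $T_{fin}$ while $\ACPF$ only demands PAC. Everything else is a direct unpacking of definitions, using non-degeneracy to pass between Galois-theoretic data on $K_1$ and the layered structure $(K_n)_n$.
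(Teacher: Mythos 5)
Your proof is correct and follows essentially the same route as the paper's: verify the $\TACPF$ axioms for each of the two classes, then split the converse according to whether $K_1$ is finite or infinite and quote Proposition~\ref{acpf-char}. The only difference is that you spell out in-line the argument that $K_1$ is perfect with $\Gal(K_1)\cong\hat{\Zz}$ (hence pseudofinite), whereas the paper delegates that step to Proposition~\ref{acpf-is-pseuf}; the content is the same.
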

\begin{proof}
  If $(K_\infty,\sigma) \models \ACPF$, then axioms (\ref{tec1}) and
  (\ref{tec2}) hold by definition, (\ref{tec3}) holds because $K_1$ is
  pseudofinite by Proposition~\ref{acpf-is-pseuf}, and (\ref{tec4}) is
  vacuous, as pseudofinite fields are infinite.  If $(K,\sigma)$ is
  the $q$th Frobenius periodic field $\Fr^q$, then all the axioms are
  trivial.  Conversely, suppose $(K_\infty,\sigma) \models \TACPF$.
  If $|K_1| = q < \infty$, then axiom (\ref{tec1}) forces $K_\infty \cong \Ff_q^{alg}$ and axiom (\ref{tec4}) forces $(K_\infty,\sigma) \cong
  \Fr^q$.  If $K_1$ is infinite, then (\ref{tec3})
  forces $K_1$ to be pseudofinite, hence PAC.  Then (\ref{tec1}) and
  (\ref{tec2}) ensure $(K_\infty,\sigma) \models \ACPF$.
\end{proof}
\begin{corollary}\label{axish-1}
  If $(K_\infty,\sigma)$ is a non-principal ultraproduct of Frobenius
  periodic fields, then $(K_\infty,\sigma) \models \ACPF$.
\end{corollary}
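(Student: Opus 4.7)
The plan is to combine \L o\'s's theorem with Lemma~\ref{axish-0}. The first step is to verify that $\TACPF$ is first-order axiomatizable in the language of periodic fields. Axiom (\ref{tec1}) is $\mathrm{ACF}$, axiom (\ref{tec2}) is the scheme asserting $[K_n : K_1] = n$ (one sentence per $n$), axiom (\ref{tec3}) is the scheme $T_{fin}$ applied to the sort $K_1$, and axiom (\ref{tec4}) is the scheme of sentences ``$|K_1| = q \rightarrow \forall x \in K_\infty : \sigma(x) = x^q$'', one for each prime power $q$. Every Frobenius periodic field $\Fr^q_\infty$ tautologically satisfies these axioms, so by \L o\'s's theorem any ultraproduct $(K_\infty,\sigma) = \prod_i \Fr^{q_i}_\infty / \mathcal{U}$ satisfies $\TACPF$ as well.

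The second step is to apply Lemma~\ref{axish-0}, which divides the models of $\TACPF$ into the Frobenius periodic fields and the models of $\ACPF$. A Frobenius periodic field has $K_1$ finite of some size $q$, and hence satisfies the first-order sentence ``$|K_1| = q$'' in the language of periodic fields. By \L o\'s's theorem, such a sentence can hold in the ultraproduct only if $\{i : q_i = q\} \in \mathcal{U}$. Under the standing convention (following Ax's analogous theorem for pseudofinite fields) that ``non-principal ultraproduct of Frobenius periodic fields'' means the $q_i$ are unbounded along $\mathcal{U}$, no such $q$ exists, so $K_1$ is forced to be infinite in the ultraproduct. Consequently the ultraproduct is not a Frobenius periodic field, and by Lemma~\ref{axish-0} it must model $\ACPF$.

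The argument is entirely routine; the only step that requires any thought is the verification that $\TACPF$ really is first-order, in particular that the non-degeneracy condition (\ref{tec2}) and the ``Frobenius acts as $x \mapsto x^q$ when $|K_1|=q$'' condition (\ref{tec4}) can be written as schemes. Neither of these is hard, and there is no real obstacle in the proof, beyond keeping track of the convention on ``non-principal''.
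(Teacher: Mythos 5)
Your proof is correct and takes the same route the paper implicitly intends: the corollary in the paper is stated without proof, being an immediate consequence of Lemma~\ref{axish-0} together with \L o\'s's theorem (each $\Fr^q_\infty$ satisfies $\TACPF$, so the ultraproduct does too, and being infinite it cannot be a Frobenius periodic field). Your extra care in spelling out why $\TACPF$ is first-order and in flagging the convention on ``non-principal'' is reasonable but does not change the argument.
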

\begin{lemma}\label{axish-2}
  If $(K_\infty,\sigma) \models \ACPF$ and $K_\infty$ has
  characteristic 0, then $(K_\infty,\sigma)$ is elementarily
  equivalent to an ultraproduct of Frobenius periodic fields $\Fr^p$
  with $p$ \emph{prime}.
\end{lemma}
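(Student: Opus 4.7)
The plan is to apply Lemma~\ref{ee-criterion} to reduce to matching substructures of absolute numbers. Concretely, it suffices to produce a non-principal ultraproduct $(L_\infty, \tau) = \prod_p \Fr^p_\infty / \mathcal{U}$ (over primes $p$) admitting an isomorphism $\Abs(K_\infty) \cong \Abs(L_\infty)$ of substructures. Since $K_\infty$ is algebraically closed of characteristic zero, $\Abs(K_\infty)$ has underlying field $\Qq^{alg}$, and its full substructure is recorded by $\sigma_0 := \sigma|_{\Qq^{alg}} \in \Gal(\Qq^{alg}/\Qq)$. By \L{}o\'s, any non-principal ultraproduct of the $\Fr^p_\infty$'s is algebraically closed of characteristic zero, so likewise $\Abs(L_\infty)$ has underlying field $\Qq^{alg}$, with substructure determined by $\tau_0 := \tau|_{\Qq^{alg}}$.

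The task therefore becomes: find an ultrafilter $\mathcal{U}$ on primes together with embeddings $\iota_p : \Qq^{alg} \hookrightarrow \Ff_p^{alg}$ such that the $\mathcal{U}$-ultralimit of the conjugated Frobeniuses $\iota_p^{-1} \circ \phi_p \circ \iota_p$ equals $\sigma_0 \in \Gal(\Qq^{alg}/\Qq)$. The driving input is \emph{\v{C}ebotarev's density theorem}: for any finite Galois $F/\Qq$ and any $g \in \Gal(F/\Qq)$, infinitely many rational primes $p$ admit a prime $\mathfrak{p}$ of $F$ with $\mathrm{Frob}_{\mathfrak{p}/p} = g$; equivalently, they admit embeddings $F \hookrightarrow \Ff_p^{alg}$ conjugating $\phi_p|_F$ to $g$.

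The construction then proceeds by diagonalization. Fix an exhaustion $\Qq = F_0 \subseteq F_1 \subseteq F_2 \subseteq \cdots$ of $\Qq^{alg}$ by finite Galois subextensions. Inductively, for each $n$, invoke \v{C}ebotarev to pick a new prime $p_n$ and an embedding $F_n \hookrightarrow \Ff_{p_n}^{alg}$ conjugating $\phi_{p_n}|_{F_n}$ to $\sigma_0|_{F_n}$; extend to $\iota_{p_n} : \Qq^{alg} \hookrightarrow \Ff_{p_n}^{alg}$ via Zorn's lemma, and fix $\iota_p$ arbitrarily for $p \notin \{p_n\}$. Take $\mathcal{U}$ to be any ultrafilter containing every tail $\{p_m : m \ge n\}$ (these have the finite intersection property because the $p_n$'s are chosen distinct, and the resulting $\mathcal{U}$ is automatically non-principal). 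Then for any $\alpha \in \Qq^{alg}$, some $F_n$ contains $\alpha$, and the identity $\iota_{p_m}^{-1}\phi_{p_m}\iota_{p_m}(\alpha) = \sigma_0(\alpha)$ holds for all $m \ge n$, hence on a $\mathcal{U}$-large set, giving $\tau_0(\alpha) = \sigma_0(\alpha)$ as desired.

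The main obstacle is the combinatorial coherence of this diagonalization: matching $\sigma_0$ simultaneously on larger and larger Galois subfields $F_n$ using a single sequence of primes and embeddings. \v{C}ebotarev supplies the key non-emptiness at each stage, while the nested tower structure automatically propagates a match on $F_n$ down to every $F_m \subseteq F_n$, so the inductive choices remain compatible without further work.
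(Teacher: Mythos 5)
Your overall strategy --- invoke Lemma~\ref{ee-criterion}, build a non-principal ultraproduct of the $\Fr^p_\infty$'s, and use \v{C}ebotarev with a diagonalization over a Galois tower to match the action of $\sigma$ on $\Abs(K_\infty)$ --- is the right one, and is essentially the content that the paper leaves implicit behind its own ``By the Chebotarev density theorem'' step. But the way you set up the \v{C}ebotarev input contains a genuine error.

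You repeatedly speak of ``embeddings $F \hookrightarrow \Ff_p^{alg}$'' for finite Galois extensions $F/\Qq$, and of ``embeddings $\iota_p : \Qq^{alg} \hookrightarrow \Ff_p^{alg}$.'' No such embeddings exist: $F$ and $\Qq^{alg}$ have characteristic $0$ and $\Ff_p^{alg}$ has characteristic $p$, and any ring homomorphism between fields is injective, so a field embedding would force $\Qq \hookrightarrow \Ff_p^{alg}$, which is absurd. What you actually want is the standard apparatus of places: a prime $\overline{\mathfrak{p}}$ of $\overline{\Zz}$ over $p$ (equivalently a surjective ring homomorphism $\rho_p : \overline{\Zz} \twoheadrightarrow \Ff_p^{alg}$), together with the decomposition group $D_p \subseteq \Gal(\Qq)$, the inertia group $I_p$, and the Frobenius element $\mathrm{Frob}_p \in D_p/I_p$. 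For a finite Galois $F/\Qq$ unramified at $p$, the restriction $\mathrm{Frob}_p|_F \in \Gal(F/\Qq)$ is well-defined, and \v{C}ebotarev (combined with the transitivity of $\Gal(F/\Qq)$ on primes above $p$) produces, for any $g \in \Gal(F/\Qq)$, infinitely many primes $p$ and primes $\mathfrak{p}|p$ in $\Oo_F$ with $\mathrm{Frob}_{\mathfrak{p}} = g$. Your diagonalization then goes through verbatim: choose the $\mathfrak{p}_n$'s on the tower $F_n$, extend each to a prime of $\overline{\Zz}$, and let $\mathcal{U}$ concentrate on the tails. The field embedding $\Qq^{alg} \to \prod_p \Ff_p^{alg}/\mathcal{U}$ is $\alpha \mapsto [\rho_p(\alpha)]_p$ (which is injective for a non-principal $\mathcal{U}$ because $\rho_p(\alpha) = 0$ for only finitely many $p$), and under this identification the ultralimit of Frobenius restricted to $\Abs$ agrees with $\sigma_0$ on each $F_n$, as desired. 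So the conceptual error is localized, but since ``$\iota_p^{-1} \circ \phi_p \circ \iota_p$'' has no meaning when $\iota_p$ is a reduction map, the central step of the argument as you wrote it is not well-posed. It is worth noting that the paper avoids this issue entirely by routing through the characteristic-zero periodic fields $(\Qq_p^{un},\sigma)$, doing the \v{C}ebotarev matching there (where the absolute numbers of both sides genuinely live inside $\overline{\Qq_p}$), and then passing to the residue field --- so no reduction-map bookkeeping is needed on the nose.
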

\begin{proof}
  For each prime $p$, let $\tilde{F}_p$ be the periodic field
  $(\Qq_p^{un},\sigma)$, where $\Qq_p^{un}$ is the maximal unramified
  algebraic extension of $\Qq_p$, and $\sigma$ induces the $p$th power
  Frobenius on the residue field.  By the Chebotarev density theorem,
  there is a non-principal ultraproduct $(\tilde{F}^*,\sigma)$ of
  $\tilde{F}_p$ such that
  \[ (\Abs(\tilde{F}^*),\sigma) \cong(\Abs(K),\sigma).\]
  Now $\tilde{F}^*$ has a $\sigma$-invariant valuation whose residue
  field is an ultraproduct $F^*$ of Frobenius periodic fields $\Fr^p$.
  Then $F^*$ has characteristic 0, the valuation is equicharacteristic
  0, and the residue map gives an isomorphism
  \[ (\Abs(\tilde{F}^*),\sigma) \cong (\Abs(F^*),\sigma).\]
  By Lemma~\ref{ee-criterion} and Corollary~\ref{axish-1}, $(K,\sigma) \equiv
  (F^*,\sigma)$.
\end{proof}
\begin{lemma}\label{axish-3}
  If $(K_\infty,\sigma) \models \ACPF$ and $K$ has characteristic $p >
  0$, then $K$ is elementarily equivalent to a non-principal
  ultraproduct of Frobenius periodic fields $\Fr^q$, with $q$ ranging
  over powers of $p$.
\end{lemma}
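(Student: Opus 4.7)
The plan is to mimic the proof of Lemma~\ref{axish-2}, but the positive characteristic case is actually easier because we may vary the exponent $n$ in $\Fr^{p^n}$ freely over all of $\Nn$, so no analogue of Chebotarev density is needed. First, extract the invariant of $(K,\sigma)$ that must be matched. Since $K_\infty \models \mathrm{ACF}$ has characteristic $p$, we have $\Abs(K_\infty) = \Ff_p^{alg}$, and $\sigma$ fixes the prime field, so it restricts to an element $\tau \in \Gal(\Ff_p^{alg}/\Ff_p) \cong \hat{\Zz}$ (identifying the topological generator $\phi_p$ with $1 \in \hat{\Zz}$). The substructure $\Abs(K)$ is then determined up to isomorphism by the datum $(\Ff_p^{alg},\tau)$, regarded as a sub-periodic field.

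Next, I construct an ultraproduct realizing this invariant. Pick a sequence of positive integers $(n_i)_{i \in \Nn}$ with $n_i \equiv \tau \pmod{i!}$ and $n_i \geq i$ (take the least positive residue of $\tau$ modulo $i!$ and add a sufficiently large multiple of $i!$); such a sequence forces $n_i \to \tau$ in the profinite topology on $\hat{\Zz}$ and $n_i \to \infty$. Let $\mathcal{U}$ be any non-principal ultrafilter on $\Nn$ and set $F^* := \prod_i \Fr^{p^{n_i}} / \mathcal{U}$. Since $|F^*_1| = \lim_{\mathcal{U}} p^{n_i} = \infty$, the ultraproduct $F^*$ is not a Frobenius periodic field, so Corollary~\ref{axish-1} gives $F^* \models \ACPF$.

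It remains to verify $\Abs(F^*) \cong \Abs(K)$. The field $\Ff_p^{alg}$ embeds diagonally in every factor $\Fr^{p^{n_i}}$, hence in $F^*_\infty$, and because $F^*_\infty$ is algebraically closed of characteristic $p$, this embedding realizes $\Abs(F^*_\infty) = \Ff_p^{alg}$. For any $x \in \Ff_{p^m}$, the action of $\sigma$ in the $i$th factor is $x \mapsto x^{p^{n_i}}$, which depends only on $n_i \bmod m$; for $i$ large enough that $m \mid i!$ we have $n_i \equiv \tau \pmod m$ by construction, so the ultraproduct value equals $\tau(x)$. Hence the diagonal copy of $\Ff_p^{alg}$ inside $F^*$ carries the $\sigma$-action $\tau$, giving an isomorphism $\Abs(F^*) \cong (\Ff_p^{alg},\tau) \cong \Abs(K)$ of substructures. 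Applying Lemma~\ref{ee-criterion} (both $K$ and $F^*$ are models of ACPF) concludes that $(K,\sigma) \equiv (F^*,\sigma)$.

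There is no deep obstacle here; the only subtle point is the computation in the last paragraph, where one has to confirm that profinite convergence $n_i \to \tau$ in $\hat{\Zz}$ is precisely what is needed to make the diagonally embedded $\Ff_p^{alg}$ inherit the prescribed Galois action. Everything else is routine once the right sequence $(n_i)$ and the right invariant $\tau$ have been isolated.
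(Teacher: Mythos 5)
Your proof is correct and is exactly what the paper intends: the paper's own proof of this lemma is simply ``Similar to Lemma~\ref{axish-2}, but easier (no valuations or Chebotarev),'' and you have carried out precisely that simplification, choosing exponents $n_i$ converging to $\tau \in \hat{\Zz}$ profinitely (and tending to infinity to ensure the ultraproduct is genuinely infinite, hence a model of $\ACPF$ by Corollary~\ref{axish-1}), and then invoking Lemma~\ref{ee-criterion}. The verification that the diagonal $\Ff_p^{alg}$ in $F^*_\infty$ carries the $\sigma$-action $\tau$ is exactly the substitute for the Chebotarev step in the characteristic-zero case.
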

\begin{proof}
  Similar to Lemma~\ref{axish-2}, but easier (no valuations or Chebotarev).
\end{proof}
\begin{proposition}\label{acpf-is-ultraprod}
\hfill
  \begin{enumerate}
  \item A periodic field $(K,\sigma)$ is existentially closed if and
    only if it is elementarily equivalent to a non-principal
    ultraproduct of Frobenius periodic fields.
  \item The elementary class generated by Frobenius periodic fields
    consists of the Frobenius periodic fields and existentially closed
    periodic fields.
  \item $\TACPF$ is the theory of Frobenius periodic fields.
  \end{enumerate}
\end{proposition}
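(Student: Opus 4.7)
The plan is to deduce all three parts by assembling Lemma~\ref{axish-0}, Corollary~\ref{axish-1}, and Lemmas~\ref{axish-2} and~\ref{axish-3}; the substantive work has already been done, so what remains is bookkeeping in the right order. I would prove the parts in the order (1), (3), (2), so that each invokes only earlier items.

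First I prove part~(1). The backward direction is immediate: a non-principal ultraproduct of Frobenius periodic fields models $\ACPF$ by Corollary~\ref{axish-1}, and $\ACPF$, being first-order axiomatizable via Proposition~\ref{acpf-char}, is preserved under elementary equivalence. For the forward direction, split on the characteristic of $K$: if $\characteristic(K)=0$, apply Lemma~\ref{axish-2}; if $\characteristic(K)=p>0$, apply Lemma~\ref{axish-3}. Either case produces an explicit ultraproduct of $\Fr^q_\infty$'s elementarily equivalent to $(K,\sigma)$.

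Next I prove part~(3). By Lemma~\ref{axish-0}, every Frobenius periodic field satisfies $\TACPF$, so $\TACPF$ is contained in the theory of Frobenius periodic fields. For the converse, let $(K,\sigma)\models\TACPF$ and let $\varphi$ be a sentence true in every Frobenius periodic field; I must show $(K,\sigma)\models\varphi$. Lemma~\ref{axish-0} leaves two cases. Either $(K,\sigma)$ is itself Frobenius periodic, in which case we are done, or $(K,\sigma)\models\ACPF$, in which case part~(1) produces an ultraproduct $\prod_i \Fr^{q_i}_\infty/\mathcal{U}$ that is elementarily equivalent to $(K,\sigma)$; by \L{}o\'s's theorem the ultraproduct satisfies $\varphi$, hence so does $(K,\sigma)$. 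Part~(2) then drops out formally: the elementary class generated by the Frobenius periodic fields is by definition the class of models of their common theory, which by part~(3) equals $\Mod(\TACPF)$, and by Lemma~\ref{axish-0} this class is exactly the union of the Frobenius periodic fields with the existentially closed periodic fields.

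There is no genuine obstacle, since all the analytic content lives in the preparatory lemmas---the Chebotarev density input to Lemma~\ref{axish-2}, the equicharacteristic-zero residue field trick common to Lemmas~\ref{axish-2} and~\ref{axish-3}, and the amalgamation argument underlying the completeness statement of Lemma~\ref{ee-criterion}. The only mild subtlety is to keep the order of parts straight so that part~(3) may quote part~(1), and part~(2) may quote part~(3).
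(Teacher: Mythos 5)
Your proof is correct and is exactly the assembly the paper intends: Proposition~\ref{acpf-is-ultraprod} is stated without an explicit proof, being a bookkeeping consequence of Lemma~\ref{axish-0}, Corollary~\ref{axish-1}, and Lemmas~\ref{axish-2}--\ref{axish-3}, and your ordering (1), (3), (2) with the characteristic split in (1) and the two-case appeal to Lemma~\ref{axish-0} in (3) matches that intent.
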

Let $T_{prime}$ be the theory of finite prime fields $\Ff_p$.  Ax
showed that the models of $T_{prime}$ are exactly the following:
\begin{itemize}
\item The finite prime fields $\Ff_p$
\item The pseudofinite fields of characteristic 0.
\end{itemize}
Analogously, one can show:
\begin{proposition}
\hfill
  \begin{enumerate}
  \item A periodic field $(K,\sigma)$ is existentially closed of
    characteristic 0 if and only if it is elementarily equivalent to a
    non-principal ultraproduct of prime Frobenius periodic fields.
  \item The elementary class generated by prime Frobenius periodic
    fields consists of:
    \begin{itemize}
    \item Prime Frobenius periodic fields
    \item Existentially closed periodic fields of characteristic 0
    \end{itemize}
  \item The theory of prime Frobenius periodic fields is
    axiomatized by $\TACPF$ and the statement that $K_1 \models
    T_{prim}$.
  \end{enumerate}
\end{proposition}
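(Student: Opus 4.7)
The plan is to mirror the proof of Proposition~\ref{acpf-is-ultraprod}, using Lemma~\ref{axish-2} (which already produces ultraproducts of $\Fr^p$ with $p$ prime) in place of the two-case argument via Lemmas~\ref{axish-2} and~\ref{axish-3}, and substituting Ax's characterization of models of $T_{prime}$ (the finite prime fields together with the pseudofinite fields of characteristic zero) for his characterization of models of $T_{fin}$.

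For Part~(1), I would first observe that a non-principal ultraproduct of prime Frobenius periodic fields $\Fr^p$, indexed by the primes, has characteristic zero, since the ultrafilter cannot concentrate on any single prime; Corollary~\ref{axish-1} then gives an ACPF model. Conversely, Lemma~\ref{axish-2} already supplies the required ultraproduct, and it must be non-principal because a principal one would reduce to a single $\Fr^p$ of positive characteristic.

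For Part~(2), \L{}o\'{s}'s theorem shows that every structure in the elementary class is elementarily equivalent to some ultraproduct $\prod \Fr^{p_i}/\mathcal{U}$. If $\mathcal{U}$ concentrates on a single prime $p$, then because each sort $K_n = \Ff_{p^n}$ of $\Fr^p$ is finite, the ultraproduct is isomorphic to $\Fr^p$, and any structure elementarily equivalent to it must also be $\Fr^p$ by Lemma~\ref{axish-0} (the first-order condition $|K_1|=p$ rules out the ACPF case). Otherwise Part~(1) applies. In the reverse direction, prime Frobenius periodic fields are trivially in the class, and characteristic-zero ACPF models are in the class by Part~(1).

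For Part~(3), each prime Frobenius periodic field $\Fr^p$ satisfies $\TACPF$ and has $K_1 = \Ff_p \models T_{prime}$. Conversely, a model of $\TACPF$ with $K_1 \models T_{prime}$ is, by Lemma~\ref{axish-0}, either a Frobenius periodic field $\Fr^q$---in which case $\Ff_q \models T_{prime}$ forces $q$ to be prime---or an ACPF model with $K_1$ pseudofinite, in which case $K_1 \models T_{prime}$ forces characteristic zero by Ax, and Part~(1) then places it in the elementary class. There is no substantive obstacle; the hard work (Chebotarev density, the twisted residue field construction) was already done in Lemma~\ref{axish-2}. The only subtle point is the implicit convention in Part~(1) that ``non-principal'' refers to indexings where the residue characteristic tends to infinity, since a non-principal ultrapower of a single $\Fr^p$ is just $\Fr^p$ by finiteness of the sorts.
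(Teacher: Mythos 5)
The paper leaves this proposition as an exercise, indicating only that it should be proved "analogously" to Proposition~\ref{acpf-is-ultraprod} via the preceding lemmas, and your proposal does exactly that, correctly: Part~(1) follows from Lemma~\ref{axish-2} and Lemma~\ref{axish-0}/Corollary~\ref{axish-1}, Part~(2) from the case split on whether the ultrafilter concentrates on a single residue characteristic, and Part~(3) from Lemma~\ref{axish-0} combined with Ax's description of models of $T_{prime}$. Two small remarks: the fact that every model of $Th(\mathcal{C})$ is elementarily equivalent to an ultraproduct of members of $\mathcal{C}$ is not \L{}o\'{s}'s theorem but a compactness/Frayne-type argument (\L{}o\'{s} gives the converse inclusion), and you are right that "non-principal ultraproduct of (prime) Frobenius periodic fields" must tacitly exclude ultrafilters concentrating on a single residue characteristic---note that the paper's own Corollary~\ref{axish-1} and Proposition~\ref{acpf-is-ultraprod} already rely on this same unstated convention, so you are merely making explicit a gloss the paper takes for granted.
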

We leave the proof as an exercise to the reader.

\section{Proof of the main theorem}\label{sec:main}
\subsection{The implicit definition}\label{sec-beth}
We will use the following forms of Beth implicit definability:
\begin{fact}[= Theorem~6.6.4 in \cite{hodges-long}] \label{beth-version}
  Let $L^+ \supseteq L^-$ be languages.  Let $T^-$ be an $L^-$ theory
  and $T^+$ be an $L^+$ theory extending $T^-$.  Let $\phi(x)$ be an
  $L^+$ formula.  Suppose that whenever $N \models T^-$, and $M_1^+$
  and $M_2^+$ are two expansions of $N$ to a model of $T^+$, that
  $\phi(M_1^+) = \phi(M_2^+)$.  Then there is an $L^-$-formula
  $\psi(x)$ such that $T^+ \vdash \phi \leftrightarrow \psi$.
\end{fact}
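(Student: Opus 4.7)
The plan is to derive this by the classical two-copies compactness argument, reducing to the Craig interpolation theorem (equivalently, to Robinson's joint consistency theorem). The whole point is that the hypothesis of Beth's theorem is exactly the semantic form of the Robinson hypothesis for two copies of $T^+$ glued along $T^-$.

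First, I would form two renamings $L^+_1, L^+_2$ of $L^+$ which agree with $L^-$ on their common fragment but whose extra symbols are disjoint copies of $L^+ \setminus L^-$. Let $T^+_i$ and $\phi_i(x)$ denote the rewriting of $T^+$ and $\phi(x)$ into $L^+_i$. Introduce a fresh tuple of constants $\bar c$ of the appropriate arity, and consider
\[
T^* \;=\; T^+_1 \,\cup\, T^+_2 \,\cup\, \{\phi_1(\bar c),\ \neg \phi_2(\bar c)\}
\]
in the combined language $L^+_1 \cup L^+_2 \cup \{\bar c\}$. The hypothesis forces $T^*$ to be inconsistent, because any model of $T^*$ would, via its two reducts, supply two expansions of a single $L^-$-model $N \models T^-$ to models of $T^+$ on which $\phi$ has distinct extensions, as witnessed by $\bar c$.

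Second, I would apply compactness and the deduction theorem to get
\[
T^+_1 \cup T^+_2 \;\vdash\; \phi_1(\bar c) \,\to\, \phi_2(\bar c).
\]
Now invoke Craig interpolation: the antecedent lives in $L^+_1 \cup \{\bar c\}$ and the consequent in $L^+_2 \cup \{\bar c\}$, and their shared vocabulary is precisely $L^- \cup \{\bar c\}$. This produces an $L^-$-formula $\psi(\bar c)$ with $T^+_1 \vdash \phi_1(\bar c) \to \psi(\bar c)$ and $T^+_2 \vdash \psi(\bar c) \to \phi_2(\bar c)$. Since each $T^+_i$ is a renaming of $T^+$ and $\psi$ lies in the shared language (so is unaffected by the renaming), both statements descend to $T^+$. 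Replacing the generic constants $\bar c$ by the variables $x$ yields $T^+ \vdash \phi(x) \leftrightarrow \psi(x)$, as required.

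The only delicate point is the interpolation step: one must carefully track the free constants $\bar c$, which belong to both sides, to ensure that the extracted interpolant is a genuine $L^-$-formula in the free variables $x$ rather than a formula accidentally mentioning one of the disjoint extra symbols. The cleanest packaging, which avoids any explicit proof-theoretic manipulation, is Robinson's joint consistency theorem applied to $T^+_1 \cup \{\phi_1(\bar c)\}$ and $T^+_2 \cup \{\neg \phi_2(\bar c)\}$ over their common $(L^- \cup \{\bar c\})$-theory; the resulting consistency failure is exactly the Beth conclusion. Either way, Craig interpolation (or its semantic twin) is the sole input beyond bare compactness.
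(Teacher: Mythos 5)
The paper states this as a Fact with a citation to Hodges (Theorem~6.6.4) and gives no proof of its own, so there is nothing to compare against in the paper. Your proposal is the standard textbook derivation of Beth's definability theorem from Craig interpolation (equivalently Robinson joint consistency), and it is essentially correct: form two disjoint renamings $L^+_1, L^+_2$ of $L^+$ over the common reduct $L^-$, show that the hypothesis forces $T^+_1 \cup T^+_2 \cup \{\phi_1(\bar c), \neg\phi_2(\bar c)\}$ to be inconsistent, and extract an interpolant in $L^- \cup \{\bar c\}$. One small packaging point worth being careful about: before you can invoke Craig interpolation you need to pass (by compactness) to \emph{finite} pieces $S_1 \subseteq T^+_1$ and $S_2 \subseteq T^+_2$ and interpolate the single-sentence implication $\bigwedge S_1 \wedge \phi_1(\bar c) \vdash \bigwedge S_2 \to \phi_2(\bar c)$, whose sides then genuinely lie in $L^+_1 \cup \{\bar c\}$ and $L^+_2 \cup \{\bar c\}$ respectively; the phrase ``$T^+_1 \cup T^+_2 \vdash \phi_1(\bar c)\to\phi_2(\bar c)$'' has the background theories straddling both languages, so Craig does not apply to it as stated. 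You flag this kind of bookkeeping yourself at the end, and the Robinson joint consistency packaging you propose does sidestep the issue cleanly.
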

\begin{corollary}\label{beth-2}
  Let $L^+ \supseteq L^-$ be languages.  Let $T^-$ be an $L^-$ theory
  and $T^+$ be an $L^+$ theory extending $T^-$.  Suppose that
  \begin{itemize}
  \item $T^-$ is the theory of some (non-elementary) class $\mathcal{C}$ of
    $L^-$-structures.
  \item Every model of $T^-$ has at most one expansion to a model of
    $T^+$.
  \item Every model in $\mathcal{C}$ has at least one expansion to a
    model of $T^+$.
  \end{itemize}
  Then every model of $T^-$ has a unique expansion to a model of
  $T^+$, and $T^+$ is a definitional expansion of $T^-$.
\end{corollary}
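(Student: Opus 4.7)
The plan is to apply Beth's theorem (Fact~\ref{beth-version}) to each atomic $L^+$-formula, read off $L^-$-definitions of the new symbols, and then transfer the satisfaction of $T^+$ from $\mathcal{C}$ to all models of $T^-$ using the assumption that $T^- = \mathrm{Th}(\mathcal{C})$.

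First I would apply Fact~\ref{beth-version} symbol by symbol. Fix any atomic $L^+$-formula $\phi(\bar x)$ — for instance, $R(\bar x)$ for a new relation symbol or $f(\bar x) = y$ for a new function symbol. The hypothesis of Fact~\ref{beth-version} requires $\phi(M_1^+) = \phi(M_2^+)$ for any two expansions $M_1^+, M_2^+$ of the same $N \models T^-$; but by the ``at most one expansion'' assumption we have $M_1^+ = M_2^+$ as $L^+$-structures whenever both exist, so the hypothesis holds vacuously. Hence there is an $L^-$-formula $\psi_\phi(\bar x)$ with $T^+ \vdash \phi \leftrightarrow \psi_\phi$. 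Doing this for each new symbol yields an $L^-$-definition for every symbol in $L^+ \setminus L^-$, and hence for every $L^+$-formula an equivalent $L^-$-formula modulo $T^+$.

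Next I would show existence of expansions. Given any $N \models T^-$, define an $L^+$-expansion $N^+$ by interpreting each new symbol using its $L^-$-definition $\psi$. It remains to check $N^+ \models T^+$. Let $\phi \in T^+$ be any sentence, and let $\phi^\ast$ be the $L^-$-sentence obtained by replacing each $L^+$-symbol in $\phi$ by its definition. By construction, for any $L^-$-structure $M$ and its canonical $L^+$-expansion $M^+$, we have $M \models \phi^\ast \iff M^+ \models \phi$. Now for every $M \in \mathcal{C}$, the assumed expansion of $M$ to a model of $T^+$ must coincide with $M^+$ (by uniqueness), so $M^+ \models \phi$ and therefore $M \models \phi^\ast$. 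Since $\mathcal{C} \models \phi^\ast$ and $T^- = \mathrm{Th}(\mathcal{C})$, we get $T^- \vdash \phi^\ast$, so $N \models \phi^\ast$, and hence $N^+ \models \phi$. Thus $N^+ \models T^+$. Uniqueness of the expansion is part of the hypotheses, so every model of $T^-$ has a unique expansion to a model of $T^+$, and the $\psi$'s witness that $T^+$ is a definitional expansion of $T^-$.

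The only delicate point is the transfer from $\mathcal{C}$ to all models of $T^-$; I do not expect this to be a real obstacle since it is exactly what the hypothesis $T^- = \mathrm{Th}(\mathcal{C})$ is designed to deliver. Everything else is bookkeeping: applying Beth's theorem to each atomic formula and checking that the defined interpretation actually models $T^+$.
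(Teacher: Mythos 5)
Your proof is correct, and it takes a genuinely different (more syntactic) route than the paper's. The paper first establishes \emph{existence}: given $M \models T^-$, it invokes the fact that $M$ is elementarily equivalent to an ultraproduct $M'$ of members of $\mathcal{C}$, expands $M'$ via \L{}o\'s's theorem, and only then applies Beth to see the expansion is $0$-definable and can be transferred from $M'$ to $M$. You instead apply Beth first, extract explicit $L^-$-definitions $\psi_\phi$ of the new symbols (legitimate because the ``at most one expansion'' hypothesis makes the antecedent of Fact~\ref{beth-version} hold automatically), and then handle existence by a purely syntactic transfer: each translated axiom $\phi^\ast$ holds in every member of $\mathcal{C}$, hence lies in $\mathrm{Th}(\mathcal{C}) = T^-$, hence holds in every $N \models T^-$. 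This avoids the ultraproduct characterization of models of $\mathrm{Th}(\mathcal{C})$ and \L{}o\'s altogether; what it buys is a shorter, more elementary argument that also makes the definitional expansion completely explicit via the $\psi$'s.

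Two small points. When you say the assumed expansion of $M \in \mathcal{C}$ ``must coincide with $M^+$ (by uniqueness),'' the uniqueness hypothesis is not the right justification, since at that moment you do not yet know $M^+ \models T^+$. The correct reason is the Beth equivalences: the assumed expansion $M'$ satisfies $T^+ \vdash \phi \leftrightarrow \psi_\phi$, so $M'$ interprets each new symbol by $\psi_\phi(M)$, which is exactly $M^+$; then $M^+ = M' \models T^+$ and your argument proceeds. Secondly, if $L^+$ adds function or constant symbols, you should note that the $L^-$-formula $\psi_f(\bar x, y)$ really defines a total function graph in an arbitrary $N \models T^-$; this follows by the same transfer (``$\psi_f$ is a total function graph'' is an $L^-$-sentence true throughout $\mathcal{C}$, hence in $T^-$). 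Neither point affects the soundness of the overall argument.
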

\begin{proof}
  If $M \models T^-$, then $M$ is elementarily equivalent to an
  ultraproduct
  \begin{equation*}
    M \equiv M' = \prod_{i \in I} M_i /\mathcal{U}
  \end{equation*}
  of structures $M_i \in \mathcal{C}$.  Each $M_i$ can be expanded to
  a model of $T^+$, so the same holds for the ultraproduct $M'$.  By
  Fact~\ref{beth-version} and the assumptions, the $T^+$-structure on
  $M'$ is 0-definable from the $T^-$-structure.  Therefore the
  $T^+$-structure transfers along the elementary equivalence $M'
  \equiv M$, giving a $T^+$-structure on $M$.  So every model of $T^-$
  expands to a model of $T^+$ in a unique way.  By
  Fact~\ref{beth-version}, $T^+$ is a definitional expansion of $T^-$.
\end{proof}
We will apply both versions of implicit definability in the following
context:
\begin{itemize}
\item The language $L^-$ is the language of periodic fields.
\item The theory $T^-$ is $\TACPF$, the theory of Frobenius periodic
  fields as in \S\ref{sec:tacpf}.
\item $\mathcal{C}$ is the class of Frobenius periodic fields.
\item The language $L^+$ is the expansion of $L^-$ by a new predicate
  $P_{\phi,n,k}(\vec{y})$ for every formula
  $\phi(\vec{x};\vec{y}) \in L^-$, every $n \in \Nn$, and every $k \in
  \Zz/n\Zz$.
\end{itemize}
The theory $T^+$ is $T^-$ plus the following axioms:
\begin{enumerate}
\item \label{t-plus-1} For every $\phi$, $n$, and
  $b$, there is a unique $k \in \Zz/n\Zz$ such that $P_{\phi,n,k}(b)$ holds.
\item \label{t-plus-2} If $\phi(K;b) = \phi'(K;b')$, then
  \[ P_{\phi,n,k}(b) \iff P_{\phi',n,k}(b').\]
\item \label{t-plus-3} If $X$ is a definable set $\phi(K;b)$, let
  $\chi_n(X)$ denote the unique $k$ such that $P_{\phi,n,k}(b)$
  holds.  (This is well-defined by (\ref{t-plus-1}) and
  (\ref{t-plus-2}).)  Then $\chi_n$ is a strong Euler characteristic
  for each $n$.
\item \label{t-plus-4} The diagram
  \begin{equation*}
    \xymatrix{\Def(M) \ar[r]^{\chi_n} \ar[rd]^{\chi_m} & \Zz/n\Zz \ar[d]
      \\ & \Zz/m\Zz}
  \end{equation*}
  commutes when $m$ divides $n$.
\item \label{t-plus-5} Let $C$ be a genus-$g$ curve
  over $K_1$, and let $J$ be its Jacobian.  Let $p^k$ be a prime
  power.  Let $h$ be the function from Corollary~\ref{trace-count-2}.
  If $\characteristic(K) \ne p$ or if $|K_1| > h(g,p,k)$, then
  $\chi_{p^k}(C(K_1))$ is given by the formula
  \begin{equation*}
    \chi_{p^k}(C(K_1)) = 1 - \Tr(\sigma | J[p^k]) + \Tr(\sigma | \Gg_m[p^k]).
  \end{equation*}
  Here, if $G$ is a commutative group variety over $K_1$, then
  $\Tr(\sigma | G[n])$ denotes the trace of the action of $\sigma$ on
  the group of $n$-torsion in $G(K_\infty)$.
\end{enumerate}
Axioms \ref{t-plus-1}-\ref{t-plus-4} encode the statement that $\chi$
is a $\hat{\Zz}$-valued strong Euler characteristic, and Axiom
\ref{t-plus-5} determines its value on curves.  We discuss why Axiom
\ref{t-plus-5} is first-order in \S\ref{sec:interlude}.

\subsection{Uniqueness}
The ``existence'' part of Corollary~\ref{beth-2} has already been verified:
\begin{proposition}\label{finites-do-work}
  If $\Fr^q$ is a Frobenius periodic field, and $\chi$ is the counting
  Euler characteristic, then $\chi$ satisfies $T^+$.  In particular,
  $\Fr^q$ admits an expansion to a model of $T^+$.
\end{proposition}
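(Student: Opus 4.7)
The plan is to observe that in a Frobenius periodic field $\Fr^q = (\Ff_q^{alg},\phi_q)$, every definable set is finite (since each sort $K_n = \Ff_{q^n}$ is finite), so the counting function $\chi(X) = |X|$ is a well-defined $\Zz$-valued function on $\Def(\Fr^q)$. The composites $\chi_n(X) := |X| + n\Zz$ then supply a candidate interpretation of the new predicates: set $P_{\phi,n,k}(b)$ to hold iff $|\phi(\Fr^q;b)| \equiv k \pmod{n}$. The remainder of the proof is to verify axioms \ref{t-plus-1}--\ref{t-plus-5} for this choice, which I would do one at a time, with almost all work sitting in axiom \ref{t-plus-5}.

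Axioms \ref{t-plus-1} and \ref{t-plus-2} are immediate from the definition of $P_{\phi,n,k}$: for each fixed $\phi$, $n$, $b$ exactly one residue class $k$ satisfies $|\phi(\Fr^q;b)|\equiv k \pmod n$, and the congruence class depends only on the set $\phi(\Fr^q;b)$, not on the formula representing it. Axiom \ref{t-plus-3} reduces to the well-known fact, stated already in the introduction, that the counting function on a finite structure is a strong 0-definable $\Zz$-valued Euler characteristic; post-composing with the surjection $\Zz\twoheadrightarrow \Zz/n\Zz$ preserves all five defining clauses of a strong Euler characteristic. Axiom \ref{t-plus-4} is automatic, since the triangle in question is just the factorization $\Zz \twoheadrightarrow \Zz/n\Zz \twoheadrightarrow \Zz/m\Zz$ when $m\mid n$.

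The main content is axiom \ref{t-plus-5}, and this is precisely the reason we developed Corollaries~\ref{trace-count} and~\ref{trace-count-2}. For $C$ a genus-$g$ curve over $K_1=\Ff_q$ with Jacobian $J$ and $\sigma=\phi_q$, we want
\begin{equation*}
  |C(\Ff_q)| \equiv 1 - \Tr(\sigma\mid J[p^k]) + \Tr(\sigma\mid\Gg_m[p^k]) \pmod{p^k}.
\end{equation*}
When $p \ne \characteristic(\Fr^q)$, this is exactly the conclusion of Corollary~\ref{trace-count} applied with $\ell=p$. When $p = \characteristic(\Fr^q)$ and $|K_1|=q>h(p,k,g)$, this is the conclusion of Corollary~\ref{trace-count-2}. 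The hypothesis of axiom \ref{t-plus-5} was written so as to match exactly these two cases, so no further work is needed; one just has to cite the right corollary in each case.

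I do not expect any serious obstacle: the real mathematical labor went into the curve-counting corollaries in \S2, and the present proposition is essentially a bookkeeping step that packages them into the implicit-definability setup of \S\ref{sec-beth}. The only place requiring any care is making sure that the $\Tr$ on the right-hand side of axiom \ref{t-plus-5} is the same object in the two structures being compared, but this is automatic because $\Tr(\sigma\mid J[p^k])$ and $\Tr(\sigma\mid\Gg_m[p^k])$ are computed from the periodic-field structure on $\Fr^q$ alone, independent of the interpretation of the new predicates.
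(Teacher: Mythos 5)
Your proof is correct and follows the paper's approach exactly: axioms \ref{t-plus-1}--\ref{t-plus-4} are trivial bookkeeping once one notes counting is a strong Euler characteristic on essentially finite structures, and axiom \ref{t-plus-5} is precisely Corollaries~\ref{trace-count} and~\ref{trace-count-2}, split by whether $p$ equals the characteristic. The paper's own proof is a two-sentence version of the same argument; your expansion of the routine verifications is accurate and does not diverge in substance.
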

\begin{proof}
  Examining the definition of $T^+$, axioms
  (\ref{t-plus-1})-(\ref{t-plus-4}) merely say that $\chi$ is a
  $\hat{\Zz}$-valued strong Euler characteristic, which is trivial.
  Axiom (\ref{t-plus-5}) holds by Corollaries~\ref{trace-count} and
  \ref{trace-count-2}.
\end{proof}
Therefore, it remains to prove the ``uniqueness'' part.
Our goal is to show that on any $(K,\sigma) \models \TACPF$,
there is at most one $\hat{\Zz}$-valued Euler characteristic
satisfying the axioms of $T^+$.  Until
Proposition~\ref{there-can-only-be-one}, we will restrict our
attention to models of $\ACPF$.
\begin{remark}\label{silly}
  In Axiom~\ref{t-plus-5} of $T^+$, the condition ``$|K_1| >
  h(g,p,k)$'' is automatic when $K_1$ is infinite, i.e., when
  $(K_\infty,\sigma) \models \ACPF$.  Therefore, for models of ACPF,
  Axiom~\ref{t-plus-5} says the following: for any curve $C$ over
  $K_1$ with Jacobian $J$,
  \[ \chi_{p^k}(C(K_1)) = 1 - \Tr(\sigma | J[p^k]) + \Tr(\sigma | \Gg_m[p^k]).\]
  By the Chinese remainder theorem, this formula determines
  $\chi_n(C)$ for any $n$.
\end{remark}
\begin{lemma}\label{almost-unique-lemma-1}
  Let $(K_\infty,\sigma)$ be a model of $\ACPF$, admitting two
  expansions to a model of $T^+$.  Let $\chi$ and $\chi'$ be the
  corresponding $\hat{\Zz}$-valued strong Euler characteristics.  Then
  $\chi(X) = \chi'(X)$ for every unary definable set $X \subseteq
  K_1$.
\end{lemma}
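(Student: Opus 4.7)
The plan is to reduce $X$ to strata on which the difference $\delta := \chi - \chi'$ satisfies a Vandermonde-type linear system forcing it to vanish. By Proposition~\ref{all-in-zero}, $X$ is the image $\pi(V(K_1))$ under a quasi-finite morphism $\pi : V \to \Aa^1$ of $K_1$-varieties, so $\dim V \le 1$. If $\dim V = 0$, then $X$ is finite and $\chi(X) = \chi'(X) = |X|$ by the singleton axiom and additivity, so assume henceforth $\dim V = 1$.

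Since $\pi$ is quasi-finite, the fiber sizes $|\pi^{-1}(x) \cap V(K_1)|$ are bounded by some $N \in \Nn$, and $X$ decomposes definably as $X = \bigsqcup_{k=1}^N X_k$ where $X_k := \{x \in X : |\pi^{-1}(x) \cap V(K_1)| = k\}$. For each $m \ge 1$, let $\pi^{(m)} : V^{(m)} \to \Aa^1$ denote the $m$-fold fiber product $V \times_{\Aa^1} \cdots \times_{\Aa^1} V$ (with its reduced structure), again a quasi-finite $K_1$-variety of dimension $\le 1$ over $\Aa^1$. The fiber of $V^{(m)}(K_1) \to \Aa^1(K_1)$ over $x$ is the $m$-fold Cartesian power of $\pi^{-1}(x) \cap V(K_1)$, hence has size $|\pi^{-1}(x) \cap V(K_1)|^m$. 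Thus on each stratum $X_k$ the restriction $V^{(m)}(K_1) \cap (\pi^{(m)})^{-1}(X_k) \to X_k$ is a constant-fiber $k^m$-to-$1$ surjection, and applying the strong Euler characteristic property to $\chi$ and to $\chi'$ separately gives $\delta(V^{(m)}(K_1) \cap (\pi^{(m)})^{-1}(X_k)) = k^m \cdot \delta(X_k)$. Summing over $k$,
\[ \delta\bigl(V^{(m)}(K_1)\bigr) \;=\; \sum_{k=1}^N k^m \cdot \delta(X_k).\]

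The crucial input is that $\delta(V^{(m)}(K_1)) = 0$ for every $m \ge 1$. Indeed, since $V^{(m)}$ has dimension at most $1$, Lemma~\ref{qfree-to-curves-v2} puts $V^{(m)}(K_1)$ in definable bijection, modulo finite sets, with $\coprod_i C_{m,i}(K_1)$ for a finite collection of curves $C_{m,i}$ over $K_1$. On finite sets $\chi$ and $\chi'$ agree by the singleton axiom and additivity; on each $C_{m,i}(K_1)$ they agree because Axiom~\ref{t-plus-5} of $T^+$, combined with the Chinese remainder theorem as in Remark~\ref{silly}, pins down $\chi(C_{m,i}(K_1)) \in \hat{\Zz}$. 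So the linear system
\[ \sum_{k=1}^N k^m \cdot \delta(X_k) \;=\; 0 \qquad (m = 1, 2, \ldots, N)\]
holds in $\hat{\Zz}$. Its coefficient matrix is $\diag(1,2,\ldots,N)$ times a Vandermonde matrix, whose determinant is a nonzero rational integer; and $\hat{\Zz} \cong \prod_p \Zz_p$ is torsion-free as a $\Zz$-module, so Cramer's rule forces $\delta(X_k) = 0$ for every $k$, whence $\delta(X) = \sum_k \delta(X_k) = 0$.

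The main point to verify carefully is that each iterated fiber product $V^{(m)}$ really is a $1$-dimensional $K_1$-variety to which Lemma~\ref{qfree-to-curves-v2} applies, which is immediate from quasi-finiteness of $\pi$ and the fact that quasi-finiteness is preserved under fiber product. Beyond that, the argument is mechanical; the only genuine creative step is using the Vandermonde system to extract $\delta(X_k)$ from the known vanishing of $\delta(V^{(m)}(K_1))$.
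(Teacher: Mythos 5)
Your proposal is correct and follows essentially the same route as the paper's own proof: present $X$ as the image of a quasi-finite morphism via Proposition~\ref{all-in-zero}, form iterated fiber products, reduce $1$-dimensional varieties to curves via Lemma~\ref{qfree-to-curves-v2} and Axiom~\ref{t-plus-5}, and extract the stratum-wise equalities from a Vandermonde system using the $\Zz$-torsion-freeness of $\hat{\Zz}$. The only cosmetic difference is that you track $\delta = \chi - \chi'$ directly, while the paper introduces a notion of ``good'' sets and separate coefficients $\alpha_k, \beta_k$.
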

\begin{proof}
  Say that a definable set is \emph{good} if $\chi(X) = \chi'(X)$.  Finite sets
  are good.  If $X$ is in definable bijection with $Y$ and $X$ is
  good, then so is $Y$.  A disjoint union of two good sets is good.
  If $S$ is a cofinite subset of $X$, then $S$ is good if and only if
  $X$ is good.  Consequently, if a cofinite subset of $X$ is in
  definable bijection with a cofinite subset of $Y$, then $X$ is good
  if and only if $Y$ is good.

  If $C$ is a curve over $K_1$, then $C(K_1)$ is
  good, by Remark~\ref{silly}.  Any disjoint union of sets of this
  form is also good.  By Lemmas~\ref{qfree-to-curves-v2}, the set
  $V(K_1)$ is good for any 1-dimensional variety $X$ over $K_1$.

  Now let $X$ be a definable subset of $(K_1)^1$. By
  Proposition~\ref{all-in-zero}, $X$ is the image of $V_1(K_1) \to
  \Aa^1(K_1)$ for some morphism $V_1 \to \Aa^1$ of $K_1$-varieties
  with geometrically finite fibers.  Let $V_n$ denote the $n$-fold
  fiber product
  \begin{equation*}
    \underbrace{V_1 \times_{\Aa^1} V_1 \times_{\Aa^1} \cdots
      \times_{\Aa^1} V_1.}_{n \textrm{ times}}
  \end{equation*}
  Each of the morphisms $V_n \to \Aa^1$ has geometrically finite
  fibers, so each variety $V_n$ is 1-dimensional.  Hence each set
  \[ Y_n := V_n(K_1)\]
  is good.  Note that $Y_n$ is the $n$-fold fiber product of $Y_1$
  over $X$.

  Let $m$ be a bound on the size of the fibers of $Y_1 \to X$.  For $1
  \le k \le m$, let $X_k$ denote the set of $a \in X$ such that
  $f^{-1}(a)$ has size $m$.  Let $\alpha_k$ and $\beta_k$ denote
  $\chi(X_k)$ and $\chi'(X_k)$.

  Because $\chi$ and $\chi'$ are strong Euler characteristics,
  \begin{align*}
    \chi(Y_n) &= \sum_{k = 1}^m \alpha_k k^n \\
    \chi'(Y_n) &= \sum_{k = 1}^m \beta_k k^n 
  \end{align*}
  for all $n$.  As the $Y_n$'s are good,
  \begin{equation*}
    \sum_{k = 1}^m \alpha_k k^n = \sum_{k = 1}^m \beta_k k^n
  \end{equation*}
  for $n = 1, \ldots, m$.  By invertibility of the Vandermonde matrix
  $\langle k^n \rangle_{1 \le k \le m,~1 \le n \le m}$, and the fact
  that $\hat{\Zz}$ has no $\Zz$-torsion, it follows that $\alpha_k =
  \beta_k$ for all $k$.  Consequently,
  \begin{equation}
    \chi(X) = \sum_{k = 1}^m \alpha_k = \sum_{k = 1}^m \beta_k =
    \chi'(X).
  \end{equation}
  Therefore $X$ is good.
\end{proof}
\begin{lemma}\label{almost-unique-lemma-n}
  For any $n$, the following statements are true:
  \begin{description}
  \item[$(S_n)$] Let $(K_\infty,\sigma)$ be a model of ACPF, admitting
    two expansions to a model of $T^+$.  Let $\chi$ and $\chi'$ be the
    corresponding $\hat{\Zz}$-valued strong Euler characteristics.
    Then $\chi(X) = \chi'(X)$ for every definable subset $X \subseteq
    (K_1)^n$.
  \item[$(T_n)$] If $(K_\infty,\sigma)$ is a model of ACPF, admitting
    an expansion to a model of $T^+$, and $\chi$ is the corresponding
    $\hat{\Zz}$-valued strong Euler characteristic, then for every
    definable family $\{X_a\}_{a \in Y}$ of subsets of $(K_1)^n$, for
    every $m \in \Nn$ and for every $k \in \Zz/m\Zz$, the set
    \[ \{ a \in Y(K) : \chi(X_a) \equiv k \pmod{m}\}\]
    is definable in the $L^-$-reduct $(K_\infty,\sigma)$.
  \end{description}
\end{lemma}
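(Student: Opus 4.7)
The plan is to prove $(S_n)$ and $(T_n)$ simultaneously by induction on $n$. The base case $(S_1)$ is Lemma~\ref{almost-unique-lemma-1}, while $(T_1)$ will fall out of the Beth argument described below. For the inductive step I would establish two implications: $(S_n) \Rightarrow (T_n)$ via Beth implicit definability, and $(T_n) \wedge (S_1) \Rightarrow (S_{n+1})$ via the strong Euler characteristic property applied to a fiberwise decomposition.

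For $(S_n) \Rightarrow (T_n)$, apply Fact~\ref{beth-version} to each $L^+$-predicate $P_{\phi,m,k}(\vec{y})$, where $\phi(\vec{x};\vec{y}) \in L^-$ has $|\vec{x}| = n$. The Beth hypothesis requires that any two $T^+$-expansions of a model $N \models T^- = \TACPF$ assign the same set to $P_{\phi,m,k}$. When $N$ is a Frobenius periodic field, $K_1$ is finite, every definable family consists of finite sets, and any strong Euler characteristic must coincide with counting, so the $T^+$-expansion of $N$ is unique and the hypothesis holds trivially. When $N \models \ACPF$, the hypothesis says precisely that $\chi_m(X_{\vec{b}}) = \chi'_m(X_{\vec{b}})$ for each parameter $\vec{b}$, which is $(S_n)$ applied to each fiber $X_{\vec{b}} = \phi(K_1^n;\vec{b})$. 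Beth then produces an $L^-$-formula equivalent to $P_{\phi,m,k}$ modulo $T^+$; in any ACPF model with $T^+$-expansion, this formula defines the set $\{a \in Y(K) : \chi_m(X_a) = k\}$, giving $(T_n)$.

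For $(T_n) \wedge (S_1) \Rightarrow (S_{n+1})$, let $X \subseteq (K_1)^{n+1}$ be definable and view $X$ as the family $\{X_a\}_{a \in K_1}$ of subsets of $(K_1)^n$ where $X_a = \{\vec{y} : (a,\vec{y}) \in X\}$. Fix $m \in \Nn$ and, for each $k \in \Zz/m\Zz$, let $U_k = \{a \in K_1 : \chi_m(X_a) = k\}$. By $(S_n)$ applied fiberwise, the set $U_k$ is the same whether computed from $\chi$ or $\chi'$, and by $(T_n)$ it is $L^-$-definable. The strong Euler characteristic axiom, applied to the projection $X \cap (U_k \times (K_1)^n) \to U_k$ whose fibers all have $\chi_m$-value $k$, yields
\[ \chi_m(X \cap (U_k \times (K_1)^n)) = k \cdot \chi_m(U_k), \]
and similarly for $\chi'$. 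Since $U_k \subseteq K_1$, the base case $(S_1)$ gives $\chi_m(U_k) = \chi'_m(U_k)$. Summing over the finitely many $k \in \Zz/m\Zz$ then gives $\chi_m(X) = \chi'_m(X)$ for every $m$, hence $\chi(X) = \chi'(X)$.

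The main obstacle is the Beth step: one must remember that $T^- = \TACPF$ has two qualitatively different classes of models---Frobenius periodic fields, where uniqueness of $\chi$ is automatic, and ACPF models, where uniqueness is exactly what is being proved inductively---and check the Beth hypothesis separately in each. The rest of the induction is essentially bookkeeping with the strong Euler characteristic axiom, and this is where \emph{strength} (rather than mere additivity) is essential: plain additivity alone could not collapse the contribution of an infinite family of fibers into $k \cdot \chi_m(U_k)$.
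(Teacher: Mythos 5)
Your proof is correct and follows essentially the same structure as the paper's: induct on $n$, deduce $(T_n)$ from $(S_n)$ via Beth implicit definability, and establish $(S_{n+1})$ by slicing $X$ over its first coordinate and invoking the strong Euler characteristic property together with $(S_1)$ and definability of the slice-level sets $U_k$. One small labeling slip: you announce the inductive implication as $(T_n) \wedge (S_1) \Rightarrow (S_{n+1})$, but your argument also invokes $(S_n)$ (to know that $U_k$ is independent of whether one uses $\chi$ or $\chi'$), matching the paper's stated implication $(S_1 \wedge S_n \wedge T_n) \Rightarrow S_{n+1}$; since $(S_n)$ is available at that stage of the induction this is harmless.
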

\begin{proof}
  Statement $S_1$ is Lemma~\ref{almost-unique-lemma-1}.  The
  implication $S_n \implies T_n$ follow by Beth implicit definability.
  It suffices to show
  \[ (S_1 \text{ and } S_n \text{ and } T_n) \implies S_{n+1}.\]
  Assume the left hand side.  Let $(K_\infty,\sigma), \chi, \chi',$
  and $X \subseteq K_1 \times (K_1)^n$ be as in the statement of
  $S_{n+1}$.  Fix $m \in \Nn$; we claim $\chi_m(X) = \chi'_m(X)$.  For $t \in K_1$, let
  \[ X_t = \{\vec{x} \in (K_1)^n : (t,\vec{x}) \in X\}\]
  By statements $S_n$ and $T_n$, the sets
  \begin{align*}
    Y_k &= \{t \in K_1 : \chi(X_t) \equiv k \pmod{m}\} \\
    Y'_k &= \{t \in K_1 : \chi'(X_t) \equiv k \pmod{m}\}
  \end{align*}
  are equal and definable.  Because $\chi$ and $\chi'$ are strong Euler characteristics,
  \begin{align*}
    \chi_m(X) &= \sum_{k \in \Zz/m\Zz} k \cdot \chi_m(Y_k) \\
    \chi'_m(X) &= \sum_{k \in \Zz/m\Zz} k \cdot \chi'_m(Y'_k).
  \end{align*}
  Then $\chi_m(Y_k) = \chi'_m(Y_k)$ by statement $S_1$, so putting
  things together, $\chi_m(X) = \chi'_m(X)$.  As $m$ was arbitrary,
  $S_n$ holds.
\end{proof}
\begin{proposition}\label{there-can-only-be-one}
  If $(K,\sigma)$ is a model of $\TACPF$, then there is at most one
  expansion of $(K,\sigma)$ to a model of $T^+$.
\end{proposition}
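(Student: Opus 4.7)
By Lemma~\ref{axish-0}, a model $(K,\sigma)$ of $\TACPF$ is either a Frobenius periodic field $\Fr^q$ or a model of $\ACPF$, and I would dispatch these two cases separately. In the Frobenius case, every sort $K_n = \Ff_{q^n}$ is finite, so every definable set $X$ (being a subset of a finite product of sorts) is finite; then axiom (\ref{t-plus-3}) of $T^+$ forces $\chi_m(X) \equiv |X| \pmod{m}$ by induction on $|X|$ using $\chi_m(\emptyset)=0$, $\chi_m(\{a\})=1$, and finite additivity. So the expansion is uniquely determined.

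For the ACPF case, Lemma~\ref{almost-unique-lemma-n} (statement $S_n$ for all $n$) already gives uniqueness on definable subsets of $(K_1)^n$, so the remaining task is only to reduce arbitrary multi-sorted definable sets to subsets of some Cartesian power of $K_1$. For each $m$ I would pick a $K_1$-basis $e_1^{(m)},\ldots,e_m^{(m)}$ of $K_m$ (which exists and has size $m$ by Lemma~\ref{non-deg}) and form the coordinate bijection $K_1^m \to K_m$ sending $(a_1,\ldots,a_m) \mapsto \sum_{i=1}^m a_i \cdot e_i^{(m)}$. This map is definable in the multi-sorted language of periodic fields with the $e_i^{(m)}$ as parameters, because the inclusion $K_1 \hookrightarrow K_m$ and the field operations on $K_m$ are part of the language. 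Taking products of such bijections, any definable $X \subseteq K_{m_1}\times\cdots\times K_{m_r}$ is in parameter-definable bijection with a definable subset of $(K_1)^{m_1+\cdots+m_r}$. Since strong Euler characteristics are invariant under definable bijection, $\chi_n(X)$ is thereby pinned down by Lemma~\ref{almost-unique-lemma-n}, yielding uniqueness.

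The substantive work of the proposition sits entirely in Lemmas~\ref{almost-unique-lemma-1} and \ref{almost-unique-lemma-n}, so there is no real obstacle here; the one point requiring care is verifying that the coordinate isomorphism $K_m \cong K_1^m$ is genuinely parameter-definable in the periodic-field language rather than only in the language of difference fields. This follows from the fact that the inclusion maps $K_n \hookrightarrow K_m$ (for $n \mid m$) were explicitly included in the multi-sorted signature, so the formula $x = \sum_i a_i \cdot e_i^{(m)}$ with $a_i$ ranging over $K_1$ and $x$ over $K_m$ is a legitimate parameter-definable formula. With that observation in place, the proof is complete.
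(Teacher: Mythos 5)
Your proof is correct and follows essentially the same path as the paper's: split via Lemma~\ref{axish-0} into the Frobenius case (where essential finiteness forces $\chi$ to be counting) and the ACPF case (where the definable coordinate bijection $K_m \cong K_1^m$, obtained from a $K_1$-basis, reduces everything to Lemma~\ref{almost-unique-lemma-n}). You spell out the basis argument more explicitly than the paper, which simply asserts the bijection, but the structure and the invoked lemmas are identical.
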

\begin{proof}
  If $(K,\sigma)$ is a Frobenius periodic field, then $K_\infty$ is
  essentially finite and there is at most one $\hat{\Zz}$-valued Euler
  characteristic.  So assume $(K_\infty,\sigma) \models \ACPF$.  Let
  $\chi, \chi'$ be two $\hat{\Zz}$-valued Euler characteristics
  satisfying $T^+$.  Note that the sort $K_n$ is in definable
  bijection with $(K_1)^n$.  If $X$ is any definable set in
  $K_\infty$, then $X$ is therefore in definable bijection with a
  definable subset $Y \subseteq (K_1)^m$ for some $m$.  By statement
  $S_m$ of Lemma~\ref{almost-unique-lemma-n},
  \[ \chi(X) = \chi(Y) = \chi'(Y) = \chi'(X). \qedhere\]
\end{proof}
By Corollary~\ref{beth-2} and Proposition~\ref{finites-do-work}, we conclude
\begin{proposition}\label{exactly-one}
  If $(K,\sigma)$ is a model of $\TACPF$, then there is a unique
  expansion of $(K,\sigma)$ to a model of $T^+$.
\end{proposition}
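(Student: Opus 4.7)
The plan is to apply the Beth-style implicit definability result Corollary~\ref{beth-2} with the ingredients already set up in \S\ref{sec-beth}: take $L^-$ to be the language of periodic fields, $T^-$ to be $\TACPF$, $L^+$ to be the expansion of $L^-$ by the predicates $P_{\phi,n,k}$, $T^+$ to be the axiomatization listed in \S\ref{sec-beth}, and $\mathcal{C}$ to be the class of Frobenius periodic fields. With this setup, the conclusion of Corollary~\ref{beth-2} is exactly the statement we want (a unique expansion, which moreover is a definitional expansion).

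To apply Corollary~\ref{beth-2} I need to check its three hypotheses in turn. First, that $T^-$ is the theory of the class $\mathcal{C}$: this is precisely part~(3) of Proposition~\ref{acpf-is-ultraprod}, which identifies $\TACPF$ as the theory of Frobenius periodic fields. Second, that every model of $T^-$ has at most one expansion to a model of $T^+$: this is Proposition~\ref{there-can-only-be-one}. Third, that every $\mathcal{C}$-member has at least one expansion to a model of $T^+$: this is Proposition~\ref{finites-do-work}, which verifies that the usual counting Euler characteristic on a Frobenius periodic field satisfies axioms \ref{t-plus-1}--\ref{t-plus-5}, with \ref{t-plus-5} holding thanks to Corollaries~\ref{trace-count} and \ref{trace-count-2}.

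Having verified all three hypotheses, Corollary~\ref{beth-2} directly delivers the proposition: every model of $\TACPF$ has a unique expansion to a model of $T^+$. There is no real obstacle at this stage, since all the substantive work has already been carried out in the referenced results; the proof is pure assembly. The one small thing worth remarking on is that Corollary~\ref{beth-2} additionally yields that $T^+$ is a definitional expansion of $T^-$, which will be used later to deduce the definability clauses in Theorems~\ref{psy-thm} and \ref{ecpdf-thm}. This stronger conclusion will be extracted at the same time.
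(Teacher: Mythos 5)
Your proof is correct and matches the paper's own argument: the paper concludes Proposition~\ref{exactly-one} exactly by combining Corollary~\ref{beth-2} (with the hypotheses supplied by Proposition~\ref{acpf-is-ultraprod}, Proposition~\ref{there-can-only-be-one}, and Proposition~\ref{finites-do-work}). Nothing is missing; your version is just a more explicit spelling-out of the same assembly step.
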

\begin{theorem-star}[Theorem~\ref{ecpdf-thm}]
  Let $\mathcal{C}$ be the class of Frobenius periodic fields and
  existentially closed periodic fields.  There is a $\hat{\Zz}$-valued
  strong Euler characteristic $\chi$ on $(K,\sigma)$ in $\mathcal{C}$ with the
  following properties:
  \begin{itemize}
  \item $\chi$ is uniformly 0-definable across $\mathcal{C}$.
  \item If $(K,\sigma)$ is a Frobenius periodic field, then $\chi$ is
    the counting Euler characteristic:
    \[ \chi(X) = |X|.\]
  \item If $(K,\sigma)$ is an ultraproduct of Frobenius periodic
    fields, then $\chi$ is the nonstandard counting Euler
    characteristic.
  \end{itemize}
\end{theorem-star}
\begin{definition}
  The \emph{canonical Euler characteristic} on $(K,\sigma) \models
  \TACPF$ is the $\hat{\Zz}$-valued Euler characteristic of
  Theorem~\ref{ecpdf-thm}.
\end{definition}
\begin{remark}
  The canonical Euler characteristic $\chi$ is the \emph{only}
  $\hat{\Zz}$-valued Euler characteristic that is uniformly
  0-definable across all models of $\TACPF$.  Indeed, if $\chi'$ is
  another uniformly definable Euler characteristic, and
  \[ \chi(\phi(K;b)) \ne \chi'(\phi(K;b))\]
  for some model $K$ and tuple $b$, then $K$ is elementarily
  equivalent to an ultraproduct of Frobenius periodic fields, so we
  can in fact take $K$ to be a Frobenius periodic field.  But
  Frobenius periodic fields are essentially finite, so $\chi$ and
  $\chi'$ must agree on $K$, a contradiction.
\end{remark}
\begin{remark}
  There are other uniformly 0-definable $\hat{\Zz}$-valued strong
  Euler characteristics on $\ACPF$.  For example,
  \[ (K_\infty,\sigma) \models \ACPF \implies (K_\infty,\sigma^{-1}) \models \ACPF,\]
  and the canonical Euler characteristic on $(K_\infty,\sigma^{-1})$
  induces a non-canonical Euler characteristic on $(K_\infty,\sigma)$.
  We shall have more to say about this in \S\ref{sec:anti}.
\end{remark}

\section{An interlude on definability and computability}\label{sec:interlude}
This section discusses some of the technical issues related to Axiom
(\ref{t-plus-5}) in the definition of $T^+$.  If one is willing to
sweep these issues under the rug, this section can be skipped.
\begin{lemma}\label{lem:tobecontinued}
  In the definition of $T^+$, Axiom (\ref{t-plus-5}) is expressible by
  first-order sentences.
\end{lemma}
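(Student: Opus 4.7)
The plan is to exhibit Axiom~(\ref{t-plus-5}) as an infinite schema of first-order $L^+$-sentences $\{\tau_{g,p,k}\}$, one for each triple $(g,p,k)$ with $g \in \Nn$, $p$ prime, $k \in \Nn$. Since any set of first-order sentences is a first-order theory, this suffices. For each fixed $(g,p,k)$, the numerical quantities appearing in the axiom---the bound $h(g,p,k)$, the genus bound on $C$, and the prime power $p^k$---are concrete natural numbers, so the conditions ``$\characteristic(K) \ne p$'' and ``$|K_1| > h(g,p,k)$'' become specific first-order conditions on $(K_\infty,\sigma)$.

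For each fixed $g$, I would parameterize genus-$g$ curves over $K_1$ by a $\emptyset$-definable family $\{C_t\}_{t \in T_g}$, where $T_g$ is a definable subset of some affine space over $K_1$. For $g \ge 2$ one can use the tricanonical embedding into $\Pp^{5g-6}$; for $g = 1$ one uses Weierstrass form; for $g = 0$ the curve is just $\Pp^1$. In each case the defining data lies in a bounded-dimensional parameter space, and smoothness/geometric irreducibility cut out a definable locus. Next, for each fixed $g$, the construction of the Jacobian can be carried out uniformly over $T_g$: classical results (Chow, Grothendieck, Mumford) produce the relative Jacobian as a projective abelian scheme over $T_g$, embedded in some $\Pp^M$, with a definable group law. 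Given this, for fixed $p^k$ the torsion subgroup $J_t[p^k]$ is cut out inside $J_t(\Mm_\infty)$ by the condition $[p^k]\cdot x = 0$, a definable condition of bounded complexity. Its size is uniformly bounded by $p^{2gk}$, so (by cases on the size) one can definably pick a generating set, read off the matrix of $\sigma$ with entries in $\Zz/p^k\Zz$, and compute $\Tr(\sigma | J_t[p^k])$. The term $\Tr(\sigma | \Gg_m[p^k])$ is simpler still.

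Having assembled these ingredients, the sentence $\tau_{g,p,k}$ asserts: for every $t \in T_g$, if $\characteristic(K) \ne p$ or $|K_1| > h(g,p,k)$, then the unique residue $r \in \Zz/p^k\Zz$ with $P_{\phi_g,p^k,r}(t)$ --- where $\phi_g(x;t)$ is the formula defining $C_t(K_1)$ as a subset of the appropriate affine chart --- satisfies
\[ r = 1 - \Tr(\sigma | J_t[p^k]) + \Tr(\sigma | \Gg_m[p^k]). \]
Both sides are first-order: the left in $L^+$ via the $P_{\cdot,\cdot,\cdot}$ predicates, and the right in $L^-$ via the definable constructions above.

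The main obstacle is the uniform construction of the Jacobian in the second step: one needs, for each fixed $g$, a \emph{single} first-order formula (independent of $t \in T_g$) that produces $J_t$, its group law, and the $p^k$-torsion. Because we only need uniformity across $t \in T_g$ for fixed $g$ (not across varying $g$), this is covered by the classical theory of relative Jacobians of families of curves of fixed genus, and no uniformity in $g$ is required for the schema approach. The deeper question of making such algebraic-geometric constructions uniform in all parameters simultaneously is what Conjecture~\ref{horror} addresses; fortunately, it plays no role in the present lemma.
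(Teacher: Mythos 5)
Your approach is genuinely different from the paper's, and the difference is substantive. You \emph{construct} the Jacobian definably: parameterize genus-$g$ curves by a definable family $T_g$ (tricanonical embedding into $\Pp^{5g-6}$, Weierstrass form for $g=1$, and so on) and then appeal to the classical existence of a relative Jacobian scheme over $T_g$. The paper instead \emph{recognizes} the Jacobian by a direct first-order criterion: $J$ is the Jacobian of $C$ if and only if $J$ is a smooth projective group variety that is $K_1$-birational to the symmetric power $\Sym^g C$, invoking Serre's explicit construction of the Jacobian from $\Sym^g C$ together with the rigidity theorem that any birational map between projective group varieties extends to an isomorphism (Milne, Theorem~I.3.8). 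The recognition criterion buys you elementarity: it reduces to expressing the symmetric product and birational equivalence, both routinely first-order, and it entirely avoids representability of the relative Picard functor. Your construction-based argument is conceptually sound, but converting ``the relative Jacobian exists as a projective $T_g$-scheme'' into a concrete $L^-$-formula reinstates exactly the bookkeeping burden the paper's recognition criterion was designed to minimize; in that sense your sketch relies on heavier black boxes than the paper's, though both remain informal and both defer the real details to \cite{complementary-paper}. One thing you get right that is worth emphasizing: you correctly isolate that this lemma needs only first-order expressibility for each fixed $(g,p,k)$, not the recursive dependence on the parameters asserted by Conjecture~\ref{horror}, so the conjecture is logically independent of the present claim. (You also implicitly rely on $\chi$ being invariant under definable bijections to justify restricting to tricanonically embedded curves; that is fine since it is one of the axioms of $T^+$, but it deserved a sentence.)
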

\begin{proof}[Proof sketch]
  The assertion 
  \begin{quote}
    $J$ is the Jacobian of $C$
  \end{quote}
  can be expressed as
  \begin{quote}
    $J$ is a smooth projective group variety that is birationally
    equivalent (over $K_1$) to $\Sym^g C$, the $g$th symmetric product
    of $C$.
  \end{quote}
  Indeed, the Jacobian is a smooth projective group variety because it
  is an abelian variety, and it is birationally equivalent to $\Sym^g
  C$ by the construction of the Jacobian in \S V.1 of
  \cite{serre-jacobian}.  By Theorem~I.3.8 in \cite{milneAV}, any
  birational map between two projective group varieties extends to an
  isomorphism.

  Even the following statement is rather non-trivial to express:
  \begin{quote}
    $C$ is a (smooth projective) curve of genus $g$
  \end{quote}
  Smoothness can be witnessed by covering projective space with
  Zariski open patches on which $C$ is cut out by a system of
  equations whose matrix of partial derivatives has rank no higher
  than the codimension of $C$.  Geometric irreducibility can be
  witnessed as in the appendix of \cite{fls}.  Genus can be determined
  by counting zeros and poles on a meromorphic section of the tangent
  bundle.

  Hopefully, everything will be spelled out in greater detail in
  \cite{complementary-paper}.
\end{proof}
By being more careful, one can presumably show that the theory $T^+$
is not only first-order, but recursively axiomatized.  We have gone
too far out on a limb, so we state this as a conjecture:
\begin{conjecture}\label{horror}
  In the language of $T^+$, there are sentences $\tau_{g,p^k,n}$
  \textbf{depending recursively on the parameters}, such that the following are
  equivalent for any $g \ge 1$, any prime power $p^k$, and any
  structure $(K_\infty,\sigma,\chi)$ satisfying $\TACPF$ and Axioms
  \ref{t-plus-1}-\ref{t-plus-4} of $T^+$:
  \begin{enumerate}
  \item $(K_\infty,\sigma,\chi)$ satisfies $\bigwedge_{n = 1}^\infty
    \tau_{g,p^k,n}$.
  \item For every genus $g$ curve $C$ over $K_1$
    with Jacobian $J$,
    \begin{equation*}
      \chi_{p^k}(C(K_1)) = 1 - \Tr(\sigma | J[p^k]) + \Tr(\sigma | \Gg_m[p^k]).
    \end{equation*}
  \end{enumerate}
\end{conjecture}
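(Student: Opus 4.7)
The plan is to parametrize curves of genus $g$ together with their Jacobians (and all auxiliary data: group law, birational map from the symmetric product, $p^k$-torsion subscheme, and $\sigma$-action on it) by finite tuples of field elements encoding polynomial presentations, stratified by a complexity parameter $n$ bounding the degrees and coefficient counts. For each $n$ one writes a single first-order sentence $\tau_{g,p^k,n}$ stating: ``for every presentation of complexity at most $n$, if the data validly encodes such a curve-plus-Jacobian package, then $\chi_{p^k}(C(K_1)) = 1 - \Tr(\sigma | J[p^k]) + \Tr(\sigma | \Gg_m[p^k])$.'' Since every genuine curve-plus-Jacobian over $K_1$ admits some presentation of finite complexity, the infinite conjunction $\bigwedge_n \tau_{g,p^k,n}$ is equivalent to condition (2) of the conjecture, and the dependence on $n$ is recursive provided each ingredient can be described effectively in terms of $g,p^k,n$.

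The first step is to fix a uniform projective embedding convention: for each $g$ one chooses ambient projective spaces $\Pp^{N(g)}$ and $\Pp^{M(g)}$ such that every genus $g$ curve embeds in $\Pp^{N(g)}$ (via a tri-canonical embedding when $g \ge 2$, ad hoc in low genus) and its Jacobian embeds in $\Pp^{M(g)}$ via a standard projective model of abelian varieties. Next one expresses the validity conditions as recursively generated first-order formulas: smoothness of $C$ and $J$ via rank conditions on Jacobian matrices over affine charts; geometric irreducibility in the style of the appendix to \cite{fls}; genus $g$ via the Hilbert polynomial of the chosen embedding; the group-variety axioms for the rational maps giving the group law on $J$; and identification of $J$ as the Jacobian of $C$ by exhibiting a birational map from $\Sym^g C$, which extends automatically to an isomorphism of projective group varieties by Theorem~I.3.8 of \cite{milneAV}. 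Finally, since the $p^k$-torsion of $J$ lives in $J(K_m)$ for some $m$ bounded in terms of $g$ and $p^k$, it can be quantified over inside the fixed sort $K_m$, and the trace under $\sigma$ is then a finite sum directly expressible in the language of $T^+$.

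The hard part will be verifying effectivity throughout. One needs recursive bounds on $N(g)$, $M(g)$, the degrees of the defining polynomials, the complexity of the rational maps realizing the group law and the birational equivalence $\Sym^g C \to J$, and the extension degree $m$ needed to capture $J[p^k]$; one also needs the validity conditions themselves to be produced algorithmically rather than via abstract existence theorems. A related subtlety is avoiding quantification over arbitrary rational maps: one must instead quantify only over finite polynomial data of bounded complexity, which forces a priori bounds on presentations of the group law and the birational equivalence. None of this should require new ideas beyond standard effective algebraic geometry, but the bookkeeping is substantial, which is exactly why the author has deferred it to \cite{complementary-paper}.
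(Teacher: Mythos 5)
The paper itself does not prove Conjecture~\ref{horror}: it is stated explicitly as a conjecture, with only an informal sketch of the weaker (non-recursive) first-order statement in Lemma~\ref{lem:tobecontinued}, and the whole of \S\ref{sec:interlude} is devoted to explaining why a clean proof is elusive and why it is deferred to \cite{complementary-paper}. Your sketch is, in outline, precisely the strategy the paper advocates and that underlies Conjecture~\ref{double-horror}: parametrize curve-plus-Jacobian packages by polynomial presentations stratified by a complexity bound $n$, express the validity conditions first-order (smoothness via Jacobian-matrix rank, geometric irreducibility as in \cite{fls}, genus, the group law, and identification of $J$ via a birational map from $\Sym^g C$ upgraded to an isomorphism by Theorem~I.3.8 of \cite{milneAV}), and arrange for everything to admit recursive bounds in $g$, $p^k$, and $n$. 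So you are not supplying a proof the paper already has; you are rediscovering the intended route, and your sketch inherits exactly the incompleteness the paper is candid about: the recursive bounds on $N(g)$, $M(g)$, the degrees of the defining equations and rational maps, and the extension degree $m$ trapping $J[p^k]$ are all \emph{asserted} rather than derived, and the validity conditions are \emph{described} rather than exhibited as a concrete recursive family of formulas. Closing that gap is the entire content of the conjecture.

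A few places where your sketch is thinner than it should be even as a sketch. First, for $g=1$ the tricanonical bundle is trivial, so your ``tri-canonical when $g\ge 2$, ad hoc in low genus'' clause actually has to do some work: you need a degree-$\ge 3$ line bundle, which you get from a rational point, which exists on every genus-$1$ curve over a model of $\TACPF$ (by the Hasse bound when $K_1$ is finite, by PAC when $K_1$ is pseudofinite) --- worth stating, since the bound on the ambient $\Pp^{N(1)}$ must be uniform across the class. Second, the trace $\Tr(\sigma\,|\,J[p^k])$ is well-defined because $J[p^k]$ is a free $\Zz/p^k\Zz$-module, but its rank is $2g$ only when $p\ne\characteristic(K)$; when $p=\characteristic(K)$ the rank is the $p$-rank of $J$, which varies with $C$, so $\tau_{g,p^k,n}$ must carry an outer case split over the possible rank values $0,\dots,g$ (and a first-order certificate of that rank). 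Third, you should make explicit how $\chi_{p^k}(C(K_1))$ enters the sentence: via the predicates $P_{\phi,p^k,j}$ of $T^+$, where $\phi(\vec x;\vec y)$ is the fixed formula cutting out $C(K_1)$ from the presentation parameters $\vec y$ --- and this $\phi$ must itself depend recursively on $n$, which is another small bookkeeping obligation. None of these are fatal, but their accumulation is exactly why the author leaves this as a conjecture rather than a lemma.
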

\begin{lemma}\label{yanked}
  (Assuming Conjecture~\ref{horror}) The theory $\TACPF$ of
  \S\ref{sec:tacpf} and the theory $T^+$ of \S\ref{sec-beth} are
  recursively axiomatized.
\end{lemma}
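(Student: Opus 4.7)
The plan is to verify recursiveness for each theory by breaking the axioms into blocks and checking that each block is a recursive (or recursively enumerable) set of sentences. For $\TACPF$, I would go through the four axioms (\ref{tec1})--(\ref{tec4}) in turn. Axiom (\ref{tec1}) is $\mathrm{ACF}$, which has a standard recursive axiomatization. Axiom (\ref{tec2}) is non-degeneracy: for each $n \in \Nn$, the sentence ``$[K_n : K_1] = n$'' (equivalently, $K_n \not\subseteq K_m$ for all $m < n$) can be written down recursively in $n$, so the full scheme is recursive. Axiom (\ref{tec3}) says $K_1$ is a model of $T_{fin}$; Ax's theorem provides a recursive axiomatization of the theory of finite fields (for instance, perfectness, pseudofiniteness, and the existence of a unique extension of each degree). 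Axiom (\ref{tec4}) amounts to the recursive scheme indexed by prime powers $q$ of the form ``$|K_1| = q \rightarrow \forall x \in K_\infty \; \sigma(x) = x^q$,'' each of which is a single first-order sentence computable from $q$.

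For $T^+$, I would handle Axioms (\ref{t-plus-1})--(\ref{t-plus-4}) first, since they are each a recursive scheme indexed by $L^-$-formulas $\phi, \phi'$, integers $n, m$, and residues $k$. Specifically, (\ref{t-plus-1}) and (\ref{t-plus-2}) are schematically uniform: enumerate pairs $(\phi, n)$ (resp.\ $(\phi, \phi', n, k)$), and for each write the corresponding sentence using the new predicates $P_{\phi,n,k}$. For (\ref{t-plus-3}), the strong Euler characteristic conditions (empty set, singleton, disjoint union, product, definable bijection, and fiber-constancy) each unfold, for each fixed defining formula $\phi$, into a first-order sentence in $L^+$ computable from $\phi$. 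Axiom (\ref{t-plus-4}) is the compatibility diagram, which becomes a single sentence for each pair $(m, n)$ with $m \mid n$ and each $\phi$.

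The one step that cannot be handled by bare unwinding is Axiom (\ref{t-plus-5}), because it quantifies implicitly over curves $C$, their Jacobians $J$, and the trace of $\sigma$ on torsion subgroups---objects whose first-order encoding is delicate (as Lemma~\ref{lem:tobecontinued} already hinted). Here I would invoke Conjecture~\ref{horror} directly: it furnishes, for each genus $g \ge 1$ and prime power $p^k$, a \emph{recursively} computable sequence $\tau_{g,p^k,n}$ of $L^+$-sentences whose conjunction is equivalent (modulo $\TACPF$ and the already-handled (\ref{t-plus-1})--(\ref{t-plus-4})) to the curve-counting identity that (\ref{t-plus-5}) demands. Enumerating the triples $(g, p^k, n)$ gives a recursive set of sentences that, together with the earlier scheme, axiomatizes (\ref{t-plus-5}).

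The main obstacle is morally entirely absorbed into Conjecture~\ref{horror}; apart from that, only routine bookkeeping is needed, namely verifying that the encoding of each schematic axiom as a function of its indices ($\phi$, $n$, $m$, $k$, $q$, $g$, $p^k$) is primitive recursive. Once all five blocks of axioms of $T^+$ and all four blocks of $\TACPF$ are verified to be recursive in this sense, their union is recursive, completing the proof.
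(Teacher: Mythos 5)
Your proposal is correct and follows essentially the same route as the paper: for $\TACPF$, the ACF, non-degeneracy, and Frobenius-on-finite-fields axioms are routine recursive schemes, and Axiom~(\ref{tec3}) is handled by citing Ax's decidability theorem for the theory of finite fields (decidability gives recursive axiomatizability for free); for $T^+$, Axioms~(\ref{t-plus-1})--(\ref{t-plus-4}) unfold into recursive schemes indexed by formulas and moduli, and Axiom~(\ref{t-plus-5}) is exactly what Conjecture~\ref{horror} is designed to absorb. The paper's own proof is considerably terser (``mostly clear''), but the decomposition is identical. One small imprecision worth flagging: your parenthetical claiming $T_{fin}$ is axiomatized by ``perfectness, pseudofiniteness, and a unique degree-$n$ extension for each $n$'' is not quite right---those conditions single out pseudofinite fields, whereas $T_{fin}$ also has all the finite fields as models (which, for instance, are not PAC, so the PAC scheme is not contained in $T_{fin}$). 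Ax's actual recursive axiomatization of $T_{fin}$ is more delicate and involves quantitative versions of the PAC condition; but the point you actually need, namely that decidability (Ax) implies recursive axiomatizability, is sound and is what the paper invokes as well.
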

\begin{proof}
  For $\TACPF$, this is mostly clear.  Axiom \ref{tec3}, saying that
  $K_1$ is a model of the theory of finite fields, is recursively
  axiomatized by Ax's theorem on the decidability of the theory of
  finite fields.

  The additional axioms of $T^+$ are plainly recursively enumerable,
  except for Axiom~\ref{t-plus-5}, which can be expressed by
  Conjecture~\ref{horror}.
\end{proof}

Conjecture~\ref{horror} is almost certainly true, by the method of
Lemma~\ref{lem:tobecontinued}.  To ``prove'' Conjecture~\ref{horror}, we
seemingly have three options:
\begin{enumerate}
\item \label{formal} A precise proof in terms of indexed families of
  formulas.
\item \label{informal} An informal ``proof'' in the style of
  Lemma~\ref{lem:tobecontinued}.
\item \label{cheating} A clever proof using subtle facts from
  algebraic geometry.
\end{enumerate}
There is something deeply unsatisfying about each of these approaches.
Approach~\ref{formal} is extremely tedious; writing out the details
would probably double the length of this paper.  Moreover, the details
would be an incomprehensible stew of indexed families of
multi-variable formulas.  For example, the statement that
underlies Conjecture~\ref{horror} is (almost) the following:
\begin{conjecture}\label{double-horror}
  There are formulas $\phi_{n,g}(\vec{x}),
  \psi_{n,g}(\vec{x},\vec{y}), \rho_{n,g}(\vec{x},\vec{z})$ depending
  recursively on $n$ and $g$ such that for any perfect field $K$ and
  any $g$, if $X, Y$ are two definable sets, then the following are
  equivalent:
  \begin{enumerate}
  \item There is a genus-$g$ curve $C/K$ with
    Jacobian $J$, such that $X$ is in definable bijection with $C(K)$
    and $Y$ is in definable bijection with $J(K)$
  \item There is some $n \in \Nn$ and some $\vec{a} \in \phi_{n,g}(K)$
    such that $X$ is in definable bijection with
    $\psi_{n,g}(\vec{a},K)$ and $Y$ is in definable bijection with
    $\rho_{n,g}(\vec{a},K)$.
  \end{enumerate}
\end{conjecture}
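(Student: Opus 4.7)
The plan is to parameterize all candidate pairs $(C,J)$ by their coefficients in suitable projective embeddings, stratified by a complexity parameter $n$ that bounds the embedding dimensions, the number of defining equations, and their degrees. For each $n$, the formula $\phi_{n,g}(\vec x)$ asserts that $\vec x$ encodes (i) a closed subscheme $C \subseteq \Pp^{N(n)}$ cut out by polynomials of degree at most $n$, (ii) a closed subscheme $J \subseteq \Pp^{M(n)}$ likewise, together with tuples of polynomials realizing multiplication and inverse maps for a group law on $J$, and (iii) a birational correspondence between $\Sym^g C$ and $J$, given by rational maps in both directions. Then $\psi_{n,g}(\vec a, \vec y)$ is the conjunction of the defining equations of $C$ evaluated at $\vec y$, and $\rho_{n,g}(\vec a, \vec z)$ is the same for $J$; both arise by pure polynomial evaluation, so recursiveness in $n$ and $g$ follows once $\phi_{n,g}$ is.

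The first main task is to check that each geometric clause inside $\phi_{n,g}$ is first-order and recursive in the indices. Smoothness is the Jacobian criterion, a rank condition on matrices of partial derivatives. Dimension, and genus $g$ via the Hilbert polynomial $dt + 1 - g$, are forced by polynomial constraints on the coefficients once the embedding degree $d$ is pinned by $n$. Geometric irreducibility is obtainable via the quantifier-free recipe in the appendix of \cite{fls}. The group-scheme axioms on $J$ translate into a finite list of polynomial identities among the components of the multiplication and inverse maps. Birational equivalence between $\Sym^g C$ and $J$ is witnessed by rational maps in both directions whose compositions agree with the identity on dense open subsets, again polynomial identities in $\vec a$. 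All of these translate mechanically into first-order formulas whose G\"odel numbers are computable from $n$ and $g$.

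For the forward direction, given a genus-$g$ curve $C/K$ with Jacobian $J$, one fixes the tricanonical embedding of $C$ into $\Pp^{5g-6}$ (with minor modifications for $g \in \{0,1\}$) and embeds $J$ via $3\Theta$. The defining equations, the group-law maps on $J$, and a birational correspondence with $\Sym^g C$ (furnished by Serre's construction in \cite{serre-jacobian}) all have degrees bounded by a computable function of $g$, so $(C,J)$ is encoded by some $\vec a \in \phi_{n(g),g}(K)$. For the reverse direction, any $\vec a \in \phi_{n,g}(K)$ yields a smooth geometrically irreducible genus-$g$ curve $C$ and a smooth projective group variety $J$ birational to $\Sym^g C$; by Theorem~I.3.8 of \cite{milneAV}, $J$ is then isomorphic as a group variety to the Jacobian of $C$, whence $\psi_{n,g}(\vec a, K)$ is in definable bijection with $C(K)$ and $\rho_{n,g}(\vec a, K)$ with the Jacobian of $C$.

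The main obstacle is producing the clean, recursively-indexed formula family rather than verifying any single geometric clause. The geometry underlying each ingredient is standard, but translating, say, ``this tuple of polynomial coefficients describes a birational correspondence between two projective schemes with prescribed Hilbert polynomials'' into a formula uniformly and recursively indexed by $n$ and $g$ is combinatorially onerous: one must juggle affine covers, common denominators, matrix ranks, and the appendix-of-\cite{fls} trick for geometric irreducibility, all while tracking degree bounds as explicit functions of $n$ and $g$. This bookkeeping is precisely what is deferred to the companion paper \cite{complementary-paper}.
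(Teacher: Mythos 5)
This statement is labeled Conjecture~\ref{double-horror} in the paper precisely because the paper does \emph{not} prove it; its proof is explicitly deferred to the companion paper \cite{complementary-paper}, and \S\ref{sec:interlude} is devoted to explaining why a clean proof is elusive. So there is no ``paper's own proof'' to compare against; there is only the informal proof sketch of Lemma~\ref{lem:tobecontinued} together with the surrounding discussion of why upgrading that sketch to a recursively indexed family of formulas is hard.

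Your proposal is, in substance, the same sketch. The ingredients coincide: the Jacobian criterion for smoothness, the appendix-of-\cite{fls} device for geometric irreducibility, witnessing ``$J$ is the Jacobian of $C$'' via a smooth projective group variety birational to $\Sym^g C$, and the appeal to Theorem~I.3.8 of \cite{milneAV} to upgrade the birational map to an isomorphism. The minor variations you introduce (Hilbert polynomial $dt + 1 - g$ rather than zeros-and-poles of a tangent-bundle section for the genus, and explicit tricanonical and $3\Theta$ embeddings with a complexity parameter $n$ bounding degrees) are sensible and make the recursiveness claim more plausible, but they do not close the gap the paper identifies. You yourself acknowledge in your final paragraph that the ``combinatorially onerous'' bookkeeping---producing the clean recursively-indexed formula family---is ``precisely what is deferred to the companion paper.'' That is exactly the paper's position. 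In other words, you have reproduced the paper's Approach~(\ref{informal}), which the paper judges ``excessively informal'' as a proof of Conjecture~\ref{double-horror}. Your write-up is an accurate and honest statement of the state of affairs, but it is not a proof, and neither is the paper's.

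One small caution worth flagging if you do pursue the full argument: the appeal to Theorem~I.3.8 of \cite{milneAV} gives an isomorphism of \emph{varieties} extending a birational map; to conclude that the group variety $J$ encoded by $\vec a$ agrees with the Jacobian \emph{as a group variety}, you also need the rigidity theorem for abelian varieties (any variety isomorphism between abelian varieties is a group isomorphism composed with a translation). This is standard, but it is one more bookkeeping item that must be made explicit when the informal sketch is converted into formulas.
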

A proof written in this style would be nearly unreadable.

In contrast, Approach~\ref{informal} is excessively informal.  It is
hard to convince oneself that Lemma~\ref{lem:tobecontinued} is really a
proof of Conjecture~\ref{horror}, especially when one compares the
relative lengths of the informal proof and the precise proof.

What seems to be missing is a language that would assign precise
meaning to statements like the following:
\begin{itemize}
\item $\Pp^n$ depends nicely on $n$.
\item The family of Zariski closed sets in $\Pp^n$ depends nicely on
  $n$.
\item The family of smooth irreducible varieties of dimension $d$ depends
  nicely on $d$.
\item If $C$ is a curve, the family of meromorphic
  functions $C \to \Pp^1$ depends nicely on $C$.
\item If $C$ is a curve, if $f : C \to \Pp^1$ is
  meromorphic, and if $x \in C$, then the zero/pole-order of $f$ at
  $x$ depends nicely on $C, f, x$.
\item The family of curves of genus $g$ depends
  nicely on $g$.
\item If $C$ is a curve, then the Jacobian of $C$
  depends nicely on $C$.
\end{itemize}
Here, ``nicely'' is supposed to mean something like ``recursively
ind-definable, uniformly across all models.''

In future work (\cite{complementary-paper}), I hope to develop a
toolbox that makes this notion precise, enabling a smoother proof
of Conjecture~\ref{horror}.  My hope is that this toolbox will be
useful in other situations where one needs to verify the recursive
axiomatizability of conditions from algebraic geometry.

Finally, we consider Approach~\ref{cheating}---using clever tricks
from algebraic geometry to simplify the problem.  This approach
probably works, but is conceptually unsatisfying.  It \emph{should} be
possible to translate the informal proof into a precise proof that is
not too long.  It \emph{shouldn't} be necessary to resort to
non-elementary facts from algebraic geometry to overcome a syntactic
problem.
\begin{remark}
  An analogous situation appears when one verifies that the theory of
  PAC fields is recursively axiomatized.  The standard approach is to
  use Bertini's theorem to reduce to the case of curves, and then
  project into the plane to reduce to the case of plane curves (see \S
  10.2 in \cite{field-arithmetic}).  This is an instance of
  Approach~\ref{cheating}.
\end{remark}

\section{Further results}
\subsection{Uniform definability of the counting Euler characteristic}
Theorem~\ref{ecpdf-thm} implies that the counting Euler characteristic
is uniformly definable across all Frobenius periodic fields.  This can
be restated more explicitly as follows:
\begin{corollary}\label{uniform-elim}
    For any formula $\phi(x;y)$ in the language of periodic fields,
    any $n \in \Nn$, and any $k \in \Zz/n \Zz$, there is a formula
    $\psi_{\phi,n,k}(y)$ such that for any Frobenius periodic field
    $\Fr^q$ and any tuple $b$ from $\Fr^q$,
    \[ \Fr^q \models \psi_{\phi,n,k}(b) \iff |\phi(\Fr^q;b)| \equiv k \pmod{n}\]
\end{corollary}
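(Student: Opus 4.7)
The plan is to read the corollary off directly from Theorem~\ref{ecpdf-thm}, which has already been established. The only work is to unpack what ``uniformly 0-definable across $\mathcal{C}$'' means in the statement of that theorem, and to combine this with the identification of $\chi$ as the counting Euler characteristic on Frobenius periodic fields.

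First I would apply Theorem~\ref{ecpdf-thm} to obtain the canonical $\hat{\Zz}$-valued strong Euler characteristic $\chi$, which is uniformly 0-definable across the class $\mathcal{C}$ of Frobenius periodic fields and existentially closed periodic fields. By Proposition~\ref{exactly-one}, every model of $\TACPF$ admits a unique expansion to a model of $T^+$, and by Beth implicit definability (applied via Corollary~\ref{beth-2}), $T^+$ is a definitional expansion of $\TACPF$. This means that for each formula $\phi(x;y)$ in $L^-$, each $n \in \Nn$, and each $k \in \Zz/n\Zz$, the predicate $P_{\phi,n,k}(y)$ of $L^+$ is $\TACPF$-equivalent to some $L^-$-formula $\psi_{\phi,n,k}(y)$. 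Since $\TACPF$ is the theory of Frobenius periodic fields (Proposition~\ref{acpf-is-ultraprod}), this equivalence holds in every $\Fr^q$.

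Next I would invoke the second bullet of Theorem~\ref{ecpdf-thm}: on any Frobenius periodic field $\Fr^q$, the canonical Euler characteristic $\chi$ coincides with the counting Euler characteristic $|\cdot|$. Therefore for any tuple $b$ from $\Fr^q$,
\[
\Fr^q \models \psi_{\phi,n,k}(b) \iff \chi_n(\phi(\Fr^q;b)) = k \iff |\phi(\Fr^q;b)| \equiv k \pmod n,
\]
which is exactly the desired equivalence.

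There is no real obstacle here: all the substantive work, namely the existence and uniqueness of the canonical Euler characteristic together with its identification with counting on the essentially-finite Frobenius fields, has been carried out in Section~\ref{sec:main}. The corollary is therefore a direct restatement of the uniform definability clause of Theorem~\ref{ecpdf-thm} for the special case of parameter tuples $b$ living in a Frobenius periodic field.
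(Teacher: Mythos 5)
Your proposal is correct and matches the paper's approach: the paper treats Corollary~\ref{uniform-elim} as a direct restatement of the uniform $0$-definability clause of Theorem~\ref{ecpdf-thm} together with the identification of $\chi$ with counting on Frobenius periodic fields, exactly as you argue. Your write-up simply unpacks the Beth-definability mechanism (via Corollary~\ref{beth-2} and Proposition~\ref{exactly-one}) that the paper leaves implicit, which is a reasonable level of detail to add.
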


\subsection{Evaluation on curves}
\begin{proposition}
  Let $(K_\infty,\sigma)$ be a model of $\ACPF$.  Let $C$ be a curve
  over $K_1$, and $J$ be the jacobian.  For any prime $\ell$ (possibly
  the characteristic), the $\ell$-adic component of $\chi(C(K_1))$ is
  determined by the trace of the action of $\sigma$ on the $\ell$-adic
  Tate modules of $J$ and the multiplicative group $\Gg_m$:
  \[ 1 - \Tr(\sigma | T_\ell J) + \Tr(\sigma | T_\ell \Gg_m).\]
\end{proposition}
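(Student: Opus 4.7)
The plan is to read off the proposition directly from Axiom~\ref{t-plus-5} of the theory $T^+$, passing to the inverse limit in the prime $\ell$. First, since $(K_\infty,\sigma) \models \ACPF$, Proposition~\ref{exactly-one} guarantees a unique expansion of $(K_\infty,\sigma)$ to a model of $T^+$, and $\chi$ is the canonical Euler characteristic furnished by Theorem~\ref{ecpdf-thm}. Moreover, because $K_1$ is infinite (being pseudofinite by Proposition~\ref{acpf-is-pseuf}), the cardinality hypothesis $|K_1| > h(g,p,k)$ in Axiom~\ref{t-plus-5} is vacuous, as observed in Remark~\ref{silly}. Consequently, for every $k \ge 1$,
\[
  \chi_{\ell^k}(C(K_1)) = 1 - \Tr(\sigma \mid J[\ell^k]) + \Tr(\sigma \mid \Gg_m[\ell^k]) \pmod{\ell^k}.
\]

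Next, I would identify the $\ell$-adic component of $\chi(C(K_1))$ with $\varprojlim_k \chi_{\ell^k}(C(K_1)) \in \Zz_\ell$, which is how the $\hat\Zz$-valued Euler characteristic is assembled under the Chinese remainder decomposition $\hat\Zz \cong \prod_\ell \Zz_\ell$. The task then reduces to showing that the two trace terms on the right-hand side of the displayed congruence are themselves the mod-$\ell^k$ reductions of traces on the corresponding Tate modules, namely
\[
  \Tr(\sigma \mid T_\ell J) \equiv \Tr(\sigma \mid J[\ell^k]) \pmod{\ell^k}
\]
and similarly for $\Gg_m$. This is a general fact about free $\Zz_\ell$-modules of finite rank: if $M = \varprojlim_k M_k$ with each $M_k$ a free $\Zz/\ell^k$-module of a common rank $s$, and if $\sigma$ acts compatibly, then a $\Zz_\ell$-basis of $M$ reduces to a $\Zz/\ell^k$-basis of $M_k$, so the matrix representing $\sigma$ on $M_k$ is the reduction of that on $M$, and the traces are compatible. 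For $J$ this applies with $s = 2g$ when $\ell \ne \characteristic(K)$ and $s = r$ (the $p$-rank) when $\ell = p$; for $\Gg_m$ it applies with $s = 1$ when $\ell \ne \characteristic(K)$ and with $s = 0$ when $\ell = p$ (in which case both sides vanish).

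With these compatibilities in hand, taking the inverse limit over $k$ of the congruence from Axiom~\ref{t-plus-5} gives the identity
\[
  \varprojlim_k \chi_{\ell^k}(C(K_1)) = 1 - \Tr(\sigma \mid T_\ell J) + \Tr(\sigma \mid T_\ell \Gg_m)
\]
in $\Zz_\ell$, which is exactly the claim. There is no real obstacle here: the substance of the proposition is already encoded in Axiom~\ref{t-plus-5}, the uniqueness (Proposition~\ref{there-can-only-be-one}) pins down which Euler characteristic is meant, and what remains is the routine passage from finite-level traces to Tate-module traces through the inverse limit.
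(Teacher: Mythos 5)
Your proposal is correct and follows the same route as the paper, which simply cites Axiom~\ref{t-plus-5} of $T^+$ together with Remark~\ref{silly}; you have merely unfolded the (implicit) step of passing from the finite-level traces $\Tr(\sigma \mid J[\ell^k])$ to the Tate-module trace via the inverse limit, which the paper takes for granted. The only thing worth noting is that this extra care is genuinely warranted in the case $\ell = p$, where $T_p J$ has rank $r < 2g$, and your observation that $J[p^k] \approx (\Zz/p^k)^r$ with surjective transition maps makes the reduction-of-basis argument go through cleanly.
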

\begin{proof}
  This follows directly from Axiom~\ref{t-plus-5} of $T^+$, and
  Remark~\ref{silly}.
\end{proof}
For $\ell \ne \characteristic(K)$, there should be a generalization
using $\ell$-adic etale cohomology:
\begin{conjecture}\label{weilish}
  Let $(K_\infty,\sigma)$ be a model of ACPF, let $V$ be a smooth projective variety
  over $K_1$, and let $\ell$ be a prime different from the
  characteristic.  Then the $\ell$-adic component of $\chi(V(K_1))$ is
  given by the formula
  \begin{equation*}
    \sum_{i = 0}^{2 \dim(V)} (-1)^i \Tr(\sigma^{-1} | H^i(V;\Qq_\ell)),
  \end{equation*}
  where $H^i(V;\Qq_\ell)$ denotes the $\ell$-adic cohomology:
  \begin{equation*}
    \Qq_\ell \otimes_{\Zz_\ell} \varprojlim_k H^i_{et}(V \times_{
      K_1} K_\infty; \Zz/\ell^k).
  \end{equation*}
\end{conjecture}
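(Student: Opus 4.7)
The plan is to prove the formula first for Frobenius periodic fields via the Grothendieck--Lefschetz trace formula, and then to transfer it to arbitrary models of $\ACPF$ by ultraproducts and uniform definability. For $(K_\infty,\sigma) = \Fr^q_\infty$, the canonical Euler characteristic $\chi$ is the counting function $V(\Ff_q) \mapsto |V(\Ff_q)|$ by Theorem~\ref{ecpdf-thm}, so the formula reduces to
\[ |V(\Ff_q)| = \sum_{i = 0}^{2\dim V} (-1)^i \Tr\bigl(F^* \mid H^i_{et}(V_{\Ff_q^{alg}}; \Qq_\ell)\bigr), \]
which is the Lefschetz fixed-point formula applied to the graph of the geometric $q$-power Frobenius $F$. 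Under the standard convention, $\sigma$ acts as arithmetic Frobenius on $\ell$-adic cohomology, i.e.\ as $(F^*)^{-1}$, so this matches the claimed formula.

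For a general $(K_\infty,\sigma) \models \ACPF$, I would invoke Proposition~\ref{acpf-is-ultraprod} to write $K_\infty$ as elementarily equivalent to a non-principal ultraproduct of Frobenius periodic fields, and then transfer the congruence modulo $\ell^k$ by \L o\'s's theorem. The left-hand side is uniformly $0$-definable by Corollary~\ref{uniform-elim}. For the right-hand side I would need a matching uniform definability: for smooth projective $V$ of bounded projective degree, the trace $\Tr(\sigma^{-1} \mid H^i_{et}(V_{K^{alg}}; \Zz/\ell^k))$ should be $0$-definable from the coefficients of the defining equations of $V$ and from the action of $\sigma$ on an explicit finite Galois module attached to $V$. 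For $V = C$ a curve this is exactly what Axiom~(\ref{t-plus-5}) already gives, once one identifies $H^0(C;\Qq_\ell) = \Qq_\ell$, $H^1(C;\Qq_\ell) \cong (V_\ell J)^*$, and $H^2(C;\Qq_\ell) \cong \Qq_\ell(-1)$, so that $\Tr(\sigma^{-1}\mid H^1) = \Tr(\sigma \mid V_\ell J)$ and $\Tr(\sigma^{-1}\mid H^2) = \Tr(\sigma \mid V_\ell \Gg_m)$ (these are the $\Qq_\ell$-lifts of the traces in Axiom~(\ref{t-plus-5})).

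The hard part, and the reason this is stated only as a conjecture, is the higher-dimensional case. One natural approach is a Lefschetz pencil: blow up a suitable codimension-two linear section of $V$ to obtain $\tilde V$ fibered over $\Pp^1$ with smooth generic fibers of dimension $\dim V - 1$, and then combine the blow-up long exact sequence, the Leray spectral sequence, and induction on $\dim V$ to reduce everything to the curve case and thus to Axiom~(\ref{t-plus-5}). Alternatively, one could invoke de Jong's alterations. Either way, the bookkeeping that tracks the action of $\sigma$ on each cohomology piece, together with the uniform definability across the class, seems to require the recursive ind-definability framework alluded to in \S\ref{sec:interlude} --- the same framework that would be needed to fully justify Conjecture~\ref{horror}. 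Once that machinery is available, the proof should reduce to smooth and proper base change for etale cohomology combined with the curve base case above.
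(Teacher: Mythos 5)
This statement is deliberately left as a conjecture in the paper, and the paragraph immediately following it in the source already sketches exactly the route you propose: verify the formula for Frobenius periodic fields by Grothendieck's cohomological approach to the Weil conjectures, then transfer to models of $\ACPF$ via Proposition~\ref{acpf-is-ultraprod}, with the remaining obstruction being that the groups $H^i_{et}(V; \Zz/\ell^k)$ must depend definably on $V$. Your proposal is a refinement of that same plan, not a different route. The added detail is sound as far as it goes: the geometric-versus-arithmetic Frobenius bookkeeping is correct, and the observation that the curve case reproduces Axiom~(\ref{t-plus-5}) under the identifications $H^0 \cong \Qq_\ell$, $H^1 \cong (V_\ell J)^\vee$, $H^2 \cong \Qq_\ell(-1)$ is a genuine sanity check.

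However, the proposal does not close the gap. The Lefschetz-pencil idea is not a reduction to the curve case in the required uniform sense: after fibering $\tilde V$ over $\Pp^1$, the Leray spectral sequence expresses $H^\bullet(\tilde V)$ via cohomology of $\Pp^1$ with coefficients in the higher direct image local systems, and the monodromy and vanishing-cycle data entering there are not obviously captured by any of the definable gadgets the paper constructs. So even granting the trace formula for each Frobenius periodic field, the step ``the trace of $\sigma^{-1}$ on $H^i(V;\Zz/\ell^k)$ is uniformly $0$-definable'' remains unproven, and this is precisely the kind of recursive ind-definability problem that \S\ref{sec:interlude} defers to future work. Regard your proposal as a more detailed rendition of the paper's own informal argument, carrying the same open step; the conjecture's status is unchanged.
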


I suspect that Conjecture~\ref{weilish} is trivial with the right
tools.  If I understand correctly, the conjecture holds for Frobenius
periodic fields, because of Grothendieck's cohomological approach to
the Weil conjectures (described in Hartshorne \cite{hartshorne},
Appendix C, \S 3-4).  As long as Conjecture~\ref{weilish} can be
stated as a conjunction of first-order sentences, it transfers to
models of ACPF by Proposition~\ref{acpf-is-ultraprod}.  Thus, the only
thing needing verification is that the groups $H^i_{et}(V;
\Zz/\ell^k)$ depend definably on $V$.

\begin{remark}
  There should be a more general form of Conjecture~\ref{weilish} for
  arbitrary varieties, using cohomology with compact supports or
  intersection homology.
\end{remark}

\subsection{Pseudofinite fields}
\begin{lemma}\label{psf-case}
  Let $K$ be a pseudofinite field and $\sigma$ be a topological
  generator of $\Gal(K)$.  The canonical $\hat{\Zz}$-valued definable
  strong Euler characteristic on $(K^{alg},\sigma)$ restricts to an
  $\acl^{eq}(0)$-definable strong Euler characteristic on $K$.
\end{lemma}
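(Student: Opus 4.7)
The verification that the restriction $\chi|_{\Def(K)}$ satisfies the axioms of a strong Euler characteristic on $K$ is essentially automatic. Every ring-definable set, bijection, or function on $K$ over $\emptyset$ is also definable in the periodic-field structure $(K^{alg},\sigma)$, viewed inside the sort $K_1 = K$. Consequently each of the strong-Euler axioms on $K$ is inherited from the corresponding axiom for $\chi$ on $(K^{alg},\sigma)$ supplied by Theorem~\ref{ecpdf-thm}, and the content of the lemma is the $\acl^{eq}(\emptyset)$-definability.

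The plan is to combine the 0-definability of $\chi$ on $(K^{alg},\sigma)$ (Theorem~\ref{ecpdf-thm}) with the bi-interpretability of $(K^{alg},\sigma)$ and $K$ after naming parameters (Proposition~\ref{acpf-is-pseuf}). For a ring formula $\phi(x;y)$ and $(m,k)$ with $m \in \Nn$, $k \in \Zz/m\Zz$, let
\[ S_{\phi,m,k} = \{b \in K^{|y|} : \chi_m(\phi(K;b)) = k\}. \]
Theorem~\ref{ecpdf-thm} produces a periodic-field formula $\psi(y)$ defining $S_{\phi,m,k}$ over $\emptyset$ in $(K^{alg},\sigma)$; this $\psi$ mentions only finitely many sorts $K_N$. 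Translating $\psi$ through the bi-interpretation yields a ring formula for $S_{\phi,m,k}$ in $K$ over the parameters used to realize these $K_N$, so the task reduces to showing that the parameters can be chosen in $\acl^{eq}(\emptyset)$.

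I would argue this via $\Aut$-orbits. Pass to a monster $\Mm \succeq K$ equipped with its canonical periodic-field structure $(\Mm^{alg},\sigma^*)$, and define $S^\Mm_{\phi,m,k}$ in the obvious way. Any $\tau \in \Aut(\Mm)$ extends to some $\tilde\tau \in \Aut(\Mm^{alg})$, and because $\Gal(\Mm) \cong \hat{\Zz}$ is abelian the conjugate $\tilde\tau\sigma^*\tilde\tau^{-1}$ does not depend on the choice of extension; write $\tilde\tau\sigma^*\tilde\tau^{-1} = (\sigma^*)^{c(\tau)}$ for a well-defined $c(\tau) \in \hat{\Zz}^\times$. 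For fixed $(\phi,m)$, the value $\chi_m(\phi(\Mm;b))$ depends on $\sigma^*$ only through its image $\sigma^*|_{\Mm_N}$ for some finite $N$ determined by $\phi$ and $m$, so $\tau(S^\Mm_{\phi,m,k})$ is determined by $c(\tau) \bmod N \in (\Zz/N\Zz)^\times$, a finite group. Hence the $\Aut(\Mm)$-orbit of $S^\Mm_{\phi,m,k}$ has size at most $\varphi(N)$, the standard criterion for $\acl^{eq}(\emptyset)$-definability; restricting the defining formula back to $K$ gives the lemma.

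The main technical obstacle is verifying that $\chi_m(\phi(\Mm;b))$ really does factor through $\sigma^*|_{\Mm_N}$ for some $N$ depending only on $\phi$ and $m$. This should follow from inspecting Axiom~\ref{t-plus-5} of $T^+$, whose right-hand side---the trace of $\sigma$ on $J[p^k]$ and $\Gg_m[p^k]$---only sees $\sigma$ through its action on a finite extension of $\Mm$. Combining this with the reduction from arbitrary definable sets to curve-counts used in Lemmas~\ref{almost-unique-lemma-1} and \ref{almost-unique-lemma-n} should yield the required bound on $N$ and complete the proof.
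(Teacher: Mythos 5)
Your proposal is correct, and the $\acl^{eq}(\emptyset)$-definability argument is actually sharper than the one in the paper. The paper's proof fixes a monster model, observes that there are only \emph{boundedly} many choices of topological generator $\sigma \in \Gal(K)$, concludes that $\chi'$ has boundedly many $\Aut(K)$-conjugates, and deduces $\acl^{eq}(\emptyset)$-definability. That last step implicitly invokes the coincidence $\bd^{eq}(\emptyset) = \acl^{eq}(\emptyset)$, which holds because pseudofinite fields are (super)simple; the paper does not make this explicit. Your argument instead produces a \emph{finite} bound on the orbit of each set $S_{\phi,m,k}$ directly, via the observation that the defining periodic-field formula mentions only finitely many sorts $K_{n_1},\ldots,K_{n_r}$, so $\tau(S^\Mm_{\phi,m,k})$ is determined by $c(\tau) \bmod N$ for $N = \lcm(n_i)$, giving an orbit of size at most $\varphi(N)$. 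This avoids any appeal to simple-theory machinery and is the more elementary route.

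One correction to your own account: the ``main technical obstacle'' you flag in the final paragraph is not actually an obstacle, and there is no need to dig into Axiom~\ref{t-plus-5} or the reduction in Lemmas~\ref{almost-unique-lemma-1} and \ref{almost-unique-lemma-n}. You have already invoked the uniform $0$-definability from Theorem~\ref{ecpdf-thm} to produce the periodic-field formula $\psi(y)$ defining $S_{\phi,m,k}$; since any first-order formula in the multi-sorted language $\mathcal{L}_{pf}$ references only finitely many sorts, the existence of $\psi$ already guarantees the desired $N$. The factoring of $\chi_m(\phi(\Mm;b))$ through $\sigma^*|_{\Mm_N}$ is a consequence of $0$-definability, not something to be re-derived independently.
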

\begin{proof}
  The structure $(K^{alg},\sigma)$ and the field $K$ have equivalent
  categories of (parametrically) definable sets, by the
  bi-interpretability of Proposition~\ref{acpf-is-pseuf}.  Therefore,
  the definable strong Euler characteristic on $(K^{alg},\sigma)$
  determines a definable strong Euler characteristic $\chi'$ on $K$.

  To prove $\acl^{eq}(0)$-definability of $\chi'$, we may pass to an
  elementary extension and assume $K$ and $(K^{alg},\sigma)$ are
  monster models.  The Euler characteristic $\chi'$ is not determined
  in an $\Aut(K)$-invariant way, because of the choice of $\sigma$.
  However, there are only boundedly many choices for $\sigma$.
  Therefore $\chi'$ has only boundedly many conjugates under
  $\Aut(K)$, so $\chi'$ is $\acl^{eq}(0)$-definable.
\end{proof}
\begin{theorem-star}[Theorem~\ref{psy-thm}]
  \hfill
  \begin{enumerate}
  \item Let $K = \prod_i K_i/\mathcal{U}$ be an ultraproduct of
    finite fields.  Then the nonstandard counting functions $\chi_n$
    are $\acl^{eq}(\emptyset)$-definable.\label{ultra-psy-copy}
  \item Every pseudofinite field admits an
    $\acl^{eq}(\emptyset)$-definable $\hat{\Zz}$-valued strong Euler
    characteristic.\label{rando-psy-copy}
  \end{enumerate}
\end{theorem-star}
\begin{proof}
  Part \ref{rando-psy-copy} is Lemma~\ref{psf-case}.  For part \ref{ultra-psy-copy}, given
  an ultraproduct $K = \prod_i \Ff_{q_i} / \mathcal{U}$, let
  $(L,\sigma) = \prod_i \Fr^{q_i} / \mathcal{U}$ be the
  corresponding ultraproduct of Frobenius periodic fields.  Then $K
  \cong L_1$.  The nonstandard counting functions on $K$ are induced
  by the canonical Euler characteristic on $(L_\infty,\sigma)$.
  Therefore the nonstandard counting functions on $K$ are
  $\acl^{eq}(0)$-definable, by Lemma~\ref{psf-case}.
\end{proof}

\subsection{Elimination of parity quantifiers}
For any $n \in \Nn$ and $k \in \Zz/n\Zz$, let $\mu^n_k x$ be a new
quantifier.  Interpret $\mu^n_k x : \phi(x)$ in finite structures as
\begin{quote}
  The number of $x$ such that $\phi(x)$ holds is congruent to $k$ mod $n$.
\end{quote}
In other words,
\begin{equation*}
  \left(M \models \mu^n_k \vec{x} : \phi(\vec{x},\vec{b})\right) \iff \left(|\{\vec{a} :
  M \models \phi(\vec{a},\vec{b})\}| \equiv k \pmod{n}\right).
\end{equation*}
For example,
\begin{itemize}
\item $\mu^2_0 x$ means ``there are an even number of $x$ such that\ldots''
\item $\mu^2_1 x$ means ``there are an odd number of $x$ such that\ldots''
\end{itemize}
We call $\mu^n_k$ \emph{generalized parity quantifiers}.

Let $\mathcal{L}^{\mu}_{rings}$ and $\mathcal{L}^{\mu}_{pf}$ be the
language of rings and the language of periodic fields, respectively,
expanded with generalized parity quantifiers.
\begin{proposition}[= Theorem~\ref{intro-deci-2}.\ref{id2a}]\label{parelim}
  Frobenius periodic fields uniformly eliminate generalized parity
  quantifiers.  If $\phi(\vec{x})$ is a formula in
  $\mathcal{L}^{\mu}_{pf}$, then there is a formula $\phi'(\vec{x})
  \in \mathcal{L}_{pf}$ such that for any Frobenius periodic field
  $\Fr^q$ and any tuple $\vec{a}$,
  \[ \Fr^q \models \phi(\vec{a}) \iff \Fr^q \models \phi'(\vec{a}).\]
\end{proposition}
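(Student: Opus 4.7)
The plan is to prove Proposition~\ref{parelim} by a straightforward structural induction on the $\mathcal{L}^\mu_{pf}$-formula $\phi(\vec{x})$, where the crucial ingredient has already been supplied by Corollary~\ref{uniform-elim}: for any $\mathcal{L}_{pf}$-formula $\theta(\vec{x};\vec{y})$, any $n \in \Nn$, and any $k \in \Zz/n\Zz$, there is an $\mathcal{L}_{pf}$-formula $\psi_{\theta,n,k}(\vec{y})$ that defines $\{\vec{b} : |\theta(\Fr^q;\vec{b})| \equiv k \pmod{n}\}$ uniformly across all Frobenius periodic fields $\Fr^q$.

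The induction is on the complexity of the $\mathcal{L}^\mu_{pf}$-formula. For atomic formulas there is nothing to do, since they already lie in $\mathcal{L}_{pf}$. Boolean combinations and ordinary first-order quantifiers $\exists \vec{x}, \forall \vec{x}$ commute with the induction hypothesis in the obvious way: if $\phi_1(\vec{y})$ and $\phi_2(\vec{y})$ have been replaced by $\phi_1'(\vec{y}), \phi_2'(\vec{y}) \in \mathcal{L}_{pf}$ which agree with them on every Frobenius periodic field, then $\neg \phi_1', \phi_1' \wedge \phi_2', \exists \vec{x}\, \phi_1'$, etc., serve as the required replacements. The only interesting case is the generalized parity quantifier: given $\mu^n_k \vec{x} : \phi(\vec{x}, \vec{y})$, first apply the inductive hypothesis to produce an $\mathcal{L}_{pf}$-formula $\phi'(\vec{x}, \vec{y})$ agreeing with $\phi(\vec{x}, \vec{y})$ on every Frobenius periodic field. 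Then on any $\Fr^q$ and tuple $\vec{b}$, the sets $\phi(\Fr^q;\vec{b})$ and $\phi'(\Fr^q;\vec{b})$ coincide, so in particular they have the same cardinality modulo $n$. Apply Corollary~\ref{uniform-elim} to $\phi'$ to obtain the $\mathcal{L}_{pf}$-formula $\psi_{\phi',n,k}(\vec{y})$, which by construction is equivalent to $\mu^n_k \vec{x} : \phi(\vec{x}, \vec{y})$ across all Frobenius periodic fields.

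The resulting $\mathcal{L}_{pf}$-formula $\phi'(\vec{x})$ produced at the root of the induction is the desired uniform translation. There is no real obstacle here: all the work has been done upstream in the proof of Theorem~\ref{ecpdf-thm}, which supplied the uniform 0-definability of the counting Euler characteristic across the class of Frobenius periodic fields. The only subtlety worth mentioning is that parity quantifiers may bind tuples rather than single variables, but Corollary~\ref{uniform-elim} is stated with a tuple-variable $\vec{x}$, so this causes no difficulty.
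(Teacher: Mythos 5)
Your proof is correct and follows exactly the same approach as the paper's: induction on formula complexity, with boolean connectives and ordinary quantifiers passing through trivially, and the generalized parity quantifier case handled by Corollary~\ref{uniform-elim}. You spell out the bookkeeping a bit more explicitly than the paper does, but the argument is the same.
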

\begin{proof}
  Proceed by induction on the complexity of $\phi(\vec{x})$.  We may
  assume $\phi(\vec{x})$ has the form
  \[ \mu^n_k \vec{y} : \psi(\vec{x},\vec{y}),\]
  for some formula $\psi(\vec{x},\vec{y}) \in \mathcal{L}_{pf}$.  In
  this case, we can eliminate $\mu^n_k$ by
  Corollary~\ref{uniform-elim}.
\end{proof}
\begin{example}\label{the-example}
  The $\mathcal{L}_{pf}^\mu$-sentence
  \begin{equation*}
    \tau \stackrel{def}{\iff} \mu^5_2 x \in K_1 : x = x
  \end{equation*}
  is equivalent in Frobenius periodic fields $\Fr^q$ to the
  $\mathcal{L}_{pf}$-sentence
  \begin{equation*}
    \tau' \stackrel{def}{\iff} 5 \ne 0 \wedge \forall x \in K_4 : (x^5
    = 1 \rightarrow \sigma(x) = x^2).
  \end{equation*}
  To see this, break into cases according to the congruence class of
  $q$ modulo 5.  Note that $\Fr^q \models \tau \iff q \equiv 2
  \pmod{5}$.
  \begin{itemize}
  \item If $q \equiv 0 \pmod{5}$, then $\Fr^q$ has characteristic 5, so
    $\tau'$ and $\tau$ are both false.
  \item If $q \equiv 2 \pmod{5}$, then $\Fr^q$ does not have characteristic 5, and
    \[ \forall x \in K_\infty : (x^5 = 1 \rightarrow x^q = x^2),\]
    so $\tau$ and $\tau'$ are both true.
  \item If $q \equiv j \pmod{5}$ for $j \ne 0, 2$, then $\Fr^q$ does not
    have characteristic 5.  Let $x$ be a primitive fifth root of
    unity.  Then $x \in K_4$, because $\Gal(K_1(x)/K_1)$ is a subgroup
    of $(\Zz/5\Zz)^\times$.  Also,
    \[ x^q = x^j \ne x^2,\]
    and so $\tau'$ is false.
  \end{itemize}
\end{example}
In contrast to Proposition~\ref{parelim}, generalized parity
quantifiers are \emph{not} eliminated in finite fields:
\begin{lemma}\label{lame}
  There is no $\mathcal{L}_{rings}$-sentence $\rho$ equivalent to
  the following $\mathcal{L}_{rings}^\mu$-sentence in every finite
  field:
  \[ \mu^5_2 x : x = x.\]
\end{lemma}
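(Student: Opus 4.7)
Suppose toward contradiction that such a ring-language sentence $\rho$ exists, so $\Ff_q \models \rho \iff q \equiv 2 \pmod 5$ for every prime power $q$. The plan is to produce two elementarily equivalent pseudofinite fields of characteristic $2$ that disagree on $\rho$.

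The first observation is that while in a \emph{prime} finite field $\Ff_p$ the size coincides with the characteristic, for powers $q = p^n$ of a fixed prime $p$ both residues $2$ and $3 \pmod 5$ are realized within a single characteristic: $2^{4i+1} \equiv 2 \pmod 5$ and $2^{4i+3} \equiv 3 \pmod 5$ for every $i$. I would thus form
\[ K_1 := \prod_{i \in \Nn} \Ff_{2^{4i+1}} / \mathcal{U}, \qquad K_2 := \prod_{i \in \Nn} \Ff_{2^{4i+3}} / \mathcal{U} \]
using a non-principal ultrafilter $\mathcal{U}$ on $\Nn$ chosen below. By \L o\'s's theorem, $K_1 \models \rho$ and $K_2 \models \neg \rho$, so it remains to arrange $K_1 \equiv K_2$.

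By Ax's classification of pseudofinite fields, two such fields of the same characteristic $p$ are elementarily equivalent iff they have isomorphic absolute subfields. For the ultraproducts above, $\Ff_{2^n} \subseteq \Abs(K_j)$ iff $\{i : n \mid n_i^{(j)}\} \in \mathcal{U}$, where $n_i^{(1)} = 4i+1$ and $n_i^{(2)} = 4i+3$; since both sequences consist of odd numbers, only odd $n$ can occur. I would aim for $\Abs(K_1) = \Abs(K_2) = \Ff_2$, which reduces to requiring $\mathcal{U}$ to contain
\[ S_\ell := \{i \in \Nn : \ell \nmid 4i+1 \text{ and } \ell \nmid 4i+3\} \]
for every odd prime $\ell$. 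The two arithmetic progressions $\{i : \ell \mid 4i+1\}$ and $\{i : \ell \mid 4i+3\}$ are disjoint (their difference is $2$, coprime to $\ell$) and each have density $1/\ell$, so $S_\ell$ has density $1 - 2/\ell > 0$. By the Chinese remainder theorem, finite intersections $\bigcap_{j=1}^r S_{\ell_j}$ have density $\prod_j(1-2/\ell_j) > 0$, hence are infinite, so $\{S_\ell\}_\ell$ together with the Fr\'echet filter has the finite intersection property and extends to a non-principal ultrafilter $\mathcal{U}$. With this $\mathcal{U}$, $\Abs(K_j) = \Ff_2$ for both $j$, giving $K_1 \equiv K_2$ and the desired contradiction.

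I do not foresee a significant obstacle. The conceptual content is a ``change of base'' trick: moving inside a single characteristic prevents $\rho$ from distinguishing $K_1$ from $K_2$ by characteristic, while the coarseness of Ax's invariant $\Abs(\cdot)$---insensitive to how large the $n_i^{(j)}$ are, only to their prime factorizations modulo $\mathcal{U}$---lets the disjoint sequences $\{4i+1\}$ and $\{4i+3\}$ produce the same absolute subfield.
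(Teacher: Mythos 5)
Your proof is correct, and it takes a genuinely different route from the paper's. The paper works inside the ACPF framework: it builds a single model $(K_\infty,\sigma)\models\ACPF$ with $\sigma$ squaring the fifth roots of unity (so $K_1\models\rho$), then observes that the same underlying field $K_1$ is also the fixed field of $(K_\infty,\sigma^{-1})$, in which the nonstandard Frobenius cubes the fifth roots, forcing $K_1\not\models\rho$ --- a direct contradiction in one structure. That argument has the extra payoff, noted in the remark after the lemma, of showing parity quantifiers are not eliminated even on the class of \emph{prime} fields $\Ff_p$. Your proof avoids periodic fields and the $\sigma\leftrightarrow\sigma^{-1}$ duality altogether: you produce two pseudofinite fields of characteristic $2$ that disagree on $\rho$, forcing elementary equivalence by arranging $\Abs(K_1)\cong\Abs(K_2)\cong\Ff_2$ with a density/Chinese-remainder argument selecting the ultrafilter, then appealing to Ax's classification. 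The arithmetic details check out ($2$ has order $4$ in $(\Zz/5)^\times$, the progressions are disjoint mod each odd prime, the finite intersections have positive density), so the construction is sound. Your version is more elementary and self-contained, requiring only \L o\'s and Ax rather than the periodic-fields machinery; the tradeoff is that, because you use the composite fields $\Ff_{2^{4i+1}}$ and $\Ff_{2^{4i+3}}$, it does not yield the paper's strengthening to prime fields.
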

\begin{proof}
  Suppose $\rho$ exists.  Then the following are equivalent for any
  model $(K_\infty,\sigma) \models \TACPF$:
  \begin{itemize}
  \item $K_1$ satisfies $\rho$
  \item $K_\infty$ does not have characteristic 5, and the action of
    $\sigma$ on the fifth roots of unity is given by
    \begin{equation*}
      \sigma(\omega) = \omega^2.
    \end{equation*}
  \end{itemize}
  Now take $(K_\infty,\sigma)$ satisfying $\ACPF$ and the two
  equivalent conditions.  (For example, we can take $K_\infty$ to be
  a non-principal ultraproduct of $\Fr^p$ where $p$ ranges over primes
  congruent to 2 mod 5.  A non-principal ultrafilter exists by
  Dirichlet's theorem.)  Then $K_1$ satisfies $\rho$, and $\sigma$
  acts on the fifth roots of unity by squaring.  Consider a dual model
  \[ (K^\dag_\infty,\sigma) \cong (K_\infty,\sigma^{-1}).\]
  From the axioms of $\ACPF$, it is clear that
  $(K^\dag_\infty,\sigma) \models \ACPF$.  Since $\sigma$ acts on
  fifth roots by squaring, $\sigma^{-1}$ acts by cubing:
  \[ \sigma^{-1}(\omega) = \omega^3,\]
  as 2 and 3 are multiplicative inverses modulo 5.  So
  $(K^\dag_\infty,\sigma)$ does not satisfy the two equivalent
  conditions, and in particular, $K^\dag_1 \not \models \rho$.  But
  this is absurd, since $K^\dag_1$ is isomorphic as a field to $K_1$.
\end{proof}
\begin{remark}
  The proof of Lemma~\ref{lame} actually proves
  something stronger: parity quantifiers are not eliminated on the
  class of prime fields $\Ff_p$.
  The non-elimination of parity quantifiers in finite fields was
  originally proven in \cite{krajicek}, Theorem~7.3, using a slightly
  different method.  
\end{remark}

\subsection{Decidability}
Recall the theory $T^+$ of \S\ref{sec-beth}.  For the rest of this
section, we assume Conjecture~\ref{horror}.
\begin{lemma}\label{comply-convert}
  (Assuming Conjecture~\ref{horror}) There is a computable function which
  takes a formula $\phi(\vec{x})$ in the language of $T^+$ and outputs
  a formula $\phi'(\vec{x})$ in the language of periodic fields, such
  that $T^+ \vdash \phi \leftrightarrow \phi'$.
\end{lemma}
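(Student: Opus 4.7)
The plan is to run a Beth-style implicit-to-explicit conversion effectively. By Proposition~\ref{exactly-one}, every model of $\TACPF$ admits a unique expansion to a model of $T^+$, so Fact~\ref{beth-version} guarantees that each atomic predicate $P_{\phi,n,k}(\vec{y})$ in the language of $T^+$ is equivalent modulo $T^+$ to some $\mathcal{L}_{pf}$-formula $\psi_{\phi,n,k}(\vec{y})$. The content of the lemma is upgrading this existence statement to an effective procedure, and the upgrade is where Conjecture~\ref{horror} is used: Lemma~\ref{yanked} converts the conjecture into a recursive axiomatization of $T^+$, so the set of theorems of $T^+$ is computably enumerable.

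Given an atomic formula $P_{\phi,n,k}(\vec{y})$, I would enumerate pairs $(\psi,\Pi)$ where $\psi(\vec{y})$ is an $\mathcal{L}_{pf}$-formula in the free variables $\vec{y}$ and $\Pi$ is a formal proof from $T^+$ of $P_{\phi,n,k}(\vec{y}) \leftrightarrow \psi(\vec{y})$, and return the first $\psi$ found. Beth's theorem guarantees that some such $\psi$ exists, and G\"odel completeness guarantees that a corresponding proof exists, so the enumeration terminates. This produces a computable function assigning to each atomic $T^+$-predicate an equivalent $\mathcal{L}_{pf}$-formula.

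For an arbitrary formula $\phi(\vec{x})$ in the language of $T^+$, proceed by structural recursion. Atomic $\mathcal{L}_{pf}$-subformulas (ring equations, applications of $\sigma$, the sort inclusions, etc.) are left unchanged; atomic subformulas of the form $P_{\phi_0,n,k}(\vec{y})$ are replaced by the $\mathcal{L}_{pf}$-equivalents produced above; Boolean connectives and quantifiers over the periodic-field sorts are handled recursively on the subformulas. The resulting formula $\phi'(\vec{x})$ lies in $\mathcal{L}_{pf}$, and $T^+ \vdash \phi \leftrightarrow \phi'$ follows from the atomic case by a routine induction.

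The only real obstacle is termination of the search at the atomic step, and this is precisely what Conjecture~\ref{horror} is designed to secure. Without it, one still obtains \emph{existence} of $\psi_{\phi,n,k}$ from the uniqueness of expansion established in Proposition~\ref{there-can-only-be-one}, but there is no a priori reason the set of $T^+$-consequences is computably enumerable, so the enumeration above might never halt. Under the conjecture the axioms of $T^+$ are recursive, the enumeration halts, and the construction goes through.
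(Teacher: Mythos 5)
Your proposal is correct and follows essentially the same strategy as the paper: invoke Lemma~\ref{yanked} (via Conjecture~\ref{horror}) to get that $T^+$ is recursively axiomatized, invoke Beth implicit definability together with Proposition~\ref{exactly-one} to guarantee an equivalent $\mathcal{L}_{pf}$-formula exists, and then enumerate formal proofs from $T^+$ until one is found. The paper's algorithm searches directly for a $T^+$-consequence of the form $\forall \vec{x}\,(\phi(\vec{x}) \leftrightarrow \phi'(\vec{x}))$ for the given $\phi$ in one pass, whereas you first eliminate each atomic predicate $P_{\phi_0,n,k}$ by search and then do a structural recursion on the formula; this is a slightly more elaborate but equally valid organization of the same proof-enumeration argument.
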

\begin{proof}
  By Lemma~\ref{yanked}, the theory $\TACPF$ of \S\ref{sec:tacpf} and
  the theory $T^+$ of \S\ref{sec-beth} are recursively axiomatized.
  
  For each $\phi$, an equivalent formula $\phi'$ exists by Beth
  implicit definability (Fact~\ref{beth-version}) and the existence
  and uniqueness of the expansion to $T^+$
  (Proposition~\ref{exactly-one}).  An algorithm can find $\phi'$ by
  searching all consequences of $T^+$ until it finds one of the form
  \[ \forall \vec{x} : \phi(\vec{x}) \leftrightarrow \phi'(\vec{x})\]
  with $\phi'$ a formula in the pure language of periodic fields.
\end{proof}
\begin{corollary}\label{conversion}
  (Assuming Conjecture~\ref{horror}.)
  \begin{enumerate}
  \item In Corollary~\ref{uniform-elim}, the formula $\psi_{\phi,n,k}$
    can be chosen to depend computably on $\phi$.
  \item In Proposition~\ref{parelim}, the elimination of generalized
    parity quantifiers can be carried out computably---the formula
    $\phi'$ can be chosen to depend computably on $\phi$.
  \end{enumerate}
\end{corollary}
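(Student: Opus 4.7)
The plan is to apply Lemma~\ref{comply-convert} as a black box to convert predicates from the language of $T^+$ into $\mathcal{L}_{pf}$-formulas in a computable fashion, then handle each part separately.

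For part (1), I would simply observe that the predicate $P_{\phi,n,k}(\vec{y})$ is by definition an atomic formula in the language of $T^+$. Feeding it to the algorithm provided by Lemma~\ref{comply-convert} produces an $\mathcal{L}_{pf}$-formula $\psi_{\phi,n,k}(\vec{y})$ equivalent to it modulo $T^+$, and this formula depends computably on $\phi, n, k$ since $P_{\phi,n,k}$ does. By Proposition~\ref{finites-do-work}, on a Frobenius periodic field $\Fr^q$ the predicate $P_{\phi,n,k}(\vec{b})$ holds exactly when $|\phi(\Fr^q;\vec{b})| \equiv k \pmod{n}$, so $\psi_{\phi,n,k}$ has the property required by Corollary~\ref{uniform-elim}.

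For part (2), I would proceed by recursion on the structure of $\phi(\vec{x}) \in \mathcal{L}^{\mu}_{pf}$, mirroring the (noncomputable) induction in the proof of Proposition~\ref{parelim}. Atomic formulas and boolean combinations are handled trivially, and ordinary quantifiers commute with the recursion. For a formula of the form $\mu^n_k \vec{y} : \psi(\vec{x}, \vec{y})$, first recursively compute an $\mathcal{L}_{pf}$-formula $\psi'(\vec{x},\vec{y})$ equivalent to $\psi$ on all Frobenius periodic fields, and then apply part (1) to $\psi'$, $n$, $k$ to obtain the desired $\mathcal{L}_{pf}$-formula $\psi_{\psi',n,k}(\vec{x})$. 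Composing these operations yields a computable procedure turning $\phi$ into an equivalent quantifier-free-parity formula.

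The main conceptual obstacle is already absorbed into Lemma~\ref{comply-convert}, whose correctness rests on Conjecture~\ref{horror} (to know that $T^+$ is recursively axiomatized) together with Beth's theorem and Proposition~\ref{exactly-one} (to know that the translated formula exists). Once that lemma is in hand, the argument here is essentially bookkeeping: one must verify that the recursion on formula structure really is computable, which is routine provided the input formulas are presented in a standard syntactic encoding and the algorithm from Lemma~\ref{comply-convert} is invoked as a subroutine in each inductive step.
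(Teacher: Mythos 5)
Your proposal is correct and follows essentially the same route as the paper: part (1) is an instance of Lemma~\ref{comply-convert} applied to the predicate $P_{\phi,n,k}$, and part (2) is the same recursion on formula structure as in Proposition~\ref{parelim}, now made effective via part (1). The additional remarks you make about correctness (via Proposition~\ref{finites-do-work}) and about the routineness of the recursion are fine but were left implicit in the paper.
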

\begin{proof}
  \hfill
  \begin{enumerate}
  \item Corollary~\ref{uniform-elim} is an instance of
    Lemma~\ref{comply-convert}, so the conversion can be done
    computably.
  \item As in the proof of Proposition~\ref{parelim}, one converts a
    $\mathcal{L}^{\mu}_{pf}$-formula into a pure
    $\mathcal{L}_{pf}$-formula by recursion on the formula. \qedhere
  \end{enumerate}
\end{proof}

\begin{theorem-star}[Theorems~\ref{intro-deci-2}.\ref{id2b} and \ref{intro-deci}]
  (Assuming Conjecture~\ref{horror}.)
  \begin{enumerate}
  \item The $\mathcal{L}^{\mu}_{pf}$-theory of Frobenius periodic
    fields is decidable.
  \item The $\mathcal{L}^{\mu}_{rings}$-theory of
    finite fields is decidable.
  \end{enumerate}
\end{theorem-star}
\begin{proof}
  First note that the ($\mathcal{L}_{pf}$-)theory of Frobenius
  periodic fields is decidable.  By
  Proposition~\ref{acpf-is-ultraprod}, the theory is completely
  axiomatized by $\TACPF$.  Therefore, the theory is computably
  enumerable.  The theory is also co-computably enumerable.  Indeed, a
  sentence $\tau$ is not part of the theory if and only if $\Fr^q
  \models \neg \tau$ for some $q$.  There is an algorithm taking $q$
  and $\tau$ and outputting whether or not $\Fr^q \models \tau$, because
  $\Fr^q$ is essentially finite.  So we can enumerate all the statements
  that fail in some Frobenius periodic field, which is the complement
  of the theory of Frobenius periodic fields.  Thus the theory of
  Frobenius periodic fields is decidable, as claimed.

  Now given a $\mathcal{L}^{\mu}_{pf}$-sentence $\tau$, we can
  computably convert it into an equivalent $\mathcal{L}_{pf}$-sentence
  $\tau'$, and use the previous paragraph to computably determine
  whether or not $\tau'$ holds in every Frobenius periodic field.
  This proves the first point.

  The second point follows, because there is a computable way to
  convert an $\mathcal{L}^{\mu}_{rings}$-sentence $\tau$ into a
  $\mathcal{L}^{\mu}_{pf}$-sentence $\tau'$ such that
  \begin{equation*}
    (K_\infty,\sigma) \models \tau' \iff K_1 \models \tau
  \end{equation*}
  for any essentially finite periodic field $(K_\infty,\sigma)$.
  Taking $K_\infty$ to be $\Fr^q$, we see that
  \begin{equation*}
    \Fr^q \models \tau' \iff \Ff_q \models \tau.
  \end{equation*}
  Therefore, $\tau$ holds in every finite field if and only if $\tau'$
  holds in every Frobenius periodic field.  Then we can apply the
  oracle for the first point to $\tau'$.
\end{proof}

\section{Mock-finite fields}
Recall that $\Abs(K)$ denotes the substructure of \emph{absolute
  numbers} of $K$---the elements algebraic over the prime field.
\begin{definition}
  A field $K$ is \emph{mock-finite} if $K$ is pseudofinite and
  $\Abs(K)$ is finite.
\end{definition}
We will see that mock-finite fields admit particularly nice Euler
characteristics.
\begin{definition}A field $K$ is a \emph{mock-$\Ff_q$} if $K$ is pseudofinite
    and $\Abs(K) \cong \Ff_q$.
\end{definition}
Note that $K$ is mock-finite if and only if $K$ is a mock-$\Ff_q$ for some $q$.
\begin{lemma}
  Let $K$ be a mock-$\Ff_q$.  Then the restriction homomorphism
  \[ \Gal(K) \to \Gal(\Ff_q)\]
  is an isomorphism.  Consequently, there is a unique topological
  generator $\sigma \in \Gal(K)$ extending the $q$th power Frobenius
  $\phi_q \in \Gal(\Ff_q)$.
\end{lemma}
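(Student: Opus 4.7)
The plan is to identify $K^{\mathrm{sep}}$ with the compositum $K \cdot \Ff_q^{\mathrm{alg}}$ inside some fixed separable closure, and then apply the standard Galois-theoretic fact about such compositums. Note first that since finite fields are perfect, $\Ff_q^{\mathrm{alg}} = \Ff_q^{\mathrm{sep}}$, and as $\Abs(K) = \Ff_q \subseteq K$, a choice of separable closure of $K$ determines an embedding $\Ff_q^{\mathrm{alg}} \hookrightarrow K^{\mathrm{sep}}$. The subfield $\Ff_q^{\mathrm{alg}} \subseteq K^{\mathrm{sep}}$ is stable under $\Gal(K)$, so the restriction map $\Gal(K) \to \Gal(\Ff_q)$ is well-defined.

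First I would verify that for each $n \ge 1$, the compositum $K \cdot \Ff_{q^n}$ has degree exactly $n$ over $K$. Since $\Abs(K) = \Ff_q$, any element of $K$ algebraic over $\Ff_q$ lies in $\Ff_q$, so $K \cap \Ff_{q^n} = \Ff_q$, and consequently $[K \cdot \Ff_{q^n} : K] = [\Ff_{q^n} : \Ff_q] = n$.

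Next, because $K$ is pseudofinite, $\Gal(K) \cong \hat{\Zz}$ has a unique open subgroup of each finite index, so $K$ admits a unique field extension of degree $n$ inside $K^{\mathrm{sep}}$. That unique extension must therefore equal $K \cdot \Ff_{q^n}$. Taking the union over $n$ yields
\[
  K^{\mathrm{sep}} = \bigcup_n K \cdot \Ff_{q^n} = K \cdot \Ff_q^{\mathrm{alg}}.
\]
Now I apply the standard fact that if $L/F$ is Galois, $M/F$ is Galois, and $L = F \cdot M$ (inside a common overfield), then restriction gives an isomorphism $\Gal(L/F) \xrightarrow{\sim} \Gal(M/F \cap M)$. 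Applied with $F = K$, $L = K^{\mathrm{sep}}$, $M = \Ff_q^{\mathrm{alg}}$, and $F \cap M = \Abs(K) = \Ff_q$, this gives the desired isomorphism $\Gal(K) \xrightarrow{\sim} \Gal(\Ff_q)$.

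For the final sentence, $\phi_q$ is a topological generator of $\Gal(\Ff_q) \cong \hat{\Zz}$, and its unique preimage under the isomorphism just established is a topological generator $\sigma \in \Gal(K)$ extending $\phi_q$. The main obstacle is really only the identification $K^{\mathrm{sep}} = K \cdot \Ff_q^{\mathrm{alg}}$; once this is in place, everything else is formal Galois theory.
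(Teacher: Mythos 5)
Your proof is correct but takes a different route from the paper. The paper observes that the restriction $\Gal(K) \to \Gal(\Ff_q)$ is surjective because $\Ff_q = \Abs(K)$ is relatively algebraically closed in $K$, and then invokes the Hopfian property of $\hat{\Zz}$ (any continuous surjective endomorphism of $\hat{\Zz}$ is an isomorphism) to conclude. You instead prove explicitly that $K^{\mathrm{sep}} = K \cdot \Ff_q^{\mathrm{alg}}$, using the uniqueness of degree-$n$ extensions of $K$ to identify each $K \cdot \Ff_{q^n}$ as the unique degree-$n$ extension, and then apply the Galois translation theorem for compositums. Both routes ultimately exploit the same structure of $\Gal(K) \cong \hat{\Zz}$, just packaged differently: the paper reduces to a one-line group-theoretic fact, whereas your argument is more field-theoretically explicit and makes visible the identity $K^{\mathrm{sep}} = K \cdot \Ff_q^{\mathrm{alg}}$, which is a reusable observation in its own right. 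One minor imprecision: you state the translation theorem with the hypothesis ``$M/F$ is Galois,'' but in the application you have $M = \Ff_q^{\mathrm{alg}}$ and $F = K$, and $\Ff_q^{\mathrm{alg}}/K$ is not an extension at all; the hypothesis you want is that $M/(F \cap M)$ is Galois (or that $M$ is Galois over the prime field), which does hold and makes the application correct.
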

\begin{proof}
  The restriction homomorphism is surjective because $\Ff_q$ is
  relatively algebraically closed in $K$.  Both Galois groups are
  isomorphic to $\hat{\Zz}$, and any continuous surjective
  homomorphism $\hat{\Zz} \to \hat{\Zz}$ is an isomorphism.
\end{proof}
\begin{definition}
  If $K$ is a mock-$\Ff_q$, the \emph{mock Frobenius automorphism} is
  the unique $\sigma \in \Gal(K)$ extending the $q$th-power Frobenius
  $\phi_q \in \Gal(\Ff_q)$.
\end{definition}
If $p$ is a prime, let $\Zz_{\neg p}$ be the prime-to-$p$ completion
of $\Zz$:
\begin{equation*}
  \Zz_{\neg p} = \varprojlim_{(n,p) = 1} \Zz/n\Zz = \prod_{\ell \ne p}
  \Zz_\ell.
\end{equation*}
\begin{definition}
  Let $K$ be a mock-finite field, and $\sigma$ be the mock Frobenius
  automorphism.
  \begin{enumerate}
  \item The \emph{principal Euler characteristic} on $K$ is the
    $\Zz_{\neg p}$-valued Euler characteristic induced by $\sigma$.
  \item The \emph{dual Euler characteristic} on $K$ is the $\Zz_{\neg
    p}$-valued Euler characteristic induced by $\sigma^{-1}$.
  \end{enumerate}
\end{definition}
The reason for the prime-to-$p$ restriction will become clear soon.
\begin{lemma}
  The principal and dual Euler characteristics are 0-definable.
\end{lemma}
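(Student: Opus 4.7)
The plan is to upgrade the $\acl^{eq}(\emptyset)$-definability established in Lemma~\ref{psf-case} to genuine $0$-definability. In the general pseudofinite case this upgrade is impossible because the Euler characteristic depends on a choice of topological generator $\sigma \in \Gal(K)$, of which there are many; in the mock-finite case, however, the mock Frobenius $\sigma$ is canonical, and I want to leverage this into full $\Aut(K)$-invariance. After passing to a monster model, Lemma~\ref{psf-case} guarantees that each set $\{b : \chi(\phi(K;b)) = k\}$ attached to the principal Euler characteristic is already definable, so to promote the parameter set to $\emptyset$ I only need to show each such set is $\Aut(K)$-invariant.

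That $\Aut(K)$-invariance will follow once I show that the mock Frobenius $\sigma \in \Gal(K)$ is invariant under the conjugation action of any lift $\tilde\alpha \in \Aut(K^{alg})$ of an element $\alpha \in \Aut(K)$. To verify this I would note that $\tilde\alpha \sigma \tilde\alpha^{-1}$ again lies in $\Gal(K)$ because $\alpha$ preserves $K$ setwise, and then restrict via the isomorphism $\Gal(K) \cong \Gal(\Ff_q)$ coming from the preceding lemma. Under this restriction, $\tilde\alpha \sigma \tilde\alpha^{-1}$ becomes $\tilde\alpha|_{\Ff_q^{alg}} \cdot \phi_q \cdot (\tilde\alpha|_{\Ff_q^{alg}})^{-1}$. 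The element $\tilde\alpha|_{\Ff_q^{alg}}$ lies in $\Aut(\Ff_q^{alg}) = \Gal(\Ff_q^{alg}/\Ff_p)$, an abelian group that contains $\phi_q$, so it commutes with $\phi_q$ and the conjugate collapses to $\phi_q$ itself. By the uniqueness clause defining the mock Frobenius, $\tilde\alpha \sigma \tilde\alpha^{-1} = \sigma$, as required. The dual Euler characteristic is handled identically, with $\sigma^{-1}$ in place of $\sigma$.

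One minor point to tidy up is that the conjugation action of $\tilde\alpha$ on $\Gal(K)$ should not depend on the choice of extension of $\alpha$; this is immediate, because any two extensions differ by an element of the abelian group $\Gal(K)$. The one substantive input is thus the abelianness of $\Gal(\Ff_q^{alg}/\Ff_p)$, which is what makes the mock setting rigid. I do not anticipate any serious obstacle: the argument essentially packages the fact that in the mock-finite case the choice of generator of $\Gal(K)$ needed to build $\chi$ is itself canonical, so no arbitrary choice leaks into the Euler characteristic and the $\Aut(K)$-conjugacy class of $\chi$ collapses to a single point.
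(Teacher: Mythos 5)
Your argument is correct and is essentially a detailed unpacking of the paper's one-line proof, which observes that the principal and dual Euler characteristics are definable by Lemma~\ref{psf-case} and $\Aut(K/\emptyset)$-invariant ``by construction.'' You have correctly identified that the substance of ``by construction'' is exactly the canonicality of the mock Frobenius under conjugation by lifts of $K$-automorphisms, and your use of the abelianness of $\Gal(\Ff_q^{alg}/\Ff_p)$ together with the uniqueness clause in the definition of the mock Frobenius is the right justification.
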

\begin{proof}
  They are definable by Lemma~\ref{psf-case}, and
  $\Aut(K/\emptyset)$-invariant by construction.
\end{proof}

\subsection{Mock-frobenius periodic fields}
\begin{definition}
  A periodic field $(K,\sigma)$ is a \emph{mock-$\Fr^q$} if
  $(K,\sigma) \models \ACPF$ and $\Abs(K,\sigma) \cong \Fr^q$.
\end{definition}
\begin{proposition}\label{prop:frob}
  Let $q$ be a prime power.
  \begin{enumerate}
  \item \label{z1} The theory of mock-$\Fr^q$ periodic fields is consistent and
    complete.
  \item \label{z2} If $K$ is a mock-$\Ff_q$ and $\sigma$ is the mock Frobenius,
    then $(K^{alg},\sigma)$ is a mock-$\Fr^q$.  Every mock-$\Fr^q$
    arises this way.
  \item \label{z3} The theory of mock-$\Ff_q$ fields is consistent and
    complete.
  \end{enumerate}
\end{proposition}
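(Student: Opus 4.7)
The plan is to handle the three parts in order. The main work lies in part~\ref{z1}; parts~\ref{z2} and~\ref{z3} will then reduce to it via Proposition~\ref{acpf-is-pseuf} and Lemma~\ref{ee-criterion}.

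For part~\ref{z1}, completeness is immediate from Lemma~\ref{ee-criterion}: any two mock-$\Fr^q$ periodic fields both satisfy $\ACPF$ and have absolute parts isomorphic (each to $\Fr^q$), so they are elementarily equivalent. For consistency I would construct a mock-$\Fr^q$ as the non-principal ultraproduct
\begin{equation*}
  (L,\sigma) = \prod_{n \in \Nn} \Fr^{q^{n!+1}}/\mathcal{U},
\end{equation*}
which is a model of $\ACPF$ by Corollary~\ref{axish-1}. Setting $e = \log_p q$, the intersection $\Ff_{p^d} \cap \Ff_{q^{m(n!+1)}} = \Ff_{p^{\gcd(d,\, m(n!+1)e)}}$ stabilizes to $\Ff_{p^{\gcd(d,me)}}$ for $n$ large enough that $d \mid n!$, giving $\Abs(L_m) = \bigcup_d \Ff_{p^{\gcd(d,me)}} = \Ff_{q^m}$ in the ultraproduct. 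The ``$+1$'' in the exponent is chosen so that $\alpha^{q^{n!+1}} = \alpha^q$ holds for $\alpha \in \Ff_{q^m}$ whenever $m \mid n!$, ensuring that $\sigma$ acts on $\Ff_q^{alg}$ as $\phi_q$, so the absolute part matches $\Fr^q$ as a periodic field.

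For part~\ref{z2}, both directions pass through Proposition~\ref{acpf-is-pseuf}. Given a mock-$\Ff_q$ field $K$ with mock Frobenius $\sigma$, that proposition yields $(K^{alg},\sigma) \models \ACPF$, and the absolute part as a periodic field is $(\Ff_q^{alg},\phi_q) = \Fr^q$, because $\Abs(K^{alg}) = \Ff_p^{alg} = \Ff_q^{alg}$ and $\sigma|_{\Ff_q^{alg}} = \phi_q$ by the definition of the mock Frobenius. Conversely, given a mock-$\Fr^q$ periodic field $(L,\sigma)$, the same proposition realizes it as $(L_1^{alg},\sigma)$ with $L_1$ pseudofinite and $\sigma$ a topological generator of $\Gal(L_1)$; then $\Abs(L_1) = L_1 \cap \Ff_q^{alg}$ is the fixed field of $\sigma|_{\Ff_q^{alg}} = \phi_q$, namely $\Ff_q$, so $L_1$ is mock-$\Ff_q$ with mock Frobenius $\sigma$.

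For part~\ref{z3}, consistency follows by taking the $L_1$-sort of any mock-$\Fr^q$ supplied by part~\ref{z1} and applying part~\ref{z2}. For completeness, two mock-$\Ff_q$ fields $K, K'$ give mock-$\Fr^q$ periodic fields $(K^{alg},\sigma), ((K')^{alg},\sigma')$ via part~\ref{z2}, which are elementarily equivalent by part~\ref{z1}; since the language of periodic fields contains the ring language on the $K_1$-sort, this forces $K \equiv K'$ as fields. The main potential obstacle I foresee is the divisibility bookkeeping in the consistency argument of part~\ref{z1}, but this requires only elementary number theory and standard ultraproduct facts, with no input from algebraic geometry or finer model theory.
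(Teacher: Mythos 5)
Your proof is correct, and the overall structure matches the paper's: completeness in part~\ref{z1} via Lemma~\ref{ee-criterion}, part~\ref{z2} from Proposition~\ref{acpf-is-pseuf} and the definitions, and part~\ref{z3} by combining the first two. The one place you genuinely diverge is the \emph{existence} argument in part~\ref{z1}. The paper's proof is shorter: since $\ACPF$ is the model companion of periodic fields, $\Fr^q$ embeds into some existentially closed periodic field $K$; because $K_\infty$ is algebraically closed, $\Abs(K_\infty) = \Ff_p^{alg} = \Ff_q^{alg}$, and the embedding forces $\sigma$ to agree with $\phi_q$ there, so $\Abs(K,\sigma) = \Fr^q$ automatically. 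You instead build an explicit witness as the ultraproduct $\prod_n \Fr^{q^{n!+1}}/\mathcal{U}$ and do the gcd bookkeeping to verify $\Abs(L_m) = \Ff_{q^m}$; this is correct (the key facts $\gcd(d,n!+1)=1$ once $d\mid n!$, and $\gcd(d,ab)=\gcd(d,a)$ when $\gcd(d,b)=1$, are what's needed) but more work. The trade-off: the paper's route is slicker but opaque about which ultraproduct realizes the theory, while yours exhibits a concrete ultraproduct of Frobenius periodic fields, in the same spirit as the $\prod_n (\Ff_q^{alg},\phi_{q^n})/\mathcal{U}$ construction the paper uses later in Lemma~\ref{dwork-case} (there with an ultrafilter concentrating on $1+n\Zz$ rather than the $n!+1$ trick). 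Both are valid; yours is more self-contained in that it bypasses the model-companionship existence principle in favor of Corollary~\ref{axish-1}.
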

\begin{proof}
  \begin{enumerate}
  \item Mock-$\Fr^q$ fields exist because we can embed $\Fr^q$ into an
    existentially closed periodic field.  Any two mock-$\Fr^q$ fields
    are elementarily equivalent by Lemma~\ref{ee-criterion}.
  \item Clear from Proposition~\ref{acpf-is-pseuf} and the
    definitions.
  \item Combine \ref{z1} and \ref{z2}. \qedhere
  \end{enumerate}
\end{proof}

\subsection{The principal Euler characteristic}
Dwork proved the following part of the Weil conjectures, in
\cite{dwork}.
\begin{fact}\label{dw-fact}
  If $V$ is a variety over $\Ff_q$, then there are non-zero algebraic
  integers $\alpha_1, \ldots, \alpha_m$ and $\beta_1, \ldots,
  \beta_{\ell}$ such that for every $n$,
  \[|V(\Ff_{q^n})| = \alpha_1^n + \cdots + \alpha_m^n - \beta_1^n - \cdots - \beta_{\ell}^n.\]
\end{fact}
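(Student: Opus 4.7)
The plan is to introduce the zeta function
\[ Z(V, T) = \exp\left(\sum_{n \geq 1} \frac{|V(\Ff_{q^n})|}{n} T^n\right) \in \Qq[[T]], \]
prove that it is a rational function of $T$ (Dwork's theorem), and then extract the desired numerical identity by comparing logarithmic derivatives. Once rationality is known, I would factor $Z(V, T) = \prod_j (1 - \beta_j T) / \prod_i (1 - \alpha_i T)$ over $\bar{\Qq}$, take $\log$, and compare coefficients of $T^n$ to obtain $|V(\Ff_{q^n})| = \sum_{i=1}^m \alpha_i^n - \sum_{j=1}^\ell \beta_j^n$. Discarding any factors with $\alpha_i = 0$ or $\beta_j = 0$ preserves this identity, so one may assume non-vanishing.

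For the rationality step, I would follow Dwork's $p$-adic argument. First reduce to the case of an affine hypersurface via inclusion-exclusion over an affine stratification of $V$. Next express each count as an exponential sum using a nontrivial additive character $\Psi : \Ff_q \to \Zz_p[\zeta_p]^\times$ and the elementary identity $|\{x : f(x) = 0\}| = q^{-1} \sum_{t,x} \Psi(tf(x))$. These sums can then be realized as $\Tr(F^n)$ for a completely continuous operator $F$ on a $p$-adic Banach space of overconvergent power series, constructed from Dwork's splitting function $\theta(T) = \exp(\pi(T - T^p))$ with $\pi^{p-1} = -p$. A trace identity expresses $Z(V, T)$ as an alternating product of Fredholm determinants $\det(1 - TF_i)$, each $p$-adically entire. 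Finally, the Borel--Dwork rationality criterion, which says that a series in $\Zz[[T]]$ that is $p$-adically meromorphic in a disk of radius $> 1$ and whose coefficients satisfy archimedean growth bounds is automatically rational, completes the proof that $Z(V, T) \in \Qq(T)$.

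To upgrade the $\alpha_i, \beta_j$ from algebraic numbers to algebraic integers, I would argue by Noetherian induction. Stratify $V$ into smooth locally closed pieces and take smooth projective compactifications (using resolution of singularities or de Jong alterations in positive characteristic). For smooth projective $V$, the Grothendieck--Lefschetz trace formula for $\ell$-adic cohomology ($\ell \ne p$) gives
\[ |V(\Ff_{q^n})| = \sum_i (-1)^i \Tr(\phi_q^n \mid H^i_{et}(V; \Qq_\ell)), \]
and the eigenvalues of $\phi_q$ on $H^i_{et}(V; \Zz_\ell)$ are algebraic integers because they preserve an integral lattice. Assembling contributions from the strata via inclusion-exclusion then yields the desired decomposition for arbitrary $V$.

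The main obstacle is the rationality theorem itself: building up Dwork's $p$-adic functional analysis (Banach spaces of overconvergent functions, complete continuity of $F$, traceability of its iterates) and verifying the trace identity is the substantive technical core. The combinatorial passage from rationality to the claimed identity is routine, but the integrality of the $\alpha_i, \beta_j$ requires the extra layer of $\ell$-adic cohomology (or a more careful $p$-adic argument tracking Newton polygons) that Dwork's original rationality theorem alone does not supply.
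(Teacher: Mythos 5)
The paper presents this as a cited \emph{Fact} attributed to Dwork's paper \cite{dwork} and gives no proof of its own, so the comparison is really with Dwork's method, and your rationality argument --- exponential sums via an additive character, the splitting function $\exp(\pi(T-T^p))$, Fredholm determinants of completely continuous operators on $p$-adic Banach spaces, and the Borel--Dwork criterion --- is exactly that. The subsequent extraction of $|V(\Ff_{q^n})|=\sum_i\alpha_i^n-\sum_j\beta_j^n$ from a factorization of $Z(V,T)$ is routine, and the non-vanishing of the $\alpha_i,\beta_j$ is automatic: if $Z=P/Q$ in lowest terms with $P(0)=Q(0)=1$, the reciprocal roots of $P$ and $Q$ cannot be zero, so your remark about ``discarding factors with $\alpha_i=0$'' does no work.

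The integrality step is where you overcomplicate and, as written, hit a real gap. You do not need $\ell$-adic cohomology, a smooth stratification, or any resolution of singularities: the zeta function already lies in $\Zz[[T]]$, immediately from the Euler product $Z(V,T)=\prod_{x\in|V|}(1-T^{\deg x})^{-1}$, and once rationality is known, Fatou's lemma produces a lowest-terms representation $P/Q$ with $P,Q\in\Zz[T]$ and $P(0)=Q(0)=1$; the $\alpha_i$ and $\beta_j$ are then the roots of the monic integer polynomials $T^{\deg Q}Q(1/T)$ and $T^{\deg P}P(1/T)$, hence nonzero algebraic integers. Your cohomological detour, by contrast, has a genuine gap in characteristic $p$: resolution of singularities is not available, and de Jong's alterations give generically finite surjections \emph{onto} a variety, not smooth projective compactifications \emph{of} the strata, so the inclusion--exclusion scheme you describe does not go through. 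A correct cohomological route exists (Deligne's integrality theorem for compactly supported $\ell$-adic cohomology, which needs no stratification), but it is far heavier machinery than the Euler-product-plus-Fatou argument, and the Fact is invoked in the paper in full generality --- no smoothness, properness, or irreducibility is assumed on $V$ --- so the elementary argument is both simpler and safer.
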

There is no assumption that $V$ is smooth, proper, or connected.
\begin{lemma}\label{dwork-case}
  Let $V, \alpha_i, \beta_j$ be as in Fact~\ref{dw-fact}.  Let
  $(K_\infty,\sigma)$ be a mock $\Fr^q$, and let $\chi_\ell$ be the
  $\ell$-adic part of the canonical Euler characteristic on $K$.  Then
  \begin{equation*}
    \chi_\ell(V(K_1)) = \alpha'_1 + \cdots + \alpha'_m - \beta'_1 -
    \cdots - \beta'_\ell, \label{to-check-elk}
  \end{equation*}
  where
  \begin{align*}
    \alpha'_i &= 
    \begin{cases}
      \alpha_i & v_\ell(\alpha_i) = 0 \\
      0 & v_\ell(\alpha_i) > 0
    \end{cases} \\
    \beta'_i &= 
    \begin{cases}
      \beta_i & v_\ell(\beta_i) = 0 \\
      0 & v_\ell(\beta_i) > 0
    \end{cases}
  \end{align*}
\end{lemma}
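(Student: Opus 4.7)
The strategy is to transport the computation from the mock-$\Fr^q$ $(K_\infty,\sigma)$ to a cleverly chosen ultraproduct of Frobenius periodic fields, where the canonical Euler characteristic literally counts points and Dwork's formula applies directly.

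Step 1: By compactness, choose a non-principal ultrafilter $\mathcal{U}$ on $\Nn$ containing $\{n : n \equiv 1 \pmod{M}\}$ for every $M \in \Nn$ (these sets have the finite intersection property). Let $L = \prod_{n \in \Nn} \Fr^{q^n}/\mathcal{U}$. By Corollary~\ref{axish-1}, $L \models \ACPF$. The condition $n \equiv 1 \pmod{\ell}$ for each prime $\ell$ ensures $\ell \nmid n$ for $\mathcal{U}$-almost all $n$, hence $\Ff_{q^\ell} \not\subseteq L_1$ and $\Abs(L_1) \cong \Ff_q$. Moreover, for $\alpha \in \Ff_{q^d}$, the condition $n \equiv 1 \pmod{d}$ gives $\sigma(\alpha) = \alpha^{q^n} = \alpha^q = \phi_q(\alpha)$ for $\mathcal{U}$-almost all $n$, so $\sigma$ restricts to the $q$th-power Frobenius on $\Ff_q^{alg}$. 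Hence $\Abs(L,\sigma) \cong \Fr^q$, making $L$ a mock-$\Fr^q$; by Proposition~\ref{prop:frob}.\ref{z1}, $L \equiv (K_\infty,\sigma)$.

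Step 2: Since $\chi$ is uniformly $0$-definable across $\TACPF$ (Theorem~\ref{ecpdf-thm}) and $V$ is defined over $\Ff_q$, the partial elementary isomorphism $\Abs(L,\sigma) \cong \Fr^q \cong \Abs(K_\infty,\sigma)$ furnished by Lemma~\ref{ee-criterion} gives $\chi_\ell(V(K_1)) = \chi_\ell(V(L_1))$. On $L$, $\chi$ is the nonstandard counting Euler characteristic (by Theorem~\ref{ecpdf-thm}), so
\[ \chi_\ell(V(L_1)) = \lim_\mathcal{U} |V(\Ff_{q^n})| \in \Zz_\ell.\]
Substituting Dwork's formula (Fact~\ref{dw-fact}) reduces the problem to computing $\lim_\mathcal{U} \alpha^n$ in $\bar{\Zz}_\ell$ for each algebraic integer $\alpha \in \{\alpha_1,\ldots,\alpha_m,\beta_1,\ldots,\beta_\ell\}$, after fixing an embedding $\bar{\Qq} \hookrightarrow \bar{\Qq}_\ell$ inducing $v_\ell$.

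Step 3: If $v_\ell(\alpha) > 0$, then $v_\ell(\alpha^n) = n \cdot v_\ell(\alpha) \to \infty$ (since $\mathcal{U}$ is non-principal), so $\alpha^n \to 0$ $\ell$-adically. If $v_\ell(\alpha) = 0$, then $\alpha$ is a unit in the ring of integers $\mathcal{O}$ of a finite extension of $\Qq_\ell$; let $d_N$ be its multiplicative order in $(\mathcal{O}/\mathfrak{p}^N)^\times$ for the prime $\mathfrak{p}$ above $\ell$. The condition $\{n \equiv 1 \pmod{d_N}\} \in \mathcal{U}$ ensures $\alpha^n \equiv \alpha \pmod{\mathfrak{p}^N}$ for $\mathcal{U}$-almost all $n$, whence $\lim_\mathcal{U} \alpha^n = \alpha$. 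Summing over $j$ and $k$ recovers the claimed formula.

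The main obstacle is the construction in Step 1: the ultrafilter must simultaneously arrange $\Abs(L,\sigma) \cong \Fr^q$, the correct $\sigma$-action on $\Ff_q^{alg}$, and the $\ell$-adic stabilization of powers of $\ell$-adic units. Fortunately, all three requirements are governed by the single family $\{n \equiv 1 \pmod{M}\}$, so no conflict arises.
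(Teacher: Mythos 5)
Your proof is correct and follows essentially the same route as the paper's: you build the same ultrafilter concentrating on the sets $\{n : n \equiv 1 \pmod M\}$, form the ultraproduct of the $\Fr^{q^n}$, argue it is a mock-$\Fr^q$ elementarily equivalent to $(K_\infty,\sigma)$, transfer $\chi_\ell$ via uniform $0$-definability, and compute $\lim_\mathcal{U}\alpha^n$ exactly as in the paper's Claim (using $\alpha^n \to 0$ when $v_\ell(\alpha)>0$ and $\alpha^n\equiv\alpha\pmod{\mathfrak p^N}$ when $v_\ell(\alpha)=0$). The only minor redundancy is that once you show $\sigma$ restricts to $\phi_q$ on $\Ff_q^{alg}$, the identity $\Abs(L_1)\cong\Ff_q$ follows automatically, so the separate "$\Ff_{q^\ell}\not\subseteq L_1$" argument is unnecessary.
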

In other words, $\chi_\ell(V(K_1))$ is obtained from $|V(\Ff_q)|$ by
dropping the terms of positive $\ell$-adic valuation.
\begin{proof}
  Take a non-principal ultrafilter $\mathcal{U}$ on $\Nn$,
  concentrating on the sets $1 + n\Zz$ for every non-zero ideal
  $n\Zz$.
    \begin{claim}
    The ultralimit of $|V(\Ff_{q^n})|$ in $\Zz_\ell$ is given by the
    right-hand side of (\ref{to-check-elk}).
  \end{claim}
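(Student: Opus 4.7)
The plan is to carry out the computation $\ell$-adically inside a single finite extension of $\Qq_\ell$. First I would fix an embedding of the number field generated by the $\alpha_i$ and $\beta_j$ into $\overline{\Qq}_\ell$, and let $F$ be a finite extension of $\Qq_\ell$ containing the images of all of them. Let $\mathcal{O}_F$ be its ring of integers, with maximal ideal $\mathfrak{m}$. Then $\mathcal{O}_F$ is compact in the $\mathfrak{m}$-adic topology, we have $\Zz \subset \Zz_\ell \subset \mathcal{O}_F$, and the integer sequence $|V(\Ff_{q^n})|$ lives in $\mathcal{O}_F$; its ultralimit along $\mathcal{U}$ exists there and automatically lies in the closed subring $\Zz_\ell$.

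The heart of the argument is to compute $\lim_\mathcal{U} \gamma^n$ inside $\mathcal{O}_F$ separately for each $\gamma \in \{\alpha_i\} \cup \{\beta_j\}$, and then pass to the finite sum furnished by Fact~\ref{dw-fact} by continuity of addition. There are two cases. If $v_\ell(\gamma) > 0$, then $v_\ell(\gamma^n) = n \cdot v_\ell(\gamma) \to \infty$, and since $\mathcal{U}$ is non-principal this forces $\lim_\mathcal{U} \gamma^n = 0$. If $v_\ell(\gamma) = 0$, then $\gamma \in \mathcal{O}_F^\times$; for each $k \ge 1$ the image of $\gamma$ in the finite group $(\mathcal{O}_F/\mathfrak{m}^k)^\times$ has some finite multiplicative order $N_k$, and because $\mathcal{U}$ concentrates on $1 + N_k \Zz$ the congruence $\gamma^n \equiv \gamma \pmod{\mathfrak{m}^k}$ holds on a $\mathcal{U}$-large set of $n$. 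Letting $k$ vary, this says $\lim_\mathcal{U} \gamma^n = \gamma$ in the $\mathfrak{m}$-adic topology. Summing, the ultralimit of $|V(\Ff_{q^n})| = \sum_i \alpha_i^n - \sum_j \beta_j^n$ is exactly $\sum_i \alpha'_i - \sum_j \beta'_j$, as claimed.

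I do not expect a serious obstacle. The only subtle point is to keep the entire computation inside one topological ring so that linearity of ultralimits is legitimate; working in $\mathcal{O}_F$ rather than in $\Zz_\ell$ directly finesses the fact that the individual summands $\alpha'_i, \beta'_j$ need not themselves lie in $\Zz_\ell$. Well-definedness of the right-hand side $\sum_i \alpha'_i - \sum_j \beta'_j$---its independence of the choice of embedding, and its membership in $\Zz_\ell$---is then a free consequence of the formula, since the left-hand side is intrinsic to the integer sequence.
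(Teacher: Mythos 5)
Your proof is correct and follows essentially the same route as the paper's: pass to a finite extension of $\Qq_\ell$ containing all the $\alpha_i$ and $\beta_j$, work in its valuation ring, and split into the cases $v_\ell(\gamma)>0$ (where $\gamma^n\to 0$) and $v_\ell(\gamma)=0$ (where periodicity modulo each power of the maximal ideal combined with $\mathcal{U}$ concentrating on $1+N\Zz$ forces $\lim_\mathcal{U}\gamma^n=\gamma$). The only cosmetic difference is that the paper uses the full order of the unit group $(\Oo/I)^\times$ where you use the order of $\gamma$ itself, which changes nothing.
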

  \begin{claimproof}
    Take a finite extension $L/\Qq_\ell$ such that $L$ contains all
    the $\alpha_i$ and $\beta_j$.  Let $\Oo$ be the $\ell$-adic
    valuation ring on $L$.  It suffices to show that for any non-zero
    ideal $I \lhd \Oo$, the following is true for ``most'' $n$:
    \begin{equation}
      \alpha_i^n \equiv \alpha_i' \pmod{I} \label{to-check}
    \end{equation}
    If $v_\ell(\alpha_i) > 0$, then $\lim_{n \to \infty} \alpha_i^n =
    0$, and Equation (\ref{to-check}) holds because $\mathcal{U}$ is
    non-principal.  If $v_\ell(\alpha_i) = 0$, then $\alpha_i$ is a
    unit in the finite ring $\Oo/I$.  Let $m$ be the cardinality of
    the group of units $(\Oo/I)^\times$.  Then
    \begin{equation*}
      n \in m\Zz \implies \alpha_i^n \equiv 1 \pmod{I}.
    \end{equation*}
    Therefore
    \begin{equation*}
      n \in 1 + m\Zz \implies \alpha_i^n \equiv \alpha_i \pmod{I}.
    \end{equation*}
    By choice of $\mathcal{U}$, this holds for ``most'' $n$, verifying
    Equation (\ref{to-check}).
  \end{claimproof}
  \begin{claim}
    The ultralimit of $|V(\Ff_{q^n})|$ in $\Zz_\ell$ is given by the
    left-hand side of (\ref{to-check-elk}).
  \end{claim}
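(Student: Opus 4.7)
The plan is to compute the ultralimit of $|V(\Ff_{q^n})|$ by realizing it as the canonical $\ell$-adic Euler characteristic on a suitable ultraproduct of Frobenius periodic fields, and then transfer the result to $(K_\infty,\sigma)$ using uniform $0$-definability.

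First I would form the periodic field $L_\infty := \prod_n \Fr^{q^n}/\mathcal{U}$. The crucial step is to show that $(L_\infty,\sigma)$ is a mock-$\Fr^q$. For this I check two things: that $L_1$ is infinite (hence pseudofinite) because $\mathcal{U}$ is non-principal; and that $\Abs(L_\infty,\sigma) \cong \Fr^q$. The containment $\Fr^q \subseteq \Abs(L_\infty,\sigma)$ is automatic. For the reverse inclusion and the identification of the $\sigma$-action, observe that in $\Fr^{q^n}$ the automorphism $\phi_{q^n}$ acts on $\Ff_{q^k}$ as $\phi_q^n$. Since $\mathcal{U}$ concentrates on $n \in 1 + k\Zz$ for every $k$, the action of $\sigma$ on every $\Ff_{q^k} \subseteq \Abs(L_\infty)$ is $\phi_q$, so $\Abs(L_\infty,\sigma) \cong \Fr^q$ as periodic fields. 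By Corollary~\ref{axish-1}, $(L_\infty,\sigma) \models \ACPF$, so it is a mock-$\Fr^q$.

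Next, by Proposition~\ref{prop:frob}.\ref{z1} the theory of mock-$\Fr^q$ periodic fields is complete, so $(L_\infty,\sigma) \equiv (K_\infty,\sigma)$. More strongly, Lemma~\ref{ee-criterion} shows that the common substructure $\Fr^q \hookrightarrow L_\infty$ and $\Fr^q \hookrightarrow K_\infty$ are partial elementary, so any $\Fr^q$-definable set has the same type in both models. Since $V$ is defined over $\Ff_q$, and since the canonical Euler characteristic is uniformly $0$-definable across $\TACPF$ (Theorem~\ref{ecpdf-thm}), the formulas $\psi_{\phi,\ell^j,k}$ from Corollary~\ref{uniform-elim} that define $\chi_{\ell^j}(V(-))$ take the same value on $\Ff_q$-parameters in both models. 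Projecting, this yields
\[ \chi_\ell(V(L_1)) = \chi_\ell(V(K_1)) \qquad \text{in } \Zz_\ell. \]

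Finally, by Theorem~\ref{ecpdf-thm}, the canonical Euler characteristic on each Frobenius periodic field $\Fr^{q^n}$ is the counting Euler characteristic. So $\chi_\ell(V(\Fr^{q^n})) = |V(\Ff_{q^n})| \bmod \ell^j$ for each $j$, and by {\L}o{\'s}'s theorem applied sort-by-sort with the uniform definitions $\psi_{\phi,\ell^j,k}$, the value $\chi_\ell(V(L_1)) \in \Zz_\ell$ is precisely the ultralimit of $|V(\Ff_{q^n})|$. Combining this with the previous display gives the claim. The main obstacle is verifying mock-$\Fr^q$-ness of $L_\infty$, i.e.\ pinning down the $\sigma$-action on absolute numbers via the combinatorial property of $\mathcal{U}$; the remaining steps are a clean application of uniform definability and {\L}o{\'s}'s theorem.
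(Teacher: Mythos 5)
Your proof is correct and follows essentially the same route as the paper: form the ultraproduct $\prod_n \Fr^{q^n}/\mathcal{U}$, identify the $\sigma$-action on the absolute numbers via the combinatorial condition on $\mathcal{U}$ to conclude it is a mock-$\Fr^q$, then use uniform $0$-definability of the canonical Euler characteristic to transfer from the ultraproduct (where $\chi_\ell$ is the nonstandard counting Euler characteristic) to $(K_\infty,\sigma)$. The only cosmetic difference is that you explicitly invoke Lemma~\ref{ee-criterion} to get the partial elementary map over $\Fr^q$, which the paper leaves implicit in its appeal to uniform definability.
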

  \begin{claimproof}
    Let $(K'_\infty,\sigma')$ be the ultraproduct of Frobenius periodic fields
    \begin{equation*}
      (K'_\infty,\sigma') = \prod_{n \in \Nn} (\Ff_q^{alg},\phi_{q^n}) / \mathcal{U} .
    \end{equation*}
    Then $(K'_\infty,\sigma') \models \ACPF$ by
    Corollary~\ref{axish-1}.  For fixed $m$, note that
    \begin{equation*}
      n \equiv 1 \pmod{m} \iff \phi_{q^n} | \Ff_{q^m} = \phi_q | \Ff_{q^m}.
    \end{equation*}
    By choice of $\mathcal{U}$, it follows that $\sigma' | \Ff_{q^m} =
    \phi_q | \Ff_{q^m}$.  As this holds for all $m$,
    \begin{equation*}
      \Abs(K'_\infty,\sigma') \cong (\Ff_q^{alg},\phi_q).
    \end{equation*}
    So $(K'_\infty,\sigma')$ is a mock-$\Fr^q$, and
    $(K'_\infty,\sigma') \equiv (K_\infty,\sigma)$.  By uniform
    definability of $\chi_\ell$,
    \[ \chi_\ell(V(K'_1)) = \chi_\ell(V(K_1)).\]
    The canonical Euler characteristic on $K'_\infty$ is given by
    nonstandard counting, so $\chi_\ell(V(K'_1))$ is the ultralimit
    of $|V(\Fr^{q^n}_1)| = |V(\Ff_{q^n})|$.
  \end{claimproof}
  Now combine the two claims.
\end{proof}

\begin{lemma}\label{curve-case}
  Let $(K_\infty,\sigma)$ be a mock-$\Fr^q$.
  \begin{enumerate}
  \item \label{cc1} If $C$ is a curve over $\Ff_q$, and $\alpha_1,
    \ldots, \alpha_{2g}$ are the characteristic roots of the
    $q$th-power Frobenius, then the prime-to-$p$ part of
    $\chi(C(K_1))$ equals $|C(\Ff_q)|$.
  \item \label{cc2} If $V$ is a 1-dimensional variety over $\Ff_q$,
    then the prime-to-$p$ part of $\chi(V(K_1))$ equals $|V(\Ff_q)|$.
  \item \label{cc3} If $X$ is an $\Fr^q$-definable subset of $K_1$, then
    the prime-to-$p$ part of $\chi(X)$ equals $|X \cap \Ff_q|$.
  \end{enumerate}
\end{lemma}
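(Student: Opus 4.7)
The plan is to prove the three parts in order, bootstrapping each from the preceding. For (cc1) I would combine Fact~\ref{curve-fact} with Lemma~\ref{dwork-case}. The expression $|C(\Ff_{q^n})| = 1 + q^n - \sum_{i=1}^{2g}\alpha_i^n$ is already of Dwork shape with ``$\alpha$-list'' $\{1,q\}$ and ``$\beta$-list'' $\{\alpha_1,\dots,\alpha_{2g}\}$. For any prime $\ell\ne p$, the $\alpha_i$ are the eigenvalues of $T_\ell(\phi_q)$ acting on the free $\Zz_\ell$-module $T_\ell J$; since $\phi_q$ is an isogeny, this action is invertible, so $v_\ell(\alpha_i)=0$, and evidently $v_\ell(1)=v_\ell(q)=0$. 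Lemma~\ref{dwork-case} therefore drops no term, yielding $\chi_\ell(C(K_1))=1+q-\sum \alpha_i=|C(\Ff_q)|$, which assembled over all $\ell\ne p$ is (cc1).

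For (cc2) I would reduce to (cc1) by decomposition. Replacing $V$ with $V^{red}$ changes no $L$-points, and inclusion-exclusion on the irreducible components $V=V_1\cup\cdots\cup V_n$ writes $|V(L)|$ as an alternating sum of $|V_i(L)|$ and of $|V_S(L)|$ for $|S|\ge 2$; the latter intersections are $0$-dimensional because $\dim V = 1$, and for any finite $\Ff_q$-scheme $F$ one has $F(K_1)=F(\Ff_q)$, since any $\Ff_q$-algebra map from a finite-dimensional $\Ff_q$-algebra into $K_1$ must land in $\Abs(K)=\Ff_q$. Each irreducible $V_i$ is birational (hence agrees up to a finite $\Ff_q$-scheme) with the smooth projective model $\bar V_i$ of its normalization. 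If $\bar V_i$ is geometrically irreducible, it is a curve and (cc1) applies; otherwise the constant field of its function field is $\Ff_{q^d}$ for some $d>1$, and the same $\Abs$-argument forces $\bar V_i(\Ff_q)=\bar V_i(K_1)=\emptyset$. Assembling the contributions, using additivity of both $\chi_\ell$ and $F\mapsto|F(\Ff_q)|$, gives (cc2).

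For (cc3) I would apply Proposition~\ref{all-in-zero} to the non-degenerate substructure $\Fr^q\subseteq(K_\infty,\sigma)$ (whose fixed field $\Ff_q$ is perfect), writing $X$ as the image of $V(K_1)\to\Aa^1(K_1)$ for some quasi-finite morphism $V\to\Aa^1$ of $\Ff_q$-varieties, so $\dim V\le 1$. Stratify $X$ by the definable sets $X_k=\{x\in X:|V_x(K_1)|=k\}$; since for $x\in\Ff_q$ the fiber $V_x$ is a $0$-dimensional $\Ff_q$-scheme, $V_x(K_1)=V_x(\Ff_q)$, so this stratification is compatible with that of $X\cap\Ff_q$ in the sense that $X_k\cap\Ff_q=(X\cap\Ff_q)_k$. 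Let $V^{[n]}\subseteq V\times_{\Aa^1}\cdots\times_{\Aa^1}V$ be the open subvariety of $n$-tuples of pairwise distinct points; it is a $1$-dimensional $\Ff_q$-variety with fiber $k(k-1)\cdots(k-n+1)$ over $x\in X_k$. The strong Euler characteristic property gives
\[
\chi(V^{[n]}(K_1))=\sum_{k}k(k-1)\cdots(k-n+1)\,\chi(X_k),
\]
and the analogous identity holds with $\chi$ replaced by the $\Ff_q$-counting function. Part (cc2) equates the prime-to-$p$ parts of the two left-hand sides, and the matrix of falling factorials is triangular with nonzero determinant $\prod_n n!$, so $\Zz$-torsion-freeness of $\Zz_\ell$ (as in the proof of Lemma~\ref{almost-unique-lemma-1}) lets me invert to conclude $\chi_\ell(X_k)=|X_k\cap\Ff_q|$ for all $k$ and all $\ell\ne p$. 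Summing over $k$ yields (cc3).

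The main obstacle I expect is the bookkeeping in (cc2): one has to verify that every correction term introduced by inclusion-exclusion, normalization, and smooth projective completion really is a finite $\Ff_q$-scheme on which $\chi_\ell$ and $|{\cdot}\cap\Ff_q|$ genuinely coincide, and that the non-geometrically-irreducible case really contributes zero on both sides. Once (cc2) is in hand, (cc1) is an immediate application of Lemma~\ref{dwork-case}, and (cc3) is a reprise of the Vandermonde argument of Lemma~\ref{almost-unique-lemma-1}, so no genuinely new difficulty should arise there.
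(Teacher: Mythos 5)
Your proof is correct and follows essentially the same route as the paper, with only cosmetic variations: you show $v_\ell(\alpha_i)=0$ via invertibility of $T_\ell(\phi_q)$ where the paper invokes Poincar\'e duality for curves; you carry out an explicit inclusion-exclusion over irreducible components where the paper produces a single replacement variety $C$ in the style of Lemma~\ref{qfree-to-curves-v2}; and you use the distinct-tuple schemes $V^{[n]}$ with a falling-factorial matrix where the paper uses the full fiber powers $V_n$ with a power Vandermonde matrix. One small imprecision in (cc1): being an isogeny alone does not make $T_\ell(\phi_q)$ invertible over $\Zz_\ell$ (consider $[\ell]$); what you actually need is that $\deg(\phi_q)=q^g$ is prime to $\ell$, equivalently that $\phi_q$ acts bijectively on $\ell$-power torsion, which is of course true for Frobenius.
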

\begin{proof}
  \hfill
  \begin{enumerate}
  \item By the Weil conjectures for curves (\cite{hartshorne},
    Appendix C, \S 1), we know that
    \[ C(\Ff_{q^n}) = 1 - \alpha_1^n - \cdots - \alpha_{2g}^n + q^n\]
    for all $n$.  Moreover, the Poincare duality part of the Weil
    conjectures gives an equality of multi-sets:
    \[ \{\alpha_1, \ldots, \alpha_{2g}\} = \{q/\alpha_1, \ldots, q/\alpha_{2g}\}\]
    It follows that each $\alpha_i$ is a unit in $\Qq_\ell^\times$,
    for $\ell$ prime to $p$.  Therefore, by Lemma~\ref{dwork-case},
    the $\ell$-adic part of $\chi(C(K))$ is given by
    \begin{equation*}
      \chi(C(K)) = 1 - \alpha_1 - \cdots - \alpha_{2g} + q =
      |C(\Ff_q)|.
    \end{equation*}
  \item Similar to Lemma~\ref{qfree-to-curves-v2}, one can produce an
    $\Ff_q$-variety $C$ and open subvarieties $V' \subseteq V$ and $C'
    \subseteq C$ such that
    \begin{itemize}
    \item $V'$ is isomorphic to $C'$ (over $\Ff_q$)
    \item The complements $V \setminus V'$ and $C \setminus C'$ have
      finitely many $K_1$-points.
    \item When base changed to $\Ff_q^{alg}$, $C$ is a finite disjoint
      union of curves.
    \end{itemize}
    Every $K^{alg}$-point of $V'\setminus V$ and $C' \setminus C$ is
    in $\Ff_q^{alg}$.  Therefore
    \begin{align*}
      \chi(V(K_1)) - |V(\Ff_q)| &= \chi(V'(K_1)) - |V'(\Ff_q)| =
      \chi(C'(K_1)) - |C'(\Ff_q)| \\ & = \chi(C(K_1)) - |C(\Ff_q)| = 0,
    \end{align*}
    where the final equality is Part~\ref{cc1}.
  \item By Proposition~\ref{all-in-zero}, there is a quasi-finite
    morphism $V \to \Aa^1_{\Ff_q}$ of $\Ff_q$-varieties such that $X$
    is the image of $V(K_1) \to \Aa^1(K_1) = K_1$.  For each $n$, let
    $V_n$ be the fiber product of $n$ copies of $V$ over $\Aa^1$.
    Then $V_n \to \Aa^1_{\Ff_q}$ is still quasi-finite, so $V_n$ has
    dimension at most 1.  Let $Y_n$ be the definable set $V_n(K_1)$.
    By Part~\ref{cc2}, $\chi(Y_n) = |Y_n(\Fr^q)|$.

    Now use the argument of Lemma~\ref{almost-unique-lemma-1}.  Let $f
    : Y_1 \to X$ be the surjection induced by $V \to \Aa^1$.  Let
    $X_k$ be the definable set of $a \in X$ such that the fiber
    $f^{-1}(a)$ has size $m$.  Note that if $a \in X(\Fr^q)$, then every
    point in the fiber is field-theoretically algebraic over $a$,
    hence in $Y_1(\Fr^q)$.

    The upshot is that the fibers of $Y_1(\Fr^q) \to X(\Fr^q)$ have size
    $k$ over $X_k(\Fr^q)$, and more generally the fibers of $Y_n(\Fr^q)
    \to X(\Fr^q)$ have size $k^n$ over $X_k(\Fr^q)$.  Therefore,
    \begin{align*}
      |Y_n(\Fr^q)| &= \sum_k k^n \cdot |X_k(\Fr^q)| \\
      \chi(Y_n) &= \sum_k k^n \cdot \chi(X_k),
    \end{align*}
    where the second line is as in the proof of
    Lemma~\ref{almost-unique-lemma-1}.  By Part~\ref{cc2}, the left
    hand sides agree.  By the invertibility of Vandermonde matrices,
    it follows that $\chi(X_k) = |X_k(\Fr^q)|$.  Summing over $k$, we
    see $\chi(X) = |X(\Fr^q)| = |X \cap \Ff_q|$. \qedhere
  \end{enumerate}
\end{proof}

\begin{proposition}
  Let $K$ be a mock-$\Ff_q$.  Let $\chi$ be the principal Euler
  characterisic on $K$.  For any definable set $X \subseteq K^n$, we
  have
  \[ \chi(X) = |X \cap \Ff_q^n|.\]
  In particular, $\chi(X) \in \Zz$.
\end{proposition}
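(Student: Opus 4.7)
The plan is to extend Lemma~\ref{curve-case}.\ref{cc3} from definable subsets of $K$ to definable subsets of $K^n$, via a Vandermonde-style argument analogous to the proof of Lemma~\ref{almost-unique-lemma-1}. Consistent with Theorem~\ref{mocktastic}.(2), I read ``definable'' here as $\Ff_q$-definable.

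First I would apply Proposition~\ref{all-in-zero} to realize $X$ as the image of $V(K) \to \Aa^n(K)$ for a quasi-finite morphism $V \to \Aa^n$ of $\Ff_q$-varieties, so $\dim V \le n$. For each $m \ge 1$, set $V_m = V \times_{\Aa^n} \cdots \times_{\Aa^n} V$ ($m$ factors); this is again an $\Ff_q$-variety of dimension $\le n$, still quasi-finite over $\Aa^n$. Partition $X$ into $\Ff_q$-definable strata $X_k$ where the $K$-rational fiber of $V \to \Aa^n$ has exactly $k$ elements (only finitely many $k$ occur). The strong Euler characteristic property applied to the projection $V_m(K) \to X$ then yields $\chi(V_m(K)) = \sum_k k^m \chi(X_k)$. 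Because $\Abs(K) = \Ff_q$, every $K$-point of the $0$-dimensional fiber of $V_m$ over an $\Ff_q$-rational point of $\Aa^n$ is itself $\Ff_q$-rational, so the parallel reasoning for $\Ff_q$-counts gives $|V_m(\Ff_q)| = \sum_k k^m |X_k \cap \Ff_q^n|$.

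The critical intermediate step is to upgrade Lemma~\ref{curve-case}.\ref{cc2} from $1$-dimensional to arbitrary $\Ff_q$-varieties, showing that $\chi(W(K)) = |W(\Ff_q)|$ in $\Zz_{\neg p}$ for every $\Ff_q$-variety $W$. Via Dwork's rationality theorem (Fact~\ref{dw-fact}) and Lemma~\ref{dwork-case}, this reduces to the statement that the algebraic integers $\alpha_i, \beta_j$ appearing in Dwork's expansion of $|W(\Ff_{q^n})|$ are $\ell$-adic units for every $\ell \ne p$. Granting this lemma and applying it to $W = V_m$, the two expressions above combine to give $\sum_k k^m \chi(X_k) = \sum_k k^m |X_k \cap \Ff_q^n|$ for every $m \ge 1$. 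Invertibility of the Vandermonde matrix $\langle k^m \rangle$, combined with torsion-freeness of $\Zz_{\neg p}$ over $\Zz$, then forces $\chi(X_k) = |X_k \cap \Ff_q^n|$ stratum by stratum; summing gives $\chi(X) = |X \cap \Ff_q^n| \in \Zz$. The hardest step is this higher-dimensional variety-count upgrade: the $1$-dimensional case in Lemma~\ref{curve-case}.\ref{cc1} used explicit Hasse--Weil and Poincar\'e duality input, whereas the general statement appears to require the $\ell$-adic unitness of Frobenius eigenvalues on compactly supported $\ell$-adic cohomology.
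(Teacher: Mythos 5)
Your strategy is genuinely different from the paper's, and the difference matters: the ``granted'' lemma at its center is not available at this point in the paper, and in fact the paper proves it \emph{as a consequence} of the Proposition you are trying to establish, so within the paper's framework your route is circular.

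Concretely, you reduce the Proposition to the statement that $\chi(W(K)) = |W(\Ff_q)|$ for every $\Ff_q$-variety $W$ (of arbitrary dimension), and you correctly observe via Lemma~\ref{dwork-case} that this amounts to the claim that all the Dwork numbers $\alpha_i, \beta_j$ of Fact~\ref{dw-fact} are $\ell$-adic units for $\ell \ne p$. But that claim is exactly Corollary~\ref{basically-units}, which the paper derives from Corollary~\ref{re-dwork}, which is itself a restatement of the Proposition. So ``granting this lemma'' begs the question unless you import it from outside, i.e., from the Grothendieck--Lefschetz trace formula together with the invertibility of Frobenius on compactly supported $\ell$-adic cohomology. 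The paper explicitly flags (in the remark following Corollary~\ref{basically-units}) that this cohomological route is available but that it prefers the more elementary path.

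The paper's actual proof avoids higher-dimensional varieties entirely by inducting on $n$ and slicing. For $X \subseteq K^n$ with $n > 1$, fix a modulus $\ell^k$ and write $X_a$ for the fiber over $a \in K$ in the first coordinate; the sets $S_i = \{a \in K : \chi(X_a) \equiv i \pmod{\ell^k}\}$ are $\Ff_q$-definable because $\chi$ is a $0$-definable Euler characteristic, so the inductive hypothesis applies to each $S_i \subseteq K$ ($n = 1$) and to each $X_a \subseteq K^{n-1}$ with $a \in \Ff_q$ ($n-1$). Strongness then gives $\chi(X) \equiv \sum_i i \cdot \chi(S_i) \equiv \sum_{a \in \Ff_q} |X_a \cap \Ff_q^{n-1}| = |X \cap \Ff_q^n| \pmod{\ell^k}$. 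This only ever needs the $1$-dimensional variety count (Lemma~\ref{curve-case}.\ref{cc3}), which is already available from the Jacobian/Poincar\'e duality argument; $\ell$-adic unitness in higher dimensions then falls out afterward as Corollary~\ref{basically-units}. Your fiber-product and Vandermonde bookkeeping is itself sound (it mirrors Lemma~\ref{almost-unique-lemma-1} and the proof of Lemma~\ref{curve-case}.\ref{cc3}), so the only issue is the missing input; if you are willing to cite the cohomological fact, the argument closes, but it is then strictly less elementary than the paper's and inverts the paper's logical order.
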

\begin{proof}
  Proceed by induction on $n$.  For the base case $n = 1$, expand $K$
  to a mock-$\Fr^q$ by Proposition~\ref{prop:frob}.\ref{z2}, and then
  apply Lemma~\ref{curve-case}.\ref{cc3}.  Suppose $n > 1$.  For $a
  \in K_1$, let $X_a$ denote the slice of $X$ over $a$:
  \begin{equation*}
    X_a = \{ \vec{b} \in (K_1)^{n-1} : (a,\vec{b}) \in X\}.
  \end{equation*}
  Fix $\ell^k$, and work with $\chi$ modulo $\ell^k$.  For $i \in
  \Zz/\ell^k$, let $S_i$ be the set of $a \in K_1$ such that
  $\chi(X_a) \equiv i \pmod{\ell^k}$.  Each set $S_i$ is
  $\Fr^q$-definable, so by induction $\chi(S_i) = |S_i \cap \Ff_q|$.
  Now for $a \in S_i \cap \Ff_q$, the set $X_a$ is $\Fr^q$-definable, so
  by induction $\chi(X_a) = |X_a \cap \Ff_q^{n-1}|$.  Then the
  following holds modulo $\ell^k$:
  \begin{align*}
    \chi(X) &\equiv \sum_{i \in \Zz/\ell^k} i \cdot \chi(S_i) \equiv \sum_{i \in \Zz/\ell^k} i \cdot |S_i \cap \Ff_q| \\
    & \equiv \sum_{i \in \Zz/\ell^k} \sum_{a \in S_i \cap \Ff_q} i \equiv \sum_{i \in \Zz/\ell^k} \sum_{a \in S_i \cap \Ff_q} \chi(X_a) \\
    & \equiv \sum_{a \in \Ff_q} \chi(X_a) \equiv \sum_{a \in \Ff_q} |X_a \cap \Ff_q^{n-1}|.
  \end{align*}
  The final sum is $|X \cap \Ff_q^n|$.
\end{proof}
This lets us simplify Lemma~\ref{dwork-case}:
\begin{corollary}\label{re-dwork}
  Let $V, \alpha_i, \beta_j$ be as in Fact~\ref{dw-fact}.  Let $F$ be
  a mock $\Ff_q$, and $\chi$ be its principal Euler characteristic.  Then
  \begin{equation*}
    \chi(V(F)) = \alpha_1 + \cdots + \alpha_m - \beta_1 - \cdots - \beta_\ell.
  \end{equation*}
\end{corollary}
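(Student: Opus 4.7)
The plan is to combine the preceding Proposition (giving $\chi(X)=|X\cap \Ff_q^n|$ for $\Ff_q$-definable $X\subseteq K^n$) with Fact~\ref{dw-fact} specialized to $n=1$. Concretely, I first want to reduce to the case of an affine variety $V\subseteq \Aa^n_{\Ff_q}$, so that the set $V(F)$ is literally a $0$-definable subset of $F^n$ over $\Ff_q$, and the preceding Proposition applies directly to give
\[
\chi(V(F)) \;=\; |V(F)\cap \Ff_q^n| \;=\; |V(\Ff_q)|.
\]
Then Fact~\ref{dw-fact} taken at $n=1$ immediately reads
\[
|V(\Ff_q)| \;=\; \alpha_1 + \cdots + \alpha_m - \beta_1 - \cdots - \beta_\ell,
\]
which is the conclusion of the corollary.

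To handle general (not necessarily affine) varieties $V$, I would cover $V$ by finitely many affine opens $U_1,\ldots,U_s$ and stratify as a disjoint union of locally closed pieces $W_i := U_i\setminus(U_1\cup\cdots\cup U_{i-1})$; each $W_i$ is affine, so its $F$-points form an $\Ff_q$-definable subset of some $F^{n_i}$. The preceding Proposition applies to each $W_i(F)$, giving $\chi(W_i(F))=|W_i(\Ff_q)|$, and additivity of $\chi$ on disjoint unions assembles these into $\chi(V(F))=|V(\Ff_q)|$. The only subtle point here is purely bookkeeping: one has to check that the stratification is defined over $\Ff_q$ and compatible with taking $\Ff_q$-points, but this is immediate since all the $U_i$ are defined over $\Ff_q$.

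I do not anticipate any real obstacle: the heavy lifting — separating $\ell$-adic values from the Dwork decomposition, and showing that $\chi$ of $\Ff_q$-definable sets equals the naive counting of $\Ff_q$-points — has already been done in Lemma~\ref{dwork-case} and the Proposition preceding this corollary. Indeed, the corollary is essentially a cleaner restatement of Lemma~\ref{dwork-case} once one knows that $\chi(V(F))$ lies in $\Zz$ and equals $|V(\Ff_q)|$: the $\alpha_i,\beta_j$ of positive $\ell$-adic valuation no longer need to be dropped, because they cancel out collectively when one sums them as an integer. In short, the proof is a two-line deduction from the Proposition plus Fact~\ref{dw-fact}, modulo a routine affine stratification argument.
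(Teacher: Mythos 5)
Your proof is correct and matches the paper's (implicit) argument: the paper leaves this corollary without an explicit proof, intending exactly the two-line deduction you give, namely that the immediately preceding Proposition yields $\chi(V(F))=|V(F)\cap\Ff_q^n|=|V(\Ff_q)|$ and Fact~\ref{dw-fact} at $n=1$ supplies $|V(\Ff_q)|=\alpha_1+\cdots+\alpha_m-\beta_1-\cdots-\beta_\ell$. One small caveat on your closing remark: it is not quite right to say the positive-valuation $\alpha_i,\beta_j$ ``cancel out collectively'' so that the Corollary becomes a restatement of Lemma~\ref{dwork-case}; rather, the Corollary is an independent input, and it is the later Corollary~\ref{basically-units} that combines it with Lemma~\ref{dwork-case} to show there are in fact \emph{no} terms of positive $\ell$-adic valuation to drop. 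This does not affect your proof, which does not rely on that remark.
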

This implies something about the numbers appearing in Dwork's theorem.
\begin{corollary}\label{basically-units}
  If $V$ is a variety over $\Ff_q$, then the $\alpha_i$ and $\beta_j$
  of Fact~\ref{dw-fact} have $\ell$-adic valuation zero for $\ell$
  prime to $q$.
\end{corollary}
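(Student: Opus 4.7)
The plan is to extract a power-sum identity from the tension between Corollary~\ref{re-dwork} (which gives $\chi(V(F)) = \sum\alpha_i - \sum\beta_j$ outright) and Lemma~\ref{dwork-case} (which computes the $\ell$-adic component as the same sum with the $\ell$-divisible terms erased). Fix $\ell$ prime to $q$. For each $n\ge1$, view $V$ as a variety over $\Ff_{q^n}$; since $|V(\Ff_{(q^n)^k})| = |V(\Ff_{q^{nk}})| = \sum\alpha_i^{nk} - \sum\beta_j^{nk}$, Dwork's representation for this base-changed variety has characteristic numbers $\alpha_i^n$ and $\beta_j^n$. By Proposition~\ref{prop:frob}.\ref{z3}, a mock-$\Ff_{q^n}$ field $F_n$ exists; let $\chi^{(n)}$ denote its principal Euler characteristic.

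Applying Corollary~\ref{re-dwork} to $V/\Ff_{q^n}$ and $F_n$ gives $\chi^{(n)}(V(F_n)) = \sum\alpha_i^n - \sum\beta_j^n$ as an integer. On the other hand, the principal Euler characteristic is by definition the $\Zz_{\neg p}$-valued restriction of the canonical $\hat\Zz$-valued one, so its $\ell$-adic projection is governed by Lemma~\ref{dwork-case}, which yields
\[
\chi^{(n)}_\ell(V(F_n)) \;=\; \sum_{v_\ell(\alpha_i)=0}\alpha_i^n \;-\; \sum_{v_\ell(\beta_j)=0}\beta_j^n
\]
in $\Zz_\ell$ (noting $v_\ell(\alpha_i^n)=nv_\ell(\alpha_i)$, so the partition of indices by whether $v_\ell>0$ is independent of $n$, and that $\ell\nmid q$ implies $\ell\nmid q^n$). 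Subtracting the two expressions for $\chi^{(n)}_\ell(V(F_n))$ yields
\[
\sum_{v_\ell(\alpha_i)>0}\alpha_i^n \;=\; \sum_{v_\ell(\beta_j)>0}\beta_j^n \qquad \text{for every } n\ge 1.
\]

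Now I would use the standard fact that equality of power sums in all degrees forces equality of multisets. Gathering like bases, write both sides as $\sum_\gamma c_\gamma \gamma^n$ with $\gamma$ ranging over a finite set of distinct nonzero elements of $\overline{\Qq}$; if these vanish for every $n\ge1$, the Vandermonde matrix $(\gamma^n)$ in the $\gamma$'s and the first $|\{\gamma\}|$ values of $n$ is invertible, forcing every $c_\gamma = 0$. Consequently the multisets $\{\alpha_i: v_\ell(\alpha_i)>0\}$ and $\{\beta_j: v_\ell(\beta_j)>0\}$ are equal, so these ``bad'' terms cancel pairwise in Dwork's identity, producing a new valid Dwork representation of $|V(\Ff_{q^n})|$ in which every surviving $\alpha_i$ and $\beta_j$ has $v_\ell = 0$. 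Only finitely many primes $\ell\nmid q$ can appear with positive valuation in the original list, so iterating the cancellation through each such prime produces a single representation simultaneously unitary at every $\ell$ prime to $q$, which is the content of the corollary.

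The only nontrivial step is the passage from the fixed variety $V$ to $V/\Ff_{q^n}$: one needs to recognize that Dwork's theorem is compatible with base change in the sense that the characteristic numbers simply get raised to the $n$th power, and that $\ell$-adic valuation commutes with this. The rest is just bookkeeping between the two identities for $\chi^{(n)}_\ell$ and the classical power-sum/Vandermonde argument.
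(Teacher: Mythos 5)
Your argument is the paper's, modulo bookkeeping: both base-change $V$ to $\Ff_{q^n}$, compare the evaluation $\chi(V(F_n))=\sum\alpha_i^n-\sum\beta_j^n$ from Corollary~\ref{re-dwork} against the $\ell$-adically truncated version coming from Lemma~\ref{dwork-case}, and convert the resulting power-sum identity (valid for all $n$) into a multiset statement, you via Vandermonde and the paper via Poincar\'e series. The one place you phrase things slightly differently is the ending --- you stop at producing a \emph{new} $\ell$-unitary Dwork representation after cancelling the positive-valuation $\alpha$'s against the positive-valuation $\beta$'s, while the paper asserts the original $\alpha_i,\beta_j$ are already $\ell$-units --- but the two are equivalent once one takes the Dwork representation to be reduced (so that $\{\alpha_i\}\cap\{\beta_j\}=\emptyset$, forcing your two equal ``bad'' multisets to be empty), which both proofs implicitly assume.
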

\begin{proof}
  Let $\alpha'_i$ and $\beta'_i$ be as in Lemma~\ref{dwork-case}.  Let
  $F$ be a mock-$\Ff_q$, and $\chi_\ell$ be the $\ell$-adic part of
  the principal Euler characteristic.  Comparing
  Lemma~\ref{dwork-case} and Corollary~\ref{re-dwork}, we see
  \begin{equation*}
    \alpha'_1 + \cdots + \alpha'_m - \beta'_1 - \cdots - \beta'_\ell =
    \alpha_1 + \cdots + \alpha_m - \beta_1 - \cdots - \beta_\ell.
  \end{equation*}
  Replacing $\Ff_q$ with $\Ff_{q^n}$ changes $\alpha_i$ to
  $\alpha_i^n$ and $\alpha'_i$ to $(\alpha'_i)^n$.  Therefore, the
  following holds for any $n \ge 1$:
  \begin{equation*}
    (\alpha'_1)^n + \cdots + (\alpha'_m)^n - (\beta'_1)^n - \cdots -
    (\beta'_\ell)^n = \alpha_1^n + \cdots + \alpha_m^n - \beta_1^n -
    \cdots - \beta_\ell^n.
  \end{equation*}
  Comparing Poincare series, one gets equality of multisets
  \begin{align*}
    \{\alpha'_1,\ldots,\alpha'_m\} &= \{\alpha_1,\ldots,\alpha_m\} \\
    \{\beta'_1,\ldots,\beta'_\ell\} &= \{\beta_1,\ldots,\beta_\ell\}.
  \end{align*}
  Therefore, none of the $\alpha'_i$ or $\beta'_i$ are zero, and every
  $\alpha_i$ and $\beta_i$ has $\ell$-adic valuation 0.
\end{proof}
\begin{remark}
  Corollary~\ref{basically-units} can be proven using $\ell$-adic
  cohomology, but the proof given here is more elementary.
\end{remark}
\subsection{The dual Euler characteristic} \label{sec:anti}
Let $K$ be a mock-$\Ff_q$.  Recall that the \emph{dual Euler
  characteristic} on $K$ is the prime-to-$q$ part of the canonical
Euler characteristic induced by $\sigma^{-1}$, where $\sigma$ is the
mock Frobenius.
\begin{lemma}\label{anti-dwork}
  Let $V$ be a variety over $\Ff_q$, and let $\alpha_1, \ldots,
  \alpha_m, \beta_1, \ldots, \beta_\ell$ be the algebraic integers
  from Fact~\ref{dw-fact}.  Let $K$ be a mock-$\Ff_q$ and let
  $\chi^\dag$ be the dual Euler characteristic.  Then
  \begin{equation*}
    \chi^\dag(V(K)) = \alpha_1^{-1} + \cdots + \alpha_m^{-1} - \beta_1^{-1}
    - \cdots - \beta_\ell^{-1}.
  \end{equation*}
  Moreover, this value is rational.
\end{lemma}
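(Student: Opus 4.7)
The plan is to run the argument of Lemma~\ref{dwork-case} with the ultrafilter $\mathcal{U}$ concentrated on the arithmetic progressions $\{n : n \equiv -1 \pmod m\}$ for every $m \geq 1$, instead of $1 + m\Zz$; such a $\mathcal{U}$ exists by the finite intersection property (Chinese remainder theorem). Form the ultraproduct
\[(K'_\infty, \sigma'') = \prod_{n \in \Nn}(\Ff_q^{alg}, \phi_{q^n})/\mathcal{U},\]
which satisfies $\ACPF$ by Corollary~\ref{axish-1}. For $x \in \Ff_{q^m}$ diagonally embedded in $K'_\infty$, the action $\phi_{q^n}(x) = \phi_q^{n \bmod m}(x)$ selects, under $\mathcal{U}$, the value $\phi_q^{-1}(x)$; hence $\sigma''$ restricts to $\phi_q^{-1}$ on $\Abs(K'_\infty) = \Ff_q^{alg}$.

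Consequently, the fixed field $K'_1$ is a mock-$\Ff_q$, and the mock Frobenius $\tau \in \Gal(K'_1)$---the unique element restricting to $\phi_q$ on $\Ff_q^{alg} \subseteq K'_\infty$---coincides with $(\sigma'')^{-1}$. By definition the dual Euler characteristic on $K'_1$ is induced by $\tau^{-1} = \sigma''$, so $\chi^\dag(V(K'_1))$ equals the canonical Euler characteristic on $(K'_\infty, \sigma'')$ evaluated at $V(K'_1)$. Since $(K'_\infty, \sigma'')$ is a non-principal ultraproduct of Frobenius periodic fields, this is the nonstandard count, namely the $\mathcal{U}$-ultralimit of $|V(\Ff_{q^n})|$ in $\hat{\Zz}$.

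I would then compute this $\ell$-adically for each prime $\ell \ne p$, as in Lemma~\ref{dwork-case}. By Fact~\ref{dw-fact} and Corollary~\ref{basically-units}, every $\alpha_i$ and $\beta_j$ is a unit in $\Zz_\ell$, hence has finite multiplicative order modulo any power of $\ell$. Since $\mathcal{U}$ concentrates on $n \equiv -1$ modulo every positive integer, $\alpha_i^n \to \alpha_i^{-1}$ and $\beta_j^n \to \beta_j^{-1}$ along $\mathcal{U}$ in $\Zz_\ell$, so the ultralimit of $|V(\Ff_{q^n})| = \sum_i \alpha_i^n - \sum_j \beta_j^n$ is $\sum_i \alpha_i^{-1} - \sum_j \beta_j^{-1}$ in each $\Zz_\ell$, and hence in $\Zz_{\neg p}$. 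Completeness of the theory of mock-$\Ff_q$ fields (Proposition~\ref{prop:frob}.\ref{z3}) together with $0$-definability of $\chi^\dag$ then transfers the identity to every mock-$\Ff_q$ $K$.

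For rationality, observe that the multisets $\{\alpha_i\}$ and $\{\beta_j\}$ are stable under $\Gal(\bar\Qq/\Qq)$: the integer-valued sequence $n \mapsto |V(\Ff_{q^n})|$ is Galois invariant, and Dwork's rationality of the zeta function $Z(V, T)$ identifies these multisets with the poles and zeros of a rational function lying in $\Qq(T)$. Hence $\sum_i \alpha_i^{-1} - \sum_j \beta_j^{-1}$ is fixed by $\Gal(\bar\Qq/\Qq)$ and therefore lies in $\Qq$. The main obstacle will be the identification $\tau = (\sigma'')^{-1}$, which requires carefully unwinding the restriction map $\Aut(K'_\infty/K'_1) \to \Gal(\Ff_q)$ in the presence of the inverted Frobenius; once that is settled, the rest is a direct transcription of the proof of Lemma~\ref{dwork-case}.
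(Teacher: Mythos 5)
Your proposal is correct and takes essentially the same approach as the paper, whose proof of Lemma~\ref{anti-dwork} is given in a single sentence: ``Similar to Lemma~\ref{dwork-case}, but using an ultrafilter that concentrates on $-1 + n\Zz$ for all $n$,'' with Corollary~\ref{basically-units} removing the need for the truncated $\alpha'_i$'s; you correctly spell out the details of that argument, including the identification of the mock Frobenius on $K'_1$ with $(\sigma'')^{-1}$. The only divergence is the rationality step, where you invoke Galois invariance of the multisets via Dwork's rationality of the zeta function, while the paper suggests deducing rationality directly from the integrality of the power sums $\sum_i \alpha_i^n - \sum_j \beta_j^n \in \Zz$ (via Newton's identities or the rationality of $Z(V,T)$); both are routine and valid.
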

\begin{proof}
  Similar to Lemma~\ref{dwork-case}, but using an ultrafilter that
  concentrates on $-1 + n\Zz$ for all $n$.
  Corollary~\ref{basically-units} ensures that $v_\ell(\alpha_i) = 0$
  for all $i$, so there is no need for any $\alpha'_i$'s or
  $\beta'_i$'s.  Rationality is an easy exercise, using the fact that
  \[ \alpha_1^n + \cdots + \alpha_m^n - \beta_1^n - \cdots - \beta_\ell^n \in \Zz\]
  for all $n \in \Nn$.
\end{proof}
\begin{proposition}\label{rational}
  If $K$ is a mock-$\Ff_q$ and $\chi^\dag$ is the dual Euler
  characteristic on $K$, then $\chi^\dag(X) \in \Qq$ for every
  $\Ff_q$-definable set $X$.
\end{proposition}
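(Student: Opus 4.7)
The plan is to mimic the proof used for the principal Euler characteristic (just before Corollary~\ref{re-dwork}), with Lemma~\ref{anti-dwork} taking the role of Lemma~\ref{curve-case}. The crucial simplification is that Lemma~\ref{anti-dwork} already yields rationality for $\chi^\dag(V(K))$ for an arbitrary $\Ff_q$-variety $V$, not just for curves; consequently no induction on the arity of $X$ is needed, and a single Vandermonde argument will suffice.

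First, I would expand $K$ to the mock-$\Fr^q$ periodic field $(K^{alg},\sigma)$ provided by Proposition~\ref{prop:frob}, and regard $\Fr^q = (\Ff_q^{alg},\phi_q)$ as a non-degenerate substructure with perfect base field $\Ff_q$. Given an $\Ff_q$-definable set $X \subseteq K^n$, Proposition~\ref{all-in-zero} produces a quasi-finite morphism $f : V \to \Aa^n$ of $\Ff_q$-varieties such that $X$ is the image of $V(K) \to \Aa^n(K)$. For each $m \ge 1$, let $V_m = V \times_{\Aa^n} \cdots \times_{\Aa^n} V$ be the $m$-fold fiber product, which is again an $\Ff_q$-variety. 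Let $M$ bound the fiber sizes of $V(K) \to X$ and set $X_k = \{a \in X : |f^{-1}(a)| = k\}$ for $1 \le k \le M$. Since $V_m(K) \to X$ has fibers of constant size $k^m$ over $X_k$, the strong Euler characteristic property gives
\[
  \chi^\dag(V_m(K)) \;=\; \sum_{k=1}^M k^m \, \chi^\dag(X_k)
\]
for every $m$, and the left-hand side is rational by Lemma~\ref{anti-dwork}.

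Next I would invoke Vandermonde invertibility. The matrix $(k^m)_{1 \le k, m \le M}$ has a non-zero integer determinant $d$, so it is invertible over $\Qq \otimes_\Zz \Zz_{\neg p}$; this ring contains $\Qq$ and admits an injection from $\Zz_{\neg p}$, since $\Zz_{\neg p} = \prod_{\ell \ne p} \Zz_\ell$ is $\Zz$-torsion-free. The unique solution of the system is rational, so comparing with the $\Zz_{\neg p}$-valued solution furnished by the $\chi^\dag(X_k)$ forces each $\chi^\dag(X_k)$ to lie in $\Qq \cap \Zz_{\neg p} = \Zz[1/p]$. Summing over $k$ then yields $\chi^\dag(X) = \sum_k \chi^\dag(X_k) \in \Qq$.

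The only delicate point is the Vandermonde inversion over the non-field coefficient ring $\Zz_{\neg p}$ (which is not even a domain). I would handle this either by passing to $\Qq \otimes_\Zz \Zz_{\neg p}$ as above, or by multiplying through by the classical adjugate: this turns the system into $d \cdot \chi^\dag(X_k) = (\text{an explicit } \Zz\text{-linear combination of rational numbers})$, and since $d$ is a non-zero-divisor in $\Zz_{\neg p}$, one recovers $\chi^\dag(X_k) \in \Qq$ directly. Apart from this bookkeeping, the argument is routine, and the substantive input is entirely concentrated in Lemma~\ref{anti-dwork} and Proposition~\ref{all-in-zero}.
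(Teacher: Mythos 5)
Your proof is correct and follows essentially the same route as the paper's: apply Proposition~\ref{all-in-zero} to the mock-$\Fr^q$ expansion to realize $X$ as the image of a quasi-finite morphism $V \to \Aa^n$, take fiber powers $V_m$, and recover $\chi^\dag(X)$ from the rational values $\chi^\dag(V_m(K))$ by the Vandermonde/stratification argument of Lemma~\ref{almost-unique-lemma-1}. The only difference is that you spell out the linear-algebra step and the coefficient-ring bookkeeping explicitly, whereas the paper simply cites ``the argument of Lemma~\ref{almost-unique-lemma-1}''; your identification of $\Qq \cap \Zz_{\neg p}$ with $\Zz[1/p]$ and the adjugate trick are both sound ways of handling the non-field coefficient ring.
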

\begin{proof}
  If $X$ is the set of $K$-points in some $\Ff_q$-definable variety,
  this follows from Lemma~\ref{anti-dwork}.

  If $X$ is a definable subset of $K^n$, then
  Proposition~\ref{all-in-zero} yields a quasi-finite morphism $V \to
  \Aa^n$ of varieties over $\Ff_q$, such that $X$ is the image of
  $V(K_1) \to \Aa^n(K_1)$.  Let $V_n$ be the $n$-fold fiber product of
  $V$ over $\Aa^1$.  By the argument of
  Lemma~\ref{almost-unique-lemma-1}, $\chi^\dag(X)$ is given by some
  rational linear combination of the $\chi^\dag(V_n(K))$.
\end{proof}
\begin{example}\label{smooth-example}
  If $V$ is a $d$-dimensional smooth projective variety over $\Ff_q$,
  then the following identities of multisets hold by the Poincare
  duality part of the Weil conjectures:
  \begin{align*}
    \{\alpha_1,\ldots,\alpha_m\} &= \{q^d/\alpha_1,\ldots,q^d/\alpha_m\} \\
    \{\beta_1,\ldots,\beta_{m'}\} &= \{q^d/\beta_1,\ldots,q^d/\beta_{m'}\}.
  \end{align*}
  Therefore, for $K$ a mock-$\Ff_q$ with dual Euler characteristic $\chi^\dag$,
  \begin{align*}
    \chi^\dag(V(K)) &= \alpha_1^{-1} + \cdots + \alpha_m^{-1} - \beta_1^{-1} - \cdots - \beta_{m'}^{-1} \\
    &= (\alpha_1 + \cdots + \alpha_m - \beta_1 - \cdots - \beta_{m'})/q^d
    = |V(\Ff_q)|/q^d.
  \end{align*}
\end{example}
Putting everything together, we have proven:
\begin{theorem-star}[Theorem~\ref{mocktastic}]
  Let $K$ be a mock-$\Ff_q$, for some prime power $q = p^k$.  There
  are two $\Zz_{\neg p}$-valued 0-definable strong Euler
  characteristics $\chi$ and $\chi^\dag$ on $K$, such that
  \begin{enumerate}
  \item If $V$ is a smooth projective variety over $\Ff_q$, then
    \begin{align*}
      \chi(V(K)) &= |V(\Ff_q)| \\
      \chi^\dag(V(K)) &= |V(\Ff_q)|/q^{\dim V}.
    \end{align*}
  \item If $X$ is any $\Ff_q$-definable set, then
    \[ \chi(X) = |X \cap \dcl(\Ff_q)|.\]
    In particular, $\chi(X) \in \Zz$.
  \item If $X$ is any $\Ff_q$-definable set, then $\chi^\dag(X) \in \Qq$.
  \end{enumerate}
\end{theorem-star}

\subsection{The neutral Euler characteristic}\label{sec-neutral}
Using the dual Euler characteristic on mock-finite fields, one can
produce an exotic $\Qq$-valued Euler characteristic $\chi_0$ on any
pseudofinite field of characteristic 0. We outline the construction,
omitting details because $\chi_0$ is less interesting than first
expected (see Example~\ref{not-strong}).
\begin{enumerate}
\item Let $F$ be a pseudofinite field of characteristic zero, given
  explicitly as an ultraproduct of prime fields \[ F =
  \prod_i F_i / \mathcal{U} .\] Suppose none of the $F_i$ have
  characteristic $\ell$.
\item For each $i$, let $K_i$ be a mock-$F_i$.  Let $K$ be the
  ultraproduct \[ F = \prod_i K_i /\mathcal{U}.\] One can show that
  $F \preceq K$.
\item Each $K_i$ has its dual Euler characteristic
  $\chi^\dag_i$, taking values in the ring $\Zz_\ell$.  On
  $F_i$-definable sets, the Euler characteristic takes values in
  $\Zz_\ell \cap \Qq$.
\item Let $\Zz_\ell^*$ and $\Qq^*$ denote the ultrapowers
  $\Zz_\ell^{\mathcal{U}}$ and $\Qq^{\mathcal{U}}$.  In the
  nonstandard limit, the $\chi^\dag_i$ determine a
  $\Zz_\ell^*$-valued Euler characteristic $\tilde{\chi}$ on $K$.
  When restricted to $F$-definable sets, $\tilde{\chi}$ takes values
  in $\Qq^*$.  Because $F \preceq K$, this gives a $\Qq^*$-valued
  Euler characteristic $\tilde{\chi}$ on $F$.
\item Say that a weak Euler characteristic $\chi$ is \emph{medial} if
  it satisfies the following partial version of strongness
  \begin{quote}
    If $f : X \to Y$ is a definable surjection between two definable
    sets, and every fiber has size $k < \infty$, then $\chi(X) = k
    \cdot \chi(Y)$.
  \end{quote}
  One can verify that $\tilde{\chi}$ is a $\Qq^*$-valued medial Euler
  characteristic on $F$.
\item If $V$ is a geometrically irreducible smooth projective variety
  over $F$, one can show using Example~\ref{smooth-example} and the
  Lang-Weil estimates that $\tilde{\chi}(V(F))$ is infinitesimally
  close to 1.
\item Using resolution of singularities and induction on dimension,
  one can show that for any variety $V/F$, the value
  $\tilde{\chi}(V(F))$ is infinitesimally close to an integer.
\item Using Proposition~\ref{all-in-zero} and an argument similar to
  Lemma~\ref{almost-unique-lemma-1}, one can show that if $X \subseteq
  F^n$ is definable, then $\tilde{\chi}(X)$ is infinitesimally close
  to a rational number.
\item Define $\chi_0(X)$ to be the standard part of $\tilde{\chi}(X)$.
  Then $\chi_0$ is a $\Qq$-valued medial Euler characteristic on $F$.
  Also, $\chi_0(V(F)) = 1$ for any geometrically irreducible smooth
  projective variety $V/F$.
\item If $F'$ is \emph{any} pseudofinite field of characteristic 0,
  then there is at most one $\Qq$-valued medial Euler characteristic
  $\chi_0$ such that $\chi_0(V(F')) = 1$ for any geometrically
  irreducible smooth projective variety $V/F'$.  This can be seen by
  resolution of singularities, induction on dimension,
  Proposition~\ref{all-in-zero}, and the proof of
  Lemma~\ref{almost-unique-lemma-1}.
\item Therefore, every pseudofinite field of characteristic 0 admits
  a unique 0-definable $\Qq$-valued medial Euler characteristic
  $\chi_0$ characterized by the fact that $\chi_0(V(F)) = 1$ for any
  smooth projective geometrically connected variety $V$.  This follows
  by a Beth implicit definability argument, similar to the proof of
  Proposition~\ref{exactly-one} and Theorem~\ref{ecpdf-thm}.
\end{enumerate}
We call $\chi_0$ the \emph{neutral} Euler characteristic.
\begin{remark}
  Unlike the Euler characteristic of Lemma~\ref{psf-case}, $\chi_0$ is
  completely canonical, and is independent of the choice of a
  nonstandard Frobenius.  If resolution of singularities holds in
  positive characteristic, then $\chi_0$ can be defined for all
  pseudofinite fields.
\end{remark}
Unfortunately, the neutral Euler characteristic has bad properties:
\begin{example}\label{not-strong}
  The neutral Euler characteristic is \emph{not} strong.  Consider the
  set
  \begin{equation*}
    S = \{(x,y,\lambda) : y^2 = x(x-1)(x-\lambda) \text{ and } \lambda \ne 0, 1\}.
  \end{equation*}
  One can view $S$ as a family of elliptic curves $E_\lambda$
  parameterized by $\lambda$.  Each elliptic curve $E_\lambda$ is
  missing one point at infinity, so
  \[ \chi_0(E_\lambda) = \chi_0(\overline{E}_\lambda) - 1 = 1 - 1 = 0.\]
  Therefore, if $\chi_0$ were a strong Euler characteristic, one would have
  \begin{equation*}
    \chi_0(S) = 0 \cdot \chi_0(\Pp^1(F) \setminus \{0,1,\infty\}) = 0 \cdot (1-3) = 0.
  \end{equation*}
  On the other hand, one can directly count points in $S$.  For any
  finite field $\Ff_q$ of characteristic $\ne 2$, the size of
  $S(\Ff_q)$ turns out to be given by the formula
  \begin{equation*}
    |S(\Ff_q)| = 
    \begin{cases}
      q^2 - q & \text{ if $-1$ is a square in $\Ff_q$} \\
      q^2 - q - 2 & \text{ if $-1$ is not a square in $\Ff_q$}.
    \end{cases}
  \end{equation*}
  Essentially by Lemma~\ref{anti-dwork}, one sees that the dual Euler
  characteristic of $S$ is given by
  \begin{equation*}
    \begin{cases}
      q^{-2} - q^{-1} & \text{ if $-1$ is a square in $\Ff_q$} \\
      q^{-2} - q^{-1} - 2 & \text{ if $-1$ is not a square in $\Ff_q$}.
    \end{cases}
  \end{equation*}
  In the limit, $q^{-1}, q^{-2} \to 0$.  Consequently, the neutral
  Euler characteristic of $S(F)$ is given by
  \begin{equation*}
    \chi_0(S(F)) = 
    \begin{cases}
      0 & \text{ if $-1$ is a square in $F$} \\
      -2 & \text{ if $-1$ is not a square in $F$}.
    \end{cases}
  \end{equation*}
  In particular, $\chi_0(S(F))$ need not equal 0.
\end{example}
The neutral Euler characteristic seems to be governed by the weight 0
part of $\ell$-adic etale cohomology, in a manner analogous to
Conjecture~\ref{weilish}.

\section{Directions for future research}
There are several immediate directions for future research.  The most
important next step is verifying Conjecture~\ref{horror}, completing
the proof that the $\mathcal{L}^{\mu}_{rings}$-theory of finite fields
is decidable (Theorem~\ref{intro-deci}).  This will hopefully be carried
out in \cite{complementary-paper}.  Another key task is to relate the
$\Zz_\ell$-valued Euler characteristic to $\ell$-adic etale cohomology
(Conjecture~\ref{weilish}).

Another interesting direction is the following variant of
Theorem~\ref{intro-deci}:
\begin{conjecture}
  The $\mathcal{L}^{\mu}_{rings}$-theory of the rings $\Zz/n\Zz$ is
  decidable.
\end{conjecture}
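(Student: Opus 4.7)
The plan is to imitate the approach of Theorem~\ref{ecpdf-thm} and Theorem~\ref{intro-deci}, transposed from pseudofinite fields to finite principal ideal local rings. The first step is to reduce via the Chinese Remainder Theorem: since $\Zz/n\Zz \cong \prod_{p \mid n} \Zz/p^{v_p(n)}\Zz$, a Feferman--Vaught style decomposition should reduce $\mathcal{L}^{\mu}_{rings}$-sentences about $\Zz/n\Zz$ to boolean combinations of $\mathcal{L}^{\mu}_{rings}$-sentences about the prime-power factor rings $\Zz/p^k\Zz$, with an outer layer that counts, modulo integers, how many primes $p \mid n$ have a given ``local first-order type''. That outer counting is itself a decidable matter once one can compute mod-$m$ cardinalities of first-order-definable subsets of $\{p : p \mid n\}$. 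One small advantage over the field case is that there is no nonstandard Frobenius to choose, since each $\Zz/p^k\Zz$ has trivial automorphism group, so the Euler characteristic should be genuinely canonical.

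Next I would analyze the ultraproducts $R^{*} = \prod_i \Zz/p_i^{k_i}\Zz / \mathcal{U}$. By Ax--Kochen--Ershov, the case where $p_i$ is eventually a fixed prime $p$ and $k_i \to \infty$ gives a henselian local ring of residue characteristic $p$ closely related to quotients of ultrapowers of $\Zz_p$; the case $p_i \to \infty$ with $k_i = 1$ gives a pseudofinite field of characteristic $0$, already handled by Theorem~\ref{psy-thm}; and intermediate regimes give local rings with pseudofinite residue field and nilpotent maximal ideal. In each regime the pure $\mathcal{L}_{rings}$-theory is known (or readily shown) to be decidable, which provides the base theory on which to mount a Beth implicit definability argument.

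The central task is to prove uniform $0$-definability of the mod-$m$ counting function by mimicking \S\ref{sec-beth}. I would set up an expanded theory $T^+$ with predicates $P_{\phi,n,k}$, with axioms asserting that the resulting $\chi_n$ is a $\hat{\Zz}$-valued strong Euler characteristic, compatible across $n$, and taking prescribed values on a generating class of definable sets. In the pseudofinite-field case this generating class was curves and the values were determined by Frobenius traces on Tate modules; here the generating class should be sets of the form $X(\Zz/p^k\Zz)$ for smooth proper $\Zz/p^k\Zz$-schemes $X$, whose point counts modulo $\ell \ne p$ are determined by $\ell$-adic cohomology of the generic fiber (with Hensel lifting reducing to the residue field), and whose counts modulo powers of $p$ require a separate treatment via Denef's rationality of Poincar\'e series for $p$-adic varieties. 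Once a sufficiently rich quantifier elimination or cell decomposition for $R^{*}$ is in hand, the uniqueness argument of Lemma~\ref{almost-unique-lemma-n} and the Vandermonde trick should transfer verbatim; existence is automatic on each finite $\Zz/n\Zz$ using the true counting function.

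The main obstacle will be identifying and analyzing the right generating class in the regime where both the residue characteristic $p$ and the length $k$ can tend to infinity, and doing so uniformly enough to satisfy a suitable analogue of Conjecture~\ref{horror}. Controlling the $p$-primary part of counts on schemes of residue characteristic $p$ --- the analogue of the ``Bad characteristic'' subsection of this paper --- is likely to be the most delicate step, and will probably require a Denef--Loeser style analysis of $p$-adic point counting together with a Hensel-style lifting lemma uniform in $p$ and $k$. A secondary obstacle is that the underlying pure $\mathcal{L}_{rings}$-theory of the class $\{\Zz/n\Zz\}$, while presumably decidable, does not seem to appear in the literature in precisely the form that the Beth implicit definability step demands, so some foundational cleanup is likely required before the parity-quantifier layer can be addressed.
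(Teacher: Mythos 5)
This statement is a \emph{conjecture} in the paper, sitting in the ``Directions for future research'' section; the paper gives no proof, only a short three-case sketch (prime fields via Theorem~\ref{intro-deci}; prime-power rings via $p$-adically closed and henselian valued fields; composite $n$ via Feferman--Vaught, mimicking adele rings). Your proposal is therefore a plan being compared against a plan, and in broad outline it tracks the paper's sketch quite closely: CRT/Feferman--Vaught reduction to prime-power factors, classification of ultraproduct regimes, and a Beth implicit definability argument patterned on \S\ref{sec-beth}. You also correctly identify that the delicate point is the $p$-primary counting in the regime where both the residue characteristic and the nilpotency length blow up.

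There is, however, one concrete error. You claim that ``there is no nonstandard Frobenius to choose, since each $\Zz/p^k\Zz$ has trivial automorphism group, so the Euler characteristic should be genuinely canonical.'' The premise is true but the inference is invalid, and it contradicts the paper's own sketch, which explicitly qualifies elimination for $\Zz/p^k\Zz$ as holding only ``modulo naming the nonstandard Frobenius.'' Rigidity of the finite pieces does not yield $0$-definability of the mod-$m$ counting function on ultraproducts: the prime fields $\Ff_p = \Zz/p\Zz$ are likewise rigid, yet Lemma~\ref{lame} and the remark following it show that generalized parity quantifiers are \emph{not} eliminated in the pure ring language on the class of prime fields, precisely because the ultraproduct forgets which topological generator of the Galois group of the residue field corresponds to ``the Frobenius.'' Since the prime fields form a subclass of $\{\Zz/n\Zz\}$, the same obstruction applies to your setting. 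More generally, the noncanonicalness in the pseudofinite case is never about automorphisms of the individual finite structures --- it is about the ultraproduct data, roughly the choice of a topological generator for the residue field's absolute Galois group --- and that choice persists whenever the residue field of the ultraproduct is a nontrivial pseudofinite field. Any Beth argument here will therefore have to either expand the language by periodic-field-like data on the residue field, or settle for $\acl^{eq}(\emptyset)$-definability as in Theorem~\ref{psy-thm}; the decidability consequence you want can survive, but not by the ``canonical'' route you propose.
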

The conjecture can be broken into several cases:
\begin{enumerate}
\item The rings $\Zz/p\Zz = \Ff_p$.  These are essentially
  handled by Theorem~\ref{intro-deci}.
\item The rings $\Zz/p^k\Zz$.  For fixed $p$, the theory of these
  rings is closely related to $p$-adically closed fields.  Indeed, any
  ultraproduct of $\Zz/p^k\Zz$ is interpretable in a $p$-adically
  closed field.  If $p$ is allowed to vary, one also encounters
  henselian valued fields with pseudofinite residue field of
  characteristic 0.  These theories are well-understood, and admit a
  form of cell decomposition.  Mimicking the proofs of motivic
  integration, it should be possible to verify that elimination of
  parity quantifiers holds in the rings $\Zz/p^k\Zz$---modulo naming
  the nonstandard Frobenius.
\item The rings $\Zz/n\Zz$, where $n$ has multiple prime divisors.
  The theory of such rings can be analyzed using Feferman-Vaught
  theory, mimicking \cite{adeles}.  Elimination of parity quantifiers
  fails rather strongly, but can be recovered by expanding the boolean
  algebra sort with new predicates.
\end{enumerate}
Lastly, it may be possible to generalize the definability of the
canonical Euler characteristic from ACPF to its expansion ACFA.
Although ACFA is not pseudofinite, its models are ultraproducts of
Frobenius difference fields (\cite{frobenius}), and definable sets of
finite rank are naturally pseudofinite.

\subsection{Interactions with number theory?}
We have relied heavily on algebraic geometry and number theory to
prove a relatively simple model-theoretic fact.  One could dream of
reversing the process to obtain new results in number theory.
Ultraproducts of finite fields are not the only source of pseudofinite
fields.  For example, if $\sigma$ is chosen randomly in $\Gal(\Qq)$,
then $(\Qq^{alg},\sigma) \models \ACPF$ with probability 1, by
(\cite{field-arithmetic}, \S16.6).  Perhaps one can prove non-trivial
facts by reasoning about nonstandard sizes of definable sets in these
structures.

Unfortunately, we have probably done nothing interesting from a
number-theoretic point of view.  The nonstandard ``sizes'' on
pseudofinite fields should be a simple artifact of etale cohomology,
by Conjecture~\ref{weilish}.  Etale cohomology is already
well-understood.  Combinatorial facts about sizes correspond to
well-known facts about cohomology.  The fact that $\chi(X \times Y) =
\chi(X) \cdot \chi(Y)$ corresponds to the K\"unneth formula.  When $f
: E \to B$ is a morphism, the strong Euler characteristic property
allows us to calculate the ``size'' of $E$ by ``integrating'' the
``sizes'' of the fibers over $B$.  This property corresponds to the
Leray spectral sequence.

One tool which might be new on the model-theoretic side is elimination
of imaginaries, which holds in ACPF by work of
Hrushovski~\cite{the-lost-paper}.  When $X$ is interpretable, or
definable with quantifiers, we know that $\chi(X)$ is ``integral,''
lying in $\hat{\Zz}$ rather than $\hat{\Zz} \otimes_\Zz \Qq$.  There
may be some number-theoretic content to this.

It feels as if there could be some connection between the canonical
Euler characteristic and $p$-adic L-functions.  The classical
L-functions associated to number fields and elliptic curves are
defined in terms of point counting.  In some cases, these L-functions
can be converted to $p$-adic analytic functions by extrapolating the
values at negative integers.  Insofar as we are counting points on
varieties mod $p^k$, there is a spiritual connection to the $p$-adic
part of the canonical Euler characteristic.

Moreover, $p$-adic integration appears in both contexts.  If $\chi$ is
a strong $\Zz_p$-valued Euler characteristic, and $f : E \to B$ is a
definable function, then $\chi$ induces a $p$-adic measure $\mu$ on $B$, and
one can calculate $\chi(E)$ by $p$-adic integration
\begin{equation*}
  \chi(E) = \int_{x \in B} \chi(f^{-1}(x))\, d \mu(x)
\end{equation*}
This was essentially how $\chi(X)$ was calculated in
Lemma~\ref{almost-unique-lemma-n}.  Meanwhile, $p$-adic integration
plays a key role in the theory of $p$-adic L-functions.  For example,
the Riemann zeta function is given on negative integers by a $p$-adic
Mellin transform: there is some $c \in \Zz_p^\times$ and $p$-adic
measure $\mu$ on $\Zz_p$ such that for positive integers $k$,
\begin{equation}
  \zeta(-k) = \frac{1}{1 - c^{k+1}} \int_{\Zz_p} x^k \, d\mu(x). \label{mellin}
\end{equation}
This Mellin transform is the underlying reason why the Kubota-Leopoldt
$p$-adic zeta function exists.  In some cases, the measure $\mu$ can
be given a pseudofinite interpretation.  For example, if $p$ is odd
and $\alpha$ is a nonstandard integer whose $p$-adic standard part is
$-1/2$, then $\zeta(-k)$ is given\footnote{Let $B_k(x)$ denote the $k$th Bernoulli
  polynomials
  \begin{equation*}
    \sum_{k = 0}^\infty \frac{B_k(x)t^k}{k!} = \frac{t e^{xt}}{e^t -
      1},
  \end{equation*}
  and let $B_k$ denote the Bernoulli numbers $B_k(0)$.  The identity
  \begin{equation*}
    \frac{B_{k+1}(1/2) - B_{k+1}(0)}{k+1} = (2^{-k}
    - 2)\frac{B_{k+1}}{k+1}
  \end{equation*}
  can be proven by an easy exercise in generating functions.  Let
  $\approx$ denote equality of standard parts.  Then the following
  holds for positive integers $k$,
  \begin{align*}
    \zeta(-k) &= \frac{-B_{k+1}}{k+1} = \frac{1}{2 -
      2^{-k}}\left(\frac{B_{k+1}(1/2) - B_{k+1}(0)}{k+1}\right) \\ &
    \approx \frac{1}{2 - 2^{-k}} \left(\frac{B_{k+1}(\alpha + 1) -
      B_{k+1}(0)}{k+1}\right) = \frac{1}{2 - 2^{-k}} \sum_{n =
      1}^\alpha n^k,
  \end{align*}
  by well-known relations between the Bernoulli numbers, the zeta
  function, and sums of $k$th powers.}
by $p$-adic standard part of the sum
\begin{equation*}
  \frac{1}{2 - 2^{-k}} \sum_{n = 1}^\alpha n^k.
\end{equation*}
In other words, (\ref{mellin}) holds with $c = 1/2$ and $\mu$ equal to
(half) the nonstandard counting measure on the pseudofinite set
$\{1,2,\ldots,\alpha\}$.

Thus there are several vague connections between the canonical Euler
characteristic on pseudofinite fields, and $p$-adic L-functions.  I
lack the expertise to pursue this connection further.

\bibliographystyle{plain}
\bibliography{mybib}{}

\end{document}